\documentclass[10pt]{amsart}
\usepackage{files/style}

\pdfoutput=1 % for arxiv compatibility; dvi has issues with url line breaks

\def\cmpctfy{\hspace{-10pt}}

\usepackage{transparent}
%\usepackage[firstpage,scale=.8,color=red]{draftwatermark}
%\SetWatermarkText{\transparent{0.1} DRAFT\\ 
%  \transparent{0.1}DO NOT\\ 
%  \transparent{0.1} CITE}

\begin{document}
\title{Orbital categories and weak indexing systems}
\date{\today}
\author{Natalie Stewart} 
 
\begin{abstract}
    We initiate the combinatorial study of the poset $\wIndSys_{\cT}$ of \emph{weak $\cT$-indexing systems}, consisting of composable collections of arities for $\cT$-equivariant algebraic structures, where $\cT$ is an orbital category (such as the orbit category of a finite group).
    In particular, we show that these are equivalent to \emph{weak $\cT$-indexing categories} and characterize various unitality conditions.

    Within this sits a natural generalization $\IndSys_{\cT} \subset \wIndSys_{\cT}$ of Blumberg-Hill's \emph{indexing systems}, consisting of arities for structures possessing binary operations and unit elements.
    We characterize the relationship between the posets of \emph{unital weak indexing systems} and \emph{indexing systems}, the latter remaining isomorphic to \emph{transfer systems} on this level of generality.
    We use this to characterize the poset of unital $C_{p^n}$-weak indexing systems.
\end{abstract}

\maketitle
\toc

\begin{comment}
\section*{Changelog}
\begin{itemize}
  \item General typo fixes and rewordings;
  \item A bit of intro elaboration about the strategy for the comparison between indexing systems and indexing categories;
  \item Removed the redundant assumption that weak indexing categories are replete;
  \item Generally began the process of de-$\infty$-ification.
  \item Fixed a dropped connectivity assumption on the Borel equivariant classification.
\end{itemize}
\end{comment}

\newpage

\section{Introduction}  
Fix $G$ a finite group.
Central to the resolution of Kervaire invariant one problem was the construction of a \emph{$G$-commutative ring structure} on norms of Real bordism \cite{HHR};
for instance, the \emph{norm map} along a subgroup inclusion $C_2 \hookrightarrow G$ gave a ring map $\MUR \rightarrow \Res_{C_2}^{G} \MU^{((G))}$ from which their computations begin.
Naturally, in subsequent work, Hill-Hopkins encapsulated this in a early version of \emph{equivariant higher algebra}, ultimately showing that this type of structure occurs ubiquitously in equivariant homotopy theory \cite{Hill_SMC}.

However, Hill made a surprising observation:
\emph{equivariant chromatic localizations need not be compatible with Hill-Hopkins-Ravanel norms}, so the associated local sphere spectra need not admit natural $G$-commutative ring structures \cite{Hill_chromatic}.
Instead, each chromatic localization just has an ``indexing system'' of equivariant arities $\uFF_I \subset \uFF_G$ describing its compatibility with norms.

In \cite{Blumberg-op}, Blumberg-Hill introduced the notion of \emph{$\cN_\infty$-operads for $G$}, hoping to rectify the situation by weakening the structure of $G$-commutative rings.
Indeed, they constructed an embedding of the $\infty$-category of $\cN_\infty$-operads for $G$ into the lattice of indexing systems $\IndSys_G$, describing their defining norms.
For instance, when applied to \emph{Mackey functors}, the $\cN_\infty$-operads parameterize  
a family of multiplicative structure interpolating between the Green functors and Tambara functors found in representation theory, compatibly with the Mackey structure on the path components of a $G$-spectrum \cite{Chan}.

Subsequently, the embedding $\cN_{\infty}-\Op_G \subset \IndSys_G$ was shown to be an equivalence in several independent works \cite{Gutierrez,Bonventre,Rubin};
from this sprang work of many authors surrounding the following objective.
\begin{objective}\label{Equivariant objective}
  Study the combinatorics of arity arising in equivariant higher algebra.
\end{objective}
Foundational in the typical approach to \cref{Equivariant objective} is the equivalent characterization of indexing systems as a poset of wide subcategories $\mathrm{IndexCat}_G \subset \Sub(\FF_G)$ (referred to as \emph{indexing categories}) \cite[\S~3.2]{Blumberg_incomplete} and the observation that indexing categories only depend on their pullbacks to the subgroup lattice $\Sub_{\Grp}(G)$, the resulting embedded subposet 
being referred to as \emph{transfer systems} \cite{Rubin_steiner,Balchin}:
\[\begin{tikzcd}[ampersand replacement=\&]
	{\cN_\infty-\Op_G} \& { \IndSys_G} \& {\mathrm{IndexCat}_G} \&\& {\Transf_G} \\
	{\Op_G} \& { \mathrm{FullSub}_{G}(\uFF_G)   } \& {\Sub(\FF_G)} \& {\Sub(\cO_G)} \& {\Sub_{\mathrm{Poset}}\Sub_{\mathrm{Grp}}(G)}
	\arrow["\sim", from=1-1, to=1-2]
	\arrow[hook, from=1-1, to=2-1]
	\arrow[hook, from=1-2, to=2-2]
	\arrow["\sim"', from=1-3, to=1-2]
	\arrow["\sim", from=1-3, to=1-5]
	\arrow[hook, from=1-3, to=2-3]
	\arrow[hook', from=1-5, to=2-5]
	\arrow[from=2-1, to=2-2]
	\arrow["{{\FF_{(-)}}}", from=2-3, to=2-2]
	\arrow["{{(-) \cap \cO_G}}"', from=2-3, to=2-4]
	\arrow["{\mathrm{p.b.}}"', from=2-4, to=2-5]
\end{tikzcd}\]
It is in the language of transfer systems that enumerative aspects of \cref{Equivariant objective} are often described.

For instance, noting that $\Sub_{\Grp}(\cO_{C_{p^n}})= \brk{n+1}$, the transfer system approach was used in \cite{Balchin} to prove that $\cN_\infty-\Op_G \simeq \Transf_{C_{p^n}}$ is equivalent to the $(n+2)$nd associahedron $K_{n+2}$, where $C_{m}$ is the cyclic group of order $m$.
Furthermore, transfer systems have powered a large amount of further work on the topic;
for instance, $\Transf_{C_{pqr}}$ is enumerated for $p,q,r$ distinct primes in \cite{Balchin2}, with some indications on how to generalize this to arbitrary cyclic groups of squarefree order.

In this paper, we aim to demonstrate how one may extend this line of work in two ways:
\begin{enumerate}[label={(\arabic*)}]
  \item \label{First way} we will remove the assumption on indexing systems that they are closed under finite coproducts;
    on the side of algebra, we see in \cite{EBV,Tensor} that this removes the assumption that algebras over the corresponding $G$-operad $\cN_{I\infty}^{\otimes}$ in Mackey functors possess underlying Green functors;
  \item following \cite{Barwick_Intro}, we will replace the orbit category $\cO_G$ with an axiomatic version, called an \emph{atomic orbital category};\footnote{By \emph{category}, we mean \emph{1-category}, i.e. \emph{$\infty$-category with $0$-truncated mapping spaces}. 
      The homotopical reader is maiy assume all categories are $\infty$-categories;
    we verify in \cref{Infinity appendix} that the combinatorics associated with $\cT$ and $\ho(\cT)$ agree, so you gain nothing combinatorial by doing so.}
    this allows us to fluently describe equivariance under (co)families.
\end{enumerate}
With respect to \ref{First way}, when we assert a unitality assumption, we find that $\wIndSys^{\uni}_G$ is finite when $G$ is finite, and it can usually be explicitly described in terms of transfer systems and $G$-families (c.f. \cref{The classification is finitary theorem,CPn theorem}).
Moreover, unitality is compatible with joins (c.f. \cref{Everything is join-stable}), and we establish in \cite{Tensor} that joins compute tensor products of the corresponding (unital weak) $\cN_\infty$-operads.

We assure the skeptical reader that they may freely assume $\cT$ is (the orbit category of) a $G$-family $\cF$ and replace all instances of orbits $V \in \cT$ with homogeneous $G$-spaces $[G/H]$ for $H \in \cF$ (or with the subgroup $H \subset G$ itself, depending on which is contextually appropriate);\footnote{Throughohut this paper, a \emph{$G$-family} will always refer to a subconjugacy closed collection of subgroups of $G$, 
    That the reader understands weak indexing systems over $G$-families will become non-negotiable over the course of this paper, as we critically employ change of universe functors throughout the text, such as \emph{Borelification}.}
then, our results will only be novel in way \ref{First way}.

\subsection{Orbital categories and indexed coproducts}
We briefly review the setting introduced in \cite{Barwick_Intro} generalizing the orbit category $\cO_G$.
We assume basic intuition for $\cO_G$ (e.g. as in \cite[\S~1.2-1.3]{Dieck}). 
\subsubsection{Orbital categories}    
We begin with a general replacement for finite $G$-sets.
\begin{construction}
  Given $\cT$ a category,
    its \emph{finite coproduct completion} is the full subcategory $\FF_{\cT} \subset \Fun(\cT^{\op},\Set)$ spanned by finite coproducts of representable presheaves, where $\Set$ denotes the category of sets.
\end{construction}
\begin{example}
    If $G$ is a finite group, then $\FF_{\cO_G}$ is equivalent to the category of finite $G$-sets;
    more generally, if $\cF \subset \cO_G$ is (the orbit category of) a $G$-family, then $\FF_{\cF} \subset \FF_{\cO_G}$ is the full subcategory spanned by finite $G$-sets $S$ such that the stabilizer $\stab_G(x)$ lies in $\cF$ for all $x \in S$.
\end{example}

$\FF_{\cT}$ is \emph{freely} generated by $\cT$ under finite coproducts;
in particular, given $S \in \FF_{\cT}$, there is a unique expression $S \simeq \bigoplus\limits_{V \in \Orb(S)} V$ for some finite set of $S$-\emph{orbits} $\Orb(S) \rightarrow \mathrm{Ob}(\cT)$.
Another important property of the finite coproduct completion is existence of equivalences \cite[Lem~2.14]{Glasman}
\[
  \FF_{\cT, /S} \simeq \prod_{V \in \Orb(S)} \FF_{\cT, /V}; \hspace{50pt} \FF_{\cT, /V} \simeq \FF_{\cT_{/V}}.  
\]
We henceforth refer to  $\FF_{\cT, /V} \simeq \FF_{\cT_{/V}}$ as $\FF_V$.
Note that, in the case $\cT = \cO_G$, induction furnishes an equivalence $\cO_{G, /[G/H]} \simeq \cO_H$, so $\FF_{[G/H]} \simeq \FF_H$.

Fundamental to genuine-equivariant mathematics is the \emph{effective Burnside category} $\Span(\FF_G)$;
for instance, the $G$-Mackey functors of \cite{Dress} may be presented as product-preserving functors $\Span(\FF_G) \rightarrow \Ab$ \cite{Lindner}.
In $\Span(\FF_G)$, composition of morphisms is accomplished via the pullback
\begin{equation}\label{pb composition}
  \begin{tikzcd}[row sep = tiny]
	&& {R_{fg}} \\
	& {R_{g}} && {R_f} \\
	S && T && Q
	\arrow[from=1-3, to=2-2]
	\arrow[from=1-3, to=2-4]
	\arrow["\lrcorner"{anchor=center, pos=0.125, rotate=-45}, draw=none, from=1-3, to=3-3]
	\arrow[from=2-2, to=3-1]
	\arrow[from=2-2, to=3-3]
	\arrow[from=2-4, to=3-3]
	\arrow[from=2-4, to=3-5]
  \end{tikzcd}
\end{equation}
Indeed, given $\cT$ an arbitrary category, the triple $(\FF_{\cT},\FF_{\cT},\FF_{\cT})$ is \emph{adequate} in the sense of \cite{Barwick1} if and only if $\FF_{\cT}$ has pullbacks, in which case the triple is \emph{disjunctive}.
Thus, Barwick's construction \cite[Def~5.5]{Barwick1} defines an effective Burnside 2-category $\Span(\FF_{\cT}) = A^{\mathrm{eff}}(\FF_{\cT},\FF_{\cT},\FF_{\cT})$ precisely if $\cT$ is \emph{orbital} in the sense of the following definition.
\begin{definition}[{\cite[Def~4.1]{Nardin-Stable}}]\label{Atomic orbital definition}
    A (small) category $\cT$ is \emph{orbital} if $\FF_{\cT}$ has pullbacks;
    an orbital category $\cT$ is \emph{atomic} if, for all maps $r\colon X \rightarrow Y$ in $\cT$ with a section $r\colon Y \rightarrow X$ such that $r \circ s$ is the identity, $r$ is an isomorphism.
\end{definition}

\begin{example}\label{Group example}
  If $G$ is a finite group, then $\cO_G$ is orbital, as $\FF_G = \FF_{\cO_G}$ has pullbacks.
  Moreover, it is atomic, as all endomorphisms of transitive $G$-sets are isomorphisms.
\end{example}

\begin{example} \label{Slice example}
  Given $P$ a meet semilattice, $P$ is atomic orbital, as the meets in $\FF_{P}$ are easily computed in terms of meets in $P$.
\end{example}

We will see in \cref{Computational section} that the atomic assumption is important, as it causes the restriction-induction adjunction for $\cT$-equivariants objects to have a sufficiently similar formula to Mackey's \emph{double coset formula} for our purposes.
In the meantime, to generate more examples, we make the following definitions.
\begin{definition}
  Given $\cT$ a category, a \emph{$\cT$-family} is a full subcategory $\cF \subset \cT$ satisfying the condition that, given $V \rightarrow W$ a morphism with $W \in \cF$, we have $V \in \cF$.
  A \emph{$\cT$-cofamily} is a full subcategory $\cF^{\perp} \subset \cT$ such that $\cF^{\perp, \op} \subset \cT^{\op}$ is a $\cT^{\op}$-family.
\end{definition}

\begin{observation}\label{Interval family observation}
  Suppose $\cF \subset \cT$ is a subcategory of an atomic orbital category 
  satisfying the following:
  \begin{enumerate}[label={(\alph*)}]
    \item for all $\cT$-paths 
      $U \rightarrow V \rightarrow W$ with $U,W \in \cF$, we have $V \in \cF$, and
    \item given $\cT$-cospan $U \rightarrow V \xleftarrow{f} W$ with $U,W \in \cF$, there is a $\cT$-span $U \leftarrow V' \rightarrow W$ with $V' \in \cF$.
  \end{enumerate}
  Then, the inclusion $\cF \subset \cT$ creates pullbacks;
  in particular, $\cF$ is an atomic orbital category.
  Note that (a) is satisfied by all families and cofamilies, and (b) is satisfied by all families.
\end{observation}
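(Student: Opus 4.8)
The plan is to reduce to a claim about pullbacks of cospans between orbits of $\cT$ and then feed that claim to (a) and (b); the bulk of the difficulty is concentrated in one step.

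Atomicity of $\cF$ is inherited from $\cT$ for free: if $r \colon X \to Y$ in $\cF$ admits a section $s$ with $r s = \mathrm{id}_Y$, then atomicity of $\cT$ makes $r$ an isomorphism in $\cT$; its inverse is necessarily $s$, which lies in $\cF$, so $r$ is an isomorphism in $\cF$. Hence the content is that the inclusion creates pullbacks. As $\FF_{\cT}$ has pullbacks (\cref{Atomic orbital definition}) and $\FF_{\cF} \subseteq \FF_{\cT}$ is the full subcategory spanned by those $S$ with $\Orb(S) \subseteq \cF$, I would prove that $\FF_{\cF}$ is closed under pullbacks formed in $\FF_{\cT}$; creation is then automatic, and in particular $\cF$ is orbital.

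Next I would unwind this. Given a cospan $S \to U \leftarrow T$ in $\FF_{\cF}$, the decomposition equivalences $\FF_{\cT,/U} \simeq \prod_{V \in \Orb(U)} \FF_V$ and $\FF_V \simeq \FF_{\cT_{/V}}$ recalled above express its $\FF_{\cT}$-pullback as $\bigoplus_{V \in \Orb(U)} \bigoplus_{A,B} A \times_V B$, with $A$ running over $\Orb(S \times_U V)$ and $B$ over $\Orb(T \times_U V)$; since $S,T,U \in \FF_{\cF}$, all of $A,B,V$ lie in $\cF$. So it suffices to show: for every cospan $A \xrightarrow{f} V \xleftarrow{g} B$ between objects of $\cF$, every orbit $W$ of $A \times_V B$ lies in $\cF$. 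For a fixed such $W$, with projections $p_A \colon W \to A$, $p_B \colon W \to B$ and common composite $h = f p_A = g p_B \colon W \to V$, I would apply (a): it is enough to produce morphisms $X \to W$ and $W \to Y$ with $X, Y \in \cF$, and the second is free from $p_A \colon W \to A$. For the first, I would invoke (b) on $A \xrightarrow{f} V \xleftarrow{g} B$ to obtain $V_0 \in \cF$ and a span $A \xleftarrow{a} V_0 \xrightarrow{b} B$, and then repair this span to one compatible over $V$ and landing in the component $W \subseteq A \times_V B$, producing the desired $X \to W$; this repair is where atomicity of $\cT$ enters, through the fact that it forces restriction--induction to obey a double-coset formula (\cref{Computational section}).

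The hard part will be that last repair step. Condition (b) only yields a span between $A$ and $B$ with no built-in compatibility over $V$, so $V_0$ need not map into $A \times_V B$ at all; the difficulty is to use the atomic structure of $\cT$ both to correct this and to ensure that \emph{every} orbit of $A \times_V B$ --- not merely one --- is reached from $\cF$. Everything else is bookkeeping with the coproduct-decomposition equivalences for $\FF_{\cT}$; checking the final remark is routine, and indeed for a family the single map $W \to A$ with $A \in \cF$ already forces $W \in \cF$ without recourse to (b).
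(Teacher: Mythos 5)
Your reduction is sound: atomicity is inherited exactly as you say, and the orbitality claim does boil down to showing that for a cospan $A \to V \leftarrow B$ of orbits in $\cF$, every orbit $W$ of $A \times_V B$ lies in $\cF$. Your observation that $p_A \colon W \to A$ supplies the outgoing leg needed for condition (a), so that the remaining task is to produce some $X \to W$ with $X \in \cF$, is also correct. The problem is the ``repair step'' you defer to the end: it is not merely hard, it cannot be carried out from the hypotheses as you (and I) read them. Condition (b) produces a \emph{single} span $A \leftarrow V' \to B$; even if one arranges it to commute over $V$, an orbit $V' \in \cF$ factors through at most one orbit of $A \times_V B$, so the other orbits are never reached, and no amount of double-coset bookkeeping changes this.

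That this is a genuine obstruction, not just a gap in your write-up, can be seen concretely. Take $\cT = \cO_{S_3}$ and $\cF = \{[S_3/C_2], [S_3/S_3]\}$, the cofamily of orbits $[S_3/J]$ with $C_2 \subseteq J$ up to conjugacy. Condition (a) holds vacuously for $V \notin \cF$, since $[S_3/e]$ and $[S_3/C_3]$ receive maps only from objects outside $\cF$; condition (b) holds because $[S_3/C_2]$ (with its unique endomorphism) serves as $V'$ for every relevant cospan. Nevertheless $[S_3/C_2] \times_{[S_3/S_3]} [S_3/C_2] \simeq [S_3/C_2] \sqcup [S_3/e]$ by the double-coset formula, and $[S_3/e] \notin \cF$, so $\FF_\cF$ is not closed under pullbacks in $\FF_\cT$. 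So with (a) read as two-step composable paths and (b) as stated, the conclusion is simply false. The reading that does make the observation go through painlessly is the one suggested by the word ``Interval'' in its label: take (a) to cover zig-zags, so that in particular a span $A \leftarrow W \to B$ with $A, B \in \cF$ already forces $W \in \cF$. Under that reading the conclusion is immediate from your step (3) alone and condition (b) becomes superfluous --- but note that cofamilies do not satisfy the span half of such an interval condition, so the closing remark ``(a) is satisfied by all families and cofamilies'' must then be restricted. The upshot: for families your argument closes without (b), as you say; for the general case you will need to either strengthen (a) to an interval condition and accept the consequent restriction on examples, or strengthen (b) to assert that \emph{every} orbit of $A \times_V B$ receives a map from $\cF$, which is exactly the missing input for your application of (a).
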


Combining \cref{Group example,Interval family observation} for the trivial family $BG \subset \cO_G$ yields the following.
\begin{example}
  The connected groupoid $BG$ is an atomic orbital category, and the associated stable homotopy theory recovers spectra with $G$-action \cite[Thm~2.13]{Glasman-Goodwillie}.\footnote{In fact, given $X$ a space considered as an $\infty$-category, $X$ is can be considered as an \emph{atomic orbital $\infty$-category}, and by \cite[Thm~2.13]{Glasman-Goodwillie}, the associated stable $\infty$-category is the Ando-Hopkins-Rezk $\infty$-category of parameterized spectra over $X$ (c.f. \cite{Ando}).}
\end{example}

Moreover, we have the following important closure property, which appears to be folklore.
\begin{lemma}
  If $\cT$ is (atomic) orbital and $V \in \cT$ is an object, then $\cT_{/V}$ is (atomic) orbital.
\end{lemma}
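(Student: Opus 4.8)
The plan is to reduce both clauses to elementary properties of slice categories, via the equivalence $\FF_{\cT_{/V}} \simeq \FF_{\cT,/V}$ recorded above (which holds for arbitrary $\cT$).

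For orbitality, I would use the standard fact that a forgetful functor out of a slice creates pullbacks: given a cospan in $\FF_{\cT,/V}$, its pullback in $\FF_{\cT}$ acquires a canonical structure map to $V$ and, so equipped, satisfies the universal property of the pullback in $\FF_{\cT,/V}$ (pullback diagrams being connected, which is what makes the lift of the limit cone automatic). Since $\cT$ being orbital means precisely that $\FF_{\cT}$ has pullbacks, it follows that $\FF_{\cT,/V}$ has pullbacks, and transporting along $\FF_{\cT_{/V}} \simeq \FF_{\cT,/V}$ shows $\FF_{\cT_{/V}}$ has pullbacks, i.e.\ $\cT_{/V}$ is orbital. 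Smallness of $\cT_{/V}$ is immediate from smallness of $\cT$.

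For the atomic clause, I would observe that the forgetful functor $u\colon \cT_{/V} \to \cT$ is conservative: a morphism of $\cT_{/V}$ is a morphism of $\cT$ together with a commuting triangle over $V$, and if the underlying morphism is invertible in $\cT$ then its inverse automatically respects that triangle and hence furnishes an inverse in $\cT_{/V}$. Similarly, a section of a morphism in $\cT_{/V}$ forgets to a section in $\cT$. So if $r\colon X \to Y$ is a morphism of $\cT_{/V}$ admitting a section $s$, then $u(r)$ admits the section $u(s)$ in $\cT$, hence is an isomorphism by atomicity of $\cT$, hence $r$ is an isomorphism in $\cT_{/V}$. Therefore $\cT_{/V}$ is atomic.

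The whole argument is formal, so I anticipate no genuine obstacle; the only point meriting a line of proof rather than a citation is the ``creates pullbacks'' assertion for slices, and even that is routine. The one thing to be slightly careful about is simply to invoke the right form of the $\FF_{\cT_{/V}} \simeq \FF_{\cT,/V}$ identification, so that the pullbacks one produces in $\FF_{\cT,/V}$ really do transport to honest pullbacks in $\FF_{\cT_{/V}}$.
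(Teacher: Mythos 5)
Your proof is correct and follows the same route as the paper's: identify $\FF_{\cT_{/V}} \simeq \FF_{\cT,/V}$, use that the projection from a slice creates pullbacks to conclude orbitality, and observe that atomicity is inherited along the conservative forgetful functor. The paper dismisses the atomic clause as ``immediate,'' and your spelled-out argument for it is exactly the intended one.
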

\begin{proof}
  In view of the equivalence $\FF_{\cT_{/V}} \simeq \FF_{\cT, /V}$, we note that the map $\FF_{\cT_{/V}} \rightarrow \FF_{\cT}$ creates pullbacks, so $\FF_{\cT_{/V}}$ has pullbacks, implying $\cT_{/V}$ is orbital.
  The atomic version is immediate.
\end{proof} 

\subsubsection{Indexed coproducts}
Throughout the remainder of this introduction, we fix $\cT$ an orbital category.
In the case $\cT = \cO_G$ is the orbit category of a finite group $G$, Elmendorf's theorem \cite{Elmendorf,Dwyer_Kan} identifies $G$-spaces with (homotopy-coherent) presheaves of spaces on the orbit category:
\[
  \cS_G \simeq \Fun(\cO^{\op}_G, \cS).
\]
It is becoming traditional to allow $G$ to act on the \emph{category theory} surrounding genuine equivariant mathematics, culminating in the following definition.
\begin{definition}
  Writing $\Cat_1$ for the 2-category of small categories, the \emph{2-category of small $\cT$-categories} is the functor 2-category
  \[
    \Cat_{\cT,1} \deq \Fun(\cT^{\op}, \Cat_1).\qedhere
  \]
\end{definition}
For the remainder of this paper, all $\cT$-categories will be small, so we omit the word ``small.''
We refer to the morphisms in $\Cat_{\cT,1}$ as \emph{$\cT$-functors}.
Given a $\cT$-category $\cC$ and an object $V \in \cT$, $\cC$ has a \emph{$V$-value} category $\cC_V \deq \cC(V)$, and given a map $U \rightarrow V$ in $\cT$, $\cC$ has an associated \emph{restriction functor} $\Res_U^V\colon \cC_V \rightarrow \cC_U$.
\begin{example}
  The functor $\cT^{\op} \rightarrow \Cat_1$ sending $V \mapsto \FF_{\cT,/V}$ is a $\cT$-category, which we call \emph{the $\cT$-category of finite $\cT$-sets} and denote as $\uFF_{\cT}$.
\end{example}

\begin{notation}
  We refer to the terminal object $(V=V) \in \FF_{V}$ as $*_V$ and call it the \emph{terminal $V$-set}.
  We refer to the initial object $(\emptyset \rightarrow V) \in \FF_V$ as $\emptyset_V$ and call it the \emph{empty $V$-set}.
\end{notation}

Evaluation is functorial in the $\cT$-category;
indeed, a $\cT$-functor $F\colon \cC \rightarrow \cD$ is just a collection of functors
\[
  F_V\colon \cC_V \rightarrow \cD_V
\]
intertwining with restriction.
We refer to a $\cT$-functor whose $V$-values are fully faithful as a \emph{fully faithful $\cT$-functor};
if $\iota\cln \cC \rightarrow \cD$ is a fully faithful $\cT$-functor, we say that $\cC$ is a \emph{full $\cT$-subcategory of $\cD$}.
A full $\cT$-subcategory of $\cD$ is uniquely determined by an equivalence-closed and restriction-stable class of objects in $\cD$; 
see \cite{Shah2} for details.

\begin{definition}[{c.f. \cite[\S~2.2.3]{HHR}}]
  Fix $\cC$ a $\cT$-category and $U \rightarrow V$ a map in $\cT$.
  The \emph{induced $V$-object functor} $\Ind_{U}^V\cln \cC_U \rightarrow \cC_V$, if it exists, is the left adjoint to $\Res_U^V$.
  Furthermore, given a $V$-set $S$ and a tuple $(T_{U})_{U \in \Orb(S)}$, the \emph{$S$-indexed coproduct of $T_U$} is, if it exists, the element
  \[
    \coprod_U^S T_U \deq \coprod_{U \in \Orb(S)} \Ind_{U}^{V} T_U \in \cC_V.
  \]
  Dually, the \emph{coinduced $V$-set} $\CoInd_U^V\cln \cC_U \rightarrow \cC_V$ is the right adjoint to $\Res_U^V$ (if it exists), and the $S$-indexed product is (if it exists), the element
  \[
    \prod_U^S T_U \deq \prod_{U \in \Orb(S)} \CoInd_{U}^{V} T_U \in \cC_V.\qedhere
  \]
\end{definition}

\begin{example}
  Given a subgroup inclusion $K \subset H \subset G$, the restriction functor $\FF_H \rightarrow \FF_K$ in $\uFF_G$ is the usual functor called restriction.
  Its left and adjoints $\FF_K \rightarrow \FF_H$ is \emph{$G$-set induction and coinduction}, matching the \emph{indexed products and coproducts} of \cite[\S~2.2.3]{HHR}.
\end{example}

Given $S \in \FF_V$, we write
\[
  \cC_S \deq \prod_{U \in \Orb(S)} \cC_V;
\]
we say that $\cC$ \emph{strongly admits finite indexed coproducts} if $\coprod_{U}^S T_U$ always exists, in which case it is a functor
\[
  \coprod^S_U(-)\cln \cC_S \rightarrow \cC_V.
\]

\begin{remark}
  Given $S \in \FF_V$, we may define the functor $\Delta^S\cln \cC_V \rightarrow \cC_S$ so that for each $U \in \Orb(S)$, the associated functor $\cC_V \rightarrow \cC_U$ is restriction along the composite map $U \rightarrow S \rightarrow V$.
  This is the rightwards horizontal composition in the following:
  \[\begin{tikzcd}
      { \cC_{V}} && {\prod\limits_{U \in \Orb(S)} \cC_V} && {\prod\limits_{U \in \Orb(S)} \cC_U}
	\arrow[""{name=0, anchor=center, inner sep=0}, "\Delta"{description}, from=1-1, to=1-3]
	\arrow[""{name=1, anchor=center, inner sep=0}, "{\coprod_{U \in \Orb(S)}(-)}"', curve={height=20pt}, from=1-3, to=1-1]
  \arrow[""{name=2, anchor=center, inner sep=0}, "{\prod_{U \in \Orb(S)}(-)}", curve={height=-10pt},  from=1-3, to=1-1, end anchor={[yshift=-.4em]south east}]
  \arrow[""{name=3, anchor=center, inner sep=0}, "{\prn{\Res_U^V}}"{description}, from=1-3, to=1-5]
  \arrow[""{name=4, anchor=center, inner sep=0}, "{\prn{\Ind_U^V}}"', curve={height=20pt}, from=1-5, to=1-3]
  \arrow[""{name=5, anchor=center, inner sep=0}, "{\prn{\CoInd_U^V}}", curve={height=-20pt}, from=1-5, to=1-3]
	\arrow["\dashv"{anchor=center, rotate=-90}, draw=none, from=0, to=2]
	\arrow["\dashv"{anchor=center, rotate=-90}, draw=none, from=1, to=0]
	\arrow["\dashv"{anchor=center, rotate=-90}, draw=none, from=3, to=5]
	\arrow["\dashv"{anchor=center, rotate=-90}, draw=none, from=4, to=3]
\end{tikzcd}\]
  In particular, by composing adjoints, we acquire adjunctions $\coprod^S_U(-) \dashv \Delta^S \dashv \prod^S_U(-)$, i.e. we've constructed indexed (co)limits in the sense of \cite{Shah}. 
\end{remark}

It follows from construction that $\uFF_{\cT}$ strongly admits finite indexed coproducts;
indeed, $\FF_{\cT, /V} = \FF_{\cT_{/V}}$ admits finite coproducts by definition, and $\cT$-set induction along a map $f\colon V \rightarrow W$ is implemented by the postcomposition $f_!\colon \FF_{\cT,/V} \rightarrow \FF_{\cT,/W}$, as it participates in the categorical push-pull adjunction $f_! \dashv f^*$.
In this language, we may restate orbitality as a characteristic of induction.
\begin{lemma}\label{Ind atomicness}
  Given $\cT$ an orbital category, the following are equivalent.
  \begin{enumerate}[label={(\alph*)}]
    \item $\cT$ is atomic.
    \item \label[condition]{Ind condition} For all maps $f\colon U \rightarrow V$ and $X \in \FF_{U}$ $\Ind_U^V X \simeq *_V$ implies $f$ is an isomorphism and $X \simeq *_V$.
  \end{enumerate}
\end{lemma}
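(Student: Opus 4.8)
The plan is to first unwind both sides of \cref{Ind condition} into statements purely about composable morphisms in $\cT$, and then to observe that, in that form, both implications collapse to one-line manipulations with sections.

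For the unwinding: since $\uFF_{\cT}$ strongly admits finite indexed coproducts with $\Ind_U^V = f_!$ (postcomposition with $f$), for $X = (A \xrightarrow{p} U) \in \FF_U$ we have $\Ind_U^V X = (A \xrightarrow{f \circ p} V)$, with the \emph{same} underlying object $A \in \FF_{\cT}$; hence $\Orb(\Ind_U^V X) = \Orb(X)$. As $*_V = (V \xrightarrow{\mathrm{id}} V)$ has a single orbit, $\Ind_U^V X \simeq *_V$ forces $X$ to be a single orbit $W \in \cT$ with structure map $p\colon W \to U$, and the equivalence $\Ind_U^V X \simeq *_V$ in $\FF_{\cT,/V}$ then says exactly that $f \circ p\colon W \to V$ is an isomorphism in $\FF_{\cT}$, equivalently (since the Yoneda embedding $\cT \hookrightarrow \FF_{\cT}$ is fully faithful, hence conservative) an isomorphism in $\cT$. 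Dually, $X \simeq *_U$ iff $p\colon W \to U$ is an isomorphism. So \cref{Ind condition} is equivalent to the assertion: \emph{for every composable pair $W \xrightarrow{p} U \xrightarrow{f} V$ in $\cT$ with $f \circ p$ an isomorphism, both $f$ and $p$ are isomorphisms.}

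Granting this reformulation, I would prove $(a) \Rightarrow (b)$ as follows: if $g \deq f \circ p$ is an isomorphism, then $p \circ g^{-1}\colon V \to U$ is a section of $f$, so the section clause of \cref{Atomic orbital definition} makes $f$ an isomorphism, whence $p = f^{-1} \circ g$ is one too. For $(b) \Rightarrow (a)$: given $f\colon U \to V$ with a section $s\colon V \to U$, apply the reformulated condition to $V \xrightarrow{s} U \xrightarrow{f} V$, whose composite $f \circ s = \mathrm{id}_V$ is an isomorphism; this forces $f$ to be an isomorphism, which is precisely atomicity.

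The only place demanding care is the translation step — correctly identifying $\Ind_U^V$ with postcomposition, checking that induction preserves the orbit count, and noticing that the hypothesis $\Ind_U^V X \simeq *_V$ can hold only when $X$ is a single orbit mapped isomorphically onto $V$. After that, the logical content is just the trivial manipulation of sections above, so I do not expect a substantive obstacle.
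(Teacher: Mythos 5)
Your proof is correct and follows the same route as the paper's: identify $\Ind_U^V$ with postcomposition so that it preserves the orbit count, observe that $\Ind_U^V X \simeq *_V$ then forces $X$ to be a single orbit with composite structure map an isomorphism, and conclude that \cref{Ind condition} is precisely the two-out-of-three property for isomorphisms in $\cT$, which is atomicity. The paper compresses the final equivalence to the phrase ``i.e.\ $\cT$ is atomic''; you make the section manipulations in both directions explicit, which is the only place the two differ.
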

\begin{proof}
  First note that, as a left adjoint, $\Ind_V^U X$ is compatible with coproducts.
  In particular, $\Ind_U^V X$ is an orbit if and only if $X$ is an orbit.
  Thus \cref{Ind condition} is equivalent to the condition that a composition $W \xrightarrow{g} U \xrightarrow{f} V$ in $\cT$ is an isomorphism if and only if $f$ and $g$ are isomorphisms, i.e. $\cT$ is atomic.
\end{proof}

Now, we use indexed coproducts to make the following central definition.
\begin{definition}
  Given a full $\cT$-subcategory $\cC \subset \uFF_{\cT}$, we say that a full $\cT$-subcategory $\cE \subset \cD$ is \emph{closed under $\cC$-indexed coproducts} if, for all $S \in \cC_V$ and $(T_U) \in \cE_S$, the object $\dcoprod_U^S T_U$ exists and is in $\cE_V$.
\end{definition}

\subsection{Weak indexing systems and weak indexing categories}
\subsubsection{Weak indexing systems}

\begin{definition}
  We say that a full $\cT$-subcategory $\cC \subset \uFF_{\cT}$ is \emph{closed under self-indexed coproducts} if it is closed under $\cC$-indexed coproducts.
\end{definition}

\begin{definition}\label{Windex definition}
  Given $\cT$ an orbital category, a \emph{$\cT$-weak indexing system} is a full $\cT$-subcategory $\uFF_I \subset \uFF_{\cT}$ with $V$-values $\FF_{I,V} \deq \prn{\uFF_{I}}_V$ satisfying the following conditions:
    \begin{enumerate}[label={(IS-\alph*)}]
        \item \label[condition]{Contractible V-sets condition} whenever $\FF_{I,V} \neq \emptyset$, we have $*_V \in \FF_{I,V}$; and
        \item \label[condition]{Self-indexed coproducts condition} $\uFF_{I}$ is closed under self-indexed coproducts.         
    \end{enumerate}
    We denote by $\wIndSys_{\cT} \subset \mathrm{FullSub}_{\cT}(\uFF_{\cT})$ the embedded sub-poset spanned by $\cT$-weak indexing systems.
    Moreover, we say that a $\cT$-weak indexing system \emph{has one color} if it satisfies the following condition:
    \begin{enumerate}[label={(IS-\roman*)}]
      \item for all $V \in \cT$, we have $\FF_{I,V} \neq \emptyset$;
    \end{enumerate}
    these span an embedded subposet $\wIndSys_{\cT}^{\oc} \subset \wIndSys_{\cT}$.
    We say that a $\cT$-weak indexing system is \emph{almost essentially unital} if it satisfies the following condition:
    \begin{enumerate}[label={(IS-\roman*)}]\setcounter{enumi}{1}
      \item \label[condition]{Self indexed colimits} for all noncontractible $V$-sets $S \sqcup S' \in \FF_{I,V}$, we have $S,S' \in \FF_{I,V}$. 
      \end{enumerate}
    An almost essentially unital $\cT$-weak indexing system is \emph{almost unital} if it has one color.
    These are denoted $\wIndSys^{a\uni}_{\cT} \subset \wIndSys^{aE\uni}_{\cT} \subset \wIndSys_{\cT}$.
    We say that a $\cT$-weak indexing system is \emph{unital} if it has one color and satisfies the following condition:
    \begin{enumerate}[label={(IS-\roman*)}]\setcounter{enumi}{2}
      \item \label[condition]{E-unital condition} for all $V$-sets $S \sqcup S' \in \FF_{I,V}$, we have $S,S' \in \FF_{I,V}$. 
    \end{enumerate}
    We write $\wIndSys^{\uni}_{\cT} \subset \wIndSys_{\cT}$.
    Lastly, a $\cT$-weak indexing system is an \emph{indexing system} if it satisfies the following condition:
    \begin{enumerate}[label={(IS-\roman*)}]\setcounter{enumi}{3}
       \item \label[condition]{Indexing system condition} the subcategory $\FF_{I,V} \subset \FF_V$ is closed under finite coproducts for all $V \in \cT$. 
    \end{enumerate}
    We denote the resulting poset by $\IndSys_{\cT} \subset \wIndSys_{\cT}^{\uni}$.
\end{definition}

\begin{remark}
  The indexing systems of \cite{Blumberg-op} are seen to be equivalent to ours when $\cT = \cO_G$ by unwinding definitions.
  The weak indexing systems of \cite{Pereira,Bonventre} are equivalent to our \emph{unital} weak indexing systems when $\cT = \cO_G$ by \cite[Rem~9.7]{Pereira} and \cite[Rem~4.60]{Bonventre}.
\end{remark}

In practice, we will find that non-almost-essentially-unital weak indexing systems are not well behaved, and questions involving almost essentiall unital weak indexing systems are usually quickly reducible to the unital case;
the reader is encouraged to focus primarily on unital weak indexing systems for this reason.

\subsubsection{Some examples} We begin with some universal examples.
\begin{example}
  The terminal $\cT$-weak indexing system is $\uFF_{\cT}$;
  the initial $\cT$-weak indexing system is the empty $\cT$-subcategory;
  the initial one-color $\cT$-weak indexing system $\uFF_{\cT}^{\triv}$ is defined by
  \[
    \FF_{\cT,V}^{\triv} \deq \cbr{*_V}.\qedhere
  \]
\end{example}

To understand the conditions of \cref{Windex definition}, we introduce some invariants.
Write
\[
  n \cdot S \deq \overbrace{S \sqcup \cdots \sqcup S}^{n\text{-fold}}.
\]
First, note that this is compatible with restriction.
\begin{lemma}\label{Restriction lemma}
  $\Res_U^V\colon \FF_V \rightarrow \FF_U$ preserves coproducts.
\end{lemma}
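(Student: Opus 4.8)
The plan is to identify $\Res_U^V$ with a restriction-of-presheaves functor and then invoke that colimits of presheaves are computed objectwise, so that no computation is required.

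Recall that $\FF_V = \FF_{\cT,/V} \simeq \FF_{\cT_{/V}}$ is by construction the full subcategory of $\Fun(\cT_{/V}^{\op},\Set)$ spanned by finite coproducts of representables, and that for a map $f\colon U \to V$ in $\cT$ the restriction functor $\Res_U^V$ is the precomposition functor $h^{*}\colon \Fun(\cT_{/V}^{\op},\Set) \to \Fun(\cT_{/U}^{\op},\Set)$ induced by $h\colon \cT_{/U} \to \cT_{/V}$, postcomposition with $f$ (equivalently, $\Res_U^V$ is pullback along $f$ in $\FF_{\cT}$). I would first record that finite coproducts in $\FF_V$ and in $\FF_U$ agree with those in the ambient presheaf categories — a finite coproduct of finite coproducts of representables is again one, so these full subcategories are closed under finite coproducts, with $\emptyset_V,\emptyset_U$ the constant presheaves on $\emptyset$.

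Granting this, the statement is immediate: a precomposition functor between presheaf categories preserves all colimits, since colimits of presheaves are computed objectwise, so $h^{*}$, and hence $\Res_U^V$, preserves finite coproducts. Unwinding notation, this gives $\Res_U^V(S \sqcup S') \simeq \Res_U^V S \sqcup \Res_U^V S'$, and by induction $\Res_U^V(n\cdot S) \simeq n\cdot \Res_U^V S$, which is the form used later.

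There is essentially no obstacle here; the only point needing a sentence of care is checking that finite coproducts — including the empty one — in $\FF_V$ coincide with presheaf coproducts, so that preservation of presheaf colimits upgrades to preservation of the coproducts of $\FF_V$. If one prefers to stay inside $\FF_{\cT}$, the same conclusion follows from extensivity: in the extensive category $\FF_{\cT}$ (which has pullbacks, by orbitality) the squares obtained by pulling the coproduct decomposition $S \to S \sqcup S' \leftarrow S'$ back along $f$ are themselves pullbacks, so their top row is again a coproduct decomposition, yielding $f^{*}(S \sqcup S') \simeq f^{*}S \sqcup f^{*}S'$.
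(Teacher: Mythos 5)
Your proposal is correct, and you have in fact offered two distinct arguments. The second one — the "extensivity" argument at the end, pulling the summand inclusions $S \hookrightarrow S \sqcup S' \hookleftarrow S'$ back along $f$ and observing that extensivity of $\FF_{\cT}$ makes the resulting top row a coproduct decomposition — is precisely the paper's proof, which phrases the same fact via the disjunctiveness equivalence $\FF_{\cT,/S_1\sqcup S_2} \simeq \prod_i \FF_{\cT,/S_i}$ from \cite[Lem~2.14]{Glasman}. If you only needed one argument, this one would suffice and would be the closest match.

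Your primary argument, via the presheaf model, is a genuinely different route. It is clean at the level of the punchline — precomposition functors between presheaf categories preserve all colimits because colimits of presheaves are objectwise — but the work has been relocated into the unproved identification of $\Res_U^V$ with the restriction of $h^*\colon \Fun(\cT_{/V}^{\op},\Set) \to \Fun(\cT_{/U}^{\op},\Set)$ along the inclusions $\FF_{\cT_{/V}} \hookrightarrow \Fun(\cT_{/V}^{\op},\Set)$. In the paper $\Res_U^V$ is \emph{defined} as pullback along $f$ in $\FF_{\cT}$ (cf. \cref{Restriction stable condition observation}), so one must actually check: (i) that $h^*$ carries finite coproducts of representables on $\cT_{/V}$ to finite coproducts of representables on $\cT_{/U}$ — which already uses orbitality, since $h^*$ of a representable is identified with the presheaf $(W'\to U) \mapsto \Hom_{\FF_{\cT,/U}}(W', W\times_V U)$ and one needs the pullback $W \times_V U$ to exist and decompose into orbits — and (ii) that under the Yoneda/universal-property comparison this agrees with $f^*$ on all of $\FF_V$, not just on orbits, which is a small further check. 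These verifications are not hard, but they are essentially where the disjunctiveness of $\FF_{\cT}$ re-enters, so the apparent gain in generality is more cosmetic than real. The point you flag as "the only one needing care" (closure of $\FF_V$ under finite coproducts including the empty one) is real but is the easier of the two; the $\Res_U^V \simeq h^*$ identification deserves at least a citation or a one-line Yoneda check. Since your extensivity argument is complete and equivalent to the paper's, I would not call this a gap, but if you keep the presheaf version as the lead, spell out the identification.
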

\begin{proof}
  To see this, we use disjunctiveness of $\FF_{\cT}$ (c.f. \cite[Lem~2.14]{Glasman};
  indeed, we have a correspondence
  \[\begin{tikzcd}[ampersand replacement=\&]
	{R_1 \sqcup R_2} \&\&\&\& {R_i} \\
	\& {T_1 \sqcup T_2} \& {S_1 \sqcup S_2} \&\&\& {T_i} \& {S_i} \& {i \in \cbr{1,2}} \\
	\& U \& V \&\&\& U \& V
	\arrow["{g_1 \sqcup g_2}"{description}, dashed, from=1-1, to=2-2]
	\arrow[curve={height=-6pt}, from=1-1, to=2-3]
	\arrow[curve={height=6pt}, from=1-1, to=3-2]
	\arrow["{g_i}"{description}, dashed, from=1-5, to=2-6]
	\arrow[curve={height=-6pt}, from=1-5, to=2-7]
	\arrow[""{name=0, anchor=center, inner sep=0}, curve={height=6pt}, from=1-5, to=3-6]
	\arrow["{f_1 \sqcup f_2}"', from=2-2, to=2-3]
	\arrow[from=2-2, to=3-2]
	\arrow[from=2-3, to=3-3]
	\arrow["{f_i}"', from=2-6, to=2-7]
	\arrow[from=2-6, to=3-6]
	\arrow[from=2-7, to=3-7]
	\arrow["f"', from=3-2, to=3-3]
	\arrow["f"', from=3-6, to=3-7]
  \arrow[shorten <=19pt, shorten >=26pt, squiggly={pre length=19pt, post length=26pt}, tail reversed, from=2-3, to=0]
  \arrow["\lrcorner" very near start, phantom, from=2-2, to=3-3]
\end{tikzcd}\]
  corresponding with the equivalence $\FF_{\cT, /S_1 \sqcup S_2} \simeq \prod_{i=1}^2 \FF_{\cT, /S_i}$.
  In particular, the universal property for $T _1 \sqcup T_2 \simeq f^* S_1 \sqcup S_2$ witnesses each $T_i$ as a pullback $T_i \simeq f^* S_i$, i.e. $f^* \prn{S_1 \sqcup S_2} \simeq f^* S_1 \sqcup f^* S_2$.
\end{proof}
This allows us to define various family-valued invariants of weak indexing systems.
\begin{lemma}\label{The families}
  Given $\uFF_I$ a $\cT$-weak indexing system, the following are $\cT$-families:
  \begin{align*}
    c(I) &\deq \cbr{V \in \cT \mid *_V \in \FF_{I,V}}\\ 
    \upsilon(I) &\deq \cbr{V \in \cT \mid \emptyset_V \in \FF_{I,V}}\\ 
    \nabla(I) &\deq \cbr{V \in \cT \mid 2 \cdot *_V \in \FF_{I,V}}
  \end{align*}
\end{lemma}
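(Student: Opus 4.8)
The plan is to reduce all three claims to one observation: the restriction functors of $\uFF_{\cT}$ carry the distinguished $V$-sets $*_V$, $\emptyset_V$, and $2 \cdot *_V$ to $*_U$, $\emptyset_U$, and $2 \cdot *_U$. Recall first that a full $\cT$-subcategory $\uFF_I \subset \uFF_{\cT}$ amounts to a choice of equivalence-closed subclasses $\FF_{I,V} \subset \FF_V$ that are \emph{restriction-stable}, i.e. $\Res_U^V \FF_{I,V} \subseteq \FF_{I,U}$ for every map $f \colon U \to V$ in $\cT$ (this is the characterization of full $\cT$-subcategories recalled above, following \cite{Shah2}); note that none of the weak indexing system axioms of \cref{Windex definition} enter. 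Recall also that a full subcategory $\cF \subset \cT$ is a $\cT$-family precisely when its object class is closed under passing to the source of a morphism. So it is enough to establish the stated behavior of $\Res_U^V$ on the three distinguished objects.

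For that behavior: $\Res_U^V \colon \FF_V \to \FF_U$ is pullback along $f$ in $\FF_{\cT}$, hence a right adjoint (to $\Ind_U^V = f_!$); it therefore preserves the terminal object, giving $\Res_U^V *_V \simeq *_U$. Pullback along $f$ also preserves the initial object, since $\emptyset \times_V U \simeq \emptyset$, so $\Res_U^V \emptyset_V \simeq \emptyset_U$ --- this is also the nullary instance of \cref{Restriction lemma}. Finally, by \cref{Restriction lemma} itself, $\Res_U^V(2 \cdot *_V) \simeq 2 \cdot \Res_U^V *_V \simeq 2 \cdot *_U$.

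To finish, fix one of the three classes --- say $\nabla(I)$; the arguments for $c(I)$ and $\upsilon(I)$ are verbatim with $2 \cdot *_{(-)}$ replaced by $*_{(-)}$ or $\emptyset_{(-)}$. Given a morphism $f \colon V \to W$ in $\cT$ with $W \in \nabla(I)$, we have $2 \cdot *_W \in \FF_{I,W}$, so by restriction-stability of $\uFF_I$ and the previous paragraph $2 \cdot *_V \simeq \Res_V^W(2 \cdot *_W) \in \FF_{I,V}$; since $\FF_{I,V}$ is equivalence-closed this gives $V \in \nabla(I)$. Hence $c(I)$, $\upsilon(I)$, and $\nabla(I)$ are each $\cT$-families. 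There is no genuine obstacle here: the only non-bookkeeping input is the compatibility of restriction with $*$, $\emptyset$, and the binary fold, which is exactly \cref{Restriction lemma} together with the fact that restriction is a pullback functor.
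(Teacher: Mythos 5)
Your proof is correct and follows essentially the same route as the paper's: the paper's one-line argument is that $\Res_U^V(n\cdot *_V) = n\cdot *_U$ by \cref{Restriction lemma}, which for $n=0,1,2$ is exactly the observation you make about $\emptyset_V$, $*_V$, and $2\cdot *_V$, and the rest is the restriction-stability of $\uFF_I$ as a full $\cT$-subcategory. You merely spell out a step the paper leaves implicit, namely that $\Res_U^V *_V \simeq *_U$ because restriction is a right adjoint.
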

\begin{proof}
  This follows by noting that $\Res_{U}^V n \cdot *_V = n \cdot *_U$ by \cref{Restriction lemma}.
\end{proof}

We call $c(I)$ the \emph{color family of $I$}, $\upsilon(I)$ the \emph{unit family}, and $\nabla(I)$ the \emph{fold map family}.
Note that $c(I) \leq \upsilon(I) \cap \nabla(I)$;
that is, \cref{Contractible V-sets condition} implies that whenever $\uFF_I$ prescribes a unit or a fold map over $V$, it possesses  a color over $V$. 
We will use the following lemma ubiquitously.
\begin{lemma}\label{Various families lemma}
  Let $\uFF_I$ be a $\cT$-weak indexing system.
  \begin{enumerate}
    \item $\uFF_I$ has one color if and only if $c(I) = \cT$.
    \item $\uFF_I$ satisfies \cref{E-unital condition} if and only if $\upsilon(I) = c(I)$.
    \item $\uFF_I$ is unital if and only if $\upsilon(I) = \cT$.
    \item $\uFF_I$ is an indexing system if and only if $\upsilon(I) \cap \nabla(I) = \cT$.
  \end{enumerate}
\end{lemma}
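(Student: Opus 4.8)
The plan is to deduce all four equivalences from two elementary consequences of \cref{Contractible V-sets condition}. First, for every $V \in \cT$ we have $\FF_{I,V} \neq \emptyset$ if and only if $*_V \in \FF_{I,V}$, so $c(I)$ is precisely the family of objects over which $\uFF_I$ has nonempty value; in particular (1) is immediate, since ``$\uFF_I$ has one color'' means exactly $\FF_{I,V} \neq \emptyset$ for all $V$. Second, membership of $\emptyset_V$ (resp.\ $2 \cdot *_V$) in $\FF_{I,V}$ forces $\FF_{I,V}$ to be nonempty, so the same fact yields $\upsilon(I) \subseteq c(I)$ and $\nabla(I) \subseteq c(I)$, as noted after \cref{The families}.

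For (2), since $\upsilon(I) \subseteq c(I)$ always, it suffices to show \cref{E-unital condition} is equivalent to the reverse containment $c(I) \subseteq \upsilon(I)$. The forward direction is one line: applying \cref{E-unital condition} to $*_V \simeq *_V \sqcup \emptyset_V$ shows $V \in c(I)$ implies $\emptyset_V \in \FF_{I,V}$, i.e.\ $V \in \upsilon(I)$. For the converse I will realize each summand of an $\FF_I$-set as a self-indexed coproduct. Given $T \deq S \sqcup S' \in \FF_{I,V}$, write $T \simeq \coprod_{U \in \Orb(T)} U$ and set $R_U \deq *_U$ for $U \in \Orb(S)$ and $R_U \deq \emptyset_U$ for $U \in \Orb(S')$; since $\Ind_U^V *_U \simeq U$ and $\Ind_U^V \emptyset_U \simeq \emptyset_V$, we get $\coprod_U^T R_U \simeq S$. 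To apply \cref{Self-indexed coproducts condition} I must check $(R_U)_U \in \prod_U \FF_{I,U}$: each orbit $U$ of $T$ has $\Res_U^V T \in \FF_{I,U}$ (full $\cT$-subcategories are restriction-stable) and $\Res_U^V T$ is nonempty, hence $*_U \in \FF_{I,U}$, i.e.\ $U \in c(I) = \upsilon(I)$, so also $\emptyset_U \in \FF_{I,U}$. Then \cref{Self-indexed coproducts condition} gives $S \in \FF_{I,V}$, and symmetrically $S' \in \FF_{I,V}$; this also covers the degenerate cases where $S$, $S'$, or $T$ is empty.

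Part (3) is then formal: $\uFF_I$ is unital iff it has one color and satisfies \cref{E-unital condition}, which by (1) and (2) means $c(I) = \cT$ and $\upsilon(I) = c(I)$; since $\upsilon(I) \subseteq c(I) \subseteq \cT$, this is equivalent to $\upsilon(I) = \cT$. For (4), observe that $\upsilon(I) \cap \nabla(I) \subseteq c(I) \subseteq \cT$ while $\upsilon(I)$ and $\nabla(I)$ each contain $\upsilon(I) \cap \nabla(I)$, so $\upsilon(I) \cap \nabla(I) = \cT$ iff $\upsilon(I) = \nabla(I) = \cT$. If $\uFF_I$ is an indexing system, then \cref{Indexing system condition} (closure under the empty coproduct) gives $\emptyset_V \in \FF_{I,V}$ for all $V$, so $\upsilon(I) = \cT$, and then $*_V \in \FF_{I,V}$ forces $2 \cdot *_V = *_V \sqcup *_V \in \FF_{I,V}$, so $\nabla(I) = \cT$. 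Conversely, if $\upsilon(I) = \nabla(I) = \cT$, then $\emptyset_V \in \FF_{I,V}$ handles the empty coproduct, and for $X, Y \in \FF_{I,V}$ we take $2 \cdot *_V \in \FF_{I,V}$ as indexing set with the tuple $(X, Y)$; since $\Ind_V^V$ is the identity, \cref{Self-indexed coproducts condition} gives $X \sqcup Y \in \FF_{I,V}$, and an induction shows $\FF_{I,V}$ is closed under all finite coproducts, i.e.\ \cref{Indexing system condition} holds.

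The one genuinely nontrivial step is the backward direction of (2): expressing $S$ as a self-indexed coproduct indexed by $T = S \sqcup S'$ and verifying the orbitwise memberships $R_U \in \FF_{I,U}$ — this is exactly where the hypothesis $\upsilon(I) = c(I)$ is used, via \cref{Contractible V-sets condition} applied over each orbit. Everything else is bookkeeping with \cref{Contractible V-sets condition}, restriction-stability of full $\cT$-subcategories, and the definition of self-indexed coproducts.
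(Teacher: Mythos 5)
Your proof is correct and follows essentially the same approach as the paper's: part (1) by unwinding definitions, part (2) forward via $*_V \sqcup \emptyset_V$ and backward via the indicator-tuple indexed coproduct over $S \sqcup S'$, part (3) by combining (1) and (2), and part (4) by iterating binary coproducts of $*_V$. The only difference is that you spell out the orbitwise membership check $R_U \in \FF_{I,U}$ (via restriction-stability, \cref{Contractible V-sets condition}, and the hypothesis $\upsilon(I)=c(I)$) that the paper's proof of (2) leaves implicit, which is a worthwhile detail to make explicit.
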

\begin{proof}
  (1) follows immediately by unwinding definitions.
  For (2), if $\uFF_I$ satisfies \cref{E-unital condition} and $V \in c(I)$, then choosing $\emptyset_V \sqcup *_V \in \FF_{I,V}$ yields $\emptyset_V \in \FF_{I,V}$, i.e. $V \in \upsilon(I)$.
  Conversely, if $\upsilon(I) = c(I)$ and $S \sqcup  S' \in \FF_{I,V}$, then
  \[
    S = \coprod_{U}^{S \sqcup S'} \chi_{S}(U), \hspace{50pt} \text{ where } \chi_S(U) \deq \begin{cases}
      *_U & U \in S\\
      \emptyset_U & U \not \in S
    \end{cases}
  \]
  so $S \in \uFF_I$, i.e. $\uFF_I$ satisfies \cref{E-unital condition}.
  (3) follows by combining (1) and (2).

  For (4), note that $\uFF_I$ an indexing system implies that $\upsilon(I) \cap \nabla(I) = \cT$ by taking nullary and binary coproducts of $*_V \in \FF_{I,V}$.
  Conversely, if $\upsilon(I) \cap \nabla(I) = \cT$, then by iterating binary coproducts $(n-1)$-times, we find that $n \cdot *_V = (*_V \sqcup (n-1) \cdot *_V) \in \FF_{I,V}$ for all $V \in \cT$ and $n \in \NN$.
  Applying \cref{Self-indexed coproducts condition}, we find that $\FF_{I,V}$ is closed under $n$-ary coproducts for all $n \in \NN$, i.e. $\uFF_I$ is an indexing system.
\end{proof}

In fact, the proof of (2) shows more;
we may use the same argument to show the following.
\begin{lemma}\label{aE-family}
  $\uFF_I$ is almost essentially unital if and only if whenever $S \in \FF_{I,V}$ is non-terminal, $V \in \upsilon(I)$.
\end{lemma}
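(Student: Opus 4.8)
The plan is to prove both implications directly, closely following the argument for \cref{Various families lemma}(2); recall that a $V$-set is non-terminal precisely when it is noncontractible, i.e. not equivalent to $*_V$. For the forward implication, suppose $\uFF_I$ is almost essentially unital and let $S \in \FF_{I,V}$ be non-terminal. Since $\uFF_I$ is a full, hence equivalence-closed, $\cT$-subcategory, the equivalence $S \simeq S \sqcup \emptyset_V$ exhibits $S \sqcup \emptyset_V$ as a noncontractible element of $\FF_{I,V}$, so \cref{Self indexed colimits} --- reading $\emptyset_V$ as a legitimate second summand --- forces both of $S$ and $\emptyset_V$ into $\FF_{I,V}$; in particular $\emptyset_V \in \FF_{I,V}$, i.e. $V \in \upsilon(I)$.

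For the converse, assume that every non-terminal $S \in \FF_{I,V}$ forces $V \in \upsilon(I)$, and let $S \sqcup S' \in \FF_{I,V}$ be noncontractible. Applying the hypothesis to the non-terminal set $S \sqcup S'$ gives $\emptyset_V \in \FF_{I,V}$, i.e. $V \in \upsilon(I)$. We will recover $S$ (and symmetrically $S'$) as the self-indexed coproduct $S = \coprod_U^{S \sqcup S'} \chi_S(U)$, where $\chi_S(U) \deq *_U$ for $U \in \Orb(S)$ and $\chi_S(U) \deq \emptyset_U$ for $U \in \Orb(S')$, exactly as in \cref{Various families lemma}(2); by \cref{Self-indexed coproducts condition} it then suffices to verify $\chi_S(U) \in \FF_{I,U}$ for every $U \in \Orb(S \sqcup S')$. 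For this: restriction-stability of $\uFF_I$ gives $\Res_U^V(S \sqcup S') \in \FF_{I,U}$, so $\FF_{I,U} \neq \emptyset$ and hence $*_U \in \FF_{I,U}$ by \cref{Contractible V-sets condition}; and when $U \in \Orb(S')$, the structure map $U \rightarrow V$, the membership $V \in \upsilon(I)$, and the fact that $\upsilon(I)$ is a $\cT$-family (\cref{The families}) together give $\emptyset_U \in \FF_{I,U}$. Thus $S \in \FF_{I,V}$, and exchanging the roles of $S$ and $S'$ yields $S' \in \FF_{I,V}$.

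The only point requiring care --- and the step I would double-check --- is the bookkeeping that $\coprod_U^{S \sqcup S'} \chi_S(U)$ genuinely computes $S$: the $U \in \Orb(S')$ summands contribute $\Ind_U^V \emptyset_U \simeq \emptyset_V$ since $\Ind_U^V$ is a left adjoint, while the $U \in \Orb(S)$ summands contribute $\Ind_U^V *_U \simeq U$ viewed as a $V$-set, so the indexed coproduct collapses to the canonical orbit decomposition $S \simeq \coprod_{U \in \Orb(S)} U$. Beyond this, the proof uses only \cref{Contractible V-sets condition}, \cref{Self-indexed coproducts condition}, and the fact (\cref{The families}) that $\upsilon(I)$ is a family, so I do not anticipate any real obstacle.
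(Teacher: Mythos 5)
Your proof is correct and is exactly the argument the paper gestures at with the phrase ``we may use the same argument to show the following'' after \cref{Various families lemma}(2). The one necessary adaptation from that proof — in the forward direction you cannot split $*_V = \emptyset_V \sqcup *_V$ since $*_V$ is contractible and \cref{Self indexed colimits} excludes it, so you instead split a non-terminal $S \simeq S \sqcup \emptyset_V$ — is made correctly, and the converse direction's bookkeeping (restriction-stability for $*_U$, the family $\upsilon(I)$ for $\emptyset_U$) matches what the paper's terse proof of \cref{Various families lemma}(2) leaves implicit.
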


We may use $c$ to reduce study of weak indexing systems to the one-color case via the following.
\begin{construction}
  Given $\cF$ a $\cT$-family and $\uFF_I$ an $\cF$-weak indexing system, we may define the $\cT$-weak indexing system $E_{\cF}^{\cT} \uFF_I$ by
  \[
    \prn{E_{\cF}^{\cT} \uFF_I}_V \deq \begin{cases}
      \FF_{I,V} & V \in \cF;\\
      \emptyset & \textrm{otherwise}. 
    \end{cases}
  \]
\end{construction}
  
This yields an embedding of posets $\wIndSys_{\cF} \rightarrow \wIndSys_{\cT}$.
In \cref{Color fiber prop}, we prove the following.
\begin{proposition}
  The preimage of $c\cln \wIndSys_{\cT} \rightarrow \Fam_{\cT}$ over $\cF$ is the image of $E_{\cF}^{\cT}|_{\oc}:\wIndSys_{\cF}^{\oc} \rightarrow \wIndSys_{\cT}$.
\end{proposition}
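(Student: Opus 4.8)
The plan rests on two preliminary observations. First, \cref{Contractible V-sets condition} forces $\FF_{I,V} = \emptyset$ for every $V \notin c(I)$, since a nonempty $\FF_{I,V}$ would contain $*_V$ and hence place $V$ in $c(I)$; thus a $\cT$-weak indexing system is recovered from its restriction to the full subcategory $c(I) \subset \cT$ together with the family $c(I)$ itself. Second, for $V \in \cF$ the projection $\cF_{/V} \to \cT_{/V}$ is an equivalence — every object of $\cT_{/V}$ is a map into $V$, whose source lies in $\cF$ because $\cF$ is a $\cT$-family — so $\FF_{\cF, /V} = \FF_V$, and restriction, induction, and finite coproducts along maps between objects of $\cF$ are computed by the same formulas ($f^*$, $f_!$, coproducts in $\FF_V$) whether one works in $\uFF_{\cF}$ or in $\uFF_{\cT}$. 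Since every orbit of a $V$-set maps to $V$, the inclusion $\uFF_{\cF} \hookrightarrow \uFF_{\cT}$ then preserves self-indexed coproducts of $V$-sets with $V \in \cF$ on the nose.

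\emph{The image is contained in the preimage.} Let $\uFF_J$ be a one-color $\cF$-weak indexing system; I must check that $E_{\cF}^{\cT}\uFF_J$ lies in $\wIndSys_{\cT}$ and has color family $\cF$. The assignment $V \mapsto \FF_{J,V}$ for $V \in \cF$ and $V \mapsto \emptyset$ otherwise is equivalence-closed, and it is restriction-stable because a restriction $\Res_U^V$ with $V \in \cF$ has $U \in \cF$ by the family condition and coincides with the $\uFF_{\cF}$-restriction, under which $\uFF_J$ is stable; hence it defines a full $\cT$-subcategory. Condition \cref{Contractible V-sets condition} holds since the value is empty off $\cF$ and equals $\FF_{J,V} \ni *_V$ on $\cF$; closure under self-indexed coproducts holds because a $V$-set in $\FF_{J,V}$ forces $V \in \cF$, so all the indexing data lies in $\cF$ and the coproduct, computed in $\uFF_{\cF}$, stays in $\uFF_J$. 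Finally $c(E_{\cF}^{\cT}\uFF_J) = \cF$: since $\uFF_J$ has one color, $*_V \in \FF_{J,V}$ for all $V \in \cF$, while the value is empty otherwise.

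\emph{The preimage is contained in the image.} Let $\uFF_I \in \wIndSys_{\cT}$ with $c(I) = \cF$, and define a candidate $\cF$-weak indexing system $\uFF_J$ by $\FF_{J,V} \deq \FF_{I,V}$ for $V \in \cF$. By the compatibility observation, $\uFF_J$ is a full $\cF$-subcategory of $\uFF_{\cF}$ satisfying both conditions of \cref{Windex definition} — each condition refers only to $V$-sets over $V \in \cF$, where the $\uFF_{\cF}$- and $\uFF_{\cT}$-structure agree — and it has one color because $c(I) = \cF$ says exactly that $*_V \in \FF_{I,V}$ for all $V \in \cF$. It remains to identify $E_{\cF}^{\cT}\uFF_J$ with $\uFF_I$: the two agree on $\cF$ by definition, and off $\cF$ both are empty, the former by construction and the latter by the first preliminary observation since $V \notin \cF = c(I)$.

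Both directions are routine bookkeeping once the observations are in place; the point I expect to require the most care is the second observation — that $\uFF_{\cF} \hookrightarrow \uFF_{\cT}$ preserves indexed coproducts, and not merely underlying objects, over objects of $\cF$. This is precisely where the hypothesis that $\cF$ is a $\cT$-family enters: it ensures no orbit ever escapes $\cF$, so the push-pull description of induction may be read off in either ambient category.
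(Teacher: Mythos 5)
Your proof is correct and is exactly the "unwinding definitions" argument that the paper invokes without writing out for \cref{Color fiber prop} (of which the statement in question is the final ``in particular''). The two preliminary observations — that \cref{Contractible V-sets condition} empties $\FF_{I,V}$ off $c(I)$, and that $\cF_{/V} \simeq \cT_{/V}$ for $V \in \cF$ so that indexed structure agrees — are precisely the points one would have to check, and you check them correctly.
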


In particular, we find that $E_{\cF}^{\cT} \uFF_{\cF}$ and $E_{\cF}^{\cT} \uFF_{\cF}^{\triv}$ are terminal and initial among $c^{-1}(\cF)$.

\begin{example}\label{EV example}
  In \cite{EBV} we define the \emph{underlying $\cT$-symmetric sequence} $\cO(-)$ of a $\cT$-operad $\cO^{\otimes}$;
  the space $\cO(S)$ parameterizes the \emph{$S$-ary operations} endowed on an $\cO$-algebra.
  We define the \emph{arity support}
  \[
    \FF_{A\cO,V} \deq \cbr{S \in \FF_V \mid \cO(S) \neq \emptyset};
  \]
  in \cite{EBV}, we show that this possesses a fully faithful right adjoint, making $\cT$-weak indexing systems equivalent to \emph{weak $\cN_\infty$-$\cT$-operads}, i.e. subterminal objects in the $\infty$-category of $\cT$-operads.
  This inspires our naming;
  \cite{EBV} establishes that $\uFF_{A\triv_{\cT}} = \uFF_{\cT}^{\triv}$ and $\uFF_{A \Comm_{\cT}} = \uFF_{\cT}$.
 
  We may choose $\cT = \cO_G$, $R$ an orthogonal $G$-representation, and $\EE^{\otimes}_R$ the little $R$-disks $G$-operad (see \cite{Horev}).
  This has arity support
  \[
    \FF^R_H \deq \FF_{A \EE_R, H} = \cbr{S \in \FF_H \mid \exists \text{ H-equivariant embedding } S \hookrightarrow R}.
  \]
  The unital weak indexing system $\uFF_R$ is not always an indexing system;
  for instance, choosing $G = C_p$ and $\lambda$ a 2-dimensional irreducible orthogonal $C_p$-representation, we see by unwinding definitions that 
  \[
    \FF^{\lambda}_e = \FF_{e}, \hspace{30pt} \FF^{\lambda}_{C_p} = \cbr{\epsilon \cdot *_{C_p} \sqcup n \cdot [C_p/e] \;\; \middle| \;\; n \in \NN, \; \epsilon \in \cbr{0,1}}.
  \]
  A unital $G$-weak indexing system $\uFF_I$ is an indexing system if and only if it contains $2 \cdot *_{G}$ by \cref{Various families lemma} and $R$ admits an embedding of $2 \cdot *_G$ if and only if the inclusion $\cbr{0} \subset R^G$ is proper, i.e. $R$ has positive-dimensional fixed points.
  Thus $\uFF^R$ is an indexing system if and only if $\dim R^G > 0$.
\end{example}

We will see in \cref{Joins subsection} that the construction $R \mapsto \uFF^R$ is monotone and compatible with direct sums.

\begin{example}
  The intial unital $\cT$-weak indexing system $\uFF_{\cT}^{0}$ is defined by
  \[
    \FF^0_{\cT,V} \deq \cbr{\emptyset_V, *_V};
  \]
  the initial $\cT$-indexing system $\uFF_{\cT}^{\infty}$ is defined by 
  \[
    \FF_{V}^{\infty} \deq \cbr{n \cdot *_V \mid n \in \NN}.\qedhere
  \]
\end{example}

\begin{example}\label{Nonequivariant example}
  Let $\cT = *$ be the terminal category.
  Then, a full subcategory $\uFF_I \subset \FF$ can be identified with a subset $n(I) \subset \NN$, \cref{Contractible V-sets condition} with the condition that $n(I)$ is empty or contains 1, and \cref{Self-indexed coproducts condition} with the condition that $n(I)$ is closed under $k$-fold sums for all $k \in n(I)$.
  There are many such things;
  for instance, for each $n \in \NN$, the set $\cbr{1} \cup \NN_{\geq n} \subset \NN$ gives a nonunital $*$-weak indexing system.

  Nevertheless, if we assert that $0 \in n(I)$ (i.e. $\uFF_I$ is unital), then $\uFF_I$ is closed under summands, i.e. $n(I) \subset \NN$ is lower-closed in $\NN$.
  Thus we have the following computations for $\cT = *$:
  
  \begin{center}
    \begin{tabular}{r | c}
      condition & poset\\
      \hline
      indexing system 
        & $\begin{tikzcd} \hphantom{\emptyset} & \hphantom{\FF^{\triv}} & {\hphantom{\FF^0}} & \FF \end{tikzcd}$\\
      unital 
        & $\begin{tikzcd} \hphantom{\emptyset} & \hphantom{\FF^{\triv}} & \FF^0 \arrow[r] & \FF \end{tikzcd}$\\
      almost unital 
        & $\begin{tikzcd} \hphantom{\emptyset} & \FF^{\triv} \arrow[r] & \FF^0 \arrow[r] & \FF \end{tikzcd}$\\
      almost essentially unital & $\begin{tikzcd} \emptyset \arrow[r] & \FF^{\triv} \arrow[r] & \FF^0 \arrow[r] & \FF \end{tikzcd}$
    \end{tabular}
  \end{center}
  In (non-equivariant) operads, the last row is realized by the (colored) operads $\emptyset^\otimes \rightarrow \triv^{\otimes} \rightarrow \EE_0^{\otimes} \rightarrow \EE_\infty^{\otimes}$.\footnote{Here, $\emptyset^\otimes$ is the \emph{unique} operad with empty category of colors; it may be modeled as an $\infty$-operad as the inclusion of the zero object $* \hookrightarrow \FF_*$.}
\end{example}

\begin{example}
  We will see in \cref{Borel equivariant corollary} that there is a canonical equivalence $\wIndSys_{BG} \simeq \wIndSys_*$ respecting our various conditions.
  In particular, the computations for \emph{Borel} equivariant weak indexing systems mirror those of \cref{Nonequivariant example}.
\end{example}

\subsubsection{Weak indexing categories}
With a wealth of examples under our belt, we now simplify the combinatorics.

\begin{observation}\label{Defn of I}
  Denote by $\phi_S\colon \Ind_V^{\cT} S \rightarrow V$ the map corresponding with a finite $V$-set $S$ under the equivalence $\FF_{V} \simeq \FF_{\cT, /V}$.
   A full $\cT$-subcategory $\cC \subset \uFF_{\cT}$ is determined by the subgraph 
  \[
    I(\cC) \deq \cbr{\coprod_i \Ind_{V_i}^{\cT} S_i \rightarrow V_i \; \middle| \; \forall i,\;\;\; S \in \cC_{V_i}} \subset \FF_{\cT}.
  \]
  In other words, the construction $I$ yields an embedding of posets
  \[
    I(-):\wIndSys_{\cT} \hookrightarrow \Sub_{\mathrm{graph}}(\FF_{\cT}).\qedhere
  \]
\end{observation}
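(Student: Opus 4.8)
The plan is to exhibit an explicit retraction of $I$ and deduce from it that $I$ preserves and reflects the containment order. First I would recall from \cite{Shah2} that a full $\cT$-subcategory $\cC \subset \uFF_{\cT}$ is precisely the data of a tuple of equivalence-closed, restriction-stable full subcategories $\cC_V \subset \FF_V$, with $\mathrm{FullSub}_{\cT}(\uFF_{\cT})$ ordered by levelwise containment. Under the equivalence $\FF_V \simeq \FF_{\cT,/V}$, the object $S \in \FF_V$ corresponds to $\phi_S\colon \Ind_V^{\cT} S \to V$, and $S \simeq S'$ in $\FF_V$ exactly when $\phi_S$ and $\phi_{S'}$ are isomorphic over $V$; so each level of $\cC$ is equivalently the iso-closed collection of maps $\cbr{\phi_S \mid S \in \cC_V}$.

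Monotonicity of $I$ is immediate: if $\cC \subseteq \cC'$, then each generating morphism $\phi_{S_i}$ with $S_i \in \cC_{V_i}$ of $I(\cC)$ is also a generator for $I(\cC')$, and $I(\cC')$ contains all finite coproducts of its generators, so $I(\cC) \subseteq I(\cC')$. For the converse the essential input is that the objects $V \in \cT$ are exactly the orbits, i.e. the indecomposable objects of $\FF_{\cT}$: by freeness of the finite coproduct completion the decomposition $T \simeq \bigoplus_{W \in \Orb(T)} W$ is unique, so an object of $\FF_{\cT}$ is an orbit if and only if it is not a coproduct of two nonempty objects. Hence a morphism of $\FF_{\cT}$ whose target is an orbit $V$ admits no expression as a coproduct $\coprod_{i \in J} (\Ind_{V_i}^{\cT} S_i \to V_i)$ with $|J| \neq 1$ (the target $\coprod_i V_i$ would be either initial or decomposable, not $V$). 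It follows that for any full $\cT$-subcategory $\cC$, the only elements of $I(\cC)$ with target $V$ are the $\phi_S$ with $S \in \cC_V$; equivalently, the monotone formula $r(G)_V \deq \cbr{S \in \FF_V \mid \phi_S \in G}$ satisfies $r(I(\cC)) = \cC$.

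Combining these observations, $I(\cC) \subseteq I(\cC')$ implies $\cC = r(I(\cC)) \subseteq r(I(\cC')) = \cC'$; with monotonicity of $I$ this gives $\cC \subseteq \cC' \iff I(\cC) \subseteq I(\cC')$. Thus $I$ is an order-embedding $\mathrm{FullSub}_{\cT}(\uFF_{\cT}) \hookrightarrow \Sub_{\mathrm{graph}}(\FF_{\cT})$ — in particular injective, which is the assertion that $\cC$ is determined by $I(\cC)$ — and restricting along the inclusion $\wIndSys_{\cT} \subset \mathrm{FullSub}_{\cT}(\uFF_{\cT})$ yields the claimed embedding of posets $I(-)\colon \wIndSys_{\cT} \hookrightarrow \Sub_{\mathrm{graph}}(\FF_{\cT})$.

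There is no serious obstacle here; this is genuinely an observation. The only points demanding attention are bookkeeping about what ``subgraph'' and ``determined by'' mean at the level of isomorphism classes of morphisms — one should make sure $\Sub_{\mathrm{graph}}(\FF_{\cT})$ is set up so that $I(\cC)$ is closed under isomorphism of morphisms, so that the retraction $r$ is literally well defined — together with the structural input that orbits are indecomposable in $\FF_{\cT}$, which is immediate from the universal property of the finite coproduct completion.
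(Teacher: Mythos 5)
Your proof is correct and matches the paper's implicit strategy: the paper states this as an unproved observation, and the retraction you construct as $r$ is exactly the paper's $\uFF_{(-)}$ from \cref{Defn of FI}, which the paper later confirms satisfies $\uFF_{I(\cC)} = \cC$ in \cref{They are inverse observation}. The one ingredient you correctly make explicit — that orbits are the indecomposables of $\FF_\cT$, so morphisms of $I(\cC)$ into an orbit $V$ are exactly the $\phi_S$ with $S \in \cC_V$ — is the small argument the paper leaves tacit.
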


We will prove the following in \cref{Windexcat vs windex}.

\begin{cooltheorem}\label{Windexcat is windex main theorem}
  Fix $\cT$ an orbital category.
  Then, the image of the map $I(-)$ consists of the subcategories $I \subset \FF_{\cT}$ satisfying the following conditions
    \begin{enumerate}[label={(IC-\alph*)}]
        \item \label[condition]{Restriction stable condition} (restriction-stability) $I$ is stable under arbitrary pullbacks in $\FF_{\cT}$;
        \item \label[condition]{Windex segal condition} (Segal condition) the pair $T \rightarrow S$ and $T' \rightarrow S'$ are in $I$ if and only if $T \sqcup T' \rightarrow S \sqcup S'$ is in $I$; and
    \end{enumerate}
    Moreover, for all numbers $n$, condition (IS-$n$) of \cref{Windex definition} is equivalent to condition (IC-$n$) below:
    \begin{enumerate}[label={(IC-\roman*)}]
      \item \label[condition]{At least one color condition} (one color) $I$ is wide;
            equivalently, $I$ contains $\FF_{\cT}^{\simeq}$.
      \item (almost essentially unital) if $S \sqcup S' \rightarrow T$ is a non-isomorphism in $I$, then $S \rightarrow T$ and $S' \rightarrow T$ are in $I$.
      \item (pre-unital) if $S \sqcup S' \rightarrow T$ is a map in $I$, then $S \rightarrow T$ and $S' \rightarrow T$ are in $I$.
      \item \label[condition]{Fold maps condition} (indexing category) the fold map $\nabla\colon n \cdot V \rightarrow V$ is in $I$ for all $n \in \NN$ and $V \in \cT$.
    \end{enumerate}
\end{cooltheorem}
We refer to the image of $I(-)$ as the \emph{weak indexing categories} $\wIndCat_{\cT} \subset \Sub_{\Cat}(\FF_{\cT})$.
In general, we will refer to a generic weak indexing category as $I$ and its corresponding weak indexing system as $\uFF_I$.
The following observations form the basis for the proof of \cref{Windexcat is windex main theorem}.
\begin{observation}\label{Reduction to maps to orbits observation}
    By a basic inductive argument, \cref{Windex segal condition} is equivalent to the following condition:
    \begin{enumerate}[label={(IC-\alph*')}]\setcounter{enumi}{1}
      \item\label[condition]{Weak windex segal condition} $T \rightarrow S$ is in $I$ if and only if $T_U = T \times_S U \rightarrow U$ is in $I$ for all $U \in \Orb(S)$.
    \end{enumerate}
    In particular, $I$ is uniquely determined by the maps to orbits.
\end{observation}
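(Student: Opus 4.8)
The plan is to prove the equivalence by induction on $|\Orb(S)|$, using disjunctiveness of $\FF_{\cT}$ to pass between the pullbacks of a map $T \rightarrow S$ along inclusions of sub-coproducts of $S$ and the pullbacks to the individual orbits of $S$. The two facts I would isolate at the outset are: $\Orb(S \sqcup S') = \Orb(S) \sqcup \Orb(S')$, and, for any decomposition $S \simeq S_0 \sqcup S_1$ and any $U \in \Orb(S_0)$, the identities $(T \times_S S_0) \times_{S_0} U \simeq T \times_S U$ and $T \simeq (T \times_S S_0) \sqcup (T \times_S S_1)$; both are instances of disjunctiveness (\cite[Lem~2.14]{Glasman}, in the same spirit as \cref{Restriction lemma}). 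These let me rewrite the clause ``$T \times_S U \rightarrow U \in I$ for all $U \in \Orb(S)$'' in terms of pullbacks to any chosen decomposition of $S$ into sub-coproducts.

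For \cref{Weak windex segal condition} $\Rightarrow$ \cref{Windex segal condition}, I would apply \cref{Weak windex segal condition} to $T \sqcup T' \rightarrow S \sqcup S'$ and use the facts above to split the resulting clause over $\Orb(S \sqcup S')$ as the conjunction of the clause over $\Orb(S)$ for $T \rightarrow S$ and the clause over $\Orb(S')$ for $T' \rightarrow S'$; two further applications of \cref{Weak windex segal condition} identify these with membership of $T \rightarrow S$ and of $T' \rightarrow S'$ in $I$. For the converse I induct on $n = |\Orb(S)|$. The case $n \leq 1$ is immediate: for $n = 1$ it is a tautology, and for $n = 0$ the objects $S$ and $T$ are both empty, the orbit clause is vacuous, and the empty object lies in $I$ since it is the image under $I(-)$ of the empty tuple of orbits --- equivalently, the pullback of any morphism of $I$ along $\emptyset \rightarrow S$. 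For $n \geq 2$, write $S \simeq S_0 \sqcup U_0$ with $U_0 \in \Orb(S)$ and $T \simeq T_0 \sqcup T_{U_0}$ for $T_0 = T \times_S S_0$, $T_{U_0} = T \times_S U_0$; then \cref{Windex segal condition} gives that $T \rightarrow S$ lies in $I$ if and only if both $T_0 \rightarrow S_0$ and $T_{U_0} \rightarrow U_0$ do, the inductive hypothesis rewrites the first condition over $\Orb(S_0)$, and the identities above collapse the total condition to ``$T \times_S U \rightarrow U \in I$ for all $U \in \Orb(S)$.''

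The last sentence of the observation is then formal: assuming \cref{Weak windex segal condition}, membership of an arbitrary morphism $T \rightarrow S$ in $I$ --- and, taking $T = S$, membership of an arbitrary object --- is expressed entirely in terms of which morphisms with orbit target lie in $I$, so two subcategories satisfying \cref{Weak windex segal condition} with the same maps to orbits coincide. I do not expect a genuine obstacle; the only points needing a little care are the empty base case (ensuring the empty object lies in $I$) and confirming that the pullback-of-coproduct manipulations are precisely disjunctiveness and nothing stronger.
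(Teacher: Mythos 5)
Your inductive strategy (peel off one orbit of $S$, apply \cref{Windex segal condition} to the resulting decomposition, invoke the inductive hypothesis via the composition-of-pullbacks identity $(T\times_S S_0)\times_{S_0} U \simeq T\times_S U$) is precisely the ``basic inductive argument'' the paper is waving at, and the two disjunctiveness facts you isolate are the right ones. The inductive step and the direction \cref{Weak windex segal condition} $\Rightarrow$ \cref{Windex segal condition} are both correct as written.

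The only soft spot is your base case $n = 0$. The justification you give for $\emptyset \rightarrow \emptyset \in I$ does not quite work in this context: ``the image under $I(-)$'' is circular, since the Observation is a tool used to characterize that image rather than something that may presuppose membership in it, and ``the pullback of any morphism of $I$ along $\emptyset \rightarrow S$'' uses \cref{Restriction stable condition}, which is not a hypothesis of the Observation (and it also needs $I$ to be nonempty). The clean derivation from \cref{Windex segal condition} alone is: pick any $f \colon T \rightarrow S$ in $I$ and apply \cref{Windex segal condition} to the pair $(f,\ \emptyset \rightarrow \emptyset)$; since $f \sqcup (\emptyset \rightarrow \emptyset) = f \in I$, the biconditional collapses to $\emptyset \rightarrow \emptyset \in I$. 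Note that this requires $I$ to contain at least one morphism, and indeed the equivalence as literally stated fails for the empty subgraph $I = \emptyset$, which satisfies \cref{Windex segal condition} vacuously but not \cref{Weak windex segal condition} (the $S = \emptyset$ clause of the latter forces $\emptyset \rightarrow \emptyset \in I$). This degenerate case is harmless everywhere the Observation is actually used --- the empty weak indexing system has $I(\emptyset) = \{\emptyset \rightarrow \emptyset\}$, not $\emptyset$ --- but your write-up should flag the nonemptiness hypothesis rather than appeal to properties of $I$ you haven't assumed.
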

\begin{observation}\label{Pullback along orbits remark}\label{Profinite remark}
   By \cref{Reduction to maps to orbits observation}, in the presence of \cref{Windex segal condition}, \cref{Restriction stable condition} is equivalent to the following condition:
   \begin{enumerate}[label={(IC-\alph*')}]
     \item\label[condition]{Weak restriction stable condition}  for all Cartesian diagrams in $\FF_{\cT}$
    \begin{equation}\label{Restriction along transfer}
        \begin{tikzcd}
            T \times_V U \arrow[r] \arrow[d,"\alpha'"] \arrow[rd,phantom,"\lrcorner" very near start]
            & T \arrow[d,"\alpha"]\\
            U \arrow[r]
            & V
        \end{tikzcd}
    \end{equation}
    with $U,V \in \cT$ and $\alpha \in I$, we have $\alpha' \in I$.\qedhere
\end{enumerate}
\end{observation}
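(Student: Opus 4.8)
The plan is to prove the nontrivial implication, \cref{Weak restriction stable condition} $\Longrightarrow$ \cref{Restriction stable condition}; the reverse is immediate, since \cref{Weak restriction stable condition} is exactly the special case of \cref{Restriction stable condition} in which the base object $V$ and the pulled-back object $U$ of the square \cref{Restriction along transfer} are required to be orbits. So assume $I$ satisfies \cref{Windex segal condition} and \cref{Weak restriction stable condition}, and fix an arbitrary map $f\colon S' \to S$ in $\FF_{\cT}$ together with some $\alpha\colon T \to S$ in $I$; I must show $f^*\alpha\colon T \times_S S' \to S'$ lies in $I$.

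By \cref{Reduction to maps to orbits observation} (in the form \cref{Weak windex segal condition}), it suffices to verify that for each orbit $W \in \Orb(S')$ the base change $(T \times_S S') \times_{S'} W \to W$ lies in $I$. A pullback pasting identifies this map with $T \times_S W \to W$, where $W \to S$ is the composite $W \to S' \xrightarrow{f} S$. Since $W$ is an orbit and coproducts in $\FF_{\cT}$ are computed pointwise, this composite factors through a unique orbit summand $U \in \Orb(S)$, say $W \to U \hookrightarrow S$; a further pasting then exhibits $T \times_S W \simeq (T \times_S U)\times_U W$ as the pullback of $T \times_S U \to U$ along $W \to U$. Now \cref{Weak windex segal condition} applied to $\alpha \in I$ shows that $T \times_S U \to U$ is in $I$, and since $U, W \in \cT$, condition \cref{Weak restriction stable condition} forces $(T \times_S U)\times_U W \to W \in I$. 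As $W \in \Orb(S')$ was arbitrary, \cref{Reduction to maps to orbits observation} gives $f^*\alpha \in I$, which is \cref{Restriction stable condition}.

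The only points to keep honest are the two pullback-pastings and the claim that a map out of an orbit $W$ into $S = \bigsqcup_{U \in \Orb(S)} U$ factors through a single orbit summand; both are standard consequences of the disjunctiveness of $\FF_{\cT}$ already used in the proof of \cref{Restriction lemma} and the equivalences $\FF_{\cT,/S} \simeq \prod_{U \in \Orb(S)}\FF_{\cT,/U}$ recorded earlier. I do not anticipate a real obstacle: all of the combinatorial content has already been isolated in \cref{Reduction to maps to orbits observation}, which reduces Segal-type closure of $I$ to a condition on maps whose target is an orbit, and \cref{Weak restriction stable condition} is precisely that condition for restriction-stability — so the argument is just the bookkeeping of reassembling an arbitrary pullback from its orbit pieces.
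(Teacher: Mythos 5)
Your argument is correct and is exactly the fleshing-out of what the paper leaves implicit in this observation: use \cref{Weak windex segal condition} to reduce to orbits $W \in \Orb(S')$, factor $W \to S$ through a single orbit summand $U \in \Orb(S)$ (legitimate since maps from a representable into a coproduct of presheaves land in one summand), and paste pullbacks to reduce to the orbit-to-orbit case covered by \cref{Weak restriction stable condition}. No gaps.
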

\begin{remark}
  In view of \cref{Reduction to maps to orbits observation,Pullback along orbits remark}, \cref{Windexcat is windex main theorem} essentially boils down to the observation that composition in $I$ corresponds with indexed coproducts in $\uFF_I$ (see \cref{Composition observation}) and \cref{Weak restriction stable condition} for $I$ corresponds with the condition that $\uFF_I \subset \uFF_{\cT}$ is restriction-stable, i.e. a full $G$-subcategory.

  On the level of arity-supports for equivariant operads, composition of arrows in $A\cO$ lifts to the formation of composite operations, identity arrows to the data of identity operations, \cref{Weak restriction stable condition} lifts to the restriction map from $T$-ary operations to $\Res_U^V T$-ary operations and \cref{Weak windex segal condition} corresponds with the Segal condition for multimorphisms in a $\cT$-$\infty$-operad.
\end{remark}

\begin{remark}\label{G-SM remark}
If $\iota\cln I \subset \FF_{\cT}$ is a pullback-stable subcategory,
then $(\FF_{c(I)},\FF_{c(I)}, I)$ is an adequate triple in the sense of \cite{Barwick1},
so we may form the span 2-category
\[
  \Span_I(\FF_{\cT}) \deq A^{eff}(\FF_{c(I)},\FF_{c(I)},I),
\]
whose forward maps are $I$ and backwards maps are arbitrary.
If $\cC$ is a 2-category, the 2-category of \emph{$I$-commutative monoids in $\cC$} is the product preserving functor 2-category
\[
    \CMon_I(\cC) \deq \Fun^{\times}(\Span_I(\FF_{\cT}), \cC);
\]
the \emph{$I$-symmetric monoidal categories} are
\[
    \Cat_{I,1}^{\otimes} \deq \CMon_I(\Cat_{1}),
\]
where $\Cat_1$ denotes the 2-category of 1-categories.
These are a form of \emph{$I$-symmetric monoidal Mackey functors} in the sense of \cite{Hill_SMC}.

$\cT$-commutative monoids yield $I$-commutative monoids by neglect of structure.\footnote{In particular, this is modeled by pullback along the product-preserving inclusion $\Span_I(\FF_{\cT}) \rightarrow \Span(\FF_{\cT})$ induced by the inclusion of adequate triples $(\FF_{c(I)}, \FF_{c(I)}, I) \hookrightarrow (\FF_{\cT}, \FF_{\cT}, \FF_{\cT})$.} 
By \cite{Tensor}, a $\cT$-1-category $\cD$ with $I$-indexed coproducts possesses an essentially unique \emph{cocartesian $I$-symmetric structure} $\cD^{I-\sqcup}$ satisfying the property that its $I$-indexed tensor products implement $I$-indexed coproducts;
a full $\cT$-subcategory $\cC \subset \cD$ is $I$-symmetric monoidal under this structure if and only if it's closed under $I$-indexed coproducts.
Thus we may reinterpret \cref{Self-indexed coproducts condition} as stipulating that $\uFF_{I} \subset \uFF_{\cT}^{I-\sqcup}$ is an $I$-symmetric monoidal full subcategory;
we will see throughout this paper that indexed coproducts implement arities of composite operations.
\end{remark}

One reason to state \cref{G-SM remark} is easy derivation of a \emph{double coset formula} for $I$-indexed coproducts.
\begin{observation}\label{Double coset formula}
  If $\cC$ is an $I$-symmetric monoidal category, $V \rightarrow W$ a map in $I$, and $U \rightarrow V$ a map in $\cT$, then $\cC^{\otimes}$ supplies an associated commutative diagram 
  \[\begin{tikzcd}[row sep=small]
	& {U \times_V W} &&&&&& {\cC_{U \times_V W}} \\
	U && W &&& {\cC_U} && {\prod\limits_{X \in \Orb(U \times_V W)} \cC_X} && {\cC_W} \\
	& V &&&&&& {\cC_V}
	\arrow[from=1-2, to=2-1]
	\arrow[from=1-2, to=2-3]
	\arrow["\lrcorner"{anchor=center, pos=0.125, rotate=-45}, draw=none, from=1-2, to=3-2]
	\arrow["\simeq"{marking, allow upside down}, draw=none, from=1-8, to=2-8]
	\arrow[from=1-8, to=2-10]
	\arrow[from=2-1, to=3-2]
  \arrow[shorten <=20pt, shorten >=20pt,
            squiggly={
               pre length=20pt, post length=20pt
             }, from=2-3, to=2-6]
	\arrow[from=2-3, to=3-2]
	\arrow[from=2-6, to=1-8]
	\arrow["{\Delta^S}"{description}, from=2-6, to=2-8]
	\arrow["{N_U^V}"', from=2-6, to=3-8]
	\arrow["{\otimes^{U \times_V W}}"{description}, from=2-8, to=2-10]
	\arrow["{\Res_V^W}"', from=3-8, to=2-10]
\end{tikzcd}\]
where $N_{U}^V$ and $\Res_V^W$ are functoriality under the spans $U = U \rightarrow V$ and $V \leftarrow W = W$.
In particular, this encodes the double coset formula $\Res^V_W N_U^V R_U = \bigotimes^{U \times_V W}_X \Res_X^U R_U$.

  In the case of the (co)cartesian structure this recovers a more traditional double coset formula:
  replacing $U$ with some $V$-set $S$, we get the formula
  \[
    \Res_V^W \coprod^S_U Z_U \simeq \coprod_X^{\Res_V^W S} \Res^{o(X)}_X Z_{o(X)}, 
  \]
  where $o(X)$ is the orbit of $S$ satisfying $X \subset \Res_V^W o(X) \subset \Res_V^W S$.\qedhere 
\end{observation}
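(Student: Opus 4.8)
The plan is to realize the displayed diagram as the image under $\cC^\otimes$ of a single ``base change'' square in the span $2$-category $\Span_I(\FF_\cT)$, using only that $\cC^\otimes\colon \Span_I(\FF_\cT)\to \Cat_1$ is a product-preserving functor (\cref{G-SM remark}) together with the fact that composition in a span category is computed by the pullback square \eqref{pb composition}. Concretely, $N_U^V = \cC^\otimes(\sigma)$ for the span $\sigma = (U \xleftarrow{=} U \xrightarrow{f} V)$, and $\Res_V^W = \cC^\otimes(\tau)$ for $\tau = (V \xleftarrow{h} W \xrightarrow{=} W)$, where $f$ and $h$ are the two maps to $V$ appearing in the diagram and $f$ lies in $I$ (this is exactly what makes $\sigma$ a morphism of $\Span_I(\FF_\cT)$, hence what makes $N_U^V$ defined). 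The composite $\tau\circ\sigma\colon U\to W$ in $\Span_I(\FF_\cT)$ is computed by pulling $f$ back along $h$, which is precisely the pullback $U\times_V W$ on the left of the diagram; its structure maps are $\mathrm{pr}_U,\mathrm{pr}_W$, and $\mathrm{pr}_W\in I$ because it is a pullback of $f\in I$ and $I$ is restriction-stable (\cref{Restriction stable condition}) --- this also places $U\times_V W$ and $W$ in $\FF_{c(I)}$, so all the spans in sight are morphisms of $\Span_I(\FF_\cT)$. Thus $\tau\circ\sigma = \rho \deq (U \xleftarrow{\mathrm{pr}_U} U\times_V W \xrightarrow{\mathrm{pr}_W} W)$.

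Next I would factor $\rho$ in $\Span_I(\FF_\cT)$ as $\rho = \rho_+\circ\rho_-$, where $\rho_- = (U \xleftarrow{\mathrm{pr}_U} U\times_V W \xrightarrow{=} U\times_V W)$ is ``pure backward'' and $\rho_+ = (U\times_V W \xleftarrow{=} U\times_V W \xrightarrow{\mathrm{pr}_W} W)$ is ``pure forward'' (the composite is $\rho$ since the intervening pullback is trivial). Since $\tau\circ\sigma$ and $\rho_+\circ\rho_-$ both equal $\rho$, the square with edges $\sigma,\tau,\rho_-,\rho_+$ commutes in $\Span_I(\FF_\cT)$ up to the canonical $2$-cell supplied by the pullback; applying $\cC^\otimes$ produces a commuting square of categories, which becomes the outer square of the displayed diagram once we name the inner terms. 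Here product-preservation enters: $\cC^\otimes(U\times_V W)\simeq \prod_{X\in\Orb(U\times_V W)}\cC_X$, and under this identification $\cC^\otimes(\rho_-)$ is the functor $\Delta^S$ associated to the $U$-set $S = (U\times_V W\xrightarrow{\mathrm{pr}_U} U)$ while $\cC^\otimes(\rho_+)$ is the indexed tensor $\otimes^{U\times_V W}$ over the $W$-set $U\times_V W$. For the first claim, post-composing with the projection to the $X$-factor corresponds to composing $\rho_-$ with the summand-inclusion span $(U\times_V W\xleftarrow{\iota_X}X\xrightarrow{=}X)$, and $\cC^\otimes$ of the resulting composite span $U\xleftarrow{\mathrm{pr}_U\iota_X}X\xrightarrow{=}X$ is restriction along $X\hookrightarrow U\times_V W\xrightarrow{\mathrm{pr}_U}U$, i.e. exactly the $X$-component of $\Delta^S$ as defined in the preamble to \cref{G-SM remark}; the second claim is immediate from the definition of the indexed tensor product (the $I$-analogue of $\coprod^S_U$) plus product-preservation. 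Reading the resulting commuting square on objects and writing $\Delta^S$ out componentwise gives $\Res^V_W N_U^V R_U \simeq \bigotimes^{U\times_V W}_X \Res^U_X R_U$.

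For the cocartesian specialization, note that nothing above used that $U$ is an orbit: the identical computation applies with $U$ replaced by an arbitrary $V$-set $S\in\FF_{I,V}$ (so its structure map lies in $I$), with $U\times_V W$ replaced by $\Res_V^W S$ --- using \cref{Restriction lemma} to see that restriction of $V$-sets to $W$ is pullback along $h$ --- and with $o(X)\in\Orb(S)$ the orbit into which $X$ projects. Specializing $\cC$ to the cocartesian $I$-symmetric structure $\cD^{I-\sqcup}$ of \cite{Tensor}, the indexed tensor products $\otimes^{(-)}$ become indexed coproducts $\coprod^{(-)}$ and the norms become inductions, so the formula reads $\Res_V^W \coprod^S_U Z_U \simeq \coprod^{\Res_V^W S}_X \Res^{o(X)}_X Z_{o(X)}$.

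I expect the main obstacle to be the bookkeeping in the middle step: cleanly matching $\cC^\otimes$ of the pure-backward span with $\Delta^S$, and of the pure-forward span with the indexed tensor, requires tracking the orbit decomposition of $U\times_V W$ and the precise shape of product-preservation for $\cC^\otimes$, even though each individual identification is routine.
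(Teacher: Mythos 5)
Your proof is correct and is essentially the paper's (implicit) argument: the Observation is stated without a written proof beyond the assertion that $\cC^\otimes$ ``supplies'' the diagram, and your elaboration---reading the pullback square as the canonical $2$-cell $\tau\circ\sigma\simeq\rho_+\circ\rho_-$ witnessing composition in $\Span_I(\FF_\cT)$ and then applying the product-preserving functor $\cC^\otimes$, with the identifications $\cC^\otimes(U\times_V W)\simeq\prod_X\cC_X$, $\cC^\otimes(\rho_-)\simeq\Delta^S$, and $\cC^\otimes(\rho_+)\simeq\otimes^{U\times_V W}$ coming from product-preservation and the Segal decomposition---is precisely the intended mechanism, as is the specialization to $\cD^{I\text{-}\sqcup}$. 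Note also that your reading of the hypotheses ($U\to V$ in $I$ so that $N_U^V$ is defined, $W\to V$ arbitrary in $\cT$) silently corrects an apparent typo in the statement, which reads ``$V\to W$ a map in $I$, and $U\to V$ a map in $\cT$'' even though the diagram has no map $V\to W$ and the forward leg of the span defining $N_U^V$ is $U\to V$.
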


\subsection{Unital weak indexing categories and transfer systems} 
We now turn to transfer systems.
\begin{definition}\label{Orbital transfer systems definition}
    Given $\cT$ an orbital $\infty$-category, an \emph{orbital transfer system in $\cT$} is a core-containing wide subcategory $\cT^{\simeq} \subset R \subset \cT$ satisfying the base change condition that for all $\cT$ diagrams
    \[
        \begin{tikzcd}
            V' \arrow[d,"\alpha'"] \arrow[r]
            & V \arrow[d,"\alpha"]\\
            U' \arrow[r]
            & U
        \end{tikzcd}
    \]
    whose associated $\FF_{\cT}$ map $V' \rightarrow V \times_{U
    } U'$ is a summand inclusion and with $\alpha \in R$, we have $\alpha' \in R$.
  The associated embedded sub-poset is denoted $\Transf_{\cT} \subset \Sub_{\Cat}(\cT)$.\qedhere
\end{definition}

\begin{observation}\label{fR observation}
    If $I$ is a unital weak indexing category, the intersection $\fR(I) \deq I \cap \cT$ is an orbital transfer system;
    hence it yields a monotone map 
    \[
      \fR(-)\cln \wIndCat_{\cT}^{\uni} \rightarrow \Transf_{\cT}.\qedhere
    \]
\end{observation}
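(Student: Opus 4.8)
The plan is to identify $\fR(I) = I \cap \cT$ with the wide subgraph of $\cT$ on those morphisms which, viewed in $\FF_{\cT}$ via the fully faithful embedding $\cT \hookrightarrow \FF_{\cT}$, belong to $I$, and then to verify directly the three requirements of \cref{Orbital transfer systems definition}. Closure under composition and the presence of all identities are immediate, since $I$ is a subcategory of $\FF_{\cT}$ while composites and identities of morphisms of $\cT$ are computed inside $\cT \subseteq \FF_{\cT}$; moreover, since $I$ is unital it is wide by \cref{Windexcat is windex main theorem} (equivalently, it contains $\FF_{\cT}^{\simeq}$), whence $\cT^{\simeq} \subseteq \fR(I)$. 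Monotonicity of $\fR(-)$ is clear from its description as the intersection with a fixed subcategory. So the entire content of the statement is the base change condition.

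For that, suppose given a square as in \cref{Orbital transfer systems definition}, with $\alpha\colon V \to U$ in $\fR(I)$ and with induced $\FF_{\cT}$-map $j\colon V' \to V \times_U U'$ a summand inclusion, say $V \times_U U' \simeq V' \sqcup Z$. Since $I$ is a weak indexing category it is stable under arbitrary pullbacks in $\FF_{\cT}$ (condition \cref{Restriction stable condition} of \cref{Windexcat is windex main theorem}), so the projection $V \times_U U' \to U'$, being the pullback of $\alpha$ along $U' \to U$, lies in $I$. By the defining property of $j$, the composite $V' \xrightarrow{j} V \times_U U' \to U'$ is exactly $\alpha'$, so the splitting $V\times_U U' \simeq V' \sqcup Z$ identifies the map $(V \times_U U' \to U')$ with $(\alpha' \sqcup (Z \to U'))$; since $I$ is unital it satisfies the pre-unitality condition (IC-iii) of \cref{Windexcat is windex main theorem}, which then forces $\alpha'\colon V' \to U'$ into $I$. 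As $\alpha'$ is a morphism of $\cT$, we conclude $\alpha' \in I \cap \cT = \fR(I)$, which is the desired conclusion.

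I do not expect a genuine obstacle here: the argument is a short combination of pullback-stability and pre-unitality of $I$, the only subtlety being to recognize $\alpha'$ as a summand of the pulled-back map $V \times_U U' \to U'$ so that pre-unitality may be applied. I would close with a remark that one-coloredness of $I$ is used only to obtain wideness of $\fR(I)$, and that, replacing (IC-iii) by the almost-essentially-unital condition (IC-ii) of \cref{Windexcat is windex main theorem} and peeling off non-isomorphism summands one at a time, the same argument produces the analogous monotone map out of the poset of almost unital weak indexing categories.
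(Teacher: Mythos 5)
Your proof is correct and is exactly the unwinding the paper has in mind when it labels this an observation: closure under composition, identities, and wideness come from $I$ being a wide (by unitality) subcategory of $\FF_{\cT}$ that restricts to $\cT$, and base change follows from combining pullback-stability \cref{Restriction stable condition} of $I$ with pre-unitality (IC-iii) applied to the summand decomposition $V \times_U U' \simeq V' \sqcup Z$. Your closing remark about extending to the almost unital case via (IC-ii) after disposing of the isomorphism case is also accurate, though not required for the statement.
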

We refer to the associated map $\wIndSys_{\cT}^{\uni} \simeq \wIndCat_{\cT}^{\uni} \rightarrow \Transf_{\cT}$ by the same name. 
Orbital transfer systems were first defined because of the following phenomenon.
\begin{proposition}[{\cite[Rmk~2.4.9]{Nardin}}]\label{Index transf prop}
    $\fR(-)$ restricts to an equivalence
    \[
        \fR(-)\cln \IndSys_{\cT} \xrightarrow\sim \Transf_{\cT}.
    \]
\end{proposition}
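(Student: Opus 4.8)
The plan is to produce an inverse map to $\fR(-)\colon \IndSys_{\cT} \to \Transf_{\cT}$ by sending a transfer system $R$ to the smallest indexing category containing $R$, then checking that the two composites are identities. First I would observe that, by \cref{Various families lemma}(4) and \cref{Windexcat is windex main theorem}, an indexing category is a subcategory $I \subset \FF_{\cT}$ satisfying \cref{Restriction stable condition}, \cref{Windex segal condition}, \cref{At least one color condition}, and \cref{Fold maps condition}; the Segal condition (via \cref{Reduction to maps to orbits observation}) means $I$ is determined by its maps to orbits, i.e.\ by the full subcategories $\FF_{I,V} \subset \FF_V$, and these must be closed under finite coproducts and under self-indexed coproducts. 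Given an orbital transfer system $R$, define $\FF_{I(R),V}$ to be the smallest subcategory of $\FF_V$ that contains the maps $\phi_S$ for every $S$ built from the arrows of $R$ by the allowed operations: finite coproducts and $R$-indexed (equivalently, $\FF_{I(R)}$-indexed) coproducts, closed under pullback along arbitrary maps of $\cT$. Concretely, $S \in \FF_{I(R),V}$ iff the structure map $\Ind_V^{\cT}S \to V$ lies in the subcategory of $\FF_{\cT}$ generated under composition and coproduct by $R \subset \cT \subset \FF_{\cT}$ and closed under base change. One must check this really is an indexing category: restriction-stability and the Segal condition hold by construction; widichness and the fold-map condition hold because $R$ is wide and $n\cdot *_V = *_V \sqcup \cdots \sqcup *_V$ is a coproduct of copies of the identity; and closure under self-indexed coproducts must be verified, which is where the combinatorial content sits — one needs that composing an $R$-indexed coproduct of $R$-admissible sets along an arrow of $R$ again lands in the generated class, using that composition in $\FF_{\cT}$ of indexed coproducts is again an indexed coproduct (the ``double coset''-type bookkeeping of \cref{Composition observation}/\cref{Double coset formula}).

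Next I would check $\fR(I(R)) = R$. The inclusion $R \subseteq \fR(I(R))$ is immediate since $R \subset I(R) \cap \cT$ by construction. For the reverse inclusion, suppose $\alpha\colon U \to V$ is a map in $\cT$ lying in $I(R)$; since $\alpha$ has orbit source, the only way $\phi_{S}$ with $\Ind\,S$ an orbit can be generated is essentially as a composite of arrows of $R$ together with base changes of such — and here the \textbf{atomicity} of $\cT$ is the crucial input (via \cref{Ind atomicness}): a composite $W \to U \to V$ of arrows in $\FF_{\cT}$ is an orbit-to-orbit map only if each factor is, so the generation process cannot "accidentally" produce new arrows of $\cT$ beyond those obtained from $R$ by composition and pullback, both of which preserve $R$ by the transfer-system axioms. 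This shows $\fR(I(R)) \subseteq R$.

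Finally I would check $I(\fR(I)) = I$ for an arbitrary indexing category $I$. The inclusion $I(\fR(I)) \subseteq I$ holds because $I$ is already closed under coproducts, self-indexed coproducts, and base change, so it contains the class generated by $\fR(I) = I \cap \cT$. The reverse inclusion $I \subseteq I(\fR(I))$ is the heart of the matter: one must show every admissible $S \in \FF_{I,V}$ is built from arrows of $I \cap \cT$ by coproducts and indexed coproducts. By the Segal condition it suffices to treat $S$ with $\Ind_V^{\cT}S$ an orbit, i.e.\ a single arrow $\phi_S\colon W \to V$ of $\cT$ lying in $I$ — but such arrows are exactly $\fR(I)$, so we are done. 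The routine reductions (Segal, reduction to orbit targets) are light; the \textbf{main obstacle} I expect is the verification that the generated class $I(R)$ genuinely satisfies \cref{Self-indexed coproducts condition}, since this requires knowing that an $R$-indexed coproduct over an $R$-arrow of things each of which is an $R$-indexed coproduct is again in the class — an associativity/composition statement for indexed coproducts that must be extracted carefully from the structure of $\FF_{\cT}$, and where atomicity of $\cT$ again enters to control orbits. All of this is cleanly packaged by \cref{Proposition}'s being an equivalence of posets once monotonicity of both $\fR(-)$ and $I(-)$ is noted, the former from \cref{fR observation} and the latter because a larger transfer system generates a larger class.
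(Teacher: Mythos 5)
The paper does not supply its own proof of this proposition; it is cited directly from Nardin's thesis (Remark~2.4.9), and \cref{Orbital transfer systems remark} attributes the $\cO_G$ case to earlier independent work. So there is no internal argument to compare against, and I will assess your attempt on its own terms.

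Your outline — build an inverse $R \mapsto I(R)$ and check both composites — is the right strategy and reaches the correct conclusion, but two load-bearing claims are misattributed. First, atomicity is not needed here: the proposition is stated for arbitrary orbital $\cT$, and \cref{Ind atomicness} is in any case about when an induction hits the \emph{terminal} $V$-set, not when a composite of $\FF_{\cT}$-maps between orbits forces each factor to be orbital. What your argument actually uses is only that objects of $\cT$ are indecomposable in $\FF_{\cT}$, so a map out of an orbit lands in a single orbital summand of its target; together with the facts that $R$ is closed under composition and that the transfer-system base change axiom gives closure under passing to orbital summands of pullbacks, this is already enough to see that the generation process cannot produce orbit-to-orbit maps outside $R$. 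Second, your appeal to the Segal condition in checking $I \subseteq I(\fR(I))$ is misplaced: \cref{Weak windex segal condition} governs maps into disjoint unions of orbits, whereas what you need is that $S \in \FF_{I,V}$ is equivalent to all orbits of $S$ lying in $\FF_{I,V}$, which follows from closure under coproducts (\cref{Indexing system condition}) and closure under summands (unitality, via \cref{E-unital condition}). You would also find the argument tightens considerably if you replace the ``smallest generated category'' description by the explicit one: set $\FF_{I(R),V}$ to be the $V$-sets all of whose orbit structure maps lie in $R$. Then closure under coproducts and summands is tautological, restriction-stability is exactly the transfer-system base change axiom, closure under self-indexed coproducts is closure of $R$ under composition, $*_V \in \FF_{I(R),V}$ since $R$ is core-containing, and $\fR(I(R)) = R$ is true by inspection.
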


\begin{remark}\label{Orbital transfer systems remark}    
  In the case $\cT = \cO_G$, before Nardin-Shah's result, it was shown independently in \cite[Thm~3.7]{Rubin_steiner} and \cite[Cor~8]{Balchin} that pullback along the composite inclusion $\Sub_{\Grp}(G) \hookrightarrow \cO_G \hookrightarrow \FF_G$ yields an embedding $\IndSys_{G} \hookrightarrow \Sub_{\Poset} \prn{\Sub_{\Grp}(G)}$ whose image is identified by those subposets which are closed under restriction and conjugation, which were called \emph{$G$-transfer systems};
  this and \cref{Index transf prop}, together imply that pullback along the \emph{homogeneous $G$-set} functor $\Sub_{\Grp}(G) \rightarrow \cO_G$ induces an isomorphisms between the posets of $G$-transfer systems of \cite{Rubin_steiner,Balchin} and the orbital $\cO_G$-transfer systems of \cref{Orbital transfer systems definition}. 
\end{remark}

In view of \cref{Orbital transfer systems remark}, we henceforth in this paper refer to orbital transfer systems simply as \emph{transfer systems}, never referring to the other notion.
In \cref{Right adjoint to wIndSys}, we will show that the composite
\[
  \Transf_{\cT} \simeq \IndSys_{\cT} \hookrightarrow \wIndSys^{\uni}_{\cT}
\]
is a fully faithful right adjoint to $\fR$, i.e. the poset of unital weak indexing systems possessing a given transfer system has a terminal object, given by the unique such indexing system.
However, the fibers can be quite large;
for instance, in \cref{Fibers have multiple objects remark}, we will see that $\fR$ also attains a fully faithful left adjoint, which is distinct from the right adjoint over all transfer systems when $\cT$ has a terminal object (e.g. when $\cT = \cO_G$). 

The upshot is that unital weak indexing systems are not determined by their transitive $V$-sets.
Nevertheless, we specify them by a small collection of data, for which we need the following terminology.
\begin{definition}
  Denote by $\pi_0 \cT$ the set of isomorphism classes of objects in $\cT$.
  Given $\cC$ a $\cT$-category, there is an underlying diagram $\Ob' \cC\cln \pi_0 \cT \rightarrow \Set$;
  We refer to a $\pi_0 \cT$-graded subset of $\Ob' \cC$ as a \emph{$\cC$-collection}.
  We will generally refer to $\uFF_{\cT}$-collections simply as \emph{collections}.
\end{definition}

\begin{construction}
  If $\cT$ is an orbital $\infty$-category, then we define the collection of \emph{sparse $\cT$-sets} $\uFF_{\cT}^{\sparse} \subset \uFF_{\cT}$ to have $V$-value spanned by the $V$-sets
  \[
    \varepsilon \cdot *_V \sqcup W_1 \sqcup \cdots \sqcup W_n,
  \]
  for $\varepsilon \in \cbr{0,1}$ and $W_1, \dots, W_n \in \cT_{/V}$ subject to the condition that there exist no $\cT_{/V}$-maps $W_i \rightarrow W_j$ for $i \neq j$.
\end{construction}
\begin{example}
  Let $G$ be a finite group.
  Then, for $(H)$ a conjugacy class of $G$, the \emph{sparse $H$-sets} are precisely the $H$-sets of one of the following forms:
  \begin{enumerate}
    \item $2 \cdot *_H$.
    \item $*_H \sqcup [H/K_1] \sqcup \cdots \sqcup [H/K_n]$ where none of $K_1, \dots, K_n$ are sub-conjugate by elements of $H$.
    \item $[H/K_1] \sqcup \cdots \sqcup [H/K_n]$ where none of $K_1,\dots,K_n$ are sub-conjugate by elements of $H$.\qedhere
  \end{enumerate}
\end{example}

Given $\cC^{\sparse} \subset \uFF_{\cT}^{\sparse}$, we may form the full $\cT$-subcategory $\cC \subset \uFF_{\cT}$ generated by $\cC^{\sparse}$ under iterated $\cC^{\sparse}$-indexed coproducts.
We say that $\cC^{\sparse}$ is \emph{closed under applicable self-indexed coproducts} if $\cC^{\sparse} = \cC \cap \uFF_{\cT}^{\sparse}$.
We prove the following in \cref{Sparse section}. 
\begin{cooltheorem}\label{The classification is finitary theorem}
  Suppose $\cT$ is an atomic orbital category.
  Then, restriction along the inclusion $\uFF_{\cT}^{\sparse} \hookrightarrow \uFF_{\cT}$ yields an embedding of posets 
      \[
        \wIndSys^{aE\uni}_{\cT} \subset \mathrm{Coll}(\uFF_{\cT}^{\sparse})
      \]
      with image the almost essentially unital collections which are closed under applicable self-indexed coproducts.
\end{cooltheorem}
In \cref{aE-unital sparse remark}, we will see that \cref{The classification is finitary theorem} is compatible with the conditions of \cref{Windex definition};
namely, the conditions of almost unitality, essential unitality, unitality, and being an indexing system correspond with the same conditions on the sparse collection.
We will prove in \cite{Tensor} that the almost essentially-unital weak indexing systems are isomorphic to the poset of $\otimes$-idempotent weak $\cN_\infty$-operads;
this allows us to show that the poset of $\otimes$-idempotent weak $\cN_\infty$ $G$-operads is finite whenever $G$ is a finite group.
\begin{example}
  Let $\sigma$ be the sign $C_2$-representation;
  following from \cref{EV example}, the sparse collection corresponding with $\FF^{\sigma} = \FF^{\infty \sigma}$ has nonequivariant part $\cbr{2 \cdot *_e}$ and $C_2$-equivariant part $\cbr{[C_2/e], *_{C_p} + [C_2/e]}$.
  
  On the level of algebra, this corresponds with the fact that the data underlying an $\EE_{\infty \sigma}$-algebra in a symmetric monoidal 1-category is generated from the underlying unital object $*_{C_2} \rightarrow A$ together with binary multiplication on $A^e$, a transfer $A^e \rightarrow A^{C_p}$, and a module structure map $A^{C_p} \otimes A^e \rightarrow A^{C_p}$, subject to conditions;
  we can see that nontransitive and nontrivial sparse $C_2$-sets must appear, as the module structure map is not determined by the remaining data.
\end{example}

\begin{remark}
  Let $\cT = \cO_G$ for $G$ a finite group.
  By \cref{The classification is finitary theorem}, one may devise an inefficient algorithm to compute $\wIndSys_G^{\uni}$.
  Namely, given a sparse collection $\cC^{\sparse} \subset \uFF_G^{\sparse}$, one may compute all of its self-indexed coproducts in finite time using the double coset formula in order to determine whether $\cC^{\sparse}$ is closed under applicable self-indexed coproducts.
  One may simply iterate over the finite poset $\Coll(\uFF_{G}^{\sparse})$, performing the above computation at each step, to determine the unital weak indexing systems.
\end{remark}
The above algorithm is quite inefficient;
in practice, we instead prefer to divide and conquer, first computing $\Fam_G$ and $\Transf_G$, then computing the fibers under $\fR$ and $\nabla$.
We will state the result of this for $G = C_{p^n} = \ZZ/p^n \ZZ$, but first we need notation.
Given $R \in \Transf_{G}$ for $G$ Abelian, we define the families
\begin{align*}
  \Domain(R) := \cbr{U \in \cO_G \mid \;\; \exists U \rightarrow V \xrightarrow{f} W \;\; \text{s.t.} \;\; f \in R - R^{\simeq}};\\
  \Codomain(R) := \cbr{U \in \cO_G \mid \;\; \exists V \xrightarrow{f} W \leftarrow U \;\; \text{s.t.} \;\; f \in R - R^{\simeq}}.
\end{align*}
Given a full subcategory $\cC \subset \cO_G$ and a $G$-transfer system $R$, we denote by $\Sieve_R(\cC) \subset \Sub_{\mathrm{graph}}(R)$ the poset of $R$-precomposition-closed and isomorphism-closed collections of maps in $R$ whose codomains lie in $\cC$ and satisfy the condition that, whenever $K \subset H$ is in $R$ and $L \subset H$ lies in $\cC$, the map $L \cap K \subset L$ is in $R$.

For $n \in \NN$, we let $K_n$ be the $n$th associahedron, i.e. the poset of parenthesizations of a string of length $n$.
The main result of \cite{Balchin} constructs an isomorphism $\Transf_{C_{p^n}} \simeq K_{n+2}$, and it's not too hard to construct an isomorphism $\Fam_{C_{p^n}} \simeq [n+2]$ for $[n+2]$ the total order on $n+2$ elements.
\begin{coolcorollary}\label{CPn theorem}
  Let $p$ be a prime.
  Then, there is a map of posets 
  \[
    (\fR,\nabla):\wIndSys_{C_{p^n}}^{\uni} \rightarrow K_{n+2} \times \brk{n+2}
  \]
  with fibers satisfying
  \[
      \fR^{-1}(R) \cap \nabla^{-1}(\cF) = \begin{cases}
        \emptyset & \Domain(R) \not \leq \cF;\\
        * & \Codomain(R) \leq \cF;\\
        \mathrm{Sieve}_R(\Codomain(R) - \cF) & \mathrm{otherwise}.
      \end{cases}
  \]
  Moreover, the associated surjection onto its image is a cocartesian fibration, with  cocartesian transport computed along $R \leq R'$ given by the map
  \[
    \Sieve_R(\Codomain(R) - \cF) \longrightarrow \Sieve_{R'}(\Codomain(R') - \cF)
  \]
  sending $\FS \mapsto R^{\simeq} \cup \cbr{J \subset K \subsetneq H \mid J \subset K \in R',\,\, K \subsetneq H \in \FS}$ and cocartesian transport computed along $\cF \leq \cF'$ by the restriction
  \[
    \Sieve_R(\Codomain(R) - \cF) \longrightarrow \Sieve_R(\Codomain(R) - \cF').
  \]
\end{coolcorollary}
This completely determines $\wIndSys_{C_{p^n}}^{\uni}$.
Nevertheless, we draw this explicitly for $n \leq 2$ in \cref{Computational section}.

\subsection{Why (unital) weak indexing systems?}
The author suggests $\wIndSys_{\cT}$ for the 
following two reasons:
\begin{enumerate}
  \item once the algebraist is convinced that they want finite $H$-sets to index their $G$-equivariant algebraic structures, weak indexing systems are forced upon them as composition- and restriction-closed collections of equivariant arities, and our unitality conditions classify useful algebraic properties;
  \item $\EE_V$-spaces and $\EE_V$-ring spectra naturally appear in algebraic topology, sometimes for $V$ a representation which has \emph{zero-dimensional fixed points}.
    As argued in \cref{EV example}, the associated $G$-operad $\EE_V$ has arities supported only on a (unital) \emph{weak} indexing system.
\end{enumerate}
Hopefully this paper and and the succeeding work will demonstrate the first point handily;
indeed, we will demonstrate that $\wIndCat_{\cT}$ occurs ``in nature'' as the poset of sub-terminal objects in the $\infty$-category $\Op_{\cT}$ of $\cT$-operads, and almost-essential-unitality of $I$ classifies \emph{Eckmann-Hilton arguments}, i.e. $I$ such that 
\[
  \CAlg_{I} \uCAlg^{\otimes}_{I}(\cC) \xrightarrow{U} \CAlg_I(\cC)
\]
is an equivalence.
This fact, together with the fact that indexed semiadditivity of a pointed $\cT$-category is classified by a unital weak indexing category, is central to the author's resolution of Blumberg-Hill's conjecture concerning tensor products of $\cN_\infty$-operads.

The author's favorite example behind the second point is the sign $C_2$-representation $\sigma$;
we argued in \cref{EV example}, its arity-support (which is shared with $\infty \sigma$) is \emph{not} an indexing system.
Furthermore, a forthcoming equivariant extension of Dunn-Lurie's additivity theorem \cite{Dunn,HA} 
implies that $\EE_{\sigma}^{\otimes \infty} \simeq \EE_{\infty \sigma}$, and in \cite{Tensor} we will see that $\EE_{\infty \sigma}$ is a \emph{weak} $\cN_\infty$-operad;
indeed, we see that $\EE_{\infty \sigma}$-algebras are relevant to constructions utilizing $\EE_\sigma$ structures, such as Real topological Hochschild homology \cite[\S~3]{Angelini}, as its an initial algebraic structure which allows one to infinitely iterate $\mathrm{THR}$ and to compute it as an $S^\sigma$-indexed colimit.
The proof of this uses many of the results of \cref{wIndex section}.

Of course, insofar as weak indexing systems jointly generalize the indexing systems of \cite{Blumberg-op} and the ``incompleteness classes'' called for in \cite{Barwick_Intro}, their motivations extend here.
The author simply contends that the assumption that weak indexing systems be coproduct-closed, when asserted globally, tends to obscure myriad important examples and universal constructions begetting these combinatorics.

\subsection{Notation and conventions}
There is an equivalence of categories between that of posets and that of categories whose hom sets have at most one point;
we safely conflate these notions.
In doing so, we use categorical terminology to describe posets.

A \emph{sub-poset} of a poset $P$ is an injective monotone map $P' \hookrightarrow P$, i.e. a relation on a subset of the elements of $P$ refining the relation on $P$.
An \emph{embedded sub-poset} (or \emph{full sub-poset} or \emph{fully faithful monotone map}) is a sub-poset $P' \hookrightarrow P$ such that $x \leq_{P'} y$ if and only if $x \leq_P y$ for all $x,y \in P'$.

An \emph{adjunction of posets} (or \emph{monotone Galois connection}) is a pair of opposing monotone maps $L\cln P \rightleftarrows Q\cln R$ satisfying the condition that
\[
  Lx \leq_Q y \hspace{30pt} \iff \hspace{30pt} x \leq_P Ry \hspace{15pt} \forall \; x \in P,\; y \in Q.
\]
In this case, we refer to $L$ as the \emph{left adjoint} and $R$ as the \emph{right adjoint}, as $L$ is uniquely determined by $R$ and vice versa.

A \emph{cocartesian fibration of posets} (or \emph{Grothendieck opfibration}) is a monotone map $\pi\cln P \rightarrow Q$ satisfying the condition that, for all pairs $q \leq q'$ and $p \in \pi^{-1}(q)$, there exists an element $t_q^{q'}p \in \pi^{-1}(q')$ characterized by the property
\[
  p \leq p' \hspace{30pt} \iff \hspace{30pt} q' \leq \pi(p') \;\;\; \text{and} \;\;\; t_q^{q'}p \leq p';
\]
in this case, we note that $t_q^{q'}\cln \pi^{-1}(q) \rightarrow \pi^{-1}(q')$ is a monotone map, and we may express $P$ as the set $\coprod_{q \in Q} \pi^{-1}(q)$ with relation determined entirely by the above formula.

\subsection*{Acknowledgements}
I would like to thank Clark Barkwick for numerous helpful conversations on this topic; 
for instance, his skepticism at an early (erroneous) sketch of the classification of weak $\cN_\infty$-operads motivated me to take a careful look at the combinatorics of weak indexing systems, which grew into this work.
Additionally, I would like to thank Mike Hill for pointing out that the first version of this paper contained the redundant assumption that weak indexing categories be replete subcategories, which follows from pullback stability.
I would be remiss to fail to mention that this project is closely linked with \cite{EBV,Tensor}, about which many illuminating conversations were had with Clark Barwick, Dhilan Lahoti, Mike Hopkins, Piotr Pstr\k{a}gowski, Maxime Ramzi, and Andy Senger.

While developing this material, the author was supported by the NSF Grant No. DGE 2140743.
\section{Weak indexing systems}\label{wIndex section}
This section concerns non-enumerative aspects of the study of weak indexing systems and weak indexing categories. 
We begin in \cref{Windexcat vs windex} by recognizing weak indexing categories as indexed collections of weak indexing categories with respect to the slice categories of $\FF_{\cT}$ over orbits, allowing us to universally reduce structural statements about $\wIndCat_{\cT}$ to the case that $\cT$ possesses a terminal object, so it is a 1-category;
we prove \cref{Windexcat is windex main theorem}.
Later, in \cref{Infinity appendix} we use this argument to show that the broader $\infty$-categorical framework of \cite{Barwick_Intro} begets identical combinatorics.

Following this, in \cref{Joins subsection} we develop the technology of \emph{weak indexing system closures} and use it to combinatorially characterize joins in the poset $\wIndSys_{\cT}$;
as examples, we compute joins of the arity support $\uFF^R$ of the little $R$-disks $G$-operad and characterize weak indexing system coinduction.

Next, in \cref{Unit and fold subsection}, we characterize the families $c$ and $\upsilon$;
the former is a fully faithful left and right adjoint (so we may reduce to the one color case), and the latter has a fully faithful left adjoint, but interacts with joins in a complicated way.

Following this, in \cref{Transfer subsection}, we characterize the map $\fR\cln \wIndCat_{\cT}^{\uni} \rightarrow \Transf_{\cT}$ of \cref{fR observation}, showing it possesses fully faithful left and right adjoints, which seldom agree;
we then characterize $\nabla$, showing that it has fully faithful left and right adjoints.
We additionally develop another family $\epsilon$, and use it to characterize join-compatibility of the various conditions of \cref{Windex definition}.

Lastly, in \cref{Compatible subsection}, we take a detour and generalize the theory of \emph{compatible pairs of indexing systems} to the setting of weak indexing systems, showing that the multiplicative hull of a weak indexing system exists and is an indexing system.

\subsection{Weak indexing categories vs weak indexing systems}\label{Windexcat vs windex}
\subsubsection{Reduction to the case with a terminal object}\label{Reduction to slices subsubsection}
We refer to $\cT$-categories $\cC$ whose $V$-values $\cC_V$ are posets for all $V \in \cT$ as \emph{$\cT$-posets}.
A central example is the following.
\begin{construction}
  Given $\cC \subset \FF_{\cT}$ a subcategory and $V \in \cT$ an object, we write
  \[
    \cC_V \deq \left\{f\cln \begin{tikzcd}[sep = tiny]
	S && T \\
	& V
	\arrow["{\widetilde f}", from=1-1, to=1-3]
	\arrow[from=1-1, to=2-2]
	\arrow[from=1-3, to=2-2]
\end{tikzcd} \;\;\; \middle| \;\;\; \widetilde f \in \cC \right\} \subset \FF_V;
  \]
  that is, maps in $\cC_V$ are maps over $V$ whose underlying map in $\FF_{\cT}$ lies in $\cC$.
  For every map $V \rightarrow W$, this yields a map $(-)_V:\Sub_{\Cat_W}(\FF_W) \rightarrow \Sub_{\Cat_V}(\FF_V)$, compatibly with composition.
  We let $\uSub_{\uCat_{\cT}}(\uFF_{\cT})$ be the resulting $\cT$-poset 
\end{construction}

\begin{proposition}\label{Slices are windexes prop}
  If $I \subset \FF_{\cT}$ is a $\cT$-weak indexing category, then $I_V \subset \FF_V$ is a $\cT_{/V}$-weak indexing category.
\end{proposition}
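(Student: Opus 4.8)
The plan is to verify that $I_V$ satisfies conditions (IC-a) and (IC-b) of \cref{Windexcat is windex main theorem}, now interpreted inside the finite coproduct completion $\FF_{\cT_{/V}} \simeq \FF_V$ of the orbital category $\cT_{/V}$; repleteness of $I_V$ is then automatic from pullback-stability, as noted after that theorem. The whole argument rests on the observation that the forgetful functor $\FF_V = \FF_{\cT,/V} \rightarrow \FF_{\cT}$ creates pullbacks and detects coproducts. I would first record that $I_V$ is a subcategory of $\FF_V$: identity morphisms of $\FF_V$ have underlying identities of $\FF_{\cT}$, which lie in the subcategory $I$, and a composite $\widetilde{g \circ f} = \widetilde g \circ \widetilde f$ of underlying maps of morphisms of $I_V$ again lies in $I$.

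For restriction-stability (IC-a), I would use that a cospan together with its pullback square in $\FF_V$ becomes, after applying the forgetful functor to $\FF_{\cT}$, a cospan together with a pullback square in $\FF_{\cT}$ --- this is exactly the fact, used already in the proof that $\cT_{/V}$ is orbital, that $\FF_{\cT,/V} \rightarrow \FF_{\cT}$ creates pullbacks. Hence if $\alpha \in I_V$, so that $\widetilde \alpha \in I$, and $\alpha'$ is a pullback of $\alpha$ in $\FF_V$, then $\widetilde{\alpha'}$ is a pullback of $\widetilde\alpha$ in $\FF_{\cT}$ and so lies in $I$ by \cref{Restriction stable condition}, giving $\alpha' \in I_V$.

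For the Segal condition (IC-b), I would note that the coproduct of $(S \rightarrow V)$ and $(S' \rightarrow V)$ in $\FF_V$ is $(S \sqcup S' \rightarrow V)$, with underlying object $S \sqcup S'$ in $\FF_{\cT}$, and that the coproduct of two morphisms over $V$ has underlying morphism their coproduct in $\FF_{\cT}$; then $T \rightarrow S$ and $T' \rightarrow S'$ lie in $I_V$ if and only if their underlying maps lie in $I$, if and only if the underlying map of $T \sqcup T' \rightarrow S \sqcup S'$ lies in $I$ by \cref{Windex segal condition} for $I$, if and only if $T \sqcup T' \rightarrow S \sqcup S'$ lies in $I_V$. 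The argument is entirely bookkeeping and I do not expect a genuine obstacle; the only point requiring care is matching the ambient orbital category, i.e.\ checking that the equivalence $\FF_{\cT_{/V}} \simeq \FF_V$ carries pullbacks and coproducts of $\FF_{\cT_{/V}}$ to those of $\FF_{\cT,/V}$, so that (IC-a) and (IC-b) for $I_V$ become literal instances of the conditions for $I$.
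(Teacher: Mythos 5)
Your proof is correct and in substance matches the paper's, which is quite terse (two sentences of ``follows by unwinding definitions''). For the Segal condition both arguments rest on the same fact: the forgetful functor $\Ind_V^{\cT}\colon\FF_V = \FF_{\cT,/V}\to\FF_{\cT}$ preserves coproducts. For restriction-stability you phrase it cleanly as creation of pullbacks by the forgetful functor from the slice, while the paper phrases it via pullback-preservation of restriction functors; both amount to the same observation that pullbacks of $\FF_V$ are computed in $\FF_{\cT}$. You also explicitly check that $I_V$ is closed under identities and composition, a point the paper leaves implicit. No gap.
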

\begin{proof}
    \cref{Windex segal condition} for $I_V$ follows by unwinding definitions, noting that $\Ind_V^{\cT}\cln \FF_V \rightarrow \FF_{\cT}$ is coproduct-preserving.
    Similarly, \cref{Restriction stable condition} follows by unwinding definitions, noting that the pullback functor $\FF_V \rightarrow \FF_{W}$ is pullback-preserving for each $W \rightarrow V$.
\end{proof}
 
\cref{Slices are windexes prop} lifts $\wIndCat_{\cT} \subset \Sub_{\Cat_{\cT}}(\uFF_{\cT})$ to an embedded $\cT$-subposet
  \[
    \uwIndCat_{\cT} \subset \uSub_{\uCat_{\cT}}(\uFF_{\cT}).
  \]
Given a $\cT$-poset $P:\cT^{\op} \rightarrow \mathrm{Poset}$, we denote by $\Gamma P$ the associated limit;
this is equivalently presented as the poset of $\cT$-functors $\Gamma P = \Fun_{\cT}(*_{\cT}, P)$, i.e. the assignments $V \mapsto C_V \in P_V$ for all $V \in \cT$ compatible with restriction.
There is a monotone map
\[
    \widetilde \gamma\cln \Sub_{\Cat}(\FF_{\cT}) \rightarrow \Gamma \uSub_{\uCat_{\cT}}(\uFF_{\cT})
\]
defined by $\widetilde \gamma(\cC)_V \deq \cC_{V}$.
We may use $\widetilde \gamma$ to recover $\wIndCat_{\cT}$ from $\uwIndCat_{\cT}$.
\begin{proposition}\label{Slice theorem}
    $\widetilde \gamma$ restricts to an isomorphism
    \[
        \gamma\cln\wIndCat_{\cT} \xrightarrow\sim \Gamma \uwIndCat_{\cT}
    \]
\end{proposition}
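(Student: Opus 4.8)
The plan is to show that $\widetilde{\gamma}$ restricts to a bijection on underlying sets between $\wIndCat_{\cT}$ and $\Gamma\uwIndCat_{\cT}$, and then observe that both $\widetilde\gamma$ and its inverse are monotone (the latter being automatic once we know they are mutually inverse, since an order-reversing bijection between posets that is a restriction of a monotone map has monotone inverse — but here it is cleaner to note directly that the inverse is given by a union, which visibly preserves containments). First I would construct the candidate inverse. Given a compatible family $(I_V)_{V \in \cT} \in \Gamma\uwIndCat_{\cT}$, set
\[
  \widehat I \deq \bigcup_{V \in \cT} \Ind_V^{\cT}(I_V) = \left\{ \coprod_i \Ind_{V_i}^{\cT} \widetilde f_i \;\middle|\; \widetilde f_i \in I_{V_i} \right\} \subset \FF_{\cT},
\]
using the Segal condition to close up under disjoint union; concretely $\widehat I$ is the subgraph of $\FF_{\cT}$ generated under disjoint union by the images of the $I_V$ under the induction functors $\Ind_V^{\cT}\colon \FF_V \to \FF_{\cT}$. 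The two things to check are: (i) $\widehat I$ is a weak indexing category, i.e.\ satisfies \cref{Restriction stable condition} and \cref{Windex segal condition}; and (ii) $\widetilde\gamma(\widehat I)_V = I_V$ for all $V$, and conversely $\widehat{\widetilde\gamma(I)} = I$ for a weak indexing category $I$.

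For (ii), the round trip $I \mapsto \widetilde\gamma(I) \mapsto \widehat{\widetilde\gamma(I)}$ being the identity is essentially \cref{Reduction to maps to orbits observation}: a weak indexing category is determined by its maps to orbits, and a map to an orbit $V$ lies in $I$ iff the corresponding object of $\FF_V$ lies in $I_V = \widetilde\gamma(I)_V$, iff its induction lies in $\widehat{\widetilde\gamma(I)}$; disjoint unions of such maps give everything by \cref{Windex segal condition} / \cref{Weak windex segal condition}. The other round trip $(I_V) \mapsto \widehat I \mapsto \widetilde\gamma(\widehat I)$ requires showing $(\widehat I)_W = I_W$ for every $W$, where $(\widehat I)_W$ means maps over $W$ whose underlying $\FF_{\cT}$-map lies in $\widehat I$; the inclusion $I_W \subseteq (\widehat I)_W$ is immediate, and the reverse inclusion uses atomicity-free reasoning about how a map $S \to T$ over $W$ decomposes: over each orbit $U$ of $T$ the fiber is a map $S_U \to U$, and one checks that $\Ind_U^{\cT}(S_U \to U)$ being in $\widehat I$ forces $(S_U \to U) \in I_U$ because induction from $U$ is a disjoint summand of induction only in the trivial way — more precisely one uses that $\Ind_V^{\cT}$ is fully faithful and preserves/detects the relevant data, together with compatibility of the family $(I_V)$ under restriction along the maps $U \to W$. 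Then reassemble via the Segal condition.

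For (i): \cref{Windex segal condition} for $\widehat I$ holds by construction (we built it by closing under disjoint union, and the "only if" direction — that a summand of a member is a member — follows from \cref{Windex segal condition} for each $I_V$ after pulling back along the summand inclusion, using \cref{Slices are windexes prop} which tells us each $I_V$ really is a weak indexing category). \cref{Restriction stable condition}, equivalently \cref{Weak restriction stable condition}, is the substantive point: given a Cartesian square with $\alpha\colon T \to V$ in $\widehat I$ and $U \to V$ a map in $\cT$, we must show $\alpha'\colon T\times_V U \to U$ is in $\widehat I$, i.e.\ the corresponding object of $\FF_U$ lies in $I_U$. Writing $\alpha$ as (a disjoint union of) $\Ind_W^{\cT}$ of some $\widetilde\alpha \in I_W$, pullback along $U \to V$ is computed inside the slice $\FF_V$ and then pushed forward; one identifies $\alpha'$ with the image under $\Ind$ of a pullback of $\widetilde\alpha$ inside $\FF_V$ (or a further slice), and invokes \cref{Restriction stable condition} for the slice weak indexing categories together with the restriction-compatibility of the family. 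This bookkeeping with the pullback formulas $\FF_{\cT,/S} \simeq \prod_{V \in \Orb(S)} \FF_{\cT,/V}$ and $\FF_{\cT,/V} \simeq \FF_{\cT_{/V}}$ is where the real work lies.

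The main obstacle I anticipate is purely organizational rather than conceptual: keeping straight the three layers of slicing ($\FF_{\cT}$, its slices $\FF_V$, and iterated slices $(\FF_V)_{/S}$) and checking that induction, restriction, and disjoint union all interact coherently so that "restriction-stable in every slice and compatible under the structure maps" genuinely assembles to "restriction-stable globally." Once the inverse $\widehat{(-)}$ is shown to land in $\wIndCat_{\cT}$ and to be two-sided inverse to $\gamma$, monotonicity of $\widehat{(-)}$ is clear (it is a union of monotone operations), so $\gamma$ is an isomorphism of posets, not merely a bijection. I would also remark that, combined with \cref{Windexcat is windex main theorem} applied slice-wise, this reduces every structural question about $\wIndCat_{\cT}$ to the case where $\cT$ has a terminal object, which is the payoff advertised at the start of \cref{wIndex section}.
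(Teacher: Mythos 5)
Your proposal takes essentially the same approach as the paper: both construct the inverse by declaring a map $T \to S$ of $\FF_{\cT}$ to lie in the reassembled $I$ precisely when each orbit-fiber $T \times_S U \to U$ is admissible for $I_U$, and then check this defines a weak indexing category, with the Segal condition holding by construction and restriction-stability following from the compatibility condition defining $\Gamma\uwIndCat_{\cT}$ together with restriction-stability of each slice $I_V$. One small correction worth noting: $\Ind_V^{\cT}\cln \FF_V \to \FF_{\cT}$ is faithful but not full (it simply forgets the structure map to $V$), so the detection claim you attribute to full faithfulness should instead be read off directly from the fiberwise description of $\widehat I$ — once you unwind the closure under disjoint union, a map to an orbit $U$ lies in $\widehat I$ tautologically if and only if its corresponding $U$-set is admissible for $I_U$, with no hidden-summand issue to rule out.
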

\begin{proof}
  \cref{Slices are windexes prop} implies that $\widetilde \gamma$ restricts to a monotone map of posets $\gamma\cln \wIndCat_{\cT} \rightarrow \Gamma \uwIndCat_{\cT}$, so it suffices to prove that this is bijective and an embedding.
  If $\gamma I = \gamma J$, then for a map $f\cln T \rightarrow V$, the canonical $\cT_{/V}$-map $T \rightarrow *_V$ lies $I_V$ if and only if it lies in $J_V$, so $f$ lies in $I$ if and only if it lies in $J$;
  thus \cref{Weak windex segal condition} implies that $I = J$, so $\gamma$ is injective.
  The same argument implies that $\gamma$ is an embedding.
    
  It remains to prove that $\gamma$ is surjective, so we fix $I_{\bullet} \in \Gamma\uwIndCat_{\cT}$.
    Define the subcategory
    \[
      I \deq \cbr{T \rightarrow S \mid \forall U \in \Orb(S),\;\;\; T \times_S U \rightarrow U \in I_U} \subset \FF_{\cT}.
    \]
    By definition, $\gamma I = I_\bullet$, so it suffices to verify that $I$ is a weak indexing category.
    First note that $I$ satisfies \cref{Weak windex segal condition} by definition;
    additionally \cref{Weak restriction stable condition} is precisely the (true) condition that $I_{\bullet}$ is an element of $\Gamma \uwIndCat_{\cT}$.
    Hence $I$ is a $\cT$-weak indexing system, proving that $\gamma$ is an isomorphism.
\end{proof}

Similarly, we define the $\cT$-poset 
\[
  \uFullSub_{\cT}(\uFF_{\cT}) \deq \FullSub_V(\uFF_V),
\]
together with the map $\widetilde \gamma\colon \FullSub_{\cT}(\uFF_{\cT}) \rightarrow \Gamma \uFullSub_{\cT}(\uFF_{\cT})$ by restriction.
\begin{lemma}
  The maps $\wIndSys_V \rightarrow \FullSub_V(\uFF_V)$ yield an embedded $\cT$-sub-poset $\uwIndSys_{\cT} \subset \uFullSub_{\cT}(\uFF_{\cT})$.
\end{lemma}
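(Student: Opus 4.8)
The plan is to split the statement into a formal part and a short verification. First, I would note that for each $V \in \cT$ the slice $\cT_{/V}$ is orbital (by the closure property recorded above), so $\wIndSys_V \deq \wIndSys_{\cT_{/V}}$ is defined; by \cref{Windex definition} its elements are exactly the full $\cT_{/V}$-subcategories of $\uFF_{\cT_{/V}} = \uFF_V$ satisfying \cref{Contractible V-sets condition,Self-indexed coproducts condition}, and since both $\wIndSys_V$ and $\FullSub_V(\uFF_V)$ carry the inclusion order on restriction-stable, equivalence-closed classes of objects, the inclusion $\wIndSys_V \hookrightarrow \FullSub_V(\uFF_V)$ is automatically an embedded sub-poset. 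It then remains only to check that this family of sub-posets is stable under the structure maps of the $\cT$-poset $\uFullSub_{\cT}(\uFF_{\cT})$.

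To do that I would first identify those structure maps: for $g\colon V \to W$ in $\cT$ and $\bar g \in \cT_{/W}$ the corresponding object, the structure map $\FullSub_W(\uFF_W) \to \FullSub_V(\uFF_V)$ is restriction of full subcategories along the slice projection $\cT_{/V} \simeq (\cT_{/W})_{/\bar g} \to \cT_{/W}$ --- the full-subcategory analogue of the operation $(-)_V$ from the construction of $\uSub_{\uCat_{\cT}}(\uFF_{\cT})$ --- sending a full $\cT_{/W}$-subcategory $\cD$ to $\rho_g(\cD)$ with value $\cD_{(X \to V \xrightarrow{g} W)}$ over an orbit $(X \to V)$ of $\cT_{/V}$, via the canonical identification $(\uFF_V)_{(X \to V)} \simeq \FF_X \simeq (\uFF_W)_{(X \to V \to W)}$ of slices. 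The observation that makes everything work is that \cref{Contractible V-sets condition,Self-indexed coproducts condition} are intrinsic to the slices $\FF_X$: the terminal object $*_X$ of $\FF_X$ is the same element regardless of whether $X$ is viewed over $V$ or over $W$, and an indexed coproduct $\coprod_Y^S T_Y = \coprod_{Y \in \Orb(S)} \Ind_Y^X T_Y$ is computed inside $\FF_X$ using only $\cT$-set data. So if $\cD \in \wIndSys_W$ then $\rho_g(\cD) \in \wIndSys_V$: nonemptiness of $\rho_g(\cD)_X = \cD_{(X \to V \to W)}$ forces $*_X \in \cD_{(X \to V \to W)} = \rho_g(\cD)_X$ by \cref{Contractible V-sets condition} for $\cD$, and if $S \in \rho_g(\cD)_X$ with $(T_Y)_{Y \in \Orb(S)}$ a family of objects of $\rho_g(\cD)$, then $S$ and the $T_Y$ lie in $\cD$, whence $\coprod_Y^S T_Y \in \cD_{(X \to V \to W)} = \rho_g(\cD)_X$ by \cref{Self-indexed coproducts condition} for $\cD$. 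This exhibits the desired embedded $\cT$-sub-poset $\uwIndSys_{\cT} \subset \uFullSub_{\cT}(\uFF_{\cT})$.

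The one place demanding care is the identification of the structure maps of $\uFullSub_{\cT}(\uFF_{\cT})$ with slice restriction; once that is nailed down, stability of the two conditions is purely formal. Alternatively one could transport \cref{Slices are windexes prop} across the poset isomorphism $\wIndSys_{\cT} \simeq \wIndCat_{\cT}$ of \cref{Windexcat is windex main theorem}, but verifying that $I(-)$ intertwines the two slicing operations is comparable effort, so I would prefer the direct route above.
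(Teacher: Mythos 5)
Your proof is correct and takes the same route as the paper: the paper's one-line justification is precisely that ``\cref{Contractible V-sets condition,Self-indexed coproducts condition} only depend on the value categories,'' which is the observation you isolate and spell out via the identification $(\uFF_V)_{(X\to V)}\simeq\FF_X\simeq(\uFF_W)_{(X\to V\to W)}$. Your version is more explicit about what the structure maps of $\uFullSub_{\cT}(\uFF_{\cT})$ actually are, but there is no substantive difference in approach.
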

\begin{proof}
  We have to prove that, given $\uFF_I \subset \uFF_{\cT}$ a $\cT$-weak indexing system, the restricted full $V$-subcategory $\uFF_{I_V} \subset \uFF_V$ is a $V$-weak indexing system;
  but this follows immediately by the fact that \cref{Contractible V-sets condition,Self-indexed coproducts condition} only depend on the value categories.
\end{proof}
\begin{proposition}
  The map $\widetilde \gamma\colon \FullSub_{\cT}(\uFF_{\cT}) \subset \Gamma\uFullSub_{\cT}(\uFF_{\cT})$ is an isomorphism restricting to an isomorphism $\gamma\colon \wIndSys_{\cT} \xrightarrow{\;\; \sim \;\;} \Gamma\uwIndSys_{\cT}$
\end{proposition}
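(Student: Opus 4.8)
The plan is to mirror the proof of \cref{Slice theorem}, which is in fact easier in the present setting: a full $\cT$-subcategory of $\uFF_\cT$ is nothing but an equivalence-closed, restriction-stable collection of objects, carrying no Segal-type condition, and this datum is manifestly local over the slice categories $\cT_{/V}$.

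First I would check that $\widetilde\gamma\colon \FullSub_\cT(\uFF_\cT)\to\Gamma\uFullSub_\cT(\uFF_\cT)$ is an isomorphism by constructing an explicit inverse. Given a compatible family $(\cC^{(V)})_{V\in\cT}$, with $\cC^{(V)}\subset\uFF_V$ a full $\cT_{/V}$-subcategory, define $\cC\subset\uFF_\cT$ to have $W$-value the $(W{=}W)$-value of $\cC^{(W)}$. The only point to verify is restriction-stability: a map $f\colon U\to W$ in $\cT$ is a $\cT_{/W}$-morphism $(U\xrightarrow{f}W)\to(W{=}W)$ along which $\cC^{(W)}$ is restriction-stable, the induced restriction functor agrees with $\Res_U^W\colon\FF_W\to\FF_U$, and the compatibility condition of $\Gamma\uFullSub_\cT(\uFF_\cT)$ identifies the $(U\xrightarrow{f}W)$-value of $\cC^{(W)}$ with the $(U{=}U)$-value of $\cC^{(U)}$; hence $\Res_U^W\cC_W\subseteq\cC_U$. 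Unwinding definitions at the $(W{=}W)$-values shows $\cC_W$ coincides with the $(W{=}W)$-value of $\widetilde\gamma(\cC)_W$, so $\widetilde\gamma$ is injective and order-reflecting, and the construction above furnishes a preimage for an arbitrary element; thus $\widetilde\gamma$ is an isomorphism of posets.

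For the restricted statement, recall from the preceding lemma that $\uwIndSys_\cT\subset\uFullSub_\cT(\uFF_\cT)$ is an embedded $\cT$-subposet, so $\Gamma\uwIndSys_\cT\subset\Gamma\uFullSub_\cT(\uFF_\cT)$ is an embedded subposet, and it suffices to show $\widetilde\gamma(\cC)$ lies in it precisely when $\cC\in\wIndSys_\cT$. Both directions rest on the observation (implicit in \cref{Slices are windexes prop}) that for each $V$ the $\cT_{/V}$-category $\uFF_V$ computes indexed coproducts exactly as $\uFF_\cT$ does, since the induction functors $\Ind_U^V$ agree. It follows that $\widetilde\gamma(\cC)_V$ satisfies \cref{Contractible V-sets condition,Self-indexed coproducts condition} if and only if $\cC$ does after restriction to $\cT_{/V}$; so if $\cC\in\wIndSys_\cT$ this holds for all $V$ and $\widetilde\gamma(\cC)\in\Gamma\uwIndSys_\cT$, while conversely, extracting the $(V{=}V)$-value of each $\widetilde\gamma(\cC)_V$ recovers \cref{Contractible V-sets condition,Self-indexed coproducts condition} for $\cC$ itself, since every self-indexed coproduct over a $V$-set $S\in\FF_{I,V}$ is an instance of \cref{Self-indexed coproducts condition} for the $\cT_{/V}$-weak indexing system $\widetilde\gamma(\cC)_V$ with $S$ in its $(V{=}V)$-value.

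I expect the main obstacle to be purely organizational: keeping straight the identifications between $\cT_{/V}$-restriction functors and $\cT$-restriction functors, and between indexed coproducts formed in $\uFF_V$ versus in $\uFF_\cT$. There is no new conceptual content relative to \cref{Slice theorem}. One could alternatively deduce the statement from \cref{Slice theorem} and \cref{Windexcat is windex main theorem} after checking that the embedding $I(-)$ is compatible with slicing, but that compatibility is a bookkeeping exercise of comparable length, so I would carry out the direct argument above.
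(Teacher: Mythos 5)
Your proof is correct and follows essentially the same route as the paper's: both construct the inverse to $\widetilde\gamma$ by extracting diagonal values $\cC^{(V)}_{(V=V)}$, deduce restriction-stability of the resulting collection from the compatibility condition on $\Gamma$, and observe that the weak-indexing-system conditions transfer by unwinding definitions (since indexed coproducts in $\uFF_V$ are computed exactly as in $\uFF_\cT$). The paper's version is simply more compressed, stating these checks as ``easy to see'' and ``by unwinding definitions'' where you spell them out.
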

\begin{proof}
  It's easy to see that, given an element $\cC^{\bullet} \in \Gamma \uFullSub_{\cT}(\uFF_{\cT})$ and a $\cT$-map $V \rightarrow W$, restriction yields a factorization of full subcategories $\cC^W_V \subset \cC^V_V \subset \uFF_V$, which is actually an equality.
  
  Using this, we may define a full $\cT$-subcategory $\cC \subset \uFF_{\cT}$ by $\cC_V \deq \cC_V^V$.
  This forms a monotonic assignment $\Gamma \uFullSub_{\cT}(\uFF_{\cT}) \rightarrow \FullSub_{\cT}(\uFF_{\cT})$, which is inverse to $\gamma$.
  To finish, note that it follows by unwinding definitions that $\widetilde \gamma(\cC)$ is an element of $\Gamma \uwIndSys_{\cT}$ if and only if $\cC$ is a $\cT$-weak indexing system.
\end{proof}

\subsubsection{The comparison theorem}
We now give an important construction.
\begin{construction}\label{Defn of FI}
  Given $I \subset \FF_{\cT}$ a subcategory, define the class of \emph{$I$-admissible $V$-sets} 
    \[
        \FF_{V,I} \deq \cbr{S \;\; \middle| \;\; \Ind_V^{\cT} S \rightarrow V \in I} \subset \FF_V.
    \]
    Taken altogether, we refer to the associated collection as $\uFF_I \subset \uFF_{\cT}$.
\end{construction}

In the other direction, recall the notation $I(-)$ used in \cref{Defn of I}.

\begin{observation}\label{They are inverse observation}
  Given $\cC \subset \uFF_{\cT}$ a collection, we have $\FF_{V,I(\cC)} \simeq \cC$;
  conversely, if a subcategory $I \subset \FF_{\cT}$ satisfies \cref{Windex segal condition}, then $I(\uFF_I) = I$.
\end{observation}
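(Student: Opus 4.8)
The plan is to prove both assertions by directly unwinding \cref{Defn of I} and \cref{Defn of FI}; the only nontrivial input is the reduction of the Segal condition to pullbacks over orbits of the target, recorded in \cref{Reduction to maps to orbits observation}.

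For the first statement, fix $V \in \cT$. The key bookkeeping remark is that an arrow of $\FF_{\cT}$ with target the orbit $V$ is the same datum as an object of $\FF_{\cT,/V} \simeq \FF_V$: namely, $S \in \FF_V$ corresponds to $\phi_S\colon \Ind_V^{\cT} S \rightarrow V$, and every arrow with target $V$ arises this way. Spelling out \cref{Defn of I}, since $\Orb(V) = \cbr{V}$, the coproduct in the description of $I(\cC)$ collapses, so an arrow with target $V$ lies in $I(\cC)$ precisely when its source, regarded as a $V$-set, lies in $\cC_V$; that is, $\phi_S \in I(\cC)$ iff $S \in \cC_V$. Comparing with \cref{Defn of FI}, this is exactly the statement $\FF_{V,I(\cC)} = \cC_V$, and as $V$ was arbitrary we conclude $\uFF_{I(\cC)} = \cC$. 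Note no hypothesis on $\cC$ is needed here; when $\cC$ fails to be restriction-stable, $I(\cC)$ is merely a subgraph of $\FF_{\cT}$ rather than a subcategory, but $\FF_{V,-}$ is defined on subgraphs, so this is harmless — one should simply take care not to conflate ``collection'' with ``full $\cT$-subcategory.''

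For the second statement, let $I \subset \FF_{\cT}$ satisfy \cref{Windex segal condition} and let $f\colon T \rightarrow S$ be an arrow of $\FF_{\cT}$. For each $U \in \Orb(S)$, the object $T \times_S U$ over $U$ is canonically $\Ind_U^{\cT} S_U$ for a unique $S_U \in \FF_U$, and by \cref{Defn of FI} we have $S_U \in \FF_{U,I}$ iff $\Ind_U^{\cT} S_U \rightarrow U \in I$, i.e. iff $(T \times_S U \rightarrow U) \in I$. Thus by the description of $I(-)$, $f \in I(\uFF_I)$ iff $(T \times_S U \rightarrow U) \in I$ for every $U \in \Orb(S)$, which by \cref{Weak windex segal condition} — equivalent to \cref{Windex segal condition} by \cref{Reduction to maps to orbits observation} — holds iff $f \in I$. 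Hence $I(\uFF_I) = I$.

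I do not expect a genuine obstacle: the entire content is the identification of an arrow of $\FF_{\cT}$ over an orbit $V$ with a $V$-set, together with the fact that the Segal condition permits testing membership in $I$ orbit-by-orbit on the target. The only point requiring mild attention is keeping straight the two regimes (arbitrary collection versus subcategory satisfying \cref{Windex segal condition}) so that each half is stated and used under the correct hypothesis.
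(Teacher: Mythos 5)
Your proposal is correct and matches what the paper leaves implicit: \cref{They are inverse observation} is stated without proof precisely because it is a direct unwinding of \cref{Defn of I} and \cref{Defn of FI}, together with the reformulation \cref{Weak windex segal condition} of the Segal condition recorded in \cref{Reduction to maps to orbits observation}. Your bookkeeping (arrows over an orbit $V$ correspond to $V$-sets; the coproduct decomposition of the target is unique since $\FF_{\cT}$ is freely generated under coproducts; the first half needs no hypothesis on $\cC$ while the second half is exactly where \cref{Windex segal condition} enters) is the intended argument.
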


These are candidates for inverse maps $\wIndSys_{\cT} \rightleftarrows \wIndCat_{\cT}$, and they are well behaved:

\begin{observation}\label{Automorphism condition observation}
  Pullback-stable subcategories are \emph{replete}, i.e. they contain all automorphisms of their objects.  
  On the other hand, if $S \simeq S'$ as $V$-sets, then there exists an equivalence $\Ind_V^{\cT} S \simeq \Ind_V^{\cT} S'$ over $V$.
  Hence whenever $I \subset \FF_{\cT}$ is a pullback-stable subcategory and $S \in \uFF_I$, the map $\Ind_V^{\cT}S' \rightarrow V$ is in $I$, i.e. $\FF_{V,I} \subset \FF_V$ is closed under equivalence;
  these objects determine a unique full subcategory which we also call $\FF_{V,I}$.
  On the other hand, if $\uFF_I$ satisfies \cref{Contractible V-sets condition}, this implies directly that $I(\uFF_I)$ has identity arrows.
\end{observation}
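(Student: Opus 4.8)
The plan is to peel off the four successive assertions in order, with the first one---repleteness of pullback-stable subcategories---doing the real work. The only genuine subtlety is that pullbacks are determined only up to isomorphism, so ``stable under arbitrary pullbacks'' (i.e.\ \cref{Restriction stable condition}) must be read as: for every cartesian square in $\FF_{\cT}$, if the leg being pulled back lies in $I$, then so does the resulting parallel leg. Under this (standard) reading the repleteness claim is a one-square diagram chase, and everything else is bookkeeping.

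\emph{Repleteness.} Let $\phi\colon X \to Y$ be an isomorphism in $\FF_{\cT}$ with $Y$ an object of $I$, so that $\mathrm{id}_Y \in I$. The square having $\phi$ along its top and left edges and $\mathrm{id}_Y$ along its bottom and right edges commutes, and its comparison map to the canonical pullback $Y \times_Y Y \simeq Y$ is exactly $\phi$, hence an isomorphism; so the square is cartesian. Its right leg $\mathrm{id}_Y$ lies in $I$, so pullback-stability places its left leg $\phi$ in $I$. If instead only $X$ is an object of $I$, then pulling $\mathrm{id}_X$ back along $\phi^{-1}\colon Y \to X$ produces $\mathrm{id}_Y$, so $\mathrm{id}_Y \in I$ and we reduce to the previous case; in particular the objects of $I$ form an isomorphism-closed class. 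Combining this with closure under composition, $I$ also contains every isomorphism of arrows: if $f\colon A \to B$ lies in $I$ and $f' = \beta f \alpha$ with $\alpha, \beta$ isomorphisms, then $\alpha$ has target $A$ and $\beta$ has source $B$, both objects of $I$, so $\alpha, \beta \in I$ and hence $f' \in I$.

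\emph{Induction respects equivalences, and the consequence.} Under the identification $\FF_V \simeq \FF_{\cT,/V}$, the functor $\Ind_V^{\cT}$ is the forgetful functor $\FF_{\cT,/V} \to \FF_{\cT}$, which carries any equivalence of $V$-sets to an equivalence in $\FF_{\cT}$ compatible with the structure maps to $V$; this yields the asserted equivalence $\Ind_V^{\cT} S \simeq \Ind_V^{\cT} S'$ over $V$. Now if $I$ is pullback-stable and $S \in \uFF_I$, then $\Ind_V^{\cT} S \to V$ lies in $I$, and the displayed equivalence exhibits the arrow $\Ind_V^{\cT} S' \to V$ as isomorphic to it compatibly with the maps to $V$, i.e.\ via an isomorphism of arrows; so the repleteness paragraph gives $\Ind_V^{\cT} S' \to V \in I$, i.e.\ $S' \in \uFF_I$. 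Hence the class $\FF_{V,I} \subset \FF_V$ is closed under equivalence, and therefore cuts out a unique full subcategory.

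\emph{Identities.} Finally, assume $\uFF_I$ satisfies \cref{Contractible V-sets condition}. For every orbit $U$ with $\FF_{I,U} \neq \emptyset$ we then have $*_U \in \FF_{I,U}$, and since $*_U$ corresponds to $\mathrm{id}_U$ under $\FF_U \simeq \FF_{\cT,/U}$, this says $\mathrm{id}_U = (\Ind_U^{\cT} *_U \to U) \in I(\uFF_I)$; as every object of $\FF_{\cT}$ is a coproduct of orbits and $I(-)$ is assembled from coproducts of such arrows, every identity occurring in $I(\uFF_I)$ arises this way. I do not expect a real obstacle here; the only point that genuinely demands care throughout is the ``up to isomorphism'' issue flagged in the first paragraph, which is exactly what the repleteness assertion exists to absorb.
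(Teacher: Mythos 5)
Your proof is correct and mirrors the paper's own terse reasoning; the pullback-of-identity-along-identity square is exactly the standard argument that the paper leaves implicit when asserting repleteness (which it credits Hill for in the acknowledgements but does not spell out). The only place to tighten is the final paragraph: to see that each orbit summand $U$ of a \emph{source} of an arrow in $I(\uFF_I)$ satisfies $\FF_{I,U}\neq\emptyset$ (so that $*_U \in \FF_{I,U}$ applies) one needs that $\uFF_I$ is restriction-stable---e.g.\ $\Res_U^{V}S \in \FF_{I,U}$ is nonempty for $U\in\Orb(S)$---which is implicit in $\uFF_I$ being a full $\cT$-subcategory but is elided by ``every object of $\FF_{\cT}$ is a coproduct of orbits.''
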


\begin{observation}\label{Restriction stable condition observation}
    By definition, the restriction functor $\Res_V^W\cln \FF_W \rightarrow \FF_V$ is implemented by the pullback
    \[
        \begin{tikzcd}
            \Ind_{V}^{\cT} \Res_V^{W} S \arrow[r] \arrow[d] \arrow[rd, phantom, "\lrcorner" very near start]
            & \Ind_W^{\cT} S \arrow[d]\\
            V \arrow[r]
            & W
        \end{tikzcd}
    \]
    thus $I$ satisfies \cref{Weak restriction stable condition} if and only if $\Res^W_V \FF_{W,I} \subset \FF_{V,I}$ for all maps $V \rightarrow W$;
    in particular, in this case, $\cbr{\FF_{V,I}}_{V \in \cT}$ corresponds with a unique full $\cT$-subcategory $\uFF_I \subset \uFF_{\cT}$.
\end{observation}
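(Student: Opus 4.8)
The plan is to treat this as an unwinding of the equivalences $\FF_V\simeq\FF_{\cT,/V}$. Under these, $\Ind_V^{\cT}\cln\FF_V\to\FF_{\cT}$ becomes the composite of $\FF_V\simeq\FF_{\cT,/V}$ with the forgetful functor to $\FF_{\cT}$, and $\Res_V^W$ along a map $f\cln V\to W$ in $\cT$ becomes the pullback functor $f^*\cln\FF_{\cT,/W}\to\FF_{\cT,/V}$, i.e. the right adjoint of $f_!=\Ind$ in the push-pull adjunction $f_!\dashv f^*$ recalled before \cref{Ind atomicness}. Granting this, the first claim is immediate: for $S\in\FF_W$ the object $\Res_V^W S\in\FF_V$ is by definition $V\times_W\Ind_W^{\cT}S$ equipped with its canonical projection to $V$, and that projection is exactly $\phi_{\Res_V^W S}\cln\Ind_V^{\cT}\Res_V^W S\to V$; so the displayed square is literally the defining Cartesian square of $f^*\phi_S$. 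The one point here that is not purely formal is this identification of $\Res$ and $\Ind$ with $f^*$ and $f_!$ compatibly with $\FF_V\simeq\FF_{\cT,/V}$, but it is precisely the push-pull adjunction already recalled, so I expect no real obstacle.

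Next I would check the two implications. Suppose $I$ satisfies \cref{Weak restriction stable condition} and $S\in\FF_{W,I}$, i.e. $\phi_S\in I$. For any map $V\to W$ in $\cT$, the displayed square realizes $\Ind_V^{\cT}\Res_V^W S\to V$ as a pullback of $\phi_S$ along a map with $V,W\in\cT$, so \cref{Weak restriction stable condition} gives $\Ind_V^{\cT}\Res_V^W S\to V\in I$, i.e. $\Res_V^W S\in\FF_{V,I}$; hence $\Res_V^W\FF_{W,I}\subset\FF_{V,I}$ for every $V\to W$. Conversely, suppose $\Res_V^W\FF_{W,I}\subset\FF_{V,I}$ for all maps $V\to W$ in $\cT$, and take a Cartesian square as in \eqref{Restriction along transfer} with $U,V\in\cT$ and $\alpha\cln T\to V$ in $I$. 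Writing $S\in\FF_{V,I}$ for the $V$-set with $\phi_S=\alpha$ (so $S\in\FF_{V,I}$ by \cref{Defn of FI}), the first part identifies $\alpha'\cln T\times_V U\to U$ with $\Ind_U^{\cT}\Res_U^V S\to U$; since $\Res_U^V S\in\FF_{U,I}$ by hypothesis, $\alpha'\in I$, which is exactly \cref{Weak restriction stable condition}.

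Finally, for the last sentence I would combine \cref{Automorphism condition observation} — which gives that each $\FF_{V,I}\subset\FF_V$ is closed under equivalence once $I$ is pullback-stable — with the restriction-closedness of $\cbr{\FF_{V,I}}_{V\in\cT}$ just established, and invoke the identification of full $\cT$-subcategories of $\uFF_{\cT}$ with equivalence- and restriction-closed families of full subcategories $\FF_{V,I}\subset\FF_V$ from \cite{Shah2}, which then produces the claimed $\uFF_I\subset\uFF_{\cT}$. Modulo the bookkeeping in the first paragraph, everything here is formal.
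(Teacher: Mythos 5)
Your proposal is correct and follows essentially the same route as the paper: the observation rests on identifying $\Res_V^W$ with the pullback functor $f^*$ via the push-pull adjunction $f_!\dashv f^*$, reading off both directions of the equivalence from the displayed Cartesian square, and then combining equivalence-closure (from \cref{Automorphism condition observation}) with restriction-stability and the identification of full $\cT$-subcategories from \cite{Shah2}. No gaps.
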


The following is fundamental to passing between weak indexing categories and weak indexing systems.
\begin{observation}\label{Composition observation}
  Let $(T_U) \in \FF_{S}$ be an $S$-tuple of elements of $\uFF_{\cT}$ for some $S \in \FF_{V}$.
  Then, the indexed coproduct of $(T_U)$ corresponds with the composite arrow 
  \begin{equation}\label{Composition equation}
    \Ind_V^{\cT} \coprod_{U}^S T_U = \coprod_{U \in \Orb(S)} \Ind_V^{\cT} \Ind_U^{V} T_U =  \coprod_{U \in \Orb(S)} \Ind_U^{\cT} T_U  \rightarrow \Ind_V^{\cT} S \rightarrow V;
    \end{equation}
    in particular, if $I \subset \FF_{\cT}$ is a subcategory satisfying \cref{Windex segal condition} and $(T_U)$ and $S$ are $I$-admissible, both arrows in \cref{Composition equation} lie in $I$, so the structure map of $\coprod_U^S T_U$ is in $I$, i.e. $\coprod_U^S T_U \in \uFF_I$.
    In other words, \cref{Windex segal condition} and the condition that $I \subset \FF_{\cT}$ is a subcategory together imply that $\uFF_I$ satisfies \cref{Self-indexed coproducts condition}.
 
    On the other hand, if $I \subset \FF_{\cT}$ is a subgraph satisfying \cref{Windex segal condition} such that $\uFF_{I}$ satisfies \cref{Self-indexed coproducts condition}, then taking coproducts of \cref{Composition equation} shows that $I$ is closed under composition.
    If $I$ additionally has identity arrows (e.g. if $\uFF_I$ satisfies \cref{Contractible V-sets condition}), this implies that $I \subset \FF_{\cT}$ is a subcategory.
\end{observation}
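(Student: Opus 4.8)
The plan is to establish the displayed identity \eqref{Composition equation} by unwinding definitions, then read it backwards for the first implication and forwards for the converse. For \eqref{Composition equation} itself: inside $\uFF_{\cT}$, induction $\Ind_U^V\cln \FF_U \to \FF_V$ along $f\cln U \to V$ is postcomposition with $f$ (being left adjoint to the pullback $\Res_U^V = f^*$), while $\Ind_V^{\cT}\cln \FF_V \simeq \FF_{\cT,/V} \to \FF_{\cT}$ is the forgetful functor sending a $V$-set to its total space. Under these identifications $\Ind_V^{\cT}\Ind_U^V T_U$ and $\Ind_U^{\cT}T_U$ are each just $T_U \in \FF_{\cT}$, which gives the middle equality, and the left equality is coproduct-preservation of $\FF_{\cT,/V} \to \FF_{\cT}$ applied to $\coprod_U^S T_U \deq \coprod_{U\in\Orb(S)}\Ind_U^V T_U$; the right-hand composite in \eqref{Composition equation} is then visibly $\coprod_U(T_U \to U)$ followed by $\phi_S\cln \coprod_{U\in\Orb(S)}U = \Ind_V^{\cT}S \to V$, which is precisely the structure map $\phi_{\coprod_U^S T_U}$. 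So the indexed coproduct ``is'' this composite; this step needs no hypothesis beyond the orbit decomposition $S = \coprod_{U\in\Orb(S)}U$ and is purely formal.

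For the first implication, suppose $I \subset \FF_{\cT}$ is a subcategory satisfying \cref{Windex segal condition}, fix $S \in \FF_{I,V}$, and fix $T_U \in \FF_{I,U}$ for each $U \in \Orb(S)$. Then $\phi_S \in I$ by definition of $\FF_{I,V}$, each $\Ind_U^{\cT}T_U \to U$ lies in $I$, and \cref{Windex segal condition} — iterated over the finitely many orbits, as in \cref{Reduction to maps to orbits observation} — puts $\coprod_U \Ind_U^{\cT}T_U \to \Ind_V^{\cT}S$ in $I$; composing with $\phi_S$, which is legal since $I$ is closed under composition, and invoking \eqref{Composition equation}, we get $\phi_{\coprod_U^S T_U} \in I$, i.e. $\coprod_U^S T_U \in \FF_{I,V}$. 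Existence of the indexed coproduct is free because $\uFF_{\cT}$ strongly admits finite indexed coproducts. This is exactly \cref{Self-indexed coproducts condition} for $\uFF_I$.

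For the converse, suppose $I$ is a subgraph satisfying \cref{Windex segal condition} with $\uFF_I$ closed under self-indexed coproducts, and take composable arrows $R \xrightarrow{h} T \xrightarrow{f} S$ in $I$. Decomposing $S = \coprod_j V_j$ into orbits and pulling back, \cref{Windex segal condition} together with disjunctiveness of $\FF_{\cT}$ reduces us to the case $S = V \in \cT$; then $T \in \FF_{I,V}$, and writing $T = \coprod_{U\in\Orb(T)}U$ and $R_U \deq R \times_T U$, \cref{Windex segal condition} again gives $R_U \in \FF_{I,U}$ for all $U$. Closure under self-indexed coproducts yields $\coprod_U^T R_U \in \FF_{I,V}$, and by \eqref{Composition equation} its structure map is exactly $fh$; hence $fh \in I$, so $I$ is closed under composition, and if in addition $I$ has identities — automatic when $\uFF_I$ satisfies \cref{Contractible V-sets condition}, cf.\ \cref{Automorphism condition observation} — then $I$ is a subcategory. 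The main obstacle, such as it is, lies in the bookkeeping of this last reduction: one must verify that pulling a composite $R \to T \to S$ back along an orbit summand $V_j \hookrightarrow S$ exhibits it again as a composite of maps forced into $I$ by \cref{Windex segal condition} (using that $T \times_S V_j \hookrightarrow T$ is a summand inclusion and $R\times_S V_j \simeq R \times_T(T\times_S V_j)$), so that the orbit-wise conclusions reassemble via \cref{Windex segal condition}; the rest is formal.
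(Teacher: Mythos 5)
Your proposal is correct and follows essentially the same route as the paper's inline argument: identify the structure map of $\coprod_U^S T_U$ with the composite in \eqref{Composition equation} via coproduct-preservation of $\Ind_V^{\cT}$, read it one way for closure under self-indexed coproducts, and the other way (reassembling over orbits via \cref{Windex segal condition}) for closure under composition. You merely spell out the orbit-wise bookkeeping that the paper compresses into ``taking coproducts of \eqref{Composition equation},'' and that bookkeeping checks out.
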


We are now ready to verify that $I(-)$ and $\uFF_{(-)}$ restrict to maps between $\wIndSys_{\cT}$ and $\wIndCat_{\cT}$.

\begin{proposition}\label{Windex to windcat prop}
  If $\cC \subset \uFF_{\cT}$ is a weak indexing system, then $I(\cC)$ is a weak indexing category.
\end{proposition}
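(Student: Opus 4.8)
The plan is to leverage the near-inverse relationship between $I(-)$ and $\uFF_{(-)}$ recorded in \cref{They are inverse observation}. Since $I(\cC)$ is, by construction, a subgraph of $\FF_{\cT}$ built as disjoint unions of $\Ind$-maps of sets lying in $\cC$, it satisfies \cref{Weak windex segal condition}, so \cref{They are inverse observation} gives $\uFF_{I(\cC)} = \cC$; thus it suffices to upgrade $I(\cC)$ from a subgraph satisfying the (weak) Segal condition to a pullback-stable subcategory, i.e.\ to check that $I(\cC)$ contains the identities of its objects, is closed under composition, and satisfies \cref{Restriction stable condition}. Establishing \cref{Weak windex segal condition} first is what licenses the reductions of \cref{Reduction to maps to orbits observation,Pullback along orbits remark,Restriction stable condition observation} without circularity.

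First I would check \cref{Windex segal condition} for $I(\cC)$: closure under disjoint unions of arrows is immediate from the definition $I(\cC) = \{\coprod_i \Ind_{V_i}^{\cT} S_i \to V_i\}$, and conversely disjunctiveness of $\FF_{\cT}$ identifies the orbits of $S \sqcup S'$ with those of $S$ together with those of $S'$, so an arrow $T \sqcup T' \to S \sqcup S'$ in $I(\cC)$ restricts over the orbits of $S$ (resp.\ $S'$) to exhibit $T \to S$ (resp.\ $T' \to S'$) in $I(\cC)$; this is exactly \cref{Weak windex segal condition}, equivalent to \cref{Windex segal condition} by \cref{Reduction to maps to orbits observation}. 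Next, for identities and composition: since $\cC$ is a $\cT$-weak indexing system it satisfies \cref{Contractible V-sets condition}, so by \cref{Automorphism condition observation} (applied with $\uFF_{I(\cC)} = \cC$) the subgraph $I(\cC)$ has identity arrows — concretely, every object of $I(\cC)$ is a coproduct of orbits $W$ with $\cC_W \neq \emptyset$, using restriction-stability of $\cC$ to handle the orbits appearing in source objects, whence $*_W \in \cC_W$ and the identity is admissible. And since $\cC$ satisfies \cref{Self-indexed coproducts condition}, the second paragraph of \cref{Composition observation} shows the subgraph $I(\cC)$, which satisfies \cref{Windex segal condition}, is closed under composition; together with identities, $I(\cC) \subset \FF_{\cT}$ is a subcategory.

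It remains to verify \cref{Restriction stable condition}. By \cref{Pullback along orbits remark}, in the presence of \cref{Windex segal condition} this is equivalent to \cref{Weak restriction stable condition}, and \cref{Restriction stable condition observation} identifies \cref{Weak restriction stable condition} for $I(\cC)$ with the containment $\Res_V^W \FF_{W,I(\cC)} \subset \FF_{V,I(\cC)}$ for all maps $V \to W$ in $\cT$; since $\FF_{(-),I(\cC)} = \cC_{(-)}$, this is precisely the assertion that $\cC \subset \uFF_{\cT}$ is restriction-stable, which holds because $\cC$ is by hypothesis a full $\cT$-subcategory. Hence $I(\cC)$ is a weak indexing category.

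The work here is genuinely light — essentially all the substance is already packaged into \cref{They are inverse observation,Automorphism condition observation,Restriction stable condition observation,Composition observation} — so I do not expect a serious obstacle. The one subtlety worth spelling out carefully is that $I(\cC)$ need not be wide (it only sees objects built from orbits $W$ with $\cC_W \neq \emptyset$), so ``$I(\cC)$ has identity arrows'' must be read as a claim about the identities of its \emph{own} objects; verifying it cleanly uses both \cref{Contractible V-sets condition} and the restriction-stability of $\cC$, to ensure no orbit with empty $\cC$-value sneaks into a source object.
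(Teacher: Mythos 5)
Your proof is correct and follows essentially the same route as the paper's: it verifies \cref{Windex segal condition} from the construction, uses \cref{Automorphism condition observation,Composition observation} to get identity arrows and closure under composition, and uses \cref{Restriction stable condition observation} for \cref{Weak restriction stable condition}. The only difference is that the paper first invokes \cref{Slice theorem} to reduce to the case that $\cT$ has a terminal object; as you implicitly observe, that reduction is dispensable here since each of the cited observations already holds over a general orbital $\cT$, and your slightly more explicit accounting of which objects carry identity arrows (using restriction-stability of $\cC$ to see that every orbit appearing in a source of $I(\cC)$ has nonempty $\cC$-value) is a welcome clarification of a point the paper leaves to \cref{Automorphism condition observation}.
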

\begin{proof}
  By \cref{Slice theorem}, we may assume that $\cT$ has a terminal object.
  By \cref{Reduction to maps to orbits observation,Pullback along orbits remark}, it suffices to verify that $I \subset \FF_{\cT}$ is a subcategory satisfying \cref{Weak windex segal condition,,Weak restriction stable condition}.
  \cref{Weak restriction stable condition} is verified by \cref{Restriction stable condition observation};
  \cref{Weak windex segal condition} follows immediately from construction;
 \cref{Composition observation} verifies that $I \subset \FF_{\cT}$ is a subcategory.
\end{proof}

\begin{proposition}\label{Windcat to windex prop}
  If $I \subset \FF_{\cT}$ is a weak indexing category and $\cT$ has a terminal object, then $\uFF_I$ is a weak indexing system.
\end{proposition}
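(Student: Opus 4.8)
The plan is to verify directly that the collection $\uFF_I$ of $I$-admissible $V$-sets from \cref{Defn of FI} is a full $\cT$-subcategory satisfying \cref{Contractible V-sets condition,Self-indexed coproducts condition}; every step assembles an observation already recorded above, so no new idea is needed and the argument is essentially bookkeeping.

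First I would show that $\uFF_I \subset \uFF_{\cT}$ is a full $\cT$-subcategory. Since a weak indexing category is pullback-stable by \cref{Restriction stable condition}, it is replete, so \cref{Automorphism condition observation} shows each $\FF_{I,V} \subset \FF_V$ is closed under equivalence of $V$-sets and thus spans a genuine full subcategory. By \cref{Restriction stable condition observation}, pullback-stability of $I$ is precisely the assertion that $\Res_V^W \FF_{I,W} \subset \FF_{I,V}$ for every map $V \to W$, so the value categories $\FF_{I,V}$ are restriction-stable and therefore glue to a full $\cT$-subcategory $\uFF_I$ (see \cite{Shah2}).

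Next, for \cref{Contractible V-sets condition}: suppose $\FF_{I,V} \neq \emptyset$ and choose any $S \in \FF_{I,V}$, so the structure map $\phi_S\colon \Ind_V^{\cT} S \to V$ lies in $I$. Then $V$ is an object of the subcategory $I$, being the target of $\phi_S$, so $\mathrm{id}_V \in I$; but $\mathrm{id}_V$ is exactly the structure map $\Ind_V^{\cT} *_V \to V$, whence $*_V \in \FF_{I,V}$. Then \cref{Self-indexed coproducts condition} is immediate from \cref{Composition observation}: given $S \in \FF_{I,V}$ and an $S$-tuple $(T_U)$ of $I$-admissible sets, the structure map of $\coprod_U^S T_U$ is the composite $\coprod_{U \in \Orb(S)} \Ind_U^{\cT} T_U \to \Ind_V^{\cT} S \to V$ of \cref{Composition equation}, whose two factors lie in $I$ by $I$-admissibility of $S$ and of the $T_U$ together with \cref{Windex segal condition}; since $I$ is closed under composition, this composite lies in $I$, so $\coprod_U^S T_U \in \FF_{I,V}$, i.e. $\uFF_I$ is closed under self-indexed coproducts.

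I do not expect a real obstacle: the only subtle point is that the structure map of an indexed coproduct coincides with the composite arrow of \cref{Composition equation}, which is precisely the unwinding carried out in \cref{Composition observation}. The hypothesis that $\cT$ has a terminal object is not essential to any of the three steps; for general orbital $\cT$ one could instead reduce the whole statement to this case via \cref{Slices are windexes prop,Slice theorem}, exactly as in the proof of \cref{Windex to windcat prop}.
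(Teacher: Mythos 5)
Your proof is correct and follows the same route as the paper: \cref{Automorphism condition observation,Restriction stable condition observation} to get a full $\cT$-subcategory, the identity-arrow argument for \cref{Contractible V-sets condition}, and \cref{Composition observation} for \cref{Self-indexed coproducts condition}. Your closing remark that the terminal-object hypothesis could be dropped via \cref{Slices are windexes prop,Slice theorem} is also accurate — that reduction is exactly how the paper removes the assumption in \cref{Windexcat is windex prop}.
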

\begin{proof}
  \cref{Automorphism condition observation,Restriction stable condition observation} verify that $\uFF_I \subset \uFF_{\cT}$ is a full $\cT$-subcategory, and the fact that the identity arrow on $V$ corresponds with the contractible $V$-set implies that whenever $\uFF_{I,V} \neq \emptyset$ (i.e. $V \in I$), $*_V \in \uFF_{I,V}$.
  Thus it suffices to verify that $\uFF_I$ is closed under self-indexed coproducts;
  this is \cref{Composition observation}.
\end{proof}

Having done this, we're poised to conclude that $I(-)$ and $\uFF_{-}$ are inverse equivalences.
\begin{proposition}\label{Windexcat is windex prop}
  The above constructions descend to inverse isomorphisms of $\cT$-posets 
  \[
    I\colon \uwIndSys_{\cT} \longrightleftarrows \uwIndCat_{\cT}\colon \uFF_{(-)};
  \]
  in particular, $\Gamma$ yields inverse isomorphisms $I\colon \wIndSys_{\cT} \rightleftarrows \wIndCat_{\cT}\colon \uFF_{(-)}$.
\end{proposition}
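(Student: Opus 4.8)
The plan is to prove the $\cT$-poset statement first and deduce the global one by applying $\Gamma$. For each $V \in \cT$, the slice $\cT_{/V}$ has a terminal object $*_V = (V = V)$, so \cref{Windcat to windex prop} applies over $\cT_{/V}$; together with \cref{Windex to windcat prop} (which needs no terminal object), this shows that the two constructions restrict to maps $I(-)\cln \wIndSys_V \to \wIndCat_V$ and $\uFF_{(-)}\cln \wIndCat_V \to \wIndSys_V$ at each $V$-value, monotone by inspection of their definitions in \cref{Defn of I,Defn of FI}. I would then check that these $V$-values intertwine the restriction functors $(-)_V$ of the ambient $\cT$-posets $\uSub_{\uCat_{\cT}}(\uFF_{\cT})$ and $\uFullSub_{\cT}(\uFF_{\cT})$; this is an unwinding of definitions, using for a map $W \to V$ that $\Ind_W^{\cT}$ factors through $\Ind_V^{\cT}$ and that pullback along $W \to V$ preserves pullbacks, exactly as in \cref{Restriction stable condition observation,Composition observation}. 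Granting this, the value-wise maps assemble into $\cT$-functors $I\cln \uwIndSys_{\cT} \to \uwIndCat_{\cT}$ and $\uFF_{(-)}\cln \uwIndCat_{\cT} \to \uwIndSys_{\cT}$.

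Next I would show these two $\cT$-functors are mutually inverse, which reduces to the claim on $V$-values, namely \cref{They are inverse observation} applied over each $\cT_{/V}$: for a weak indexing system $\cC$ the collection $\uFF_{I(\cC)}$ has $V$-value $\FF_{V,I(\cC)} = \cC_V$, hence $\uFF_{I(\cC)} = \cC$, while for a weak indexing category $I$ the Segal condition \cref{Windex segal condition} is available, so $I(\uFF_I) = I$. Thus $I$ and $\uFF_{(-)}$ are inverse isomorphisms of $\cT$-posets.

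Applying the limit functor $\Gamma$ to these inverse isomorphisms and composing with the isomorphisms $\gamma\cln \wIndSys_{\cT} \xrightarrow{\sim} \Gamma \uwIndSys_{\cT}$ and $\gamma\cln \wIndCat_{\cT} \xrightarrow{\sim} \Gamma \uwIndCat_{\cT}$ of \cref{Slice theorem} and its weak-indexing-system counterpart yields inverse isomorphisms $\wIndSys_{\cT} \rightleftarrows \wIndCat_{\cT}$; a final definition-chase identifies these with $I(-)$ and $\uFF_{(-)}$ as originally defined, since $\gamma$ is literally restriction to slices and the constructions were set up compatibly with it.

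I expect the main obstacle --- really the only point not already packaged into \cref{Windex to windcat prop,Windcat to windex prop,They are inverse observation,Slice theorem} --- to be the verification that forming the weak indexing category of a weak indexing system, and forming its inverse, commute with passage to the slices $\cT_{/V}$. This rests on the interaction of $\Ind_V^{\cT}$ with induction over further slices and with pullbacks, i.e. the same bookkeeping that underlies \cref{Restriction stable condition observation,Composition observation}.
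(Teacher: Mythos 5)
Your proof is correct and takes essentially the same approach as the paper: both reduce to the slices $\cT_{/V}$, which have terminal objects and hence allow \cref{Windcat to windex prop,Windex to windcat prop,They are inverse observation} to apply valuewise; both then verify compatibility with the slice restriction functors (the paper states this as $I(\uFF_{\cT}^V) \simeq I(\uFF_{\cT})_V$ and $\uFF_{I_V} \simeq \uFF_I^V$, which is exactly your ``main obstacle''); and both conclude by applying $\Gamma$ together with \cref{Slice theorem}. The only cosmetic difference is ordering: the paper establishes the slice-compatibility first and then reduces the inverse-isomorphism claim to a point, whereas you establish the valuewise maps first and then check they assemble, but the content is identical.
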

\begin{proof}
  First note that it follows by unwinding definitions that $I(\uFF_{\cT}^V) \simeq I(\uFF_{\cT})_V$ and $\uFF_{I_V} \simeq \uFF_I^V$;
  in particular, applying $I$ and $\uFF_{(-)}$ valuewise yields maps
  \[\begin{tikzcd}[ampersand replacement=\&]
	{\uwIndSys_{\cT}} \& {\uwIndCat_{\cT}} \& {\uwIndSys_{\cT}} \\
	{\wIndSys_{\cT}} \& {\wIndCat_{\cT}} \& {\wIndSys_{\cT}}
	\arrow["I", from=1-1, to=1-2]
	\arrow["{\uFF_{(-)}}", from=1-2, to=1-3]
  \arrow["\Gamma", shorten <=2pt, shorten >=2pt, squiggly={pre length=2pt, post length=2pt}, from=1-2, to=2-2]
	\arrow["I", from=2-1, to=2-2]
	\arrow["{\uFF_{(-)}}", from=2-2, to=2-3]
\end{tikzcd}\]
$\Gamma$ takes isomorphisms of $\cT$-posets to isomorphisms of posets, so it suffices to prove that $I$ and $\uFF_{(-)}$ are inverse isomorphisms of $\cT$-posets, for which it suffices to prove that evaluating at a point yields inverse isomorphisms $I\colon \wIndSys_{\cT_{/V}} \rightleftarrows \wIndCat_{\cT_{/V}}\colon \uFF_{(-)}$;
in particular, we've reduced to proving the second claim when $\cT$ has a terminal object.
In this case, the result follows from \cref{They are inverse observation,Windex to windcat prop,Windcat to windex prop}.
\end{proof}

In particular, \cref{Windexcat is windex prop} removes the terminal object assumption of \cref{Windcat to windex prop}.
We're now poised to complete the proof of \cref{Windexcat is windex main theorem}.

\begin{proof}[Proof of \cref{Windexcat is windex main theorem}]
  The equivalence is \cref{Windexcat is windex prop}.
  What remains is to verify that (IC-n) is equivalent to (IS-n) in \cref{Windex definition,Windexcat is windex main theorem}.
  For $n = i$, this follows immediately by noting that $V \in I \iff \id_V \in I \iff *_V \in \FF_{I,V} \iff \FF_{I,V} \neq \emptyset$.
  For $n = ii$ and $n = iii$, this follows by unwinding definitions using \cref{Weak windex segal condition}.
  For $n = iv$, this follows by noting that the fold map $n \cdot V \rightarrow V$ corresponds with the element $n \cdot *_V \in \FF_V$.
\end{proof}

\subsection{Joins and coinduction}\label{Joins subsection} 
We move on to intrinsic characteristics of $\wIndSys_{\cT}$, beginning with prerequisites.
\subsubsection{Prerequisites on adjunctions and cocartesian fibrations}
Recall that a monotone map $\pi\cln \cC \rightarrow \cD$ is a cocartesian fibration (i.e. a Grothendieck opfibration) if and only if, for all related pairs $D \leq D'$ in $\cD$ and elements $C \in \pi^{-1}(D)$, there is an element $t_D^{D'} C \in \pi^{-1}(D')$ satisfying the property
\[
  \forall \; C' \text{ s.t. } \; D' \leq \pi(C'), \hspace{50pt} C \leq C' \hspace{15pt} \iff \hspace{15pt} t_D^{D'}C \leq C'. 
\]
In this case, by monotonicity of $\pi$, we have the stronger property that
\[
  \forall \; C', \hspace{50pt} C \leq C' \hspace{15pt} \iff \hspace{15pt} \pi(C) \leq \pi(C') \text{ \; and \; } t_{\pi(C)}^{\pi(C')} C \leq C'. 
\]
The upshot is that we may describe $\cC$ using \emph{only} $\cD$, the fibers of $\pi$, and the cocartesian transport maps $t_D^{D'}\colon \pi^{-1}(D) \rightarrow \pi^{-1}(D')$.
In this section, we show that these arise from many adjunctions of posets (i.e. monotone Galois connections).

\begin{lemma}\label{Fully faithful left adjoint lemma}
  Let $\pi\cln \cC \rightarrow \cD$ be a monotone map.
  The following are equivalent.
  \begin{enumerate}[label={(\alph*)}]
    \item $\pi$ possesses a fully faithful left adjoint $L$.
    \item For all $D \in \cD$, the preimage $\pi^{-1}(\cD_{\geq D})$ possesses an initial object $L(D)$ with $\pi L(D) = D$.
    \item For all $D \in \cD$, the fiber $\pi^{-1}(D)$ has an initial object $L(D)$, and $D \leq D'$ implies $L(D) \leq L(D')$.
  \end{enumerate}
  Furthermore, the element $L(D)$ agrees between these three situations.
\end{lemma}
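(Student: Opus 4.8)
The plan is to prove the cycle of implications (a) $\Rightarrow$ (b) $\Rightarrow$ (c) $\Rightarrow$ (a), tracking throughout that the element $L(D)$ produced is always ``the same''. The content is entirely formal poset manipulation once one remembers that an adjunction of posets $L \colon \cD \rightleftarrows \cC \colon \pi$ means $L(D) \le C \iff D \le \pi(C)$, and that $L$ is fully faithful exactly when $\pi L = \id$ (since fully faithfulness of a monotone map just says the unit $D \le \pi L(D)$ is an equality, and the counit $L\pi(C) \le C$ is automatic).

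\emph{First}, for (a) $\Rightarrow$ (b): assume $\pi$ has a fully faithful left adjoint $L$, so $\pi L(D) = D$ for all $D$. Then $L(D) \in \pi^{-1}(D) \subset \pi^{-1}(\cD_{\ge D})$, and if $C \in \pi^{-1}(\cD_{\ge D})$, i.e. $D \le \pi(C)$, the adjunction gives $L(D) \le C$; hence $L(D)$ is initial in $\pi^{-1}(\cD_{\ge D})$ with $\pi L(D) = D$.

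\emph{Next}, (b) $\Rightarrow$ (c): the fiber $\pi^{-1}(D)$ sits inside $\pi^{-1}(\cD_{\ge D})$ and contains its initial object $L(D)$, so $L(D)$ is a fortiori initial in $\pi^{-1}(D)$. For monotonicity, if $D \le D'$ then $\pi^{-1}(\cD_{\ge D'}) \subseteq \pi^{-1}(\cD_{\ge D})$; since $L(D')$ lies in the smaller set and $L(D)$ is initial in the larger one, $L(D) \le L(D')$.

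\emph{Finally}, (c) $\Rightarrow$ (a): define $L$ on objects by sending $D$ to the initial object $L(D)$ of $\pi^{-1}(D)$; monotonicity of $L$ is the hypothesis in (c), so $L$ is a monotone map. To check the adjunction $L(D) \le C \iff D \le \pi(C)$: if $L(D) \le C$ then applying $\pi$ gives $D = \pi L(D) \le \pi(C)$ (using $\pi L(D) = D$, which holds since $L(D) \in \pi^{-1}(D)$); conversely if $D \le \pi(C)$, set $D' = \pi(C)$, so $L(D) \le L(D') \le C$, the first inequality by (c) and the second because $L(D')$ is initial in $\pi^{-1}(D') \ni C$. Thus $L \dashv \pi$, and $\pi L(D) = D$ makes $L$ fully faithful. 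At each stage $L(D)$ was characterized as the initial object of $\pi^{-1}(D)$ (equivalently of $\pi^{-1}(\cD_{\ge D})$), so the three descriptions agree, and in particular agree with the value of the left adjoint in (a). I do not anticipate a genuine obstacle here — the only point requiring a moment's care is the bookkeeping that $\pi L(D) = D$ (rather than merely $D \le \pi L(D)$) is exactly the content of ``fully faithful'' for a monotone map between posets, which is what glues (a) to the fiberwise statements.
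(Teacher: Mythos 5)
Your proof is correct and takes essentially the same approach as the paper's: the same observations (fully faithfulness of $L$ amounts to $\pi L = \id$, $L(D)$ is initial in the preimage/fiber, and monotonicity of $L$ follows from the nesting of preimages) appear in both, with the only difference being that you organize the implications as a cycle (a)$\Rightarrow$(b)$\Rightarrow$(c)$\Rightarrow$(a) while the paper proves the biconditionals (a)$\iff$(b) and (b)$\iff$(c) separately.
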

\begin{proof}
  By definition, $\pi$ has a left adjoint $L$ if and only if there are initial objects in $\pi^{-1}\prn{\cD_{\geq D}}$, which are $L(D)$.
  By the usual category theoretic argument, $L$ is fully faithful if and only if the unit relation $D \leq \pi L(D)$ is an equality, i.e. $L(D) \in \pi^{-1}(D)$;
  hence (a) $\iff$ (b).

  To see (b) $\iff$ (c), first note that 
  \[
   L(D) \leq C' \;\;\;\;\; \iff \;\;\;\;\;\;  D \leq \pi(C') \;\;\;\;\; \iff \;\;\;\;\; L(D) \leq L\pi(C');
  \]
  if (b), then when $D = L(D) \leq L \pi L(D') = D'$, we have $L(D) \leq L(D')$, so (c).
  Conversely, if (c) and $D \leq \pi(C')$, then we have $L(D) \leq C'$, so $L(D)$ is initial in $\pi^{-1}(\cD_{\geq D})$, i.e. (b). 
\end{proof}

\begin{proposition}\label{Cocartesian lemma}
    Suppose $\cC$ has binary joins and $\pi\cln \cC \rightarrow \cD$ is a monotone map which is compatible with binary joins and possesses a fully faithful left adjoint $L$.
    Then, $\pi$ is a cocartesian fibration with
    \[
      t_D^{D'} C = L(D') \vee C.
    \]
\end{proposition}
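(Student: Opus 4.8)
The plan is to verify directly that $t_D^{D'}C\deq L(D')\vee C$ is the cocartesian lift, so that the whole proof reduces to two short checks: that this element lies in the fiber over $D'$, and that it has the required minimality property. First I would record the two facts about the Galois connection $L\dashv\pi$ that get used: the adjunction gives $L(D)\le C'$ if and only if $D\le\pi(C')$, and since $L$ is fully faithful, \cref{Fully faithful left adjoint lemma} guarantees the unit is an equality, i.e. $\pi L(D)=D$ for every $D\in\cD$.

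Then I would fix a relation $D\le D'$ in $\cD$ and an object $C\in\pi^{-1}(D)$; the join $t\deq L(D')\vee C$ exists since $\cC$ has binary joins. Using that $\pi$ preserves binary joins, that $\pi L(D')=D'$, and that $D\le D'$, I compute
\[
  \pi(t)=\pi L(D')\vee\pi(C)=D'\vee D=D',
\]
so $t\in\pi^{-1}(D')$. It then remains to check that for all $C'\in\cC$ one has $t\le C'$ if and only if $C\le C'$ and $D'\le\pi(C')$. For the forward direction, $C\le t\le C'$ gives $C\le C'$, and monotonicity of $\pi$ gives $D'=\pi(t)\le\pi(C')$. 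For the reverse, $D'\le\pi(C')$ yields $L(D')\le C'$ by the adjunction, which together with $C\le C'$ gives $L(D')\vee C\le C'$. This is exactly the universal property characterizing the cocartesian transport of $C$ along $D\le D'$, and the lift is automatically unique in a poset, so $\pi$ is a cocartesian fibration with $t_D^{D'}C=L(D')\vee C$.

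I do not anticipate a serious obstacle; the content is entirely in bookkeeping which hypothesis does what, and I would make sure each is invoked explicitly rather than being left to ``routine.'' Existence of binary joins in $\cC$ is what lets us form $t$ at all; join-preservation of $\pi$ is precisely what lands $t$ in the fiber $\pi^{-1}(D')$ rather than merely somewhere above it; and full faithfulness of $L$ — via \cref{Fully faithful left adjoint lemma} — is what makes $\pi L(D')=D'$, which in turn is what the computation of $\pi(t)$ rests on.
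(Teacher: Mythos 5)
Your proof is correct and takes essentially the same route as the paper's: compute $\pi(L(D')\vee C)=D'$ using join-preservation and $\pi L=\mathrm{id}$, then verify the universal property via the chain $L(D')\vee C\le C'\iff L(D')\le C'\text{ and }C\le C'\iff D'\le\pi(C')\text{ and }C\le C'$. The only cosmetic difference is that you make the use of $\pi(C)=D$ and $D\le D'$ explicit in the fiber computation, where the paper leaves it implicit.
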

\begin{proof}
  First note that
  \[
    \pi(L(D') \vee C) = \pi L(D') \vee \pi(C) = D' \vee \pi(C) = D';
  \]
  in particular, it forms a (monotone) map $\pi^{-1}(D) \rightarrow \pi^{-1}(D')$.
  Moreover, we have
  \[
    L(D') \vee C \leq C' \iff L(D') \leq C' \; \text{ and } \; C \leq C' \iff D' \leq \pi(C') \; \text{ and } \; C \leq C'.
  \]
  In the case $D' \leq \pi(C')$, we have $C \leq C'$ if and only if $L(D') \vee C \leq C'$, as desired.
\end{proof}

\begin{remark}\label{Double adjunction remark}
  If $\pi$ possesses a \emph{right} adjoint $R$, then it is compatible with joins, as left adjoint functors are compatible with colimits.\footnote{We may see this directly in the binary case by noting that, for $X,Y \in \cC$, the universal property for joins is satisfied by
  \[
    \pi(X \vee Y) \leq Z \;\;\; \iff \;\;\; X \vee Y \leq R(Z) \;\;\; \iff \;\;\; X \leq R(Z) \text{ and } Y \leq R(Z) \;\;\; \iff \;\;\; \pi(X) \leq Z \text{ and } \pi(Y) \leq Z.
\]}
  The adjoint functor theorem for posets states the converse;
  indeed, if $\cC$ has arbitrary joins and $\pi$ is compatible with joins, then its right adjoint is computed by
  \[
    R(Z) = \bigvee_{\pi(Y) \leq Z} Y.
  \]
  Thus \cref{Cocartesian lemma} may be weakened to state that whenever $\pi$ has a left and right adjoint and the left is fully faithful, $\pi$ is a cocartesian fibration with transport computed as stated.
  In fact, the left adjoint is fully faithful if and only if the right adjoint is fully faithful \cite[Lem~1.3]{Dyckhoff}, so we may stipulate that either (or both) are fully faithful.
  
  This is manifestly self-dual;
  in this setting, the dual of \cref{Cocartesian lemma} implies that $\pi$ is a cartesian fibration with cartesian transport given by $t_D^{D'} C = R(D) \wedge C$.
  We will not use this explicitly in this text, but the author suggests that homotopical combinatorialists keep this trick in mind.
\end{remark}

\subsubsection{Closures and joins of weak indexing systems}
The following construction will be used often.
\begin{construction}
  Given collections $\cD,\cC \subset \uFF_{\cT}$, we inductively define
$\Cl_{\cD,0}(\cC) \deq \cC$ and
\[
  \Cl_{\cD,n}(\cC)_{V} = \cbr{\coprod_U^S T_U \; \middle| \; (T_U) \in \Cl_{n-1}(\cC)_{S}, \;\; S \in \cD},
\]
with $\Cl_{\cD,\infty}(\cC)\deq \bigcup_n \Cl_{\cD,n}(\cC)$
and $\Cl_{n}(\cC) \deq \Cl_{\cC,n}(\cC)$.
We call this the \emph{$n$-step closure of $\cC$ under $\cD$-indexed coproducts}, or just the \emph{closure of $\cC$ under $\cD$-indexed coproducts} when $n = \infty$.
\end{construction}

\begin{proposition}
  If $\cD$ is a weak indexing system, then the canonical inclusion
  \[
    \Cl_{\cD,1}(\cC) \subset \Cl_{\cD}(\cC)
  \]
  is an equality for all $\cC$.
\end{proposition}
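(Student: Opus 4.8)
The plan is to show that $\Cl_{\cD,1}(\cC)$ is already closed under $\cD$-indexed coproducts, which forces the closure sequence to stabilize after one step. Since $\Cl_{\cD,1}(\cC) \subseteq \Cl_{\cD}(\cC)$ is the canonical inclusion, the content is the reverse inclusion, and as $\Cl_{\cD}(\cC) = \bigcup_n \Cl_{\cD,n}(\cC)$ it suffices to prove $\Cl_{\cD,n}(\cC) \subseteq \Cl_{\cD,1}(\cC)$ for all $n$ by induction. The case $n = 0$ amounts to $\cC \subseteq \Cl_{\cD,1}(\cC)$, which I would get by taking $S = *_V$ and using \cref{Contractible V-sets condition} (noting $\Orb(*_V) = \cbr{V}$ and $\coprod_V^{*_V} T = T$); the case $n = 1$ is vacuous. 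Given $\Cl_{\cD,n-1}(\cC) \subseteq \Cl_{\cD,1}(\cC)$, an element of $\Cl_{\cD,n}(\cC)_V$ has the form $\coprod_U^S T_U$ with $S \in \FF_{\cD,V}$ and $(T_U) \in \Cl_{\cD,n-1}(\cC)_S \subseteq \Cl_{\cD,1}(\cC)_S$, so the inductive step reduces exactly to the claim that $\Cl_{\cD,1}(\cC)$ is closed under $\cD$-indexed coproducts.

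The engine for this is an associativity (``Fubini'') statement for indexed coproducts: for $S \in \FF_V$, a tuple $(S_U)_{U \in \Orb(S)}$ with $S_U \in \FF_U$, and $S' \deq \coprod_U^S S_U$, there is a canonical bijection $\Orb(S') \simeq \coprod_{U \in \Orb(S)} \Orb(S_U)$, and for any tuple $(R_W)_{W \in \Orb(S')}$ whose relevant indexed coproducts exist one has
\[
  \coprod_U^S \prn{ \coprod_W^{S_U} R_W } \;\simeq\; \coprod_W^{S'} R_W.
\]
I would prove this by applying the equivalence $\FF_V \simeq \FF_{\cT,/V}$, $T \mapsto (\Ind_V^{\cT} T \to V)$, under which an $S$-indexed coproduct becomes an ordinary coproduct in $\FF_{\cT}$ by \cref{Composition observation}. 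Applying that formula twice and using the composition law $\Ind_U^{\cT} \Ind_W^U \simeq \Ind_W^{\cT}$ together with associativity of coproducts in $\FF_{\cT}$ identifies $\Ind_V^{\cT}$ of both sides; since $\Ind_V^{\cT}$ is injective on objects, this yields the displayed equality.

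Granting the associativity statement, I would fix $S \in \FF_{\cD,V}$ and $(T_U) \in \Cl_{\cD,1}(\cC)_S$ (the case $S = \emptyset_V$ being immediate), write each $T_U \simeq \coprod_W^{S_U} R_W$ with $S_U \in \FF_{\cD,U}$ and $(R_W) \in \cC_{S_U}$, and set $S' \deq \coprod_U^S S_U$. Since $(S_U) \in \FF_{\cD,S}$ and $S \in \FF_{\cD,V}$, \cref{Self-indexed coproducts condition} gives $S' \in \FF_{\cD,V}$; reindexing the $R_W$ along $\Orb(S') \simeq \coprod_U \Orb(S_U)$ exhibits $(R_W)_{W \in \Orb(S')} \in \cC_{S'}$; and then $\coprod_U^S T_U \simeq \coprod_W^{S'} R_W \in \Cl_{\cD,1}(\cC)_V$ by construction. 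This closes the induction and hence the proof.

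The step I expect to be the main obstacle is the associativity statement for nested indexed coproducts: although it is morally just ``colimits compose'', the combinatorial version requires some care in identifying $\Orb(\coprod_U^S S_U)$ with $\coprod_U \Orb(S_U)$ and in tracking the various structure maps down to $V$; routing everything through the equivalence $\FF_V \simeq \FF_{\cT,/V}$, where indexed coproducts become genuine coproducts in $\FF_{\cT}$, is the device that makes this bookkeeping manageable. Everything else — the induction and the single invocation of \cref{Self-indexed coproducts condition} — is then formal.
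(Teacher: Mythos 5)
Your proof is correct and takes essentially the same approach as the paper: the paper simply cites the associativity lemma (\cref{Associativity of coproducts lemma}) as immediately implying the proposition, leaving the induction and the base case $\cC \subseteq \Cl_{\cD,1}(\cC)$ implicit, and you fill both in. Your derivation of the associativity statement --- pushing indexed coproducts to $\FF_{\cT}$ via $\FF_V \simeq \FF_{\cT,/V}$ and invoking \cref{Composition observation} together with $\Ind_U^{\cT}\Ind_W^U \simeq \Ind_W^{\cT}$ --- is a direct, hands-on variant of the paper's proof, which instead appeals to uniqueness of left adjoints of the composite $\Delta^{\cT}\cln \cC_V \xrightarrow{\Delta^S} \cC_S \xrightarrow{(\Delta^{T_U})} \cC_T$; the paper's version works in an arbitrary $\cT$-category, yours is specific to $\uFF_{\cT}$, but for this proposition only the $\uFF_{\cT}$ case is needed, so they are equivalent here.

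One point your careful treatment of the base case surfaces: obtaining $\cC \subseteq \Cl_{\cD,1}(\cC)$ via $*_V$-indexed coproducts needs $*_V \in \cD_V$, and \cref{Contractible V-sets condition} only grants this when $\cD_V \neq \emptyset$, i.e.\ $V \in c(\cD)$. If $\cC_V \neq \emptyset$ for some $V \notin c(\cD)$, then $\Cl_{\cD,1}(\cC)_V = \emptyset$ while $\Cl_{\cD,\infty}(\cC)_V \supseteq \Cl_{\cD,0}(\cC)_V = \cC_V \neq \emptyset$, so the proposition as stated implicitly assumes $c(\cC) \subseteq c(\cD)$ (compare \cref{Inclusion into closure observation}) or that the union defining $\Cl_{\cD,\infty}$ begins at $n=1$. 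The paper's one-line proof glosses over this; your explicit argument makes the hidden hypothesis visible, which is a virtue rather than a gap in your reasoning.
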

This follows immediately from the following lemma.
\begin{lemma}\label{Associativity of coproducts lemma}
  Fix an orbit $V \in \cT$, a finite $V$-set $S \in \FF_V$, and a finite $S$-set $(T_U) \in \FF_S$.
  Write $T \deq \coprod_U^S T_U$.
  Then, there is a canonical natural equivalence
  \[
    \coprod_X^T (-) \simeq \coprod_U^S \coprod_X^{T_U} (-)
  \]
\end{lemma}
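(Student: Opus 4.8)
The statement is an associativity/transitivity assertion for indexed coproducts, and the natural route is to build the equivalence from the defining adjunctions rather than by manipulating explicit formulas. The key structural fact I would use is that $\coprod_X^T(-)$ is left adjoint to the diagonal $\Delta^T\colon \cC_V \to \cC_T$, as recorded in the remark following the definition of $S$-indexed coproducts (the composite-adjunction picture); likewise $\coprod_U^S(-) \dashv \Delta^S$ and, fiberwise over each $U \in \Orb(S)$, $\coprod_X^{T_U}(-) \dashv \Delta^{T_U}$. Since a left adjoint is determined up to canonical natural equivalence by its right adjoint, it suffices to exhibit a canonical equivalence of right adjoints
\[
  \Delta^T \;\simeq\; \prn{\prod_{U}\Delta^{T_U}} \circ \Delta^S,
\]
where on the right we first apply $\Delta^S\colon \cC_V \to \cC_S = \prod_{U\in\Orb(S)}\cC_V$ and then apply $\Delta^{T_U}\colon \cC_V \to \cC_{T_U}$ in each factor, landing in $\prod_U \cC_{T_U} \simeq \cC_T$ (this last identification using $\FF_{\cT,/S_1\sqcup S_2}\simeq\prod\FF_{\cT,/S_i}$ and $\Orb(T) = \coprod_U \Orb(T_U)$, together with $\cC_T \deq \prod_{X\in\Orb(T)}\cC_V$).

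\textbf{Main steps, in order.} First I would unwind $\cC_T$, $\cC_S$, and $\cC_{T_U}$ via the product decomposition $\cC_T \simeq \prod_{U\in\Orb(S)} \cC_{T_U}$ coming from $\Orb(T) = \coprod_{U\in\Orb(S)}\Orb(T_U)$; this is exactly \cref{Restriction lemma} applied repeatedly, i.e.\ the disjunctiveness of $\FF_{\cT}$, so it is legitimate to identify the target of $\Delta^T$ with the target of $(\prod_U \Delta^{T_U})\circ\Delta^S$. Second, I would check that the two composites of diagonals agree on each orbit-factor: for $X\in\Orb(T_U)\subset\Orb(T)$, the $X$-component of $\Delta^T$ is restriction along the composite $X \to T \to V$, while the $X$-component of $(\prod_U\Delta^{T_U})\circ\Delta^S$ is restriction along $X \to T_U \to V$ followed by... wait, more precisely it is restriction along $X\to T_U$ (the $T_U$-diagonal) precomposed with restriction along $U \to V$ (the $U$-component of $\Delta^S$), so it is restriction along $X \to T_U \to U \to V$; and by construction of $T = \coprod_U^S T_U$ the orbit $X$ sits over $U$ and then over $V$ compatibly, so these two maps $X \to V$ agree. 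Hence the diagonals agree, and passing to left adjoints gives the claimed natural equivalence, with the naturality statement free since it is a natural equivalence of functors obtained by passing through adjunctions.

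\textbf{The main obstacle.} The only real subtlety is bookkeeping: making the identification $\cC_T \simeq \prod_{U\in\Orb(S)}\cC_{T_U}$ genuinely canonical and checking it intertwines the relevant diagonals, i.e.\ that the square relating $\Delta^T$ to the iterated $\Delta^{T_U}\circ\Delta^S$ commutes on the nose after these identifications (not merely up to uncontrolled isomorphism). This rests on the transitivity of the slice equivalences $\FF_{\cT,/T}\simeq\prod_U\FF_{\cT,/T_U}$ and $\FF_{\cT,/T_U}\simeq\FF_{(\cT_{/U})_{/T_U}}$ and on the fact that the structure map $T \to S \to V$ factors each orbit $X$ through its image orbit $U\in\Orb(S)$; once one pins down that every object in sight is a finite coproduct of orbits and all functors in play are coproduct-preserving (restriction by \cref{Restriction lemma}, induction as a left adjoint), the commutation is forced. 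Alternatively—if one prefers to avoid adjoint-chasing—one could argue directly: expand $\coprod_X^T(-) = \coprod_{X\in\Orb(T)}\Ind_X^V(-)$, regroup the index set $\Orb(T) = \coprod_U\Orb(T_U)$, and use $\Ind_X^V \simeq \Ind_U^V\Ind_X^U$ (transitivity of induction, itself an adjoint-uniqueness statement) to reassemble the right-hand side; this is the same proof with the adjunctions made explicit, and I expect the paper's proof to take one of these two essentially equivalent forms.
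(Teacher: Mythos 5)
Your proof takes essentially the same route as the paper's: the paper cites the composite-adjunction picture and observes that $\Delta^T$ factors as $\cC_V \xrightarrow{\Delta^S} \cC_S \xrightarrow{(\Delta^{T_U})} \cC_T$, then passes to left adjoints, which is precisely your main argument. Your write-up is rather more explicit about the orbit-by-orbit bookkeeping and the identification $\cC_T \simeq \prod_{U}\cC_{T_U}$, and your alternative ``expand and regroup'' argument is the same proof with the adjunctions unfolded, exactly as you observe.
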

\begin{proof}
  In view of \cref{Composition observation}, this follows by composition of left adjoints to the composite functor \[
    \Delta^{\cT}\cln \cC_V \xrightarrow{\Delta^S} \cC_S \xrightarrow{\prn{\Delta^{T_U}}} \cC_T.\qedhere
  \]
\end{proof}

\begin{observation}\label{Inclusion into closure observation}
  If $\cD$ satisfies \cref{Contractible V-sets condition} and $c(\cD) \supset c(\cC)$, then by taking $*_V$-indexed coproducts for all $V \in c(\cC)$, we find that $\cC \subset \Cl_{\cD,1}(\cC)$.
  Similarly, if $\cC$ satisfies \cref{Contractible V-sets condition} and $c(\cD) \subset c(\cC)$, by taking indexed coproducts of $(*_U)$, we find that $\cC \subset \Cl_{\cC,1}(\cD)$.
  Combining these, if $\cC$ and $\cD$ satisfy \cref{Contractible V-sets condition} and $c(\cC) = c(\cD)$ (e.g. they each have one color), then we have
  \[
    \cC,\cD \subset \Cl_{\cD,1}(\cC).
  \]
  Furthermore, note that $c(\Cl_{\cD,1}(\cC)) = c(\cC)$ in this situation, so $\Cl_{\cD,1}(\cC)$ satisfies \cref{Contractible V-sets condition}.
\end{observation}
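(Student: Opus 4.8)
The plan is to reduce the entire observation to two tautological descriptions of self-indexed coproducts and then do bookkeeping with the color families. The first identity says that indexing by $*_V$ does nothing: since $\Orb(*_V) = \cbr{\id_V}$ and $\Ind_V^V$ is the identity functor, every $T \in \uFF_{\cT}$ satisfies $T \simeq \coprod_V^{*_V} T$. The second says that every finite $V$-set is the indexed coproduct of the terminal objects of its orbits: induction of $*_U$ along $U \to V$ is $U$ regarded as a $V$-set, so $\coprod_U^S *_U \simeq S$ for all $S \in \FF_V$. These are precisely the ``$*_V$-indexed coproducts'' and ``indexed coproducts of $(*_U)$'' named in the statement, and once they are recorded, each containment is just a matter of confirming that the terminal objects involved lie in the collection being used as the index, respectively as the collection of summands.

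For $\cC \subseteq \Cl_{\cD,1}(\cC)$, I would take $T \in \cC_V$; if $\cC_V \neq \emptyset$ then \cref{Contractible V-sets condition} for $\cC$ gives $V \in c(\cC)$, hence $V \in c(\cD)$ by hypothesis, so $*_V \in \cD_V$, and then $T \simeq \coprod_V^{*_V} T$ displays $T$ as a $\cD$-indexed coproduct of the single $\cC$-object $T$; thus $T \in \Cl_{\cD,1}(\cC)_V$. This proves $\cC \subseteq \Cl_{\cD,1}(\cC)$ whenever $c(\cC) \subseteq c(\cD)$. For the companion containment I would instead use the second identity: given $S \in \cD_V$, \cref{Contractible V-sets condition} for $\cD$ gives $V \in c(\cD)$, and since $\cD$ is a full $\cT$-subcategory, $c(\cD)$ is a $\cT$-family exactly as in \cref{The families} (via \cref{Restriction lemma}), so every orbit $U \in \Orb(S)$ — which receives a $\cT$-map to $V$ — also lies in $c(\cD)$; assuming $c(\cD) \subseteq c(\cC)$, this puts $*_U \in \cC_U$ for all $U \in \Orb(S)$, so $(*_U) \in \cC_S$ and $S \simeq \coprod_U^S *_U$ lies in $\Cl_{\cD,1}(\cC)_V$. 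Hence $\cD \subseteq \Cl_{\cD,1}(\cC)$ whenever $c(\cD) \subseteq c(\cC)$ (this is the ``similar'' statement of the observation). When $c(\cC) = c(\cD)$ both hypotheses hold simultaneously, giving $\cC, \cD \subseteq \Cl_{\cD,1}(\cC)$.

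Finally I would read off $c\bigl(\Cl_{\cD,1}(\cC)\bigr) = c(\cC)$ and \cref{Contractible V-sets condition} for $\Cl_{\cD,1}(\cC)$ directly from what was just proved: the inclusion $c(\cC) \subseteq c\bigl(\Cl_{\cD,1}(\cC)\bigr)$ is immediate from $\cC \subseteq \Cl_{\cD,1}(\cC)$, while if $\Cl_{\cD,1}(\cC)_V \neq \emptyset$ then some $S \in \cD_V$ served as an index, so $\cD_V \neq \emptyset$ and \cref{Contractible V-sets condition} for $\cD$ gives $V \in c(\cD) = c(\cC)$; therefore $*_V \in \cC_V \subseteq \Cl_{\cD,1}(\cC)_V$, which simultaneously yields $c\bigl(\Cl_{\cD,1}(\cC)\bigr) \subseteq c(\cC)$ and \cref{Contractible V-sets condition} for $\Cl_{\cD,1}(\cC)$. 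There is no real obstacle in any of this; the only points needing care are the orientation of the two color-family hypotheses in the two halves of the argument, and the fact that one must genuinely invoke \cref{Contractible V-sets condition} for both $\cC$ and $\cD$ — together with restriction-stability, in order to know $c(-)$ is a $\cT$-family — rather than treating them as arbitrary collections.
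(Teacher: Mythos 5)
Your proof is correct and spells out exactly the argument the paper intends: the two tautologies $T\simeq\coprod_V^{*_V}T$ and $S\simeq\coprod_U^S *_U$ are precisely the "$*_V$-indexed coproducts" and "indexed coproducts of $(*_U)$" the observation gestures at, and your color-family bookkeeping (including the need for restriction-stability so that $c(-)$ is a family, and for \cref{Contractible V-sets condition} on both collections) matches how the observation is actually applied. You have also correctly oriented the second clause — proving $\cD\subset\Cl_{\cD,1}(\cC)$ under $c(\cD)\subset c(\cC)$ — which is the version needed for the "combining these" conclusion, so no further comment is needed.
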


Let $\FSp_{\cT}(\uFF_{\cT}) \subset \mathrm{FullSub}_{\cT}(\uFF_{\cT})$ denote the full subposet of elements satisfying \cref{Contractible V-sets condition}.
\begin{proposition}\label{Iota is left adjoint lemma}
  The fully faithful map $\iota\cln \wIndSys_{\cT} \hookrightarrow \FSp_{\cT}(\uFF_{\cT})$ is right adjoint to $\Cl_\infty$.
\end{proposition}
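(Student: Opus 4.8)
The plan is to verify directly the three ingredients of a monotone Galois connection: that $\Cl_\infty$ carries $\FSp_{\cT}(\uFF_{\cT})$ into $\wIndSys_{\cT}$, that the unit $\cC \subset \Cl_\infty(\cC)$ holds, and that $\Cl_\infty(\cC) \subset \cD$ whenever $\cC \subset \cD$ for a weak indexing system $\cD$; monotonicity of $\Cl_\infty$ is immediate from the construction. Together these yield $\Cl_\infty(\cC) \subset \cD \iff \cC \subset \iota(\cD)$ for all $\cC \in \FSp_{\cT}(\uFF_{\cT})$ and $\cD \in \wIndSys_{\cT}$, which is the asserted adjunction.

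First I would show $\Cl_\infty(\cC)$ is a weak indexing system. Applying \cref{Inclusion into closure observation} with $\cD = \cC$ gives $\cC = \Cl_0(\cC) \subset \Cl_1(\cC)$, and inductively $\Cl_n(\cC) \subset \Cl_{n+1}(\cC)$, so the stages form an increasing chain with union $\Cl_\infty(\cC)$. Restriction-stability of each $\Cl_n(\cC)$ — needed so that $\uFF_{\Cl_\infty(\cC)} \subset \uFF_{\cT}$ is a full $\cT$-subcategory — follows by induction on $n$ from the cocartesian double coset formula of \cref{Double coset formula}: the restriction along $U \rightarrow V$ of an $S$-indexed coproduct of stage-$n$ elements is again an indexed coproduct, over $\Res_U^V S \in \cC$, of restrictions of stage-$n$ elements, which lie at stage $n$ by the inductive hypothesis. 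Condition \cref{Contractible V-sets condition} is essentially free: if $\Cl_\infty(\cC)_V \neq \emptyset$ then $\cC_V \neq \emptyset$ — for $n \geq 1$ any nonempty value $\Cl_n(\cC)_V$ contains an indexed coproduct whose outer index lies in $\cC_V$ — so $*_V \in \cC_V \subset \Cl_\infty(\cC)_V$ by (IS-a) for $\cC$.

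The substantive point — the step I expect to be the main obstacle — is condition \cref{Self-indexed coproducts condition}. I would prove by induction on $n$ that if $S \in \Cl_n(\cC)_V$ and $(T_U) \in \Cl_m(\cC)_S$, then $\coprod_U^S T_U \in \Cl_\infty(\cC)_V$. The base $n=0$ is the definition of $\Cl_{m+1}$. For the step, write $S = \coprod_W^{S'} S_W$ with $S' \in \cC_V$ and $(S_W) \in \Cl_{n-1}(\cC)_{S'}$; since $\Orb(S)$ is the disjoint union of the $\Orb(S_W)$, the associativity equivalence of \cref{Associativity of coproducts lemma} rewrites $\coprod_U^S T_U$ as $\coprod_W^{S'}$ of the inner coproducts $\coprod_{U \in \Orb(S_W)}^{S_W} T_U$; by the inductive hypothesis each inner coproduct lies at a finite stage, hence (the chain being increasing and there being finitely many $W$) at a common stage $k$, so the outer $\coprod_W^{S'}$ over $S' \in \cC_V$ lands in $\Cl_{k+1}(\cC)$. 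Since any $S \in \Cl_\infty(\cC)_V$ and $(T_U) \in \Cl_\infty(\cC)_S$ lie together at some finite stage, this shows $\uFF_{\Cl_\infty(\cC)}$ is closed under self-indexed coproducts, so $\Cl_\infty(\cC) \in \wIndSys_{\cT}$.

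Finally, for the adjunction itself: the unit $\cC \subset \Cl_\infty(\cC)$ is exactly \cref{Inclusion into closure observation} with $\cD = \cC$. For the other half, given $\cC \subset \cD$ with $\cD \in \wIndSys_{\cT}$, I would show $\Cl_n(\cC) \subset \cD$ by induction on $n$: the step is immediate, since any element of $\Cl_{n+1}(\cC)_V$ is an $S$-indexed coproduct with $S \in \cC_V \subset \cD_V$ and the $T_U$ in $\Cl_n(\cC) \subset \cD$, and $\cD$ is closed under self-indexed coproducts. Passing to the union gives $\Cl_\infty(\cC) \subset \cD$, which together with the unit and monotonicity establishes $\Cl_\infty \dashv \iota$.
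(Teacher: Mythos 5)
Your proof is correct and follows essentially the same route as the paper: both reduce (IS-a) to \cref{Inclusion into closure observation} and establish (IS-b) by iterating the associativity equivalence of \cref{Associativity of coproducts lemma} over the filtration $\Cl_0 \subset \Cl_1 \subset \cdots$, then conclude the adjunction by minimality. Two small points where you are more explicit than the paper: you verify restriction-stability of $\Cl_n(\cC)$ (needed for $\Cl_\infty(\cC)$ to be a full $\cT$-subcategory at all), which the paper leaves implicit; and you run the associativity induction on the stage of the outer index $S$, which lands the outer coproduct over $\cC$ directly, whereas the paper inducts on the stage of the $T_U$'s and thereby tacitly relies on the auxiliary fact that a $\Cl_i(\cC)$-indexed coproduct of $\cC$-elements lies in $\Cl_{i+1}(\cC)$. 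Your bookkeeping avoids that sub-lemma, so this is a modest tightening rather than a different argument.
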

\begin{proof}
  If $\Cl_{\infty}(\cC)$ is a weak indexing system, then it is clearly minimal among those containing $\cC$, so it suffices to prove that it's a weak indexing system.
  By \cref{Inclusion into closure observation}, $\Cl_{\infty}(\cC)$ satisfies \cref{Contractible V-sets condition}, so it suffices to verify \cref{Self-indexed coproducts condition}.

  In fact, by \cref{Associativity of coproducts lemma}, we find that $\Cl_i(\cC)$-indexed coproducts of elements of $\Cl_j(\cC)$ are $\Cl_{i+1}(\cC)$-indexed coproducts of elements of $\Cl_{j-1}(\cC)$;
  applying this $j$-many times, we find that $\Cl_i(\cC)$-indexed coproducts of elements in $\Cl_j(\cC)$ are in $\Cl_\infty(\cC)$, so taking a union, we find that $\Cl_\infty(\cC)$ satisfies \cref{Self-indexed coproducts condition}.
\end{proof}

Define the rectified closure
\[
  \widehat \Cl_{\cC,1}(\cD) \deq \Cl_{\cC \cup \FF_{c(\cD)}^{\triv}, 1}(\cD) = \Cl_{\cC,1}(\cD) \cup \cD;
\] 
the equalities follow from \cref{Inclusion into closure observation}, and in particular, when $c(\cC) \supset c(\cD)$ we have $\Cl_{\cC,1}(\cD) = \widehat \Cl_{\cC,1}(\cD)$.
Similarly define $\widehat \Cl_{\cC}(\cD) \deq \cD \cup \Cl_{\cC}(\cD)$ and write $\widehat \Cl_I(-) \deq \widehat \Cl_{\uFF_I}(-)$.
\begin{proposition}\label{Joins prop}
  $\wIndSys_{\cT}$ is a lattice;
  the meets in $\wIndSys_{\cT}$ are intersections, and the joins are
  \[
    \uFF_I \vee \uFF_J = \bigcup_{n \in \NN} \overbrace{\widehat \Cl_I \widehat \Cl_J \cdots \widehat \Cl_I \widehat \Cl_J}^{2n\text{\rm-fold}}(\uFF_{I} \cup \uFF_{J}).
  \]
\end{proposition}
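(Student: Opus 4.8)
The plan is to treat meets and joins separately, and then verify the explicit join formula; the meets and the mere existence of joins will be formal given \cref{Iota is left adjoint lemma}, and all the content lies in matching the join with the displayed expression.

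\emph{Meets.} First I would check that the valuewise intersection $\uFF_I \cap \uFF_J$, formed in $\mathrm{FullSub}_{\cT}(\uFF_{\cT})$, is again a weak indexing system: it is a full $\cT$-subcategory since restriction-stable, equivalence-closed classes of objects are closed under intersection; it satisfies \cref{Contractible V-sets condition} because if $(\uFF_I \cap \uFF_J)_V \neq \emptyset$ then $*_V$ lies in both $\FF_{I,V}$ and $\FF_{J,V}$; and it satisfies \cref{Self-indexed coproducts condition} because any self-indexed coproduct of objects of $\uFF_I \cap \uFF_J$ lands in both $\FF_{I,V}$ and $\FF_{J,V}$ by \cref{Self-indexed coproducts condition} for $\uFF_I$ and for $\uFF_J$. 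Since $\wIndSys_{\cT}$ is a full subposet of $\mathrm{FullSub}_{\cT}(\uFF_{\cT})$ and the intersection is the meet there, it is the meet in $\wIndSys_{\cT}$.

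\emph{Joins.} The union $\uFF_I \cup \uFF_J$ is again a full $\cT$-subcategory satisfying \cref{Contractible V-sets condition}, hence lies in $\FSp_{\cT}(\uFF_{\cT})$; by \cref{Iota is left adjoint lemma}, $\Cl_\infty(\uFF_I \cup \uFF_J)$ is the smallest weak indexing system containing it and so is the join $\uFF_I \vee \uFF_J$. Together with the previous paragraph this shows $\wIndSys_{\cT}$ is a lattice, so it remains only to identify $\Cl_\infty(\uFF_I \cup \uFF_J)$ with the displayed iterated rectified closure. Write $\cC_0 \deq \uFF_I \cup \uFF_J$ and set $L_0 \deq \cC_0$, $L_{2k+1} \deq \widehat\Cl_J(L_{2k})$, $L_{2k+2} \deq \widehat\Cl_I(L_{2k+1})$; since each rectified closure contains its argument, the $L_j$ form an increasing chain, so $K \deq \bigcup_j L_j$ equals the displayed union $\bigcup_n (\widehat\Cl_I \widehat\Cl_J)^n(\cC_0)$, and I must prove $K = \Cl_\infty(\cC_0)$. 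For $K \subseteq \Cl_\infty(\cC_0)$: the weak indexing system $\Cl_\infty(\cC_0)$ contains $\uFF_I$ and $\uFF_J$, hence is closed under $\uFF_I$- and $\uFF_J$-indexed coproducts; since $\widehat\Cl_I(\cD) = \cD \cup \Cl_{\uFF_I,1}(\cD)$ and similarly for $J$ (using that $\uFF_I$, $\uFF_J$ are weak indexing systems), both operations carry the property ``contained in $\Cl_\infty(\cC_0)$'' to itself, so $L_j \subseteq \Cl_\infty(\cC_0)$ by induction on $j$.

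For the reverse inclusion $\Cl_\infty(\cC_0) \subseteq K$ it is enough to show $K$ contains $\cC_0$ and is closed under $\cC_0$-indexed coproducts, for then $\Cl_{\cC_0,n}(\cC_0) \subseteq K$ follows by induction on $n$ and $\Cl_\infty(\cC_0) = \bigcup_n \Cl_{\cC_0,n}(\cC_0)$. Containment is the case $j=0$. The closure property reduces to the key observation that $K$ is closed under $\uFF_I$-indexed coproducts and, separately, under $\uFF_J$-indexed coproducts: given such a coproduct of objects of $K$ along a set $S$, the finitely many objects and indexing data all lie in a common $L_j$ (the chain is increasing and $S$ has finitely many orbits), and after replacing $j$ by $j+1$ to arrange the correct parity, applying $\widehat\Cl_I$ (resp.\ $\widehat\Cl_J$) lands the coproduct in $L_{j+1} \subseteq K$. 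Since any $\cC_0$-indexed coproduct has its indexing set $S$ lying wholly in $\FF_{I,V}$ or wholly in $\FF_{J,V}$, closure under $\cC_0$-indexed coproducts follows. I expect this last paragraph to be the only genuine content: the two points that matter are that one need only verify closure under $\cC_0$-indexed coproducts rather than full self-indexed coproducts, and that the rectification in $\widehat\Cl_I, \widehat\Cl_J$ — which is exactly what makes the tower $L_0 \subseteq L_1 \subseteq \cdots$ increasing — is what licenses the parity bump.
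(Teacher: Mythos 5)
Your proof is correct and fills in, with a careful bidirectional inclusion, exactly the iterated-closure identity that the paper's proof asserts via ``it suffices to note''; you also verify that intersections are weak indexing systems directly rather than appealing to the fact that reflective subposets are closed under meets, but otherwise the route (meets via the adjunction/ambient poset, joins as $\Cl_\infty(\uFF_I \cup \uFF_J)$ via \cref{Iota is left adjoint lemma}, then identification with the iterated rectified closure) matches the paper's.
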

\begin{proof}
  By \cref{Iota is left adjoint lemma}, $\wIndSys_{\cT}$ has meets computed in $\FSp_{\cT}(\uFF_{\cT})$, which are clearly given by intersections.
  Furthermore, \cref{Iota is left adjoint lemma} implies that $\uFF_I \vee \uFF_J = \Cl_\infty(\uFF_{I} \cup \uFF_J)$.
  Thus is suffices to note that, for arbitrary $\cC,\cD,\cE$, we have
  \[
    \widehat \Cl_{\cC \cup \cD,\infty}(\cE) = \bigcup_{n \in \NN} \overbrace{\widehat \Cl_{\cC \cup \FF_{c(\cD)}^{\triv}} \widehat \Cl_{\cD \cup \FF_{c(\cC)}^{\triv}} \cdots \widehat \Cl_{\cC \cup \FF_{c(\cD)}^{\triv}} \widehat Cl_{\cD \cup \FF_{c(\cC)}^{\triv}} }^{2n\text{-fold}}(\cE),
  \]
  and set $\cC = \uFF_I$, $\cD = \uFF_J$, and $\cE = \uFF_I \cup \uFF_J$.
\end{proof}

\begin{remark}
  \cref{Iota is left adjoint lemma} constructs arbitrary meets in $\wIndSys_{\cT}$.
  Furthermore, chains in $\wIndSys_{\cT}$ have joins computed by unions;
  hence $\wIndSys_{\cT}$ is a \emph{complete} lattice.
\end{remark}

\begin{observation}
  Similarly, if $\cC \subset \uFF_{\cT}$ is a collection, then the full $\cT$-subcategory $\widehat \cC$ defined by
  \[
    \widehat \cC_V = \begin{cases}
      \cbr{*_V} \cup \bigcup\limits_{V \rightarrow W} \Res_V^W \cC_W & \cC_V \neq \emptyset,\\
      \emptyset & \cC_V = \emptyset
    \end{cases}
  \]
  is initial among full $\cT$-subcategories containing $\cC$ and satisfying \cref{Contractible V-sets condition}. 
  Combining adjunctions, we find that the fully faithful map $\iota:\wIndSys_{\cT} \hookrightarrow \Coll(\uFF_{\cT})$ possesses a left adjoint $\Cl_\infty(\widehat{-})$, which we write simply as $\Cl_\infty(-)$ for brevity.
\end{observation}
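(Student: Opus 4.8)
The plan is to exhibit this observation as a composite of two adjunctions, most of the substantive work having already been done in \cref{Iota is left adjoint lemma}. First I would prove the asserted description of $\widehat \cC$, which amounts to the statement that $\widehat{(-)}$ is left adjoint to the fully faithful inclusion of $\FSp_{\cT}(\uFF_{\cT})$ into $\Coll(\uFF_{\cT})$. Concretely, I would check: that $\widehat \cC$ is a full $\cT$-subcategory, since each $\widehat \cC_V$ is visibly equivalence-closed and restriction-stable --- given $f \cln V' \rightarrow V$ and $S \in \widehat \cC_V$, one has either $S = *_V$, whence $\Res_{V'}^V S = *_{V'}$, or $S = \Res_V^W T$ with $T \in \cC_W$, whence $\Res_{V'}^V S = \Res_{V'}^W T$ by functoriality of restriction along $V' \rightarrow V \rightarrow W$, and in both cases the result lies in $\widehat \cC_{V'}$; that $\widehat \cC \supseteq \cC$, by taking $W = V$ with the identity map; that $\widehat \cC$ satisfies \cref{Contractible V-sets condition} by construction; and that $\widehat \cC$ is minimal, since any full $\cT$-subcategory $\cD \supseteq \cC$ satisfying \cref{Contractible V-sets condition} must contain every $\Res_V^W T$ (by restriction-stability) and hence every $*_V$ with $\widehat \cC_V \neq \emptyset$ (by \cref{Contractible V-sets condition}), so $\cD \supseteq \widehat \cC$.

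Second, I would compose this with the adjunction $\Cl_\infty \dashv \iota$ of \cref{Iota is left adjoint lemma}, whose right adjoint is the fully faithful inclusion $\wIndSys_{\cT} \hookrightarrow \FSp_{\cT}(\uFF_{\cT})$. A composite of right adjoints is right adjoint to the composite of the corresponding left adjoints, and a composite of fully faithful monotone maps is fully faithful; hence the composite inclusion $\wIndSys_{\cT} \hookrightarrow \FSp_{\cT}(\uFF_{\cT}) \hookrightarrow \Coll(\uFF_{\cT})$ is fully faithful with left adjoint $\Cl_\infty \circ \widehat{(-)}$, which is the assertion.

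The only step that is not purely formal is the verification that $\widehat \cC$ is genuinely a full $\cT$-subcategory, and in particular that it is restriction-stable; this is where the precise support condition in the formula for $\widehat \cC$ does its work, and I expect it to be the sole point requiring genuine care, everything else being bookkeeping with adjunctions already established in the excerpt.
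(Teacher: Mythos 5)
Your overall plan is the right one: factor $\iota\cln \wIndSys_{\cT} \hookrightarrow \Coll(\uFF_{\cT})$ through $\FSp_{\cT}(\uFF_{\cT})$, exhibit $\widehat{(-)}$ as left adjoint to the second inclusion, and compose with the adjunction of \cref{Iota is left adjoint lemma}; a composite of fully faithful right adjoints is again a fully faithful right adjoint, so that step is routine. However, the restriction-stability verification --- which you yourself flag as the one step requiring genuine care --- has a gap. In both of your cases you conclude that $\Res_{V'}^V S$ lies in $\widehat\cC_{V'}$, but with the formula as stated $\widehat\cC_{V'} = \emptyset$ unless $\cC_{V'} \neq \emptyset$, and nothing forces a collection to have $\cC_{V'}$ nonempty just because $\cC_V \neq \emptyset$ and a map $V' \to V$ exists. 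Concretely, over $\cT = \cO_G$ for nontrivial $G$, the collection with $\cC_{[G/G]} = \cbr{*_{[G/G]}}$ and $\cC_{[G/H]} = \emptyset$ for all proper $H$ gives $*_{[G/G]} \in \widehat\cC_{[G/G]}$ while $\widehat\cC_{[G/e]} = \emptyset$, so $\widehat\cC$ is not restriction-stable and in particular is not a full $\cT$-subcategory at all.

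The repair is to replace the case condition ``$\cC_V \neq \emptyset$'' by membership of $V$ in the $\cT$-family generated by the support of $\cC$: set $\widehat\cC_V = \cbr{*_V} \cup \bigcup_{V \to W} \Res_V^W \cC_W$ when there exists some $V \to W$ with $\cC_W \neq \emptyset$, and $\widehat\cC_V = \emptyset$ otherwise. That collection of orbits is downward closed, so with the corrected formula your restriction-stability check goes through verbatim (the target $\widehat\cC_{V'}$ is now guaranteed nonempty whenever $\widehat\cC_V$ is), and the containment $\cC \subset \widehat\cC$, the verification of \cref{Contractible V-sets condition}, and your minimality argument are unaffected: any restriction-stable $\cD \supset \cC$ has $\Res_V^W \cC_W \subset \cD_V$, and whenever some $V \to W$ has $\cC_W \neq \emptyset$ this forces $\cD_V \neq \emptyset$ and thus $*_V \in \cD_V$ by \cref{Contractible V-sets condition}. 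With that emendation the remainder of your argument, including the appeal to \cref{Iota is left adjoint lemma}, is sound.
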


\subsubsection{Principal weak indexing systems}
Given $S \in \FF_V$, let $\FF_{I_S,V}$ be the closure of $\cbr{*_V}$ under $S$-indexed coproducts;
more generally, $\FF_{I_S,W} \deq \bigcup_{W \rightarrow V} \Res^V_W \FF_{I_S,V}$.
Let $\uFF_{I_S}$ be the collection $\prn{\uFF_{I_S}}_W \deq \FF_{I_S,W}$.
\begin{proposition}\label{Element closure prop}
  Given $S \in \FF_V$, we have $\Cl_\infty(\cbr{S}) = \uFF_{I_S}$.
\end{proposition}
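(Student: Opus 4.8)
The plan is to use \cref{Iota is left adjoint lemma} to identify $\Cl_\infty(\cbr{S})$ with the smallest weak indexing system whose $V$-value contains $S$, and then to establish the two inclusions $\uFF_{I_S} \subseteq \Cl_\infty(\cbr{S})$ and $\Cl_\infty(\cbr{S}) \subseteq \uFF_{I_S}$ separately.

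The first inclusion I would prove in the stronger form that $\uFF_{I_S}$ is contained in \emph{every} weak indexing system $\uFF_J$ with $S \in \FF_{J,V}$. Since $\FF_{J,V} \neq \emptyset$, \cref{Contractible V-sets condition} gives $*_V \in \FF_{J,V}$; since $\uFF_J$ is restriction-stable and $S \in \FF_{J,V}$, every restriction $\Res_W^V S$ lies in $\FF_{J,W}$, so forming a $(\Res_W^V S)$-indexed coproduct is an instance of a self-indexed coproduct in $\uFF_J$, and \cref{Self-indexed coproducts condition} shows $\uFF_J$ is closed under all such operations. As $\uFF_{I_S}$ is (unwinding its definition) built up from the terminal sets by restriction and indexed coproduction along restrictions of $S$, it follows that $\uFF_{I_S} \subseteq \uFF_J$; in particular $\uFF_{I_S} \subseteq \Cl_\infty(\cbr{S})$.

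For the reverse inclusion, by the minimality just recalled it is enough to show that $\uFF_{I_S}$ is itself a weak indexing system containing $S$. That it contains $S = \coprod_U^S *_U$ is immediate, and \cref{Contractible V-sets condition} holds because every member of a nonempty value $\FF_{I_S,W}$ is built out of terminal sets over objects admitting a map to $V$ by restrictions and indexed coproducts, so $*_W \in \FF_{I_S,W}$. The substantive point is \cref{Self-indexed coproducts condition}, and I would argue it exactly as in the proof of \cref{Iota is left adjoint lemma}: proceed by induction on the number of indexed-coproduct steps needed to produce an admissible set $R \in \FF_{I_S,W}$. Writing $R \simeq \coprod_U^{S'} T_U$ for the last such step, \cref{Associativity of coproducts lemma} rewrites an arbitrary $R$-indexed coproduct as an $S'$-indexed coproduct of $T_U$-indexed coproducts, each of strictly smaller complexity; the double coset formula of \cref{Double coset formula} is used to commute the restrictions appearing in the generators of $\uFF_{I_S}$ past these indexed coproducts so that the inductive hypothesis applies, and the base case is precisely the defining closure property of $\uFF_{I_S}$.

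The main obstacle is this verification of \cref{Self-indexed coproducts condition}: tracking how restriction interacts with indexed coproducts in the inductive step is the only place where genuine care is needed — indeed, this interplay is exactly what the associativity lemma and the double coset formula were isolated to handle — while the remaining verifications are routine manipulations of the definitions.
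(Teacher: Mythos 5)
Your proposal is correct and takes essentially the same route as the paper: reduce via \cref{Iota is left adjoint lemma} to showing $\uFF_{I_S}$ is a weak indexing system, verify \cref{Contractible V-sets condition} directly from the construction, and prove \cref{Self-indexed coproducts condition} by combining \cref{Associativity of coproducts lemma} (to handle iterated $S$-indexed coproducts over $V$) with the double coset formula of \cref{Double coset formula} (to propagate closure to slices $W \rightarrow V$). The only presentational differences are that you frame the first inclusion as minimality of $\uFF_{I_S}$ among all weak indexing systems containing $S$ rather than just directly noting $\uFF_{I_S} \subset \Cl_\infty(\cbr{S})$, and that you package the closure argument as an explicit induction on generation depth where the paper states the same two closure principles and composes them.
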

\begin{proof}
  First, note that $\cbr{S} \subset \uFF_{I_S} \subset \Cl_\infty(\cbr{S})$.
  By \cref{Iota is left adjoint lemma}, it suffices to prove that $\uFF_{I_S}$ is a weak indexing system.
  By construction, $\uFF_{I_S} \subset \uFF_{\cT}$ is a full $\cT$-subcategory satisfying the property that 
  \[
    *_{W} \in \FF_{I_S,W} \hspace{20pt} \iff \hspace{20pt} \exists f\cln W \rightarrow V \hspace{20pt} \iff \hspace{20pt} \FF_{I_S,W} \neq \emptyset,
  \]
  i.e. it satisfies \cref{Contractible V-sets condition}.
  Hence it suffices to prove that $\uFF_{I_S}$ is closed under self-indexed coproducts.

  \cref{Associativity of coproducts lemma} implies that that if $\cC \subset \uFF_{\cT}$ is closed under $T$-indexed coproducts and $X_U$-indexed coproducts for $(X_U) \in \FF_T$, then $\cC$ is closed under $\coprod_U^T X_U$-indexed coproducts, as they are $T$-indexed coproducts of $X_U$-indexed coproducts;
  hence $\uFF_{I_S}$ is closed under $\FF_{I_S,V}$-indexed coproducts.
  Furthermore, \cref{Double coset formula} implies that if $\cC_W$ is generated under restrictions by $\cC_U$ and $\cC_U$ is closed under $T$-indexed coproducts, then $\cC_W$ is closed under $\Res_W^U T$-indexed coproducts;
  hence $\uFF_{I_S}$ is closed under self-indexed coproducts, as desired.
\end{proof}

\subsubsection{Joins and little disks}
Let $G$ be a finite group and $R$ an orthogonal $G$-representation.
Recall from \cref{EV example} the weak indexing system $\uFF^R$ satisfying $\FF^R_H = \cbr{S \in \FF_H \mid \exists \, H \text{-equivariant embedding } S \hookrightarrow R}.$
\begin{observation}
  If $S \in \FF^R_V$ and $R$ is a subrepresentation of $R'$, then the composite embedding $S \hookrightarrow R \hookrightarrow R'$ witnesses the membership $S \in \FF^{R'}_V$;
  that is, $\uFF^{(-)}$ is \emph{monotone} under inclusions of subrepresentations.
\end{observation}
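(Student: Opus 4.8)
The plan is to unwind \cref{EV example} directly; there is essentially no content beyond a composition of embeddings. Identifying an orbit $V \in \cO_G$ with a homogeneous $G$-set $[G/H]$, membership $S \in \FF^R_V$ means precisely that there is an $H$-equivariant embedding $\iota \colon S \hookrightarrow \Res_H^G R$, where we restrict the orthogonal $G$-representation $R$ along $H \subset G$. The first step is the elementary observation that if $R$ is a subrepresentation of $R'$, then the linear inclusion $j \colon R \hookrightarrow R'$ is $G$-equivariant, hence $\Res_H^G j \colon \Res_H^G R \hookrightarrow \Res_H^G R'$ is an $H$-equivariant embedding.

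Next I would form the composite $\Res_H^G j \circ \iota \colon S \hookrightarrow \Res_H^G R'$. A composite of two embeddings is an embedding and a composite of two $H$-equivariant maps is $H$-equivariant, so this composite witnesses $S \in \FF^{R'}_V$. Since $S$ and $V$ were arbitrary, we conclude $\FF^R_V \subseteq \FF^{R'}_V$ for every $V \in \cO_G$.

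Finally, I would upgrade this to the asserted inequality $\uFF^R \le \uFF^{R'}$ in $\wIndSys_G$. By \cref{EV example}, both $\uFF^R$ and $\uFF^{R'}$ are weak indexing systems, and since a full $\cO_G$-subcategory of $\uFF_G$ is determined by its restriction-stable, equivalence-closed class of objects, the valuewise containments $\FF^R_V \subseteq \FF^{R'}_V$ assemble to a containment of full $\cO_G$-subcategories, which is exactly $\uFF^R \le \uFF^{R'}$ in the embedded subposet $\wIndSys_G \subset \FullSub_G(\uFF_G)$; monotonicity of $R \mapsto \uFF^R$ under inclusions of subrepresentations is then immediate. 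I do not expect any genuine obstacle here — the only point requiring a moment's care is bookkeeping the group-restriction of $R$ to subgroups, for which equivariance of the inclusion $R \hookrightarrow R'$ is automatic.
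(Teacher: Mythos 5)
Your proof is correct and is the same argument the paper gives: the observation's stated justification is exactly that the composite embedding $S \hookrightarrow R \hookrightarrow R'$ witnesses $S \in \FF^{R'}_V$, and your write-up merely spells out the bookkeeping that $G$-equivariance of the inclusion $R \hookrightarrow R'$ restricts to $H$-equivariance. No gap, no difference in strategy.
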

In particular, monotonicity yields relations $\uFF^R,\uFF^{R'} \subset \uFF^{R \oplus R'}$, and hence a relation $\uFF^R \vee \uFF^{R'} \subset \uFF^{R \oplus R'}$.
We verify that this relation is an equality in the following argument;
throughout the argument, when $x \in T$ is an element of an $H$-set, we will write $\brk{x}_H$ for its orbit under the $H$-action.
\begin{proposition}
  For $R,R'$ orthogonal $G$-representations, we have $\uFF^R \vee \uFF^{R'} = \uFF^{R \oplus R'}$. 
\end{proposition}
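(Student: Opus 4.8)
The containment $\uFF^R \vee \uFF^{R'} \subseteq \uFF^{R \oplus R'}$ is already in hand by monotonicity, so the plan is to prove the reverse containment. Since $\uFF^R \vee \uFF^{R'}$ is a weak indexing system, it is closed under self-indexed coproducts and contains both $\uFF^R$ and $\uFF^{R'}$; hence it suffices to exhibit an arbitrary $S \in \FF^{R \oplus R'}_H$ as a self-indexed coproduct of objects of $\uFF^R$ over an object of $\uFF^R$, with the indexed pieces lying in $\uFF^{R'}$.

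First I would fix an $H$-equivariant embedding $\iota\colon S \hookrightarrow R \oplus R'$ and write $\iota(s) = (f(s), g(s))$ for its two coordinates, so that $f\colon S \to R$ and $g\colon S \to R'$ are $H$-equivariant (the latter only as a set map to the underlying space). The image $T \deq f(S) \subseteq R$ is a finite $H$-invariant subset of $R$, hence a finite $H$-set, and the inclusion $T \hookrightarrow R$ witnesses $T \in \FF^R_H$. The surjection $f\colon S \twoheadrightarrow T$ then exhibits $S \simeq \coprod_U^T S_U$, where for $U \in \Orb(T)$ and a chosen $x \in U$ with $K \deq \stab_H(x)$, the fiber $S_U \deq f^{-1}(x)$ is regarded as a finite $K$-set.

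The key local step is to check each $S_U \in \FF^{R'}_K$. For $s \in f^{-1}(x)$ and $k \in K$ one computes $\iota(ks) = (kf(s), kg(s)) = (x, kg(s))$, so $f^{-1}(x)$ is $K$-invariant and $g|_{f^{-1}(x)}\colon f^{-1}(x) \to R'$ is $K$-equivariant; it is injective because $\iota$ is injective and all its values on $f^{-1}(x)$ share first coordinate $x$. Since $\Res^H_K R' = \Res^G_K R'$, this gives a $K$-equivariant embedding $S_U \hookrightarrow \Res^G_K R'$, i.e. $S_U \in \FF^{R'}_K$. Now $T \in \uFF^R \subseteq \uFF^R \vee \uFF^{R'}$ and each $S_U \in \uFF^{R'} \subseteq \uFF^R \vee \uFF^{R'}$, so closure of $\uFF^R \vee \uFF^{R'}$ under self-indexed coproducts yields $S = \coprod_U^T S_U \in \uFF^R \vee \uFF^{R'}$, completing the argument.

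The only genuine obstacle is the equivariance bookkeeping in the middle two steps: confirming that the first-coordinate projection descends to an honest equivariant surjection of $S$ onto a genuine finite $H$-set sitting inside $R$, and that each of its fibers sits $K$-equivariantly inside $R'$. Once those verifications are made, the conclusion is forced purely formally from the fact that the join is a weak indexing system, with no reference to the explicit join formula of \cref{Joins prop} needed.
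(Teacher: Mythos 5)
Your argument is correct and is essentially the paper's own proof: project $S$ onto its image $T \subseteq R$, observe that $T \in \FF^R_H$ and that each fiber over a point $x$ with stabilizer $K$ embeds $K$-equivariantly into $R'$ (the paper translates the fiber of $R \oplus R'$ over $(0,\iota'(x))$ back to $R'$; you write out the coordinate functions $f,g$ directly, which is the same computation), then conclude $S = \coprod_U^T S_U$ lies in the join by closure under self-indexed coproducts. No gaps; just a notational difference from the paper's image-factorization phrasing.
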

\begin{proof}
  By the above argument, it suffices to verify the relation $\uFF^{R \oplus R'} \subset \uFF^R \vee \uFF^R$.
  Let $S \in \FF^{R \oplus R'}_H$ be a finite $H$-set embedding into $R \oplus R'$.
  The composite map $S \rightarrow R \oplus R' \rightarrow R$ possesses an image factorization
  \[\begin{tikzcd}[row sep=small]
	S & {R \oplus R'} \\
	{S_R} & R
	\arrow["{\iota }", hook', from=1-1, to=1-2]
  \arrow["{\; \psi}", two heads, from=1-1, to=2-1]
  \arrow["\; \pi", two heads, from=1-2, to=2-2]
	\arrow["{\iota' = \mathrm{im}(\pi \iota)}"', hook, from=2-1, to=2-2]
\end{tikzcd}\]
  Given $x \in S_R$, note that there is an isomorphism
  \[
    \psi^{-1} \brk{x}_{H} \simeq \Ind_{\stab_H(x)}^H \psi^{-1}(x),
  \]
  where the $\stab_H(x)$ action on $\psi^{-1}(x)$ is restricted from the $H$-action on $S$.
  Furthermore, note that the fiber of $R \oplus R'$ over $(0,\iota'(x))$ is invariant under the $\stab_H(x)$ action and the resulting $\stab_H(x)$-space is taken isomorphically onto $R' \simeq \cbr{0} \oplus R'$ by $(-) - (0,\iota'(x))$;
  thus $\psi^{-1}(x)$ $\stab_H(x)$-equivariantly embeds into $R'$.
  
  To summarize, we may make a choice of an element $x_{K_i}$ in each orbit $\brk{H/K_i} \subset S_R$ and apply the above argument to conclude that $S_R \in \FF^R_H$, that $\psi^{-1}(x_{K_i}) \in \FF^{R'}_{K_i}$, and that 
  \[
    S 
    = \cmpctfy \coprod_{[H/K_i] \in \Orb(S_R)} \cmpctfy \psi^{-1}(\brk{H/K_i}) 
    =  \cmpctfy \coprod_{[H/K_i] \in \Orb(S_R)} \cmpctfy \Ind^H_{\mathrm{\stab}_H(x)} \psi^{-1}\prn{x_{K_i}}
    =  \coprod_{K_i}^{S_R} \psi^{-1}(x_{K_i}).
  \]
  In particular, this shows that
  \[
    \uFF^{R \oplus R'} \subset \Cl_{\uFF_{R}}(\uFF_{R'}) \subset \uFF^{R} \vee \uFF^{R'},
  \]
  proving the proposition.
\end{proof}

\subsubsection{Coinduction} If it exists, the right adjoint to $\Res_V^W\cln \wIndSys_W \rightarrow \wIndSys_V$ is denoted $\CoInd_V^W$. 
\begin{proposition}
  Let $\uFF_{I}$ be a weak indexing system and assume $\cT$ is atomic orbital.
  Then, 
  \[
    \prn{\CoInd_V^W \uFF_I}_U = \cbr{S \in \FF_U \; \middle| \; \forall \; W \leftarrow U \leftarrow U' \rightarrow V, \;\; \Res_{U'}^U S \in \FF_{I,U'}} 
  \]
\end{proposition}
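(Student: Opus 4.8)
The plan is to write $\uFF_J$ for the collection appearing on the right-hand side, regarded as a collection over $\cT_{/W}$, to show that $\uFF_J$ is a $\cT_{/W}$-weak indexing system, and to verify the universal property $\Res_V^W \uFF_K \subseteq \uFF_I \iff \uFF_K \subseteq \uFF_J$ for all $\uFF_K \in \wIndSys_W$; this exhibits $\uFF_J$ as the right adjoint $\CoInd_V^W \uFF_I$, establishing at once its existence and the stated formula.

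First I would dispatch the universal property, which is the easy half. Suppose $\uFF_K \subseteq \uFF_J$; for $U' \in \cT_{/V}$, applying the defining condition of $\uFF_J$ to the degenerate span $U' \xleftarrow{\id} U' \to V$ (which is a span over $W$, since the structure map $U' \to W$ factors through $V$) shows $\FF_{K,U'} \subseteq \FF_{I,U'}$, i.e. $\Res_V^W \uFF_K \subseteq \uFF_I$. Conversely, suppose $\Res_V^W \uFF_K \subseteq \uFF_I$, i.e. $\FF_{K,U'} \subseteq \FF_{I,U'}$ for all $U' \in \cT_{/V}$; given $U \in \cT_{/W}$, $S \in \FF_{K,U}$, and a span $U \leftarrow U' \to V$ over $W$, restriction-stability of the $\cT_{/W}$-weak indexing system $\uFF_K$ along the map $U' \to U$ gives $\Res_{U'}^U S \in \FF_{K,U'} \subseteq \FF_{I,U'}$ (using the leg $U' \to V$), so $S \in \FF_{J,U}$.

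Next I would verify that $\uFF_J$ is a $\cT_{/W}$-weak indexing system. Closure under equivalences is immediate, as each $\FF_{I,U'}$ is equivalence-closed; restriction-stability holds because restrictions compose and because composing a span $U \leftarrow U' \to V$ over $W$ with a map $U'' \to U$ over $W$ produces such a span over $U''$, so $\uFF_J$ is a genuine full $\cT_{/W}$-subcategory. For \cref{Contractible V-sets condition}: if $\FF_{J,U} \neq \emptyset$, then any member of $\FF_{J,U}$ witnesses $\FF_{I,U'} \neq \emptyset$ for every $U'$ admitting a span $U \leftarrow U' \to V$ over $W$, whence $*_{U'} \in \FF_{I,U'}$ by \cref{Contractible V-sets condition} for $\uFF_I$; since $\Res_{U'}^U *_U \simeq *_{U'}$, this yields $*_U \in \FF_{J,U}$.

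The hard part will be \cref{Self-indexed coproducts condition}. Given $T \in \FF_{J,U}$ and a tuple $(S_X)_{X \in \Orb(T)}$ with $S_X \in \FF_{J,X}$, I must show $\Res_{U'}^U \coprod_X^T S_X \in \FF_{I,U'}$ for every span $U \leftarrow U' \to V$ over $W$. The double coset formula (\cref{Double coset formula}) identifies this restriction with
\[
  \coprod_Y^{\Res_{U'}^U T} \Res_Y^{o(Y)} S_{o(Y)},
\]
where $Y$ runs over $\Orb(\Res_{U'}^U T)$ and $o(Y) \in \Orb(T)$ is the orbit with $Y \subseteq \Res_{U'}^U o(Y)$. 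Here $\Res_{U'}^U T \in \FF_{I,U'}$ because $T \in \FF_{J,U}$, and for each $Y$ the canonical map $Y \to o(Y)$ over $U' \to U$ together with the composite $Y \to U' \to V$ assembles into a span $o(Y) \leftarrow Y \to V$ over $W$ — its two composites $Y \to W$ coincide because the original span is over $W$ — so $S_{o(Y)} \in \FF_{J,o(Y)}$ forces $\Res_Y^{o(Y)} S_{o(Y)} \in \FF_{I,Y}$. Thus the display exhibits $\Res_{U'}^U \coprod_X^T S_X$ as a $\Res_{U'}^U T$-indexed coproduct, over an element of $\FF_{I,U'}$, of elements lying in $\uFF_I$; by \cref{Self-indexed coproducts condition} for $\uFF_I$ it lies in $\FF_{I,U'}$, as required. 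The atomic hypothesis is used only through \cref{Double coset formula}; the remainder of the argument is formal.
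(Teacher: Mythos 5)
Your proof is correct and follows essentially the same route as the paper's: both identify the right-hand side as the largest full $\cT_{/W}$-subcategory restricting into $\uFF_I$ (you spell out the two directions of the Galois connection a bit more explicitly), verify \cref{Contractible V-sets condition} by restricting $*_U$, and verify \cref{Self-indexed coproducts condition} via the double coset formula of \cref{Double coset formula}. The only difference is expository: you are more careful about checking that the induced spans $o(Y)\leftarrow Y\to V$ actually live over $W$, a step the paper leaves implicit.
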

\begin{proof}
  Denote by $\cC$ the right hand side of the above equation.
  Note that $\cC \subset \uFF_{W}$ is largest full $\cT$-subcategory such that $\Res_V^W \cC \leq \uFF_I$.
  Indeed, if $S \in \FF_{U} - \cC_U$, then for some $U' \rightarrow V$, we have $\Res_{U'}^{U} S \not \in \FF_{I,U'}$;
  thus whenever $\uFF_J \not \leq \Res_V^W \cC$, we have $\uFF_J \not \leq \uFF_I$.
  Hence it suffices to prove that $\cC$ is a weak indexing system.

  Fix some $S \in \cC_U$;
  then, $\Res_{U'}^U S \in \FF_{I,U}$ for all $U' \rightarrow V$, so $*_{U'} = \Res_{U'}^U *_U \in \FF_{I_U}$ for all $U' \rightarrow V$.
  Hence $*_U \in \cC_U$, i.e. $\cC$ satisfies \cref{Contractible V-sets condition}.
  Now, fix $(T_X) \in \cC_S$ an $S$-tuple.
  We must verify for all $U' \rightarrow V$ that
  \[
    \Res_{U'}^U \coprod\limits_X^S T_X \simeq \coprod_{X'}^{\Res_{U'}^U S} \Res^{o(X')}_{X'} T_{o(X')} \in \FF_{I,U'}, 
  \]
  the equivalence coming from \cref{Double coset formula}.
  But by assumption, we have $\Res_{U'}^U S, \,\, \Res^{o(X')}_{X'} T_{o(X')}  \in \uFF_I$, so this is in $\uFF_I$ by \cref{Self-indexed coproducts condition}, as desired. 
\end{proof}
We will use this in \cite{Tensor} to see that $\CoInd_V^W A\cO = A \CoInd_V^W \cO$ for all $\cT$-operads $\cO^{\otimes}$.

\subsection{The color and unit fibrations}\label{Unit and fold subsection}
Recall the maps $c$, $\upsilon$, and $\nabla$ of \cref{The families} and $\fR$ of \cref{fR observation}. 
In this subsection, we study $c$ and $\upsilon$, for which we start at the following observation.
\begin{observation}\label{Union observation}\label{Folds of union}
  By definition, we find that $c,\upsilon,\nabla,$ and $\fR$ are compatible with unions, in the sense that for each $F \in \cbr{c,\upsilon,\nabla,\fR}$, and set of collections $(C_\alpha)_{\alpha \in A}$ we have an equality
  \[
    \bigcup_{\alpha \in A} F\prn{C_\alpha} = F \prn{\bigcup_{\alpha \in A} C_\alpha}.\qedhere
  \]
\end{observation}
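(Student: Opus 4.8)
The claim is purely set-theoretic: each of the four operations is a \emph{single-element membership condition} on collections, and any such condition commutes with unions. I would organize the argument around this principle.

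First I would record the elementary fact that, for a set of collections $(C_\alpha)_{\alpha\in A}$ (that is, $\pi_0\cT$-graded subsets of the objects of $\uFF_\cT$), the union $\bigcup_\alpha C_\alpha$ has $V$-value $\prn{\bigcup_\alpha C_\alpha}_V = \bigcup_\alpha (C_\alpha)_V$ for every $V\in\cT$, straight from the definitions. (When the $C_\alpha$ are full $\cT$-subcategories their union need not be restriction-stable, but this plays no role, since the formulas of \cref{The families} make sense on arbitrary collections.) Then, for $F\in\cbr{c,\upsilon,\nabla}$ I would note that $F(C)=\cbr{V\in\cT\mid \sigma_F(V)\in C_V}$ with $\sigma_c(V)=*_V$, $\sigma_\upsilon(V)=\emptyset_V$, $\sigma_\nabla(V)=2\cdot *_V$ fixed $V$-sets. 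Combining the two observations yields, for each such $F$,
\[
  V\in F\Big(\bigcup_\alpha C_\alpha\Big)\iff \sigma_F(V)\in\bigcup_\alpha (C_\alpha)_V\iff \exists\,\alpha\colon\ V\in F(C_\alpha)\iff V\in\bigcup_\alpha F(C_\alpha),
\]
which is the assertion.

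For $\fR$ I would first recall, using \cref{Windexcat is windex prop}, that a morphism $f\colon V\to W$ of $\cT$ lies in $I(C)\cap\cT$ precisely when the transitive $W$-set $(f\colon V\to W)\in\FF_W$ lies in $C_W$; an orbit has no nontrivial coproduct decomposition, so the coproduct defining $I(C)$ contributes only this one term. Thus $\fR$ is once more a single-element membership condition, now indexed by $\mathrm{Mor}(\cT)$ instead of $\cT$, and the same chain of equivalences applies verbatim. I expect this reduction to be the only step needing care: one should resist passing the union through the map $I(-)$, which does \emph{not} commute with arbitrary unions — the finitely many orbits appearing in a chosen coproduct decomposition of an element of $I\prn{\bigcup_\alpha C_\alpha}$ need not all lie in a single $C_\alpha$ — whereas observing that $\fR$ probes only the transitive sets avoids the issue altogether.
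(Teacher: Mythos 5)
Your proposal is correct and takes essentially the same approach as the paper: the paper gives no proof beyond ``by definition,'' and your argument is exactly the unwinding — each of $c$, $\upsilon$, $\nabla$, and $\fR$ is a single-element membership condition on collections (indexed by objects of $\cT$, respectively morphisms of $\cT$), and such conditions trivially commute with unions. The parenthetical remark that $\fR$ depends only on the orbits in a collection (avoiding the pitfall that $I(-)$ itself does not commute with unions) is the one place where care is genuinely needed, and you handle it correctly; it is the same point the paper uses implicitly in the proof of \cref{Transfer system underlying closure lemma}.
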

Much of the following work concerns \emph{joins} and these maps, beginning with $c$.

\subsubsection{The color-support fibration}
\begin{proposition}\label{Color support prop}
  The monotone map $c\cln\wIndSys_{\cT} \rightarrow \Fam_{\cT}$ has a fully faithful left adjoint $\uFF_{(-)}^{\triv}$ and a fully faithful right adjoint $\uFF_{(-)}$.
\end{proposition}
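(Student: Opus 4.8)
The plan is to identify the two candidate adjoints explicitly, verify the defining biconditionals of a monotone Galois connection, and then deduce full faithfulness from the observation that $c$ retracts each of them. For a $\cT$-family $\cF$, write $\uFF_\cF^{\triv}$ for the $\cT$-weak indexing system with $V$-value $\cbr{*_V}$ when $V \in \cF$ and $\emptyset$ otherwise (i.e. $E_\cF^{\cT}$ applied to the initial one-color $\cF$-weak indexing system), and $\uFF_\cF$ for the one with $V$-value $\FF_{\cF, /V}$ when $V \in \cF$ and $\emptyset$ otherwise; since $\cF$ is a family, every orbit of a $V$-set with $V \in \cF$ again lies in $\cF$, so $\FF_{\cF, /V} = \FF_V$ for such $V$. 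First I would confirm that both are genuinely weak indexing systems: one may invoke that $E_\cF^{\cT}$ preserves weak indexing systems, or check \cref{Contractible V-sets condition,Self-indexed coproducts condition} directly -- for $\uFF_\cF$ because $\FF_V$ is closed under all indexed coproducts while restriction along any $U \to V$ preserves membership of orbits in $\cF$, and for $\uFF_\cF^{\triv}$ because the unique admissible $V$-set $*_V$ has a single orbit (the identity of $V$), so the only self-indexed coproduct it admits is $\coprod_V^{*_V} *_V = *_V$. Both assignments $\cF \mapsto \uFF_\cF^{\triv}$ and $\cF \mapsto \uFF_\cF$ are patently monotone.

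Next I would check the two biconditionals. For $\uFF_{(-)}^{\triv} \dashv c$: given a $\cT$-weak indexing system $\uFF_I$, the relation $\uFF_\cF^{\triv} \leq \uFF_I$ holds exactly when $*_V \in \FF_{I,V}$ for all $V \in \cF$ (nothing being required for $V \notin \cF$), i.e. exactly when $\cF \leq c(I)$, which is the universal property of a left adjoint to $c$. For $c \dashv \uFF_{(-)}$: the relation $\uFF_I \leq \uFF_\cF$ holds exactly when $\FF_{I,V} = \emptyset$ for all $V \notin \cF$ (the inclusion $\FF_{I,V} \subseteq \FF_V$ being automatic when $V \in \cF$), and here \cref{Contractible V-sets condition} supplies the key input $\FF_{I,V} \neq \emptyset \iff *_V \in \FF_{I,V} \iff V \in c(I)$, so $\uFF_I \leq \uFF_\cF \iff c(I) \leq \cF$, which is the universal property of a right adjoint to $c$.

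Finally, full faithfulness of both adjoints is the statement that $c$ is a retraction, i.e. that the unit of the left adjunction and the counit of the right adjunction are equalities; this follows from the direct computations $c(\uFF_\cF^{\triv}) = \cF$ (immediate) and $c(\uFF_\cF) = \cF$ (using $\FF_{\cF, /V} = \FF_V$ for $V \in \cF$), which say exactly that $c \circ \uFF_{(-)}^{\triv} = \mathrm{id}_{\Fam_{\cT}} = c \circ \uFF_{(-)}$. I do not expect a genuine obstacle here; the only delicate point is the right-adjoint biconditional, where the equivalence $\FF_{I,V} = \emptyset \iff V \notin c(I)$ -- and hence the very existence of the right adjoint -- relies entirely on \cref{Contractible V-sets condition}.
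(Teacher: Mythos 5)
Your proof is correct and takes essentially the same approach as the paper. The paper's proof is a one-liner that invokes \cref{Fully faithful left adjoint lemma} (criterion (c): initiality/terminality of $\uFF_\cF^{\triv}$ and $\uFF_\cF$ in the fiber $c^{-1}(\cF)$, plus monotonicity); you instead verify the Galois-connection biconditionals $\uFF_\cF^{\triv} \leq \uFF_I \iff \cF \leq c(I)$ and $\uFF_I \leq \uFF_\cF \iff c(I) \leq \cF$ from first principles, which is criterion (a) of the same lemma — so the two arguments are different unwindings of the identical idea. Your additional check that $\uFF_\cF^{\triv}$ and $\uFF_\cF$ are genuine weak indexing systems is harmless but not needed in the paper's organization, since these appear earlier as examples; and you correctly isolate \cref{Contractible V-sets condition} as the input that makes $\FF_{I,V}\neq\emptyset \iff V \in c(I)$ work, which is exactly what powers the right-adjoint biconditional.
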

\begin{proof}
  By \cref{Fully faithful left adjoint lemma} it suffices to note that $\uFF_{c(\uFF_I)}^{\triv} \leq \uFF_{I} \leq \uFF_{c(\uFF_I)}$ for all $\cF$, and that $\uFF_{\cF}^{\triv} \leq \uFF_{\cF'}^{\triv}$ and $\uFF_{\cF} \leq \uFF_{\cF'}$ whenever $\cF \leq \cF'$.
\end{proof} 
 
The following proposition additionally follows by unwinding definitions.
\begin{proposition}\label{Color fiber prop}
  The fiber $c^{-1}(\Fam_{\cT, \leq \cF})$ is equivalent to $\wIndSys_{\cF}$, and the associated fully faithful functor $E_{\cF}^{\cT}\cln \wIndSys_{\cF} \hookrightarrow \wIndSys_{\cT}$ is left adjoint to $\Bor_{\cF}^{\cT}(-) \deq (-) \cap \uFF_{\cF}$ and has values given by
  \[
    E_{\cF}^{\cT} \cC_V = \begin{cases}
        \cC_V & V \in \cF;\\
        \emptyset & \mathrm{otherwise}.
      \end{cases}
  \]
  In particular, the fiber $c^{-1}(\cbr{\cF})$ is the image of $E_{\cF}^{\cT}\cln \wIndSys_{\cF}^{\oc} \hookrightarrow \wIndSys_{\cT}$. 
\end{proposition}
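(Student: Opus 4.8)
The plan is to unwind \cref{Windex definition} directly, checking that $E_{\cF}^{\cT}$ and $\Bor_{\cF}^{\cT}$ are well defined and mutually adjoint and then reading off the fibers. First note that $\cF$ is orbital: since $\cF \subset \cT$ is a family, every map into an object of $\cF$ has source in $\cF$, so $\cF_{/V} = \cT_{/V}$ for $V \in \cF$ and the slice equivalences give $\FF_{\cF,/V} \simeq \FF_{\cF_{/V}} = \FF_{\cT_{/V}} \simeq \FF_V$; consequently pullbacks in $\FF_{\cF}$ are computed as in $\FF_{\cT}$, and the induction functors $\Ind_U^V$ for $U \to V$ in $\cF$ agree whether formed in $\FF_{\cF}$ or $\FF_{\cT}$, so an $\cF$-indexed coproduct of $\cF$-objects is the corresponding $\cT$-indexed coproduct. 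Given an $\cF$-weak indexing system $\uFF_I$, condition \cref{Contractible V-sets condition} for $E_{\cF}^{\cT}\uFF_I$ holds because $(E_{\cF}^{\cT}\uFF_I)_V \neq \emptyset$ forces $V \in \cF$ and $\FF_{I,V} \neq \emptyset$, whence $*_V \in \FF_{I,V}$; and \cref{Self-indexed coproducts condition} holds because for $S \in (E_{\cF}^{\cT}\uFF_I)_V$ we have $V \in \cF$, hence every $U \in \Orb(S)$ maps to $V \in \cF$ and so lies in $\cF$, so $\coprod_U^S T_U$ formed in $\uFF_{\cT}$ equals the $\cF$-indexed coproduct, which lies in $\FF_{I,V}$ by \cref{Self-indexed coproducts condition} for $\uFF_I$. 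Thus $E_{\cF}^{\cT}$ maps into $\wIndSys_{\cT}$, and $c(E_{\cF}^{\cT}\uFF_I) = \cbr{V \in \cF \mid *_V \in \FF_{I,V}} = c(\uFF_I) \leq \cF$, so it lands in $c^{-1}(\Fam_{\cT, \leq \cF})$.

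Next, for $\uFF_J \in \wIndSys_{\cT}$ the collection $\uFF_J \cap E_{\cF}^{\cT}\uFF_{\cF}$ has $V$-value $\FF_{J,V}$ for $V \in \cF$ and $\emptyset$ otherwise; since the conditions of \cref{Windex definition} only involve the value categories at objects of $\cF$, and (by the previous paragraph) $\cF$-indexed coproducts agree with $\cT$-indexed ones, this is an $\cF$-weak indexing system, which we call $\Bor_{\cF}^{\cT}\uFF_J$. The adjunction is then formal: $E_{\cF}^{\cT}\uFF_I \leq \uFF_J$ says precisely $\FF_{I,V} \subseteq \FF_{J,V}$ for all $V \in \cF$ (the inequality at $V \notin \cF$ being vacuous), which is exactly $\uFF_I \leq \Bor_{\cF}^{\cT}\uFF_J$. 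In particular the unit $\uFF_I \leq \Bor_{\cF}^{\cT}E_{\cF}^{\cT}\uFF_I$ is an equality, so $E_{\cF}^{\cT}$ is fully faithful, i.e. an embedding of posets, with the displayed formula as its definition.

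Finally, by \cref{Contractible V-sets condition} we have $\FF_{J,V} \neq \emptyset$ iff $V \in c(\uFF_J)$, so $\uFF_J$ lies in $c^{-1}(\Fam_{\cT, \leq \cF})$ iff $\FF_{J,V} = \emptyset$ for all $V \notin \cF$, iff the counit $E_{\cF}^{\cT}\Bor_{\cF}^{\cT}\uFF_J \leq \uFF_J$ is an equality; combined with the previous paragraph, $E_{\cF}^{\cT}$ and $\Bor_{\cF}^{\cT}$ restrict to inverse isomorphisms between $\wIndSys_{\cF}$ and $c^{-1}(\Fam_{\cT, \leq \cF})$. Restricting once more, $\uFF_J \in c^{-1}(\cbr{\cF})$ iff it is supported on $\cF$ and $*_V \in \FF_{J,V}$ for every $V \in \cF$, iff $\Bor_{\cF}^{\cT}\uFF_J$ is one-color; hence $c^{-1}(\cbr{\cF})$ is exactly the image of $E_{\cF}^{\cT}$ restricted to $\wIndSys_{\cF}^{\oc}$. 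I expect no real obstacle here: the one point deserving care — used repeatedly above — is the compatibility between $\cF$-indexed and $\cT$-indexed coproducts, which rests only on the identity $\cF_{/V} = \cT_{/V}$ forced by the family condition (compare \cref{Interval family observation}); beyond that the proposition genuinely follows by unwinding definitions.
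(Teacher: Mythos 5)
Your proof is correct, and it is exactly the ``unwinding definitions'' the paper asserts: you correctly isolate the one nontrivial point, namely that the family condition forces $\cF_{/V} = \cT_{/V}$ so that $\cF$-indexed and $\cT$-indexed coproducts agree, after which the well-definedness of $E_{\cF}^{\cT}$ and $\Bor_{\cF}^{\cT}$, the adjunction, and the identification of the fibers are all formal. (The only step left implicit is restriction-stability of $E_{\cF}^{\cT}\uFF_I$, which is immediate from the family condition, so this is not a gap.)
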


Finally, in order to understand cocartesian transport, we make the following observation.
\begin{observation}\label{Join is union observation}
  Since $\FF_{\cF, V}^{\triv}$ is $*_V$ when $V \in \cF$ and empty otherwise, a finite $V$-set $X$ is a $\uFF_{\cF}^{\triv}$-indexed coproduct of elements in $\cC$ if and only if $V \in \cF$ and $X \in \cC_V$.
  In other words, we have
  \[
    \Cl_{I_{\cF}^{\triv}}(\cC) = \Bor_{\cF}^{\cT}(\cC).
  \]
  In fact, extending this logic, if $\Bor_{c(I)}^{\cT} \cC$ is closed under $I$-indexed coproducts, then we have $\Cl_{I}(\cC) = \Bor_{c(I)}^{\cT} \cC$; hence $\widehat \Cl_I(\cC) = \cC$.
  In particular, applying \cref{Joins prop}, we find that
  \[
    \uFF_{\cF}^{\triv} \vee \uFF_I = \uFF_{\cF}^{\triv} \cup \uFF_I.\qedhere
  \]
\end{observation}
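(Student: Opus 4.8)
The plan is to evaluate the join with the explicit formula of \cref{Joins prop} and to show that both closure operators appearing there already fix the collection $\uFF_\cF^\triv \cup \uFF_I$, so that the iterated union collapses; the two auxiliary identities in the statement fall out along the way.

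First I would pin down the $\uFF_\cF^\triv$-indexed coproducts. Since $\FF_{\cF,V}^\triv = \cbr{*_V}$ for $V \in \cF$ and is empty otherwise, and $\Orb(*_V)$ is the single orbit $\id_V\cln V \to V$, the indexed-coproduct formula degenerates to $\coprod_U^{*_V} T_U = \Ind_V^V T_V = T_V$; hence for any collection $\cC$ a $V$-set is a $\uFF_\cF^\triv$-indexed coproduct of elements of $\cC$ exactly when $V \in \cF$ and it already lies in $\cC_V$. One step of the closure therefore produces $\Bor_\cF^\cT(\cC) = \cC \cap \uFF_\cF$, and further steps change nothing, giving $\Cl_{I_\cF^\triv}(\cC) = \Bor_\cF^\cT(\cC)$; since $\Bor_\cF^\cT(\cC) \subseteq \cC$, this makes $\widehat\Cl_{I_\cF^\triv}$ the identity operator on collections.

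Next I would treat $\widehat\Cl_I$. The general fact to prove is: if $\Bor_{c(I)}^\cT\cC$ is closed under $I$-indexed coproducts, then $\Cl_I(\cC) = \Bor_{c(I)}^\cT\cC$, and hence $\widehat\Cl_I(\cC) = \cC$. The crux is that an $I$-admissible $V$-set $S$ forces $V \in c(I)$, and since $c(I)$ is a $\cT$-family by \cref{The families}, every orbit $U \to V$ of $S$ also lies in $c(I)$; thus in any $\uFF_I$-indexed coproduct $\coprod_U^S T_U$ with $(T_U) \in \cC_S$ each $T_U$ already lies in $(\Bor_{c(I)}^\cT\cC)_U$, so the coproduct lands in $\Bor_{c(I)}^\cT\cC$ by hypothesis, while conversely each element of $(\Bor_{c(I)}^\cT\cC)_V$ with $V \in c(I)$ is the $*_V$-indexed coproduct of itself; iterating yields the equality, and $\Bor_{c(I)}^\cT\cC \subseteq \cC$ yields $\widehat\Cl_I(\cC) = \cC$. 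I would then specialize to $\cC = \uFF_\cF^\triv \cup \uFF_I$, observing that $\Bor_{c(I)}^\cT(\uFF_\cF^\triv \cup \uFF_I) = \uFF_I$ — because $*_U \in \FF_{I,U}$ whenever $U \in c(I)$ by \cref{Contractible V-sets condition} — which is closed under $I$-indexed coproducts by \cref{Self-indexed coproducts condition}, so $\widehat\Cl_I$ fixes $\uFF_\cF^\triv \cup \uFF_I$.

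Finally I would feed these facts into the formula of \cref{Joins prop}, which presents $\uFF_\cF^\triv \vee \uFF_I$ as the increasing union of the alternating applications $(\widehat\Cl_{I_\cF^\triv}\widehat\Cl_I)^n(\uFF_\cF^\triv \cup \uFF_I)$; since each operator fixes $\uFF_\cF^\triv \cup \uFF_I$, the union equals $\uFF_\cF^\triv \cup \uFF_I$. I expect the only genuinely load-bearing step to be the propagation of $c(I)$-membership from an $I$-admissible $V$-set to all of its orbits — this is precisely what lets the ``$*_U$''-summands coming from $\uFF_\cF^\triv$ be absorbed into $\FF_{I,U}$ — with everything else amounting to bookkeeping with the definitions of $\Cl$ and $\widehat\Cl$.
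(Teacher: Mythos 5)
Your argument is correct and, as far as I can tell, it is exactly the reasoning the observation in the paper has in mind: compute $\Cl_{I_\cF^\triv,1}$ directly (the $*_V$-indexed coproduct is the identity), note the resulting operator is absorbed by $\Bor_\cF^\cT$, prove the general fixed-point criterion via the propagation of $c(I)$-membership from an $I$-admissible $V$-set to its orbits (using that $c(I)$ is a $\cT$-family), and then collapse the alternating union of \cref{Joins prop}. The identification $\Bor_{c(I)}^\cT(\uFF_\cF^\triv \cup \uFF_I) = \uFF_I$ via \cref{Contractible V-sets condition} is the right way to reduce the specialization to \cref{Self-indexed coproducts condition}.
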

 
Thus \cref{Double adjunction remark,Color support prop,Color fiber prop,Join is union observation} yield the following. 
\begin{corollary}\label{Color-support corollary}
  Let $\cT$ be an orbital category.
  \begin{enumerate}
    \item The map $c\cln \wIndSys_{\cT} \rightarrow \Fam_{\cT}$ is a cocartesian fibration with fiber $c^{-1}(\cF) = \wIndSys_{\cF}^{oc}$ and with cocartesian transport along $\cF \leq \cF'$ sending 
      $\uFF_{I} \mapsto \uFF_{\cF'}^{\triv} \cup E_{\cF}^{\cF'} \uFF_{I}$.
    \item The map $c\cln \wIndSys_{\cT}^{aE\uni} \rightarrow \Fam_{\cT}$ is a cocartesian fibration with fiber $c^{-1}(\cF) = \wIndSys_{\cF}^{a\uni}$ and cocartesian transport along $\cF \leq \cF'$ sending 
      $\uFF_{I} \mapsto \uFF_{\cF'}^{\triv} \cup E_{\cF}^{\cF'} \uFF_{I}$.
  \end{enumerate}
\end{corollary}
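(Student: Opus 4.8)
The plan is to realize both statements as instances of the cocartesian-fibration criterion \cref{Cocartesian lemma} (in the packaging of \cref{Double adjunction remark}), feeding in the adjoints supplied by \cref{Color support prop}, the fiber description of \cref{Color fiber prop}, and \cref{Join is union observation} to put the cocartesian transport maps into the asserted form.

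For part (1) I would first record that $\wIndSys_{\cT}$ has binary joins by \cref{Joins prop}, and that $c$ carries a fully faithful left adjoint $\uFF_{(-)}^{\triv}$ together with a right adjoint $\uFF_{(-)}$ by \cref{Color support prop}; possessing a right adjoint, $c$ preserves joins (\cref{Double adjunction remark}), so \cref{Cocartesian lemma} applies and presents $c$ as a cocartesian fibration with transport $t_{\cF}^{\cF'}\uFF_I = \uFF_{\cF'}^{\triv} \vee \uFF_I$. I would then rewrite this via \cref{Join is union observation} as $\uFF_{\cF'}^{\triv} \cup \uFF_I$, identify $c^{-1}(\cF)$ with $\wIndSys_{\cF}^{\oc}$ through $E_{\cF}^{\cT}$ using \cref{Color fiber prop}, and invoke the evident identities $E_{\cF}^{\cT} = E_{\cF'}^{\cT}E_{\cF}^{\cF'}$ (valid for $\cF \leq \cF'$) and the compatibility of $E_{\cF'}^{\cT}$ with valuewise unions to transfer the transport formula to fiber coordinates, yielding $\uFF_I \mapsto \uFF_{\cF'}^{\triv} \cup E_{\cF}^{\cF'}\uFF_I$.

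For part (2) I would deduce everything from part (1) by restriction along the full inclusion $\wIndSys_{\cT}^{aE\uni} \subset \wIndSys_{\cT}$, using the elementary fact that a full subposet of a cocartesian fibration which is closed under the ambient cocartesian transport maps is again a cocartesian fibration over the same base, with fibers and transport inherited (the characterizing universal property of a cocartesian lift, being a statement about all objects above $\cF'$, is automatically inherited). Thus the only genuine point to check is that $\uFF_I \mapsto \uFF_{\cF'}^{\triv} \cup \uFF_I$ preserves almost essential unitality: since unions of collections are valuewise and $\uFF_{\cF'}^{\triv}$ contributes only the contractible $V$-set, any noncontractible $S \sqcup S'$ lying in the union already lies in $\FF_{I,V}$, whence $S,S' \in \FF_{I,V}$ by almost essential unitality of $\uFF_I$. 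For the fiber identification I would combine \cref{Color fiber prop} with the observation that $E_{\cF}^{\cT}\uFF_I$ is almost essentially unital precisely when $\uFF_I$ is (the condition is vacuous where the value is empty) and the definitional equality ``one color $+$ almost essentially unital $=$ almost unital,'' giving fiber $\wIndSys_{\cF}^{a\uni}$; I would also note that $\uFF_{\cF'}^{\triv} \cup E_{\cF}^{\cF'}\uFF_I$ has color family $\cF' \cup \cF = \cF'$, so this transported weak indexing system indeed lands in $\wIndSys_{\cF'}^{a\uni}$.

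I expect the sole non-formal step to be this closure check in part (2); everything else is the bookkeeping of assembling \cref{Cocartesian lemma,Color support prop,Color fiber prop,Join is union observation}, exactly as the remark preceding the statement anticipates, and in particular one never needs the general join-stability of the unitality conditions here.
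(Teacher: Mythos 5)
Your proposal is correct and matches the paper's intended argument, which is compressed to a citation list. Part (1) is exactly the intended route: \cref{Color support prop} supplies both adjoints with the left one fully faithful, \cref{Double adjunction remark} and \cref{Cocartesian lemma} then give the cocartesian fibration with transport $\uFF_{\cF'}^{\triv} \vee \uFF_I$, and \cref{Join is union observation} rewrites the join as a union; transporting to fiber coordinates via \cref{Color fiber prop} and the identity $E_{\cF}^{\cT} = E_{\cF'}^{\cT}E_{\cF}^{\cF'}$ is routine bookkeeping, as you note.

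For part (2) the paper simply reuses the same citations without saying how they apply to the restricted domain $\wIndSys_{\cT}^{aE\uni}$, so the reader must supply a bridging step. Your choice — observe that $\wIndSys_{\cT}^{aE\uni} \subset \wIndSys_{\cT}$ is a full subposet closed under the ambient cocartesian transports, whence a cocartesian subfibration with inherited fibers and transport — is a clean and arguably the most elementary way to do this, and your verification that $\uFF_{\cF'}^{\triv} \cup \uFF_I$ inherits almost essential unitality (any noncontractible $S \sqcup S'$ must already be in $\uFF_I$, since $\uFF_{\cF'}^{\triv}$ contributes only $*_V$) is exactly the non-formal check required. An equally valid alternative, closer to a literal reading of the citation list, is to rerun \cref{Color support prop,Color fiber prop,Join is union observation} verbatim with $\wIndSys_{\cT}$ replaced by $\wIndSys_{\cT}^{aE\uni}$, noting that $\uFF_{\cF}^{\triv}$ and $\uFF_{\cF}$ are both aE-unital so the adjoints restrict; both approaches buy the same thing, and yours is perhaps slightly more economical. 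The fiber identification via $E_{\cF}^{\cT}\uFF_I$ aE-unital $\iff$ $\uFF_I$ aE-unital (vacuous outside $\cF$) and ``one color $+$ aE-unital $=$ almost unital'' is correct and as it should be.
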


\subsubsection{The unit fibration} We study the map $\upsilon$ using the following.
\begin{proposition}
  The map $\upsilon:\wIndSys_{\cT} \rightarrow \Fam_{\cT}$ has fully faithful left adjoint given by $E_{-}^{\cT}\uFF_{(-)}^0$.
\end{proposition}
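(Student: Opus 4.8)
The plan is to apply \cref{Fully faithful left adjoint lemma} to $\pi = \upsilon$, checking its criterion (c) for the candidate left adjoint $L \deq E_{(-)}^{\cT}\uFF_{(-)}^0$. First I would unwind the construction $E_{\cF}^{\cT}(-)$ of \cref{Color fiber prop}, applied to the weak indexing system $\uFF_{\cF}^0$: the resulting weak indexing system $E_{\cF}^{\cT}\uFF_{\cF}^0$ has $V$-value $\cbr{\emptyset_V, *_V}$ for $V \in \cF$ and $\emptyset$ otherwise. Since $\emptyset_V$ lies in its $V$-value exactly when $V \in \cF$, this gives $\upsilon\prn{E_{\cF}^{\cT}\uFF_{\cF}^0} = \cF$, so $L(\cF)$ lands in the fiber $\upsilon^{-1}(\cF)$ and the unit relation $\cF \leq \upsilon L(\cF)$ is an equality; by \cref{Fully faithful left adjoint lemma}, that equality is precisely what makes the resulting left adjoint \emph{fully faithful}.

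Next I would show that $L(\cF)$ is \emph{initial} in $\upsilon^{-1}(\cF)$, that is, that $E_{\cF}^{\cT}\uFF_{\cF}^0 \leq \uFF_I$ for every weak indexing system with $\upsilon(I) = \cF$. For $V \notin \cF$ there is nothing to check, since the $V$-value of $L(\cF)$ is empty. For $V \in \cF = \upsilon(I)$, the definition of $\upsilon$ gives $\emptyset_V \in \FF_{I,V}$, whence $\FF_{I,V} \neq \emptyset$, and then \cref{Contractible V-sets condition} forces $*_V \in \FF_{I,V}$; thus $\cbr{\emptyset_V, *_V} \subseteq \FF_{I,V}$, as needed. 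Finally, monotonicity of $L$ is immediate by comparing $V$-values: $\cF \subseteq \cF'$ gives $E_{\cF}^{\cT}\uFF_{\cF}^0 \leq E_{\cF'}^{\cT}\uFF_{\cF'}^0$. With these three points in hand, \cref{Fully faithful left adjoint lemma}(c) applies and yields the claim.

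I do not expect a substantial obstacle; the one point worth isolating is that \cref{Contractible V-sets condition} is exactly what pins down $\uFF_{\cF}^0$ as the smallest weak indexing system whose unit family contains $\cF$, which is the content of the initiality step — the rest is bookkeeping with $V$-values.
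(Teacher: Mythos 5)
Your proposal is correct and takes essentially the same approach as the paper: both apply \cref{Fully faithful left adjoint lemma}, with the only superficial difference being that you verify criterion (c) (fiber initiality plus monotonicity of $L$) while the paper invokes criterion (b) (initiality in $\upsilon^{-1}(\Fam_{\cT,\geq\cF})$ together with $\upsilon L(\cF) = \cF$), and these are equivalent by that same lemma. Your write-up usefully spells out what the paper compresses into ``follow by unwinding definitions'' — in particular the observation that $\emptyset_V \in \FF_{I,V}$ forces $\FF_{I,V} \neq \emptyset$, so \cref{Contractible V-sets condition} supplies $*_V \in \FF_{I,V}$.
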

\begin{proof}
  In view of \cref{Fully faithful left adjoint lemma}, we're tasked with proving that $E_{\cF}^{\cT} \uFF_{\cF}^0 \in \upsilon^{-1}(\Fam_{\cT, \geq \cF})$ is initial and $\upsilon\prn{\uFF_{\cF}^{0}} = \cF$, both of which follow by unwinding definitions.
\end{proof}

Once again, we would like to simplify our expression for cocartesian transport.
\begin{observation}\label{aE-unital joins are unions observation}
  Let $V \in \cF$. 
  Note that a $V$-set is an $S$-indexed coproduct of elements of $E_{\cF}^{\cT}\uFF_{\cF}^{0}$ if and only if it is a summand of $S$;
  in particular, if $\uFF_{I}$ is closed under \emph{nonempty} summands, then $\uFF_{I} \cup \uFF_{c(I)}^0 = \Cl_{I}(\uFF_{c(I)}^0)$.
  In this case we have
  \[
    \uFF_I \vee E_{\cF}^{\cT}\uFF_{cF}^0 = \cdots \widehat \Cl_{\uFF_I} \widehat \Cl_{E_{\cF}^{\cT}\uFF_{\cF}^0}(\uFF_I \cup E_{\cF}^{\cT} \uFF_{\cF}^0) = \uFF_I \cup E_{\cF}^{\cT} \uFF_{\cF}^0.   
  \]
  In particular, if $\uFF_I$ is almost essentially unital, then it is closed under nonempty summands, so this applies.
\end{observation}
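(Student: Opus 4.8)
The plan is to prove the four assertions of the observation in turn. \emph{First}, I would unwind the definition of indexed coproduct: for $V \in \cF$ and $S \in \FF_V$ --- whose orbits automatically lie in $\cF$, as $\cF$ is a family --- an admissible tuple $(T_U)_{U \in \Orb(S)}$ for $E_\cF^\cT\uFF_\cF^0$ has each $T_U \in \{\emptyset_U, *_U\}$; since $\Ind_U^V *_U$ is the orbit $U$ regarded as a $V$-set via its structure map while $\Ind_U^V \emptyset_U = \emptyset_V$, the coproduct $\coprod_U^S T_U$ equals $\coprod_{U \,:\, T_U = *_U} U$, which ranges bijectively over the summands of $S$ as $(T_U)$ varies. \emph{Second}, for $\uFF_I \cup \uFF_{c(I)}^0 = \Cl_I(\uFF_{c(I)}^0)$: the inclusion $\subseteq$ is \cref{Inclusion into closure observation} (the color families of $\uFF_I$ and $\uFF_{c(I)}^0$ coincide, so both embed into $\Cl_{\uFF_I,1}(\uFF_{c(I)}^0)$); for $\supseteq$, a member of $\Cl_{\uFF_I,1}(\uFF_{c(I)}^0)$ is $\coprod_U^S T_U$ with $S \in \FF_{I,V}$ and $T_U \in \{\emptyset_U,*_U\}$, hence by the first assertion a summand of $S$, so it lies in $\uFF_I$ when nonempty (closure under nonempty summands) and equals $\emptyset_V \in (\uFF_{c(I)}^0)_V$ when empty (here $V \in c(I)$). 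Since $\uFF_I$ is a weak indexing system the one-step closure agrees with the full closure, which finishes the second assertion.

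\emph{Third}, for the collapse of the join, by \cref{Joins prop} it suffices to check that $\uFF_I \cup E_\cF^\cT\uFF_\cF^0$ is a fixed point of both $\widehat\Cl_{\uFF_I}(-)$ and $\widehat\Cl_{E_\cF^\cT\uFF_\cF^0}(-)$, since then the $2n$-fold composite in the formula of \cref{Joins prop} collapses to the identity. Fixity under $\widehat\Cl_{E_\cF^\cT\uFF_\cF^0}$ is immediate, as an $E_\cF^\cT\uFF_\cF^0$-indexed coproduct has index set $\emptyset_V$ or $*_V$ and is therefore either $\emptyset_V$ or a single summand already present. Fixity under $\widehat\Cl_{\uFF_I}$ runs as in the second assertion: discard the orbits $U$ with $T_U = \emptyset_U$ to reduce a coproduct $\coprod_U^S T_U$ (with $S \in \FF_{I,V}$ and each $T_U \in \uFF_I \cup E_\cF^\cT\uFF_\cF^0$) to one over a nonempty summand $S_A$ of $S$ --- itself in $\FF_{I,V}$ --- carrying a tuple valued in $\FF_{I,(-)}$, using $*_U \in \FF_{I,U}$ for $U \in c(I)$, so the coproduct lies in $\FF_{I,V}$ by \cref{Self-indexed coproducts condition}; in the degenerate case where every $T_U = \emptyset_U$, either $S = *_V$, where the coproduct is just $T_V$ and is already present, or $S$ is non-terminal, where \cref{aE-family} forces $V \in \upsilon(I)$ and hence $\emptyset_V \in \FF_{I,V}$. \emph{Fourth}, if $\uFF_I$ is almost essentially unital and $X = X_1 \sqcup X_2 \in \FF_{I,V}$ with $X_1 \neq \emptyset_V$, then either $X_2 = \emptyset_V$ (trivial) or $X$ is non-contractible (a contractible $V$-set is an orbit and has no proper nonempty summand), so \cref{Self indexed colimits} gives $X_1 \in \FF_{I,V}$.

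The step I expect to be the main obstacle is the degenerate ``every $T_U = \emptyset_U$'' case in the third assertion: one must know $\emptyset_V$ already lies in the union whenever it arises this way. For the variant phrased with $\uFF_{c(I)}^0$ this is automatic, since $\emptyset_V \in (\uFF_{c(I)}^0)_V$ for all $V \in c(I)$ and $S \in \FF_{I,V}$ forces $V \in c(I)$; for a general family $\cF$ it is exactly where almost essential unitality enters, through \cref{aE-family}. Everything else reduces to bookkeeping with the definitions and with \cref{Inclusion into closure observation,Joins prop}.
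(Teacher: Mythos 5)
Your proof is correct and follows the same approach as the paper's implicit argument: unwind indexed coproducts to see that $E_\cF^\cT\uFF_\cF^0$-indexed coproducts of a collection are exactly its summands, combine this with closure under nonempty summands, and collapse the join formula of \cref{Joins prop}.

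You were right to single out the degenerate case ``every $T_U = \emptyset_U$'' as the delicate point. Under only the hypothesis that $\uFF_I$ is closed under nonempty summands, for arbitrary $\cF$ the displayed equality can actually fail: take $\cT = \cO_{C_p}$, $\cF = \cbr{e}$, $\FF_{I,e} = \cbr{n\cdot *_e \mid n \geq 1}$, and $\FF_{I,C_p} = \cbr{n\cdot [C_p/e] \mid n \geq 1} \cup \cbr{*_{C_p}}$; this $\uFF_I$ is a weak indexing system closed under nonempty summands, yet $\coprod_e^{[C_p/e]}\emptyset_e = \emptyset_{C_p}$ is absent from $\uFF_I \cup E_\cF^\cT\uFF_\cF^0$, so the union is not closed under $\uFF_I$-indexed coproducts and is strictly smaller than the join. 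Your use of \cref{aE-family} repairs this and covers the paper's applications. An alternative, more in line with your treatment of the $\uFF_{c(I)}^0$ variant in the second assertion, is to observe that every application in the paper has $c(I) \subset \cF$, under which $S \in \FF_{I,V}$ forces $V \in c(I) \subset \cF$ and hence $\emptyset_V \in (E_\cF^\cT\uFF_\cF^0)_V$ already; then the degenerate case is automatic without invoking almost essential unitality. Either way, your verifications of the first, second, and fourth assertions match the paper's reasoning, and the third is sound for the cases the paper requires.
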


We may use this to reduce enumerative problems from the almost unital setting (or the almost essentially unital setting in view of \cref{Color-support corollary}) to the unital setting.
\begin{proposition}\label{Unit corollary}
  The restricted map $\upsilon_a\cln \wIndSys_{\cT}^{a\uni} \rightarrow \Fam_{\cT}$ is a cocartesian fibration with fiber $\upsilon_a^{-1}(\cF) = \wIndSys_{\cF}^{\uni}$ embedded along $\uFF_{\cT}^{\triv} \cup E_{\cF}^{\cT}(-)$.
  Moreover, the cocartesian transport map $t_\cF^{\cF'}\cln \wIndSys_{\cF}^{\uni} \rightarrow \wIndSys_{\cF'}^{\uni}$ is implemented by
  \[
    t_\cF^{\cF'} \uFF_I = \uFF_{\cF'}^{0} \cup E_{\cF}^{\cF'} \uFF_I 
  \]
\end{proposition}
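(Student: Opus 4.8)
The plan is to present $\upsilon_a$ as the cocartesian fibration assembled from the restriction functors $\Bor_{\cF}^{\cF'}\cln \wIndSys_{\cF'}^{\uni}\to\wIndSys_{\cF}^{\uni}$ along family inclusions $\cF\subseteq\cF'$, each of which I will show admits a fully faithful left adjoint; that left adjoint will be the cocartesian transport, and the fiber identification will turn it into the stated formula.

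First I would pin down the fibers, showing that $\uFF_J\mapsto\uFF_{\cT}^{\triv}\cup E_{\cF}^{\cT}\uFF_J$ and $\uFF_I\mapsto\Bor_{\cF}^{\cT}\uFF_I$ are mutually inverse monotone bijections $\wIndSys_{\cF}^{\uni}\cong\upsilon_a^{-1}(\cF)$. Three things need checking: (i) if $\uFF_I$ is almost unital with $\upsilon(I)=\cF$, then $\Bor_{\cF}^{\cT}\uFF_I$ has color and unit family both equal to $\cF$, hence is unital by \cref{Various families lemma}(3); (ii) conversely $\uFF_{\cT}^{\triv}\cup E_{\cF}^{\cT}\uFF_J$ is a weak indexing system — closure under self-indexed coproducts being verified by whether the base orbit lies in $\cF$, using that $\cF$ is a family so all orbits of an admissible $V$-set with $V\in\cF$ lie in $\cF$ — that is visibly one-colored, is almost essentially unital because its $V$-value is $\cbr{*_V}$ off $\cF$ and has unit family $\cF$; (iii) the two assignments are mutually inverse because \cref{aE-family} forces $\FF_{I,V}=\cbr{*_V}$ for every almost unital $\uFF_I$ and every $V\notin\upsilon(I)$, and $\Bor_{\cF}^{\cT}E_{\cF}^{\cT}=\mathrm{id}$. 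The resulting bijection is an order isomorphism on each fiber since off $\cF$ every member of $\upsilon_a^{-1}(\cF)$ agrees with $\uFF_{\cT}^{\triv}$.

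Next comes the key adjunction: for $\cF\subseteq\cF'$, the restriction $\Bor_{\cF}^{\cF'}\cln\wIndSys_{\cF'}^{\uni}\to\wIndSys_{\cF}^{\uni}$ has fully faithful left adjoint $L_{\cF}^{\cF'}\uFF_J\deq\uFF_{\cF'}^{0}\cup E_{\cF}^{\cF'}\uFF_J$. The target is a unital $\cF'$-weak indexing system by the case split of the previous paragraph; full faithfulness is the identity $\Bor_{\cF}^{\cF'}L_{\cF}^{\cF'}\uFF_J=\uFF_J$; and the adjunction inequality $L_{\cF}^{\cF'}\uFF_J\leq\uFF_{J'}\iff\uFF_J\leq\Bor_{\cF}^{\cF'}\uFF_{J'}$ holds because, comparing $V$-values, the left inequality restricts on $\cF$ to the right one and on $\cF'-\cF$ reads $\cbr{\emptyset_V,*_V}\subseteq\FF_{J',V}$, automatic from unitality of $\uFF_{J'}$. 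To conclude: $\upsilon$ is monotone, so a relation $\uFF_I\leq\uFF_{I'}$ in $\wIndSys_{\cT}^{a\uni}$ forces $\upsilon(I)\leq\upsilon(I')$, and transporting through the fiber identification it becomes $\Bor_{\upsilon(I)}^{\cT}\uFF_I\leq\Bor_{\upsilon(I)}^{\upsilon(I')}\Bor_{\upsilon(I')}^{\cT}\uFF_{I'}$, which by the adjunction is $L_{\upsilon(I)}^{\upsilon(I')}\Bor_{\upsilon(I)}^{\cT}\uFF_I\leq\Bor_{\upsilon(I')}^{\cT}\uFF_{I'}$. Read back through the fiber identification this is exactly the defining relation of cocartesian transport with $t_{\cF}^{\cF'}\uFF_J=\uFF_{\cF'}^{0}\cup E_{\cF}^{\cF'}\uFF_J$ (compare the proof of \cref{Cocartesian lemma}), so $\upsilon_a$ is a cocartesian fibration with the stated fibers and transport.

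An alternative route, closer to \cref{Joins subsection}: by \cref{Fully faithful left adjoint lemma} the initial objects $\uFF_{\cT}^{\triv}\cup E_{\cF}^{\cT}\uFF_{\cF}^{0}$ of the fibers assemble into a fully faithful left adjoint of $\upsilon_a$; granting that $\wIndSys_{\cT}^{a\uni}$ carries binary joins, \cref{Cocartesian lemma} then gives $t_{\cF}^{\cF'}\uFF_I=(\uFF_{\cT}^{\triv}\cup E_{\cF'}^{\cT}\uFF_{\cF'}^{0})\vee\uFF_I$, and \cref{aE-unital joins are unions observation} collapses this to the stated union since almost unital weak indexing systems are closed under nonempty summands. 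I expect the main obstacle to be purely organizational: carrying the two layers of case analysis — $V\in\cF$ versus $V\notin\cF$, and the bookkeeping along $\cF\subseteq\cF'$ — while keeping the identifications $\upsilon_a^{-1}(\cF)\cong\wIndSys_{\cF}^{\uni}$ straight when translating the order relation; none of the individual verifications is hard.
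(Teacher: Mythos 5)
Your proposal is correct, and you actually give two routes. The second, "alternative route" — building the fully faithful left adjoint $\cF \mapsto \uFF_{\cT}^{\triv} \cup E_{\cF}^{\cT}\uFF_{\cF}^0$ to $\upsilon_a$, invoking \cref{Fully faithful left adjoint lemma} and \cref{Cocartesian lemma} to get transport as a join, and collapsing the join to a union via \cref{aE-unital joins are unions observation} — is essentially what the paper does, though the paper compresses it into a sentence because the machinery is already in place. Your primary route is genuinely different in flavor: instead of leveraging the join formula, you explicitly identify each fiber with $\wIndSys_{\cF}^{\uni}$, construct the adjunctions $L_{\cF}^{\cF'}\dashv\Bor_{\cF}^{\cF'}$ between fibers by hand, and then re-read the order relation through the fiber identification to verify the cocartesian universal property directly. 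This is more elementary and self-contained — it does not rely on the prior observations about joins — but it is also longer and requires somewhat more bookkeeping. One place to tighten the main route: in the conclusion you verify the transport property only against $\uFF_{I'}$ with $\upsilon(I')=\cF'$, whereas the defining property of a cocartesian lift over $\cF\leq\cF'$ quantifies over all $\uFF_{I'}$ with $\upsilon(I')\geq\cF'$; this is repaired by noting $t_{\cF}^{\cF'}\uFF_I\leq t_{\cF}^{\cF''}\uFF_I$ for $\cF'\leq\cF''$ (monotonicity of the left adjoints) and $\uFF_I\leq t_{\cF}^{\cF'}\uFF_I$. The fiber identification and the adjunction verification itself are both correct — in particular your use of \cref{aE-family} to pin the $V$-values off $\upsilon(I)$ to $\cbr{*_V}$, and the observation that restriction stability of $\uFF_{\cF'}^0\cup E_{\cF}^{\cF'}\uFF_J$ holds because $\cF$ is a family inside $\cF'$ (so there are no maps from $\cF'-\cF$ into $\cF$), are exactly the right points to make.
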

\begin{proof}
  The property $\upsilon_a^{-1}(\cF) = \wIndSys_{\cF}^{\uni}$ follows by unwinding definitions using \cref{Various families lemma}.
  For the remaining property, we're tasked with proving that $\uFF_{\cF'}^0 \cup E_{\cF}^{\cF'} \uFF_I \in \wIndSys_{\cF'}^{\uni}$ is the initial unital $\cF'$-weak indexing system which embeds $\uFF_I$ after each are embedded into $\wIndSys_{\cT}^{a\uni}$ along $\uFF_{\cT}^{\triv} \cup E_{-}^{\cT}(-)$.
  Unwinding definitions, this property is satisfied of $\uFF_{\cF'}^0 \vee E_{\cF}^{\cF'} \uFF_I$, so
  the proposition follows from \cref{aE-unital joins are unions observation}.
\end{proof}

The fibers of the unrestricted map $\upsilon$ have terminal objects, which are useful counterexamples.
\begin{proposition}
  Given $\cF \in \Fam_{\cT}$, the fiber $\upsilon^{-1}(\cF)$ has a terminal object computed by
  \[
    \FF_{\cF^{\perp}-nu,V} = \begin{cases}
      \FF_{V} & V \in \cF;\\
      \FF_V - \cbr{S \mid \forall \, U \in \Orb(S), U \in \cF} & V \not\in \cF
    \end{cases}
  \]
\end{proposition}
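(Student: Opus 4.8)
Write $\cC$ for the collection with the displayed $V$-values, and for $V \notin \cF$ set $N_V := \{S \in \FF_V \mid \Orb(S) \subseteq \cF\}$, so that $\FF_{\cC,V} = \FF_V \setminus N_V$; observe $\emptyset_V \in N_V$ (its orbit set is empty) while $*_V \notin N_V$ (its only orbit is $V \notin \cF$). The argument has three parts: (a) $\cC$ is a $\cT$-weak indexing system; (b) $\upsilon(\cC) = \cF$; (c) $\cC$ is terminal in $\upsilon^{-1}(\cF)$. I would dispatch the soft parts (b) and (c) first and then reduce (a) to a single point about restriction.

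For (b): $\emptyset_V \in \FF_{\cC,V}$ precisely when $V \in \cF$, so $\upsilon(\cC) = \cF$; en route one records that $*_V \in \FF_{\cC,V}$ for all $V$, so $\cC$ has one color and satisfies \cref{Contractible V-sets condition}. For (c): given $\uFF_I$ with $\upsilon(I) = \cF$, I claim $\uFF_I \leq \cC$. When $V \in \cF$ this is vacuous since $\FF_{\cC,V} = \FF_V$; when $V \notin \cF$, suppose $S \in \FF_{I,V} \cap N_V$. Every orbit $U$ of $S$ lies in $\cF = \upsilon(I)$, so $\emptyset_U \in \FF_{I,U}$, and \cref{Self-indexed coproducts condition} applied to $S$ and the tuple $(\emptyset_U)_{U \in \Orb(S)}$ yields $\emptyset_V = \coprod_U^S \emptyset_U \in \FF_{I,V}$, forcing $V \in \upsilon(I) = \cF$ --- a contradiction. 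Hence $\uFF_I \leq \cC$; once (a) holds, $\cC$ lies in $\upsilon^{-1}(\cF)$ and is therefore its top element.

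For (a), only \cref{Self-indexed coproducts condition} and the fact that $\cC$ is a genuine full $\cT$-subcategory (equivalence-closed and restriction-stable) remain. Equivalence-closedness and \cref{Contractible V-sets condition} are immediate, and I would verify \cref{Self-indexed coproducts condition} directly: when $V \in \cF$ there is nothing to do, and when $V \notin \cF$, if $S \in \FF_{\cC,V}$ has an orbit $U_0 \notin \cF$ and $(T_U) \in \cC_S$, then $T_{U_0} \notin N_{U_0}$, so $T_{U_0}$ has an orbit $W \notin \cF$, which by \cref{Composition observation} remains an orbit of $\Ind_{U_0}^V T_{U_0}$ and hence of $\coprod_U^S T_U$; thus $\coprod_U^S T_U \notin N_V$. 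The crux is restriction-stability. For $V \in \cF$ it is free: any $W \to V$ has $W \in \cF$ because $\cF$ is a family, so $\Res_W^V \FF_{\cC,V} \subseteq \FF_W = \FF_{\cC,W}$. For $V \notin \cF$ and a map $W \to V$ with $W \notin \cF$, given $S \in \FF_{\cC,V}$ one picks an orbit $U_0 \notin \cF$ of $S$; by \cref{Restriction lemma}, $\Res_W^V U_0$ is a summand of $\Res_W^V S$, so it suffices to produce an orbit of the pullback $W \times_V U_0$ lying outside $\cF$, which I would extract by analyzing its orbits via the double coset formula of \cref{Double coset formula}. This last point --- that pulling an orbit outside $\cF$ back along an arbitrary orbit map cannot land on a set supported entirely in $\cF$ --- is the main obstacle, and is exactly where the interaction between orbitality of $\cT$ and the closure properties of the family $\cF$ must be pushed; it is cleanest to carry out all of (a) through the weak indexing category $I(\cC) \subseteq \FF_\cT$ of \cref{Defn of I}, verifying \cref{Windex segal condition,Restriction stable condition} for $I(\cC)$ and invoking \cref{Windexcat is windex main theorem}.
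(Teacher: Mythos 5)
You are right that restriction-stability of the displayed collection is the crux, and you are right to flag it: the paper's own proof skips this verification entirely, passing from terminality directly to ``since it contains each $*_V$, it suffices to prove that it's closed under self-indexed coproducts.'' Your terminality and self-indexed-coproduct arguments agree with the paper's. But the step you defer --- extracting via the double coset formula an orbit of $W \times_V U_0$ lying outside $\cF$ --- cannot be completed, because the displayed collection genuinely fails to be restriction-stable. Take $\cT = \cO_G$ with $G = C_2 \times C_2$, $\cF = \{[G/e]\}$, and distinct order-two subgroups $K \neq L$. With $V = [G/G]$, $W = [G/L]$, and $S = [G/K]$, the lone orbit $[G/K]$ of $S$ lies outside $\cF$, so $S \in \FF_{\cF^{\perp}-nu,V}$; yet
\[
  \Res_W^V S \;\simeq\; [G/L] \times [G/K] \;\simeq\; [G/(L \cap K)] \;=\; [G/e],
\]
whose only orbit lies in $\cF$, so $\Res_W^V S \notin \FF_{\cF^{\perp}-nu,W}$ even though $W \notin \cF$. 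Thus the displayed collection is not a full $\cT$-subcategory, hence not a weak indexing system, and the proposition is false as stated; both the paper's proof and your proposal are incomplete at exactly the point you identify. The terminal object of $\upsilon^{-1}(\cF)$ does exist --- replace the $V$-value for $V \notin \cF$ by $\{S \in \FF_V : \text{for every } W \rightarrow V,\ \Res_W^V S \text{ has an orbit outside } \cF\}$, which is restriction-stable by construction; your terminality argument and a double-coset argument for closure under self-indexed coproducts then go through for this corrected collection, and your observation that the two formulas agree when the slice categories $\cT_{/V}$ are totally ordered (e.g.\ $\cT = \cO_{C_{p^n}}$) explains why the discrepancy is invisible in the paper's worked examples.
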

\begin{proof}
  We begin by noting that $\uFF_{\cF^{\perp}-nu}$ contains all $\cT$-weak indexing systems with unit family $\cF$;
  indeed for contradiction, if $\uFF_J$ satisfies $\upsilon(J) = \cF$ and there is some $S \in \FF_{J,V} - \FF_{\cF^{\perp}-nu,V}$, then we must have $U \in \cF \subset \upsilon(J)$ for all $U \in \Orb(S)$ and $V \not \in \cF$, so
  \[
    \coprod\limits_U^S \emptyset_U = \emptyset_V \in \FF_{J,V},
  \]
  implying that $V \in \upsilon(J) - \cF$ (which contradicts our assumption).
  Thus it suffices to verify that $\uFF_{\cF^{\perp}-nu}$ is a $\cT$-weak indexing system.
  Since it contains each $*_V$, it suffices to prove that it's closed under self-indexed coproducts.
  
  Fix some $S \in \FF_{\cF^{\perp}-nu,V}$ and $(T_U) \in \FF_{\cF^{\perp}-nu,S}$.
  If $V \in \cF$, then there is nothing to prove.
  If $V \not \in \cF$, then
  $S$ must contain some orbit $U$ outside of $\cF$, and by assumption, $T_U$ contains an orbit outside of $\cF$;
  since $\Orb\prn{\coprod\limits_U^S T_U} = \coprod\limits_{U \in \Orb(S)} \Orb(T_U)$, $\coprod\limits_U^S T_U$ contains an orbit outside of $\cF$, i.e. $\coprod\limits_U^S T_U \in \uFF_{\cF^{\perp}-nu}$, as desired.
\end{proof}

\begin{warning}\label{Units are not compatible with joins warning}
  $\upsilon$ does not admit a right adjoint, as it is not even compatible with binary joins;
  for instance, if $\cT = \cO_G$, then note that the weak indexing system $\uFF_{\emptyset^{\perp}-nu}$ consists of all nonempty $H$-sets, and $E_{BG}^{G} \uFF_{BG}^0$ contains only the $e$-sets $\cbr{\emptyset_e, *_e}$.
  Nevertheless, the join $\uFF_{\emptyset^{\perp}-nu,V} \vee E_{BG}^G \uFF_{BG}^0$ contains the inductions $\Ind_e^H \emptyset_e = \emptyset_H$, so it is equal to the complete indexing system $\uFF_G$.
  Thus when $G$ is nontrivial, we have a proper family inclusion
  \[  
     \upsilon(\uFF_{\emptyset^{\perp}-nu}) \cup \upsilon(E_{BG}^G \uFF_{BG}^0) = BG \subsetneq \cO_G = \upsilon(\uFF_{\emptyset^{\perp}-nu} \vee E_{BG}^G \uFF_{BG}^0).\qedhere
  \]  
\end{warning}   

\begin{remark}\label{Units are lax-compatible with joins}
  Despite \cref{Units are not compatible with joins warning}, $\upsilon$ is \emph{lax}-compatible with joins, in the sense that there is a relation
  \[
    \upsilon(I) \cup \upsilon(J) \leq \upsilon(I \vee J);
  \]
  this follows by simply noting that $I \vee J$ contains $I$ and $J$.
  In particular, by \cref{The families}, we find that joins of unital weak indexing systems are unital.
\end{remark}

\begin{observation}\label{aE-unital observation}
  Despite \cref{Units are not compatible with joins warning}, $\upsilon$ is compatible with joins \emph{on almost essentially unital weak indexing systems};
  indeed, if $\uFF_I$ is almost essentially unital, then we have
  \[
    \uFF_I = 
    E_{c(I)}^{\cT} \uFF_{c(I)}^{\triv} \cup E_{\upsilon(I)}^{\cT} \Bor_{\upsilon(I)}^{\cT} \uFF_{I},
  \]
  so that
  \[
    \uFF_I \vee \uFF_J = 
  E_{c(I)}^{\cT} \uFF_{c(I)}^{\triv} \cup
  E_{c(J)}^{\cT} \uFF_{c(J)}^{\triv} \cup
  E_{\upsilon(I) \cup \upsilon(J)}^{\cT} \Bor_{\upsilon(I) \cup \upsilon(J)}^{\cT} \prn{ \uFF_{I}  \vee \uFF_{J}}. 
  \]
  Thus we have
  \[
    \upsilon(I) \cup \upsilon(J) \leq \upsilon(\uFF_I \vee \uFF_J) = \upsilon\prn{\Bor_{\upsilon(I) \cup \upsilon(J)}^{\cT}  \prn{\uFF_I \vee \uFF_J}} \leq \upsilon(I) \cup \upsilon(J).\qedhere 
  \]
\end{observation}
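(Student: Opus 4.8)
The plan is to prove the three displayed facts in order: first the decomposition of an almost essentially unital $\uFF_I$, then the join formula, then the chain of comparisons for $\upsilon$. Throughout, fix $\uFF_I,\uFF_J$ almost essentially unital (this is what ``on almost essentially unital weak indexing systems'' means; the decomposition of $\uFF_J$ is needed for the join formula, cf.\ \cref{Units are not compatible with joins warning}). For the decomposition, note first that $\upsilon(I) \subseteq c(I)$: if $\emptyset_V \in \FF_{I,V}$ then $\FF_{I,V} \ne \emptyset$, so $*_V \in \FF_{I,V}$ by \cref{Contractible V-sets condition}. Hence the collection $E_{c(I)}^{\cT}\uFF_{c(I)}^{\triv} \cup E_{\upsilon(I)}^{\cT}\Bor_{\upsilon(I)}^{\cT}\uFF_I$ has $V$-value $\FF_{I,V} \cup \cbr{*_V}$ when $V \in \upsilon(I)$, $\cbr{*_V}$ when $V \in c(I) \setminus \upsilon(I)$, and $\emptyset$ when $V \notin c(I)$. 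By \cref{aE-family}, for $V \notin \upsilon(I)$ the set $\FF_{I,V}$ has no nonterminal element and omits $\emptyset_V$, so it equals $\cbr{*_V}$ if $V \in c(I)$ and $\emptyset$ otherwise; while $*_V \in \FF_{I,V}$ whenever $V \in \upsilon(I)$. Matching the three cases yields $\uFF_I = E_{c(I)}^{\cT}\uFF_{c(I)}^{\triv} \cup E_{\upsilon(I)}^{\cT}\Bor_{\upsilon(I)}^{\cT}\uFF_I$ (the outer extension $E_{\upsilon(I)}^{\cT}$ being redundant, since Borelification already kills values outside $\upsilon(I)$).

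For the join formula, set $\cF \deq \upsilon(I) \cup \upsilon(J)$; the crux is the claim that $(\uFF_I \vee \uFF_J)_V = \FF_{I,V} \cup \FF_{J,V}$ for every $V \notin \cF$. Using the formula $\uFF_I \vee \uFF_J = \bigcup_n (\widehat\Cl_I \widehat\Cl_J)^n(\uFF_I \cup \uFF_J)$ of \cref{Joins prop}, together with the fact that one-step and full $\uFF_I$-indexed closures agree (as $\uFF_I$ is a weak indexing system), it suffices to see that $\widehat\Cl_I$ and $\widehat\Cl_J$ each fix the $V$-value of any collection at such a $V$: a $\FF_{I,V}$-indexed coproduct is indexed by some $S \in \FF_{I,V}$, and since $V \notin \upsilon(I)$, \cref{aE-family} forces $S = *_V$ whenever $\FF_{I,V} \ne \emptyset$, so $\coprod_U^{*_V} T_U = T_V$ already lies in the collection; likewise for $\widehat\Cl_J$ since $V \notin \upsilon(J)$. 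Thus no composite of these operators, nor their union, enlarges the $V$-value beyond $(\uFF_I \cup \uFF_J)_V = \FF_{I,V} \cup \FF_{J,V}$, which by \cref{aE-family} equals $\cbr{*_V}$ if $V \in c(I) \cup c(J)$ and $\emptyset$ otherwise. The formula $\uFF_I \vee \uFF_J = E_{c(I)}^{\cT}\uFF_{c(I)}^{\triv} \cup E_{c(J)}^{\cT}\uFF_{c(J)}^{\triv} \cup E_{\cF}^{\cT}\Bor_{\cF}^{\cT}(\uFF_I \vee \uFF_J)$ now follows by a valuewise comparison: each summand on the right lies in $\uFF_I \vee \uFF_J$, hence so does the union; over $V \in \cF$ the third summand already contributes all of $(\uFF_I \vee \uFF_J)_V$; and over $V \notin \cF$ the first two summands contribute exactly $\cbr{*_V}$ or $\emptyset$ according as $V \in c(I) \cup c(J)$ or not, which by the crux is $(\uFF_I \vee \uFF_J)_V$.

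Finally, applying $\upsilon$ to the join formula: the first inequality $\upsilon(I) \cup \upsilon(J) \leq \upsilon(\uFF_I \vee \uFF_J)$ is \cref{Units are lax-compatible with joins}; for the equality, $\upsilon$ commutes with unions by \cref{Union observation}, the two extended-trivial summands contain no empty $V$-set and so contribute nothing to $\upsilon$, and $E_{\cF}^{\cT}\Bor_{\cF}^{\cT}$ leaves $\upsilon$ unchanged, so $\upsilon(\uFF_I \vee \uFF_J) = \upsilon(\Bor_{\cF}^{\cT}(\uFF_I \vee \uFF_J))$; the last inequality $\upsilon(\Bor_{\cF}^{\cT}(\uFF_I \vee \uFF_J)) \leq \cF = \upsilon(I) \cup \upsilon(J)$ holds because Borelification to $\cF$ has support in $\cF$. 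Chaining the three gives $\upsilon(\uFF_I \vee \uFF_J) = \upsilon(I) \cup \upsilon(J)$, and as a byproduct (combining the crux with this identity via \cref{aE-family}) that $\wIndSys^{aE\uni}_{\cT}$ is closed under joins. The one step demanding genuine care is the crux in the second paragraph --- confirming that the iterated closures of \cref{Joins prop} truly cannot produce a nonterminal $V$-set over an orbit $V$ outside both unit families; once one observes that every admissible indexing $V$-set over such $V$ is terminal, the rest is bookkeeping with the definitions of $E^{\cT}_{(-)}$ and $\Bor^{\cT}_{(-)}$.
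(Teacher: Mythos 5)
Your argument is correct, and it supplies the justification the paper compresses into ``so that.'' Passing from the single decompositions of $\uFF_I$ and $\uFF_J$ to the displayed decomposition of the join is not formal; the needed input is exactly your ``crux'': for $V \notin \upsilon(I) \cup \upsilon(J)$, \cref{aE-family} forces every admissible indexing $V$-set in $\FF_{I,V}$ or $\FF_{J,V}$ to be $*_V$, so the iterated rectified closures of \cref{Joins prop} leave the $V$-value untouched and $(\uFF_I \vee \uFF_J)_V = \FF_{I,V} \cup \FF_{J,V}$. This is also the right argument to give \emph{at this point in the text}: the shorter route $\upsilon(\uFF_I \vee \uFF_J) \leq \epsilon(\uFF_I \vee \uFF_J) = \epsilon(I) \cup \epsilon(J) = \upsilon(I) \cup \upsilon(J)$ would invoke \cref{Epsilon prop}, which appears later and which \cref{Everything is join-stable} treats as an input independent of this observation. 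The rest of the proposal checks out — the first paragraph's case analysis from $\upsilon(I) \subseteq c(I)$ and \cref{aE-family} gives the single decomposition, and the final chain for $\upsilon$ is correct — modulo a slight gloss: $E^{\cT}_{\cF}\Bor^{\cT}_{\cF}$ does change $\upsilon$ in general (it replaces $\upsilon$ by $\upsilon \cap \cF$); the equality $\upsilon(\uFF_I \vee \uFF_J) = \upsilon(\Bor^{\cT}_{\cF}(\uFF_I \vee \uFF_J))$ holds here because of the join decomposition you just established, not as a general feature of Borelification.
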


\subsection{The transfer system and fold map fibrations}\label{Transfer subsection}
We further reduce our classification using $\fR$ and $\nabla$.
\subsubsection{The transfer system fibration}
Recall the monotone map $\fR\cln \wIndCat^{\uni}_{\cT} \rightarrow \Transf_{\cT}$ defined by $\fR(I) = I \cap \cT$;
we denote the composite $\wIndSys_{\cT} \simeq \wIndCat_{\cT} \rightarrow \Transf_{\cT}$ as $\fR$ as well.
Given $R$ a transfer system, define the weak indexing system
  \[
    \overline{\uFF}_{R} \deq \uFF_{\cT}^0 \vee \Cl_\infty\prn{\cbr{\Res_V^{W} U \mid U \rightarrow W \in R,\;\; V \rightarrow W \in \cT}}
  \]
Our main statements about $\fR$ will be the following proposition and its immediate corollary
\begin{proposition}\label{Right adjoint to wIndSys}
  The map of posets $\fR\cln \wIndSys^{\uni}_{\cT} \rightarrow \Transf_{\cT}$ has fully faithful right adjoint given by the composite $\Transf_{\cT} \simeq \IndSys_{\cT} \hookrightarrow \wIndSys_{\cT}$ and fully faithful left adjoint given by $\overline{\uFF}_{(-)}$.
\end{proposition}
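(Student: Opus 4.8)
The plan is to handle the two adjunctions separately, in each case checking the poset-theoretic criterion of \cref{Fully faithful left adjoint lemma} --- for the right adjoint, its evident dual concerning terminal objects of down-sets. That is, for each transfer system $R \in \Transf_{\cT}$ I will exhibit an object of $\fR^{-1}\prn{\Transf_{\cT, \geq R}}$ that is initial there (this will be $\overline{\uFF}_R$) and an object of $\fR^{-1}\prn{\Transf_{\cT, \leq R}}$ that is terminal there (this will be the indexing system associated to $R$), each lying over $R$; full faithfulness and monotonicity then follow formally from that lemma.

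\emph{The right adjoint.} Write $\uFF_J$ for the image of $R$ under $\Transf_{\cT} \simeq \IndSys_{\cT} \hookrightarrow \wIndSys_{\cT}^{\uni}$; by \cref{Index transf prop} this composite is a section of $\fR$ and is fully faithful (an equivalence followed by a fully faithful inclusion), and $\uFF_J$ is the unique indexing system with $\fR(\uFF_J) = R$. First I would record that, since $\uFF_J$ is closed under finite coproducts and satisfies \cref{E-unital condition} (being unital), a $V$-set $S$ lies in $\FF_{J,V}$ if and only if the structure map of every orbit of $S$ lies in $R$. Now take any $\uFF_I \in \wIndSys_{\cT}^{\uni}$ with $\fR(I) \leq R$, and $S \in \FF_{I,V}$; \cref{E-unital condition} forces each orbit-summand $(W \to V)$ of $S$ into $\FF_{I,V}$, whence $(W \to V) \in I \cap \cT = \fR(I) \leq R$ and so $S \in \FF_{J,V}$. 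Thus $\uFF_I \leq \uFF_J$, so $\uFF_J$ is terminal in $\fR^{-1}\prn{\Transf_{\cT, \leq R}}$; monotonicity in $R$ is clear from the orbit-wise description of $\uFF_J$.

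\emph{The left adjoint.} Write $\cG_R \deq \cbr{\Res_V^W U \mid (U \to W) \in R,\ (V \to W) \in \cT}$ for the generating collection, so that $\overline{\uFF}_R = \uFF_{\cT}^0 \vee \Cl_\infty(\cG_R)$; this is a weak indexing system (since $\Cl_\infty$ of a collection is one and $\wIndSys_{\cT}$ is a lattice by \cref{Joins prop}) and it is unital, because $\upsilon(\overline{\uFF}_R) \geq \upsilon(\uFF_{\cT}^0) = \cT$ by \cref{Units are lax-compatible with joins} and hence $\overline{\uFF}_R$ is unital by \cref{Various families lemma}. For initiality in $\fR^{-1}\prn{\Transf_{\cT, \geq R}}$: any $\uFF_I \in \wIndSys_{\cT}^{\uni}$ with $\fR(I) \geq R$ contains $\uFF_{\cT}^0$ (the initial unital weak indexing system) and, for each $(U \to W) \in R \subseteq I \cap \cT$, contains the transitive $W$-set determined by $U \to W$; being a full $\cT$-subcategory (hence restriction-stable) it therefore contains every $\Res_V^W U$, so $\uFF_I \supseteq \uFF_{\cT}^0 \cup \cG_R$ and, being a weak indexing system, $\uFF_I \geq \uFF_{\cT}^0 \vee \Cl_\infty(\cG_R) = \overline{\uFF}_R$. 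It remains to check $\fR(\overline{\uFF}_R) = R$: the containment $R \subseteq \fR(\overline{\uFF}_R)$ holds because $\cG_R$ already contains the transitive $W$-set determined by each $(U \to W) \in R$ (the case $V = W$), and conversely $\uFF_J$ from the previous paragraph is restriction-stable and contains $\uFF_{\cT}^0$ together with every $\Res_V^W U$ for $(U \to W) \in R$, so $\overline{\uFF}_R \leq \uFF_J$ and hence $\fR(\overline{\uFF}_R) \leq \fR(\uFF_J) = R$. Since $R \leq R'$ visibly gives $\cG_R \subseteq \cG_{R'}$ and so $\overline{\uFF}_R \leq \overline{\uFF}_{R'}$, \cref{Fully faithful left adjoint lemma} produces the fully faithful left adjoint $\overline{\uFF}_{(-)}$.

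The step I expect to be the main obstacle is pinning down $\fR(\overline{\uFF}_R)$ exactly: a head-on determination of which transitive sets appear in the iterated self-indexed-coproduct closure $\Cl_\infty(\cG_R)$ would require repeatedly unwinding the double coset formula and the base-change condition defining transfer systems. The sandwich above --- $\overline{\uFF}_R$ equals $\uFF_{\cT}^0 \vee \Cl_\infty(\cG_R)$ by construction and lies below $\uFF_J$ because $\uFF_J$ is restriction-stable --- sidesteps that computation, so the genuine content reduces to the orbit-wise characterization of the indexing system $\uFF_J$ and the pre-unitality (\cref{E-unital condition}) possessed by every unital weak indexing system.
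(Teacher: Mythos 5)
Your proof is correct, but it takes a genuinely different route from the paper's, even though both use the same candidate objects ($\overline{\uFF}_R$ for the left adjoint, the indexing system $\uFF_J$ associated to $R$ for the right adjoint). For the right adjoint, the paper argues via joins: it first proves \cref{Indexing system join prop} ($\fR(I \vee J) = \fR(J)$ when $\fR(I) \leq \fR(J)$), observes that $I \mapsto \uFF_{\cT}^{\infty} \vee I$ lands in indexing systems while preserving $\fR$, and deduces terminality of $\uFF_J$ in the down-fiber from the uniqueness clause in \cref{Index transf prop}; you instead pin $\FF_{J,V}$ down orbit-by-orbit (closure under finite coproducts plus \cref{E-unital condition} characterize it as the $V$-sets whose orbit structure maps all lie in $R$) and check terminality directly by applying \cref{E-unital condition} to an arbitrary $\uFF_I$. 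For the left adjoint, the paper establishes $\fR(\overline{\uFF}_R) = R$ by proving a closure-commutation lemma (\cref{Transfer system underlying closure lemma}, $\fR \Cl_{\cD,1}(\cC) = \fR \Cl_{\fR(\cD),1}(\fR\cC)$) and unwinding; your sandwich of $\overline{\uFF}_R$ between $R$ (from below, via the $V = W$ generators in $\cG_R$) and $\uFF_J$ (from above, since $\uFF_J$ is restriction-stable and unital so contains $\uFF_{\cT}^0 \cup \cG_R$) sidesteps that computation entirely. Your route is leaner and more self-contained; what it gives up is exactly the closure and join lemmas that the paper reuses throughout \cref{Transfer subsection}.
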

\begin{corollary}
  If $I,J$ are unital weak indexing categories, then \[
    \fR(I) \vee \fR(J) = \fR(I \vee J) \hspace{40pt} \textrm{and} \hspace{40pt} \fR(I) \cap \fR(J) = \fR(I \cap J).
  \]
\end{corollary}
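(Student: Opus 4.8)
The plan is to extract both equations from \cref{Right adjoint to wIndSys} and \cref{Joins prop} using only the formal behaviour of monotone Galois connections. The first thing I would check is that the right-hand sides make sense inside the correct lattices: by \cref{Joins prop} the meet of $I$ and $J$ in $\wIndCat_{\cT}$ is just the intersection $I \cap J$ of subcategories, and since $\emptyset_V \in \FF_{I \cap J, V}$ precisely when $\emptyset_V \in \FF_{I,V}$ and $\emptyset_V \in \FF_{J,V}$, we get $\upsilon(I \cap J) = \upsilon(I) \cap \upsilon(J) = \cT$, so $I \cap J$ is again unital. Dually, \cref{Units are lax-compatible with joins} shows $I \vee J$ is unital. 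Hence $\wIndSys^{\uni}_{\cT}$ is closed under the binary meet and join of the complete lattice $\wIndSys_{\cT}$, so these may be computed there and $\fR(I \cap J)$, $\fR(I \vee J)$ are well-defined transfer systems by \cref{fR observation}.

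For the join I would use that, by \cref{Right adjoint to wIndSys}, $\fR$ admits a fully faithful right adjoint $\iota\colon \Transf_{\cT} \hookrightarrow \wIndSys^{\uni}_{\cT}$; being a left adjoint, it carries any join that exists in its source to the join of the images. Concretely: $\fR(I \vee J)$ dominates $\fR(I)$ and $\fR(J)$, and if $R'$ is a transfer system with $\fR(I) \leq R'$ and $\fR(J) \leq R'$, then $I \leq \iota(R')$ and $J \leq \iota(R')$ by the adjunction, so $I \vee J \leq \iota(R')$, whence $\fR(I \vee J) \leq R'$ again by the adjunction. Thus $\fR(I \vee J)$ is the least upper bound of $\fR(I)$ and $\fR(J)$, i.e. $\fR(I \vee J) = \fR(I) \vee \fR(J)$.

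For the meet the quickest route is the direct identity
\[
  \fR(I) \cap \fR(J) = (I \cap \cT) \cap (J \cap \cT) = (I \cap J) \cap \cT = \fR(I \cap J),
\]
the middle equality being \cref{Joins prop}. Conceptually this is the mirror of the join argument: by \cref{Right adjoint to wIndSys}, $\fR$ also has a fully faithful \emph{left} adjoint $\overline{\uFF}_{(-)}$, so it is a right adjoint and therefore preserves binary meets, which on each side are intersections.

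I do not anticipate a real obstacle: all of the mathematical content lies in \cref{Right adjoint to wIndSys} and \cref{Joins prop}. The only steps needing attention are the verification that the unital weak indexing categories form a sub-lattice of $\wIndSys_{\cT}$ (closed under its binary join and meet), so that the identities are assertions in the intended posets, and the standard fact that a monotone Galois connection sends joins present in its source to joins in its target with no hypothesis required on the target.
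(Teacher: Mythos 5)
Your proof is correct and follows the approach the paper intends: the corollary is presented as an immediate consequence of \cref{Right adjoint to wIndSys}, i.e.\ $\fR$ has both a right adjoint (hence preserves joins) and a left adjoint (hence preserves meets). Your explicit check that $I \cap J$ and $I \vee J$ remain unital is a worthwhile addition that the paper leaves implicit.
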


We begin with an easy technical lemma concerning closures and transfer systems.
\begin{lemma}\label{Transfer system underlying closure lemma}
  $\fR\Cl_{\cD,1}(\cC) = \fR\Cl_{\fR(\cD),1}\prn{\fR \cC}$.
\end{lemma}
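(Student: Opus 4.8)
The plan is to compute both sides as collections of orbits. For a subgraph $\cE\subseteq\uFF_\cT$ (equivalently $I(\cE)\subseteq\FF_\cT$), the operation $\fR$ amounts to recording, at each orbit $V$, the $V$-orbits occurring among the elements of $\cE_V$ — so $\fR(\cE)$ depends only on the orbit data of $\cE$, and on $\wIndSys_\cT^{\uni}$ it agrees with $I(\cE)\cap\cT$ since unital weak indexing systems are closed under summands (\cref{E-unital condition}, \cref{Various families lemma}). The one structural input I would isolate first is this: for any map $f\colon U\to V$, the functor $\Ind_U^V\colon\FF_U\to\FF_V$ is, under $\FF_{\cT,/U}\simeq\FF_U$, postcomposition with $f$, hence a left adjoint; consequently it sends $\emptyset_U$ to $\emptyset_V$ and preserves the cardinality of $\Orb(-)$, so $\Ind_U^V T$ is an orbit if and only if $T$ is. (No atomicity is needed here — this is softer than and separate from \cref{Ind atomicness}.)

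Granting this, I would unwind each side. An element of $\fR\Cl_{\cD,1}(\cC)_V$ is an orbit of the form $\coprod_U^S T_U=\coprod_{U\in\Orb(S)}\Ind_U^V T_U$ with $S\in\cD_V$ and $(T_U)\in\cC_S$. A coproduct in $\FF_V$ is an orbit exactly when precisely one summand is an orbit and all others are empty, so the structural input forces a unique $U^*\in\Orb(S)$ with $T_{U^*}$ an orbit and $T_U=\emptyset_U$ for every other $U\in\Orb(S)$; thus the element equals $\Ind_{U^*}^V T_{U^*}$ with $T_{U^*}\in\fR(\cC)_{U^*}$ and with the $V$-orbit $U^*$ occurring in $S\in\cD_V$, i.e.\ $U^*\in\fR(\cD)_V$. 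Hence the left-hand side is precisely the set of $V$-orbits of the form $\Ind_{U^*}^V T$ with $U^*\in\fR(\cD)_V$ and $T\in\fR(\cC)_{U^*}$. On the other hand, every element of $\fR(\cD)_V$ is a transitive (single-orbit) $V$-set, so an element of $\Cl_{\fR(\cD),1}(\fR\cC)_V$ is literally an induction $\Ind_{U^*}^V T$ with $U^*\in\fR(\cD)_V$ and $T\in\fR(\cC)_{U^*}$; applying $\fR$ to this collection changes nothing, since each such element is already an orbit. So the right-hand side is the same set, and the two are equal.

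The inclusion $\fR\Cl_{\fR(\cD),1}(\fR\cC)\subseteq\fR\Cl_{\cD,1}(\cC)$ is the easy one: a one-entry tuple exhibits $\Ind_{U^*}^V T$ as an orbit element of $\Cl_{\cD,1}(\cC)_V$. The content is the reverse inclusion, and the only real obstacle there is the bookkeeping point that a transitive indexed coproduct over a possibly non-transitive $S$ must have all but one summand empty — so the non-transitive part of $S$ plays no role and only the surviving orbit of $S$ (which $\fR$ sees) matters. Once that is made precise, the lemma is a direct comparison of the two one-step descriptions.
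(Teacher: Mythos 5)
Your argument matches the paper's: both rest on the single observation that for $\coprod_U^S T_U$ to be an orbit, exactly one $T_{U^*}$ must be a nonempty orbit and all others empty, so the coproduct collapses to $\Ind_{U^*}^V T_{U^*}$, which then lies in the right-hand side, while the easy inclusion $\fR\Cl_{\fR(\cD),1}(\fR\cC)\subset\fR\Cl_{\cD,1}(\cC)$ is immediate. Your explicit structural input -- that $\Ind_U^V$ preserves $\emptyset$ and $\abs{\Orb(-)}$, with no atomicity needed -- is used silently in the paper's one-sentence argument and is a fair elaboration. One small imprecision you share with (and make slightly more visible than) the paper: the step ``$U^*$ occurs as an orbit of some $S\in\cD_V$, i.e.\ $U^*\in\fR(\cD)_V$'' needs $\cD$ to be closed under nonempty orbit summands; this holds in every invocation of the lemma (where $\cD$ is a union of unital weak indexing systems), but it is not automatic for an arbitrary collection $\cD$, so the ``i.e.'' is not a definitional unwinding.
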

\begin{proof}
  Since $\fR \Cl_{\fR(\cD),1}(\fR\cC) \subset \fR \Cl_{\cD,1}(\cC)$, it suffices to prove the opposite inclusion;
  indeed, whenever $\coprod_{U}^S T_U \in \Cl_{\cD,1}(\cC)$ is an orbit, there is exactly one $T_U$ which is nonempty, in which case $\Ind_U^V T_U = \coprod_U^S T_U$, implying that $T_U$ is an orbit, so that $\coprod_U^S T_U \in \fR\Cl_{\fR(\cD),1}\prn{\fR\cC}$.
\end{proof}

We use this to give compatibility of $\fR$ with joins in a restricted setting.
\begin{lemma}\label{Indexing system join prop}
  If $I,J$ unital satisfy $\fR(I) \leq \fR(J)$, then $\fR(I \vee J) = \fR(J)$.
\end{lemma}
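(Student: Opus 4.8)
The plan is to establish the nontrivial inclusion $\fR(I\vee J)\leq\fR(J)$; the reverse inclusion $\fR(J)\leq\fR(I\vee J)$ is immediate, since $J\leq I\vee J$ and $\fR$ is monotone.

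First I would reduce to a single closure step. By \cref{Joins prop}, $\uFF_I\vee\uFF_J$ is an increasing union of iterated rectified closures $\widehat\Cl_I\widehat\Cl_J\cdots(\uFF_I\cup\uFF_J)$, and by \cref{Union observation} the map $\fR$ commutes with this union; moreover $\fR(\uFF_I\cup\uFF_J)=\fR(I)\cup\fR(J)\subseteq\fR(J)$ using the hypothesis $\fR(I)\leq\fR(J)$. So an induction reduces the lemma to the claim: for any collection $\cC\subset\uFF_{\cT}$ with $\fR(\cC)\leq\fR(J)$ and any $K\in\cbr{I,J}$, one has $\fR(\widehat\Cl_{\uFF_K,1}(\cC))\leq\fR(J)$. (Alternatively one could route this through \cref{Transfer system underlying closure lemma}, which gives $\fR\Cl_{\uFF_K,1}(\cC)=\fR\Cl_{\fR(K),1}(\fR\cC)$; either packaging works.)

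To prove the claim I would analyze an orbit occurring in $\widehat\Cl_{\uFF_K,1}(\cC)=\Cl_{\uFF_K,1}(\cC)\cup\cC$. A morphism $f\cln W\to V$ in $\cT$ lies in this closure iff the orbit $V$-set $W$ is either in $\cC_V$ --- in which case $f\in\fR(\cC)\leq\fR(J)$ and we are done --- or of the form $\dcoprod_U^S T_U$ with $S\in\FF_{K,V}$ and $(T_U)\in\cC_S$. In the latter case, since $\Orb(\dcoprod_U^S T_U)=\coprod_{U\in\Orb(S)}\Orb(T_U)$ and $W$ is an orbit, exactly one term $T_{U_0}$ is nonempty and it is an orbit, so $W=\Ind_{U_0}^V T_{U_0}$. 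Now $(U_0\to V)$ arises as a summand inclusion into $S\in\FF_{K,V}$ followed down, so it lies in $K\cap\cT=\fR(K)\leq\fR(J)$ by pre-unitality of $K$ (which holds since $K$ is unital); and $T_{U_0}\in\cC_{U_0}$ being an orbit gives $(T_{U_0}\to U_0)\in\fR(\cC)\leq\fR(J)$. Under the identification $W=\Ind_{U_0}^V T_{U_0}$, the morphism $f$ is precisely the composite $T_{U_0}\to U_0\to V$, which lies in $\fR(J)$ since $\fR(J)=J\cap\cT$ is a subcategory, hence closed under composition.

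The step I would be most careful with is the bookkeeping around ``$\fR$ of a collection'' and the identification of an orbit-valued one-step closure with a composition in $\cT$ --- specifically, that induction of an orbit along a map of $\cT$ is again an orbit, and that a summand inclusion into a $K$-admissible $V$-set, followed down, lies in $K$. Once those identifications are in place, the argument is a short diagram chase, and in particular no atomicity hypothesis or any condition on $\cT$ beyond orbitality is used.
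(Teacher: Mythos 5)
Your argument is correct and is in essence the paper's argument. The paper packages the key step into \cref{Transfer system underlying closure lemma} — observing that the transfer-system part of a one-step closure only sees orbits, because an orbit-valued indexed coproduct has a unique nonempty summand — and then iterates $\Cl_{\uFF_I\cup\uFF_J,n}$; you inline exactly that observation (an orbit $\coprod_U^S T_U$ collapses to $\Ind_{U_0}^V T_{U_0}$, and its structure map is the composite $T_{U_0}\to U_0\to V$ in $\fR(J)$), iterating instead through the $\widehat\Cl_I\widehat\Cl_J$ formula of \cref{Joins prop}. Both iteration schemes compute $\Cl_\infty(\uFF_I\cup\uFF_J)=\uFF_I\vee\uFF_J$, so this is not a material difference.

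The one place where you are more explicit than the paper is in justifying that the summand map $U_0\to V$ actually lies in $\fR(K)$, which you correctly trace to pre-unitality of $K$. This is a genuinely necessary step: the cited lemma, as stated without hypotheses, tacitly needs $\cD$ to be closed under orbit summands (otherwise $U_0\in\fR(\cD)_V$ can fail), and in this application unitality of $I$ and $J$ is exactly what supplies it. Making that explicit is a small improvement in rigor, and your remark that only orbitality of $\cT$ is used is also accurate. No gap.
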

\begin{proof}
  Note that $\uFF_I \cup \uFF_J$ is closed under $I$-indexed induction, so we have
  \[
    \fR\Cl_{\uFF_{I} \cup \uFF_{J},1}(\uFF_{I} \cup \uFF_{J})
    = \fR\Cl_{\fR(\uFF_{I} \cup \uFF_{J}),1}(\fR(\uFF_{I} \cup \uFF_{J})) 
    = \fR\Cl_{\fR(J),1}(\fR(J))
    = \fR(J). 
  \]
  Iterating this and taking a union, we find that 
  \[
    \fR(I \vee J) = \fR \Cl_{\uFF_I \cup \uFF_J, \infty}(\uFF_I \cup \uFF_J) = \fR(J).\qedhere
  \]
\end{proof}

We additionally note the following.
\begin{lemma}\label{Left adjoint to wIndSys}
  $\overline{\uFF}_R$ is initial in $\fR^{-1}(\Transf_{\cT, \geq R})$ and $\fR \overline{\uFF}_R = R$.
\end{lemma}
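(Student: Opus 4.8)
The plan is to first prove that $R \subseteq \fR(\uFF_J)$ implies $\overline{\uFF}_R \leq \uFF_J$, and then deduce $\fR\overline{\uFF}_R = R$ by sandwiching, rather than computing head-on which orbits occur in $\overline{\uFF}_R$.

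Write $\mathcal{C}_0 \deq \cbr{\Res_V^W U \mid U \rightarrow W \in R,\ V \rightarrow W \in \cT}$, so that $\overline{\uFF}_R = \uFF_{\cT}^0 \vee \Cl_\infty(\mathcal{C}_0)$; this is a unital weak indexing system, since $\uFF_{\cT}^0 \leq \overline{\uFF}_R$ forces $\emptyset_V \in \overline{\uFF}_R$ for every $V$. Note that $\mathcal{C}_0$ contains every morphism $\phi \in R$, viewed as an orbit over its codomain (take the restriction in the defining expression to be along an identity), and in particular contains each $*_V$; hence $\mathcal{C}_0$ satisfies \cref{Contractible V-sets condition}, and it is moreover restriction-closed since $\Res_{V'}^V \Res_V^W U = \Res_{V'}^W U$. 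As $\mathcal{C}_0 \subseteq \Cl_\infty(\mathcal{C}_0) \subseteq \overline{\uFF}_R$, each such $\phi$ witnesses $\phi \in \fR\overline{\uFF}_R$, so $R \subseteq \fR\overline{\uFF}_R$.

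For the displayed inequality, let $\uFF_J$ be a unital weak indexing system with $R \subseteq \fR(\uFF_J)$. For each $U \rightarrow W$ in $R$, the orbit $(U \rightarrow W)$ lies in $\FF_{J,W}$ (by definition of $\fR$ and of $I(-)$), hence $\Res_V^W U \in \FF_{J,V}$ because $\uFF_J \subseteq \uFF_{\cT}$ is a full, hence restriction-stable, $\cT$-subcategory; thus $\mathcal{C}_0 \subseteq \uFF_J$. Since $\Cl_\infty$ is left adjoint to the inclusion $\wIndSys_{\cT} \hookrightarrow \Coll(\uFF_{\cT})$ (\cref{Iota is left adjoint lemma}), this gives $\Cl_\infty(\mathcal{C}_0) \leq \uFF_J$; combined with $\uFF_{\cT}^0 \leq \uFF_J$ (as $\uFF_J$ is unital and $\uFF_{\cT}^0$ is the initial unital weak indexing system) and the fact that $\wIndSys_{\cT}$ is a lattice (\cref{Joins prop}), we conclude $\overline{\uFF}_R = \uFF_{\cT}^0 \vee \Cl_\infty(\mathcal{C}_0) \leq \uFF_J$. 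Applying this to the indexing system $\uFF_J$ with $\fR(\uFF_J) = R$ furnished by \cref{Index transf prop} gives $\overline{\uFF}_R \leq \uFF_J$, hence $\fR\overline{\uFF}_R \subseteq \fR(\uFF_J) = R$; together with the reverse inclusion above, $\fR\overline{\uFF}_R = R$. In particular $\overline{\uFF}_R \in \fR^{-1}(\Transf_{\cT,\geq R})$, and the inequality just proved shows it sits below every other member of this fiber, i.e.\ it is initial there.

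I expect the only friction to be the bookkeeping in the middle of the argument: checking that $\mathcal{C}_0$ and $\uFF_{\cT}^0$ meet the hypotheses under which \cref{Iota is left adjoint lemma} and \cref{Joins prop} may be invoked — so that $\Cl_\infty(\mathcal{C}_0)$ really is a weak indexing system and $\overline{\uFF}_R$ really lies in $\wIndSys_{\cT}^{\uni}$ — and that $\fR$ is monotone on collections. Exploiting \cref{Index transf prop} to obtain $\fR\overline{\uFF}_R \subseteq R$ is the shortcut that avoids the more laborious alternative of tracking, via \cref{Transfer system underlying closure lemma} and the base-change axiom for transfer systems, exactly which orbits arise as iterated self-indexed coproducts of elements of $\mathcal{C}_0$.
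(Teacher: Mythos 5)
Your proof is correct, and it takes a genuinely different route from the paper's. The paper dispatches the initiality claim as "trivial" (it follows once one knows what the join and closure constructions do) and then computes $\fR\overline{\uFF}_R = R$ head-on by invoking \cref{Transfer system underlying closure lemma} to track which orbits arise under iterated self-indexed coproducts of $\mathcal{C}_0$. You instead prove initiality carefully first — verifying $\mathcal{C}_0 \subseteq \uFF_J$ and appealing to the adjunction $\Cl_\infty \dashv \iota$ and the lattice structure — and then derive $\fR\overline{\uFF}_R \subseteq R$ as a formal consequence by testing against the indexing system $\fR^{-1}(R) \cap \IndSys_{\cT}$ supplied by \cref{Index transf prop}. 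This sandwiching step is the real divergence: it substitutes an abstract adjunction/uniqueness argument for the paper's combinatorial computation, and so avoids \cref{Transfer system underlying closure lemma} entirely. The trade-off is that your argument presupposes the equivalence $\IndSys_{\cT} \simeq \Transf_{\cT}$ as an external input, whereas the paper's computational approach is self-contained at this point in the text; on the other hand, your version makes the logical dependency between initiality and $\fR\overline{\uFF}_R = R$ explicit, which is arguably clearer. Both are valid.
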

\begin{proof}
  The only nontrivial part is showing that $\fR \overline{\uFF}_R = R$;
  in fact, this follows by unwinding definitions and applying \cref{Transfer system underlying closure lemma}.
\end{proof}

\begin{proof}[Proof of \cref{Right adjoint to wIndSys}]
  By \cref{Fully faithful left adjoint lemma}, the fully faithful left adjoint is \cref{Left adjoint to wIndSys}, so we're left with proving that we've constructed the right adjoint. 
  
  By \cref{Indexing system join prop}, the indexing category $I^\infty_{\cT} \vee I$ satisfies $\fR(I^{\infty}_{\cT} \vee I) = \fR(I)$ and is an upper bound for $I$.
  In fact, by \cref{Index transf prop}, $I^{\infty}_{\cF} \vee I$ is the \emph{unique} indexing system with $\fR(I \vee I^{\infty}_{\cF}) = I$, and so it is an upper bound for all $J$ with $\fR(I) = \fR(J)$.
  In fact, if $\fR(I) \geq \fR(J)$, then $J \leq J \vee I \leq I^{\infty}_{\cF} \vee I$ by the same argument, so $I^\infty_{\cF} \vee J \leq I^{\infty}_{\cF} \vee I$.
  In particular, the assignment $I \mapsto I^{\infty}_{\cF} \vee I$ satisfies the conditions of \cref{Fully faithful left adjoint lemma}, as desired.
\end{proof}

\begin{remark}\label{Fibers have multiple objects remark}
  If $\cT$ is an atomic orbital category with a terminal object $V$, then $2 \cdot *_V$ is not in $\overline{\uFF}_{R}$ for any $R$, since $2\cdot *_V$ is not a summand in the restriction of any orbital $W$-sets for any $W \in \cT$;
  indeed, since $\cT$ is atomic, there are no non-isomorphisms $V \rightarrow W$, so this would require that $2 \cdot *_V$ is an orbit, but it is not.
  Hence $\overline{\uFF}_R$ is not an indexing system; equivalently, $\fR^{-1}(R)$ has multiple elements.
  We may interpret this as saying that unital weak indexing systems are seldom determined by their transitive $V$-sets.
\end{remark}

\subsubsection{The fold map fibration} 
  Our first statement about $\nabla$ is the following.
\begin{proposition}\label{Folds of join}
  Suppose $\cT$ is atomic.
  Then, for all unital weak indexing systems $\uFF_I$ and $\uFF_J$, we have $\nabla(\uFF_I \vee \uFF_J) = \nabla(\uFF_I) \cup \nabla(\uFF_J)$.
\end{proposition}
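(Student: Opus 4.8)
The plan is to prove the nontrivial inclusion $\nabla(\uFF_I \vee \uFF_J) \subseteq \nabla(\uFF_I) \cup \nabla(\uFF_J)$; the reverse inclusion is immediate from monotonicity of $\nabla$ together with $\uFF_I, \uFF_J \leq \uFF_I \vee \uFF_J$, just as in \cref{Units are lax-compatible with joins}.

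First I would invoke \cref{Iota is left adjoint lemma} (as used in the proof of \cref{Joins prop}) to write $\uFF_I \vee \uFF_J = \Cl_{\cD, \infty}(\cD)$ with $\cD \deq \uFF_I \cup \uFF_J$. The collection $\cD$ has two relevant properties: its color family is all of $\cT$ (since $\uFF_I, \uFF_J$ are unital), so it satisfies \cref{Contractible V-sets condition}; and it is closed under nonempty summands, as each of $\uFF_I, \uFF_J$ is almost essentially unital (\cref{aE-unital joins are unions observation}). Since $\nabla$ commutes with unions (\cref{Union observation}) and $\Cl_{\cD,\infty}(\cD) = \bigcup_n \Cl_{\cD,n}(\cD)$, the problem reduces to showing $\nabla(\Cl_{\cD,n}(\cD)) \subseteq \nabla(\cD) = \nabla(\uFF_I) \cup \nabla(\uFF_J)$ for all $n$, by induction on $n$.

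The key input is a one-step estimate, which I would isolate as a lemma: if $\cD$ is closed under nonempty summands with color family $\cT$, and $\cC$ is any collection with color family $\cT$, then $\nabla(\Cl_{\cD,1}(\cC)) \subseteq \nabla(\cD) \cup \nabla(\cC)$. To prove this, suppose $2 \cdot *_V = \coprod_U^S T_U$ for some $S \in \cD_V$ and $(T_U) \in \cC_S$. Writing $S \simeq \coprod_{U \in \Orb(S)} U$, uniqueness of orbit decompositions in $\FF_{\cT}$ forces each nonzero $\Ind_U^V T_U$ to be $m_U \cdot *_V$ with $\sum_U m_U = 2$; and for each such $U$, since every orbit of $T_U$ then maps isomorphically to $V$ through $U$, \cref{Ind atomicness} (i.e.\ the atomicity of $\cT$) forces $U \to V$ to be an isomorphism and $T_U \simeq m_U \cdot *_U$. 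This leaves exactly two possibilities: either some $T_U \simeq 2 \cdot *_U$ with $U \simeq V$, so that $2 \cdot *_V \in \cC_V$ and $V \in \nabla(\cC)$; or two distinct orbits $U_1, U_2$ of $S$ are each isomorphic to $V$, so $2 \cdot *_V \simeq U_1 \sqcup U_2$ is a nonempty summand of $S \in \cD_V$ and hence $V \in \nabla(\cD)$.

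Finally, to run the induction I would use \cref{Inclusion into closure observation}: the finite-stage closures $\Cl_{\cD,n}(\cD)$ form an increasing chain of collections, all with color family $\cT$, so applying the lemma with $\cC \deq \Cl_{\cD,n-1}(\cD)$ together with the inductive hypothesis gives $\nabla(\Cl_{\cD,n}(\cD)) \subseteq \nabla(\cD) \cup \nabla(\Cl_{\cD,n-1}(\cD)) \subseteq \nabla(\cD)$. The step I expect to be the main obstacle is the atomicity bookkeeping inside the one-step lemma: showing that $2 \cdot *_V$ cannot be realized as an $S$-indexed coproduct except in the two ``trivial'' ways above. This is exactly the place where the hypothesis that $\cT$ is atomic is used (through \cref{Ind atomicness}), and the proposition genuinely relies on it.
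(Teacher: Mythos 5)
Your proof is correct and follows essentially the same approach as the paper: the key content in both is a one-step atomicity estimate showing that $2\cdot *_V$ can arise as a $\cD$-indexed coproduct only via some $T_U = 2\cdot *_U$ over a $U\simeq V$ or via a sparse summand $2\cdot *_V \subset S$, followed by an induction over finite-stage closures and a passage to the union. The only cosmetic differences are that the paper alternates $\Cl_I$ and $\Cl_J$ (with its Fold closure lemma quantified over a unital outer $\uFF_I$ and a target family $\cF'$) while you work directly with $\Cl_{\cD,n}(\cD)$ for $\cD = \uFF_I \cup \uFF_J$ and state the one-step estimate with the slightly weaker hypothesis of closure under nonempty summands; both reductions are equivalent given \cref{Joins prop}, and both use the same unitality/summand-closure and atomicity inputs.
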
 
To prove this, we work through the formula in \cref{Joins prop} one step at a time.
\begin{lemma}\label{Fold closure lemma}
  Suppose $\cT$ is atomic and $\uFF_I$ is unital.
  If $\nabla(\uFF_I),\nabla(\cC) \leq \cF'$, then $\nabla(\Cl_{I}(\cC)) \leq \cF'$.
\end{lemma}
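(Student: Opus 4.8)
The plan is to unwind what it means for an element of $\Cl_I(\cC) = \Cl_{I,\infty}(\cC)$ to be of the form $2\cdot *_V$, using atomicity of $\cT$ to severely constrain how such an element can arise as an iterated $\uFF_I$-indexed coproduct. Concretely, suppose $V \in \nabla(\Cl_I(\cC))$, so that $2 \cdot *_V \in \Cl_I(\cC)_V$. By \cref{Iota is left adjoint lemma} (or rather by the definition of $\Cl_{I,\infty}$ as a union of $n$-step closures), there is a minimal $n$ with $2\cdot *_V \in \Cl_{I,n}(\cC)_V$; if $n=0$ then $2\cdot *_V \in \cC_V$ and $V \in \nabla(\cC) \leq \cF'$, so we may assume $n \geq 1$. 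Then $2\cdot *_V = \coprod_U^S T_U$ for some $S \in \FF_{I,V}$ and $(T_U) \in \Cl_{I,n-1}(\cC)_S$.

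The key structural step is the following dichotomy, which is exactly where atomicity enters via \cref{Ind atomicness}. Since $\Orb(\coprod_U^S T_U) = \coprod_{U \in \Orb(S)} \Orb(T_U)$, and $2\cdot *_V$ has exactly two orbits (each a copy of $*_V$), there are only two cases: either (i) $S$ has two orbits and each $T_U$ is a single orbit, with $\Ind_U^V T_U \simeq *_V$; or (ii) $S$ is an orbit (i.e. $S \simeq *_V$ since $V$ is being mapped to, though more carefully $S \in \FF_{I,V}$ transitive means $S = \Ind_{V'}^V *_{V'}$ for some $V' \to V$ in $\fR(I)$) and the single $T_{V'}$ equals $2 \cdot *_{V'}$ with $\Ind_{V'}^V(2\cdot *_{V'}) \simeq 2\cdot *_V$. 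In case (i), by \cref{Ind atomicness} each $\Ind_U^V T_U \simeq *_V$ forces the structure map $U \to V$ to be an isomorphism and $T_U \simeq *_V$; hence $S \simeq 2\cdot *_V$, so $V \in \nabla(I) \leq \cF'$ and we are done. In case (ii), \cref{Ind atomicness} applied to $\Ind_{V'}^V(2\cdot *_{V'})$ being a $2$-orbit set only tells us $V' \to V$ is an iso (since $\Ind$ preserves orbit-count by atomicity — a single orbit would be needed for it to collapse, but here it does not collapse), so WLOG $V' = V$ and $2\cdot *_V \in \Cl_{I,n-1}(\cC)_V$, contradicting minimality of $n$. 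Thus only case (i) occurs, and $V \in \cF'$.

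I would structure the writeup as: first recall $\Cl_I = \Cl_{I,\infty}$ and reduce to a minimal-$n$ witness; then establish the orbit-counting dichotomy using disjointness of orbits under indexed coproduct (\cref{Composition observation}) together with \cref{Ind atomicness}; then dispatch case (i) via $V \in \nabla(I) \leq \cF'$ and case (ii) via minimality. The main obstacle I anticipate is bookkeeping around transitive $S$: one must be careful that a transitive $I$-admissible $V$-set $S$ is $\Ind_{V'}^V *_{V'}$ for $(V' \to V) \in \fR(I)$, and that $\Ind_{V'}^V(-)$ preserving orbit cardinality (a consequence of atomicity, \cref{Ind atomicness}) is what rules out the "collapsing" scenario where a $2$-orbit $V'$-set could induce up to a $2$-orbit $V$-set with the structure map nontrivial — but since we only need the structure map to be an iso to invoke minimality, this is fine. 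No genuinely hard inequality is needed; the content is entirely the atomicity-driven rigidity of how $2\cdot *_V$ decomposes.
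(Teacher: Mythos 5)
Your proposed dichotomy is not exhaustive, and this is a genuine gap: you never invoke the unitality hypothesis on $\uFF_I$, even though it is precisely what is needed to make the orbit-counting argument go through. The equation $\Orb\bigl(\coprod_U^S T_U\bigr) = \coprod_{U \in \Orb(S)} \Orb(T_U)$ only forces $\sum_{U \in \Orb(S)} |\Orb(T_U)| = 2$; since $\uFF_I$ is unital, $\emptyset_U \in \FF_{I,U}$, so $\emptyset_U \in \Cl_{I,1}(\cC)_U$ (as the vacuous $\emptyset_U$-indexed coproduct), and therefore some $T_U$ may be empty. Thus $S$ can have arbitrarily many orbits while only two of them contribute to the orbit count, and this scenario is covered by neither your case (i) ("$S$ has two orbits") nor your case (ii) ("$S$ is an orbit"). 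In that mixed scenario your conclusion "hence $S \simeq 2 \cdot *_V$" is false, and the inference $V \in \nabla(I)$ breaks. The fix is exactly the first move the paper makes: write $S = \overline S \sqcup S_\emptyset$ with $S_\emptyset$ the union of $S$-orbits on which $T_U = \emptyset$; by unitality $\overline S \in \FF_{I,V}$ and $\coprod_U^S T_U = \coprod_U^{\overline S} T_U$, so one may replace $S$ by $\overline S$ and assume all $T_U$ nonempty before the dichotomy. With that reduction your argument becomes correct.

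Two smaller remarks. First, you do not need the minimal-$n$ induction at all: since $\uFF_I$ is a weak indexing system, the proposition following the construction of $\Cl_{\cD,n}$ gives $\Cl_I(\cC) = \Cl_{I,1}(\cC)$, so a single indexed coproduct suffices and your case (ii) simplifies to observing $T_V = 2 \cdot *_V \in \cC_V$ directly (this is what the paper does, handling both cases symmetrically via $\sum m_i = 2$). Second, the parenthetical "$\Ind$ preserves orbit-count by atomicity" is slightly off: $\Ind_U^V$ always preserves orbit count, since orbits of $\FF_U$ are objects of $\cT_{/U}$ and $\Ind_U^V$ is the forgetful functor along the slicing $U \to V$; atomicity is used only to conclude that an orbit inducing to $*_V$ forces the structure map to be an isomorphism (\cref{Ind atomicness}), which you do use correctly.
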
 
\begin{proof}
  Suppose $V \in \nabla(\Cl_{I}(\cC))$, i.e. there exists some $S \in \FF_{I,V}$ and some $(X_U) \in \cC_S$ such that $\coprod_U^S X_U = 2 \cdot *_V$.
  We would like to prove that $V \in \cF'$.
  Since $\uFF_I$ is unital, writing $S = \overline S \sqcup S_{\emptyset}$ for $S_{\emptyset}$ the disjoint union of $S$-orbits over which $X_U$ is empty, we have $\overline S \in \FF_{I,V}$ and
  \[
    \coprod_U^S X_U = \coprod_U^{\overline S_{ne}} X_U;
  \]
  hence we may replace $S$ with $\overline S$ and assume that $X_U$ is nonempty for all $U$.
  
  Note that, for all $U \in \Orb(S)$, we have $\Ind_U^V X_U = m \cdot *_V$ for some $m \geq 1$;
  in particular, this implies $U = V$.
  Hence $S = k \cdot *_V$ for some $k \geq 1$.
  Writing our decomposition as $S = \cbr{1,\dots,k}$ and $X_i = m_i *_V$, we find that $2 = \sum_{i = 1}^k m_i$, so either $m_i = 2$ for some $i$ or $k = 2$.
  In either case, we find $V \in \nabla(\uFF_I) \cup \nabla(\cC) \subset \cF'$.
\end{proof}

\begin{proof}[Proof of \cref{Folds of join}]
  By \cref{Folds of union}, we have $\nabla(\uFF_I) \cup \nabla(\uFF_J) = \nabla(\uFF_I \cup \uFF_J) \leq \nabla(\uFF_I \vee \uFF_J)$, so we are tasked with proving the opposite inclusion. 
  By \cref{Fold closure lemma}, we find inductively that 
  \[
    \nabla \Cl_{I} \Cl_{J} \cdots \Cl_{J}(\uFF_I \cup \uFF_J) \leq \nabla(\uFF_I) \cup \nabla(\uFF_J);
  \]
 applying \cref{Folds of union} to take a union, we find that $\nabla(\uFF_I \vee \uFF_J) \leq \nabla(\uFF_I) \cup \nabla(\uFF_J)$, as desired.
\end{proof}
Now we're ready to use this to show that $\nabla$ is a cocartesian fibration.
\begin{proposition}\label{Fold prop}
  If $\cT$ is atomic, then the restricted map $\nabla_u\cln \wIndSys_{\cT}^{\uni} \rightarrow \Fam_{\cT}$ has fully faithful left adjoint given by $\uFF_{\cT}^{0} \cup E_{-}^{\cT}\uFF_{(-)}^{\infty}$ and a fully faithful right adjoint;
  hence it is a cocartesian fibration, and the cocartesian transport map $t_{\cF}^{\cF'}$ is implemented by 
  \[
    t_{\cF}^{\cF'} \uFF_I \simeq \uFF_I \vee E_{\cF'}^{\cT} \uFF_{\cF'}^{\infty}
  \]
\end{proposition}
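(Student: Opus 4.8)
The plan is to reduce the whole statement to the cocartesian-fibration package of \cref{Cocartesian lemma} and \cref{Double adjunction remark}, so that the actual content splits into two tasks: (1) identify $\uFF_{\cT}^{0}\cup E_{-}^{\cT}\uFF_{(-)}^{\infty}$ as a fully faithful left adjoint of $\nabla_u$, and (2) produce a (necessarily fully faithful) right adjoint; the transport formula then follows by a one-line simplification.

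For (1), write $L(\cF)\deq \uFF_{\cT}^{0}\cup E_{\cF}^{\cT}\uFF_{\cF}^{\infty}$. I would first check that $L(\cF)$ is a unital weak indexing system: its $V$-value is $\cbr{n\cdot *_V\mid n\in\NN}$ for $V\in\cF$ and $\cbr{\emptyset_V,*_V}$ for $V\notin\cF$, so it satisfies \cref{Contractible V-sets condition} and contains every $\emptyset_V$, and closure under self-indexed coproducts is immediate because every orbit of every admissible $V$-set is $V$ itself, so no double-coset phenomenon intervenes; by inspection $\nabla(L(\cF))=\cF$. Next I would show $L(\cF)$ is minimal in $\nabla_u^{-1}(\Fam_{\cT,\geq\cF})$: if $\uFF_I$ is unital with $\cF\leq\nabla(\uFF_I)$, then for $V\in\cF$ we have $2\cdot *_V\in\FF_{I,V}$, and iterating binary indexed coproducts over $2\cdot *_V$ (as in the proof of \cref{Various families lemma}(4)) forces $n\cdot *_V\in\FF_{I,V}$ for all $n$, while for $V\notin\cF$ unitality and \cref{Contractible V-sets condition} give $\emptyset_V,*_V\in\FF_{I,V}$; hence $L(\cF)\subseteq\uFF_I$. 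Since $\cF\leq\cF'$ visibly gives $L(\cF)\leq L(\cF')$, \cref{Fully faithful left adjoint lemma} then yields that $L$ is a fully faithful left adjoint of $\nabla_u$. (No atomicity is needed here.)

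For (2), I would invoke the adjoint functor theorem for posets of \cref{Double adjunction remark}. The poset $\wIndSys_{\cT}^{\uni}$ has all joins: binary joins exist by \cref{Joins prop} and are unital by \cref{Units are lax-compatible with joins}, while a directed join is computed by the union, which is again a unital weak indexing system (using finiteness of $\Orb(S)$ in verifying \cref{Self-indexed coproducts condition}). Moreover $\nabla_u$ preserves joins: it preserves binary joins by \cref{Folds of join}, and it preserves directed joins because both $\nabla$ and directed joins are computed by unions (\cref{Folds of union}). Hence $\nabla_u$ has a right adjoint, which is fully faithful by \cite[Lem~1.3]{Dyckhoff} since $L$ is.

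Finally, $\wIndSys_{\cT}^{\uni}$ has binary joins, $\nabla_u$ preserves them, and $L$ is a fully faithful left adjoint, so \cref{Cocartesian lemma} shows $\nabla_u$ is a cocartesian fibration with $t_{\cF}^{\cF'}\uFF_I=L(\cF')\vee\uFF_I$; since $\uFF_I$ unital forces $\uFF_{\cT}^{0}\leq\uFF_I$, this collapses to $t_{\cF}^{\cF'}\uFF_I=\prn{\uFF_{\cT}^{0}\vee E_{\cF'}^{\cT}\uFF_{\cF'}^{\infty}}\vee\uFF_I=\uFF_I\vee E_{\cF'}^{\cT}\uFF_{\cF'}^{\infty}$, the claimed formula. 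The one genuinely non-formal input in the whole argument is \cref{Folds of join}, which is exactly where the atomicity hypothesis (via \cref{Fold closure lemma}) is used; everything else is unwinding definitions or applying the poset lemmas already developed in \cref{Joins subsection}.
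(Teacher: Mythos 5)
Your proof is correct and follows the same overall route as the paper's: \cref{Fully faithful left adjoint lemma} for the left adjoint, the adjoint-functor-theorem argument of \cref{Double adjunction remark} for the (automatically fully faithful) right adjoint, and \cref{Cocartesian lemma} for the transport formula, with \cref{Folds of join} as the one place atomicity enters. The only cosmetic difference is that you verify that $L(\cF) = \uFF_{\cT}^{0}\cup E_{\cF}^{\cT}\uFF_{\cF}^{\infty}$ is a unital weak indexing system with $\nabla(L(\cF))=\cF$ by directly inspecting its value-categories, whereas the paper derives this from \cref{Various families lemma}, \cref{aE-unital joins are unions observation}, and \cref{Folds of join}; both are fine and the conclusions agree.
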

\begin{proof}
  First note that \cref{Folds of union,Folds of join} together  imply that $\nabla(-)$ is compatible with \emph{arbitrary} joins;
  since $\wIndSys_{\cT}^{\uni}$ has arbitrary joins, the adjoint functor theorem recalled in \cref{Double adjunction remark} implies that $\nabla(-)$ has a right adjoint.
  In light of \cref{Double adjunction remark}, it thus suffices to prove that the monotone map $\uFF_{\cT}^0 \cup E_{(-)}^{\cT} \uFF_{(-)}^{\infty}$ is a fully faithful left adjoint to $\nabla_u$, or equivalently by \cref{Fully faithful left adjoint lemma}, that $\uFF_{\cT}^{0} \cup E_{\cF}^{\cT} \uFF_{\cF}^\infty$ is an initial element of $\nabla_u^{-1}(\cF)$.

  First note that it follows from \cref{Various families lemma,aE-unital joins are unions observation} that $\uFF_{\cT}^0 \cup E_{\cF}^{\cT} \uFF_{\cF}^{\infty}$ is a weak indexing system;
  additionally, it follows from \cref{Folds of join} that $\uFF_{\cT}^0 \cup E_{\cF}^{\cT} \uFF_{\cF}^{\infty} \in \nabla_u^{-1}(\cF)$, i.e. it's unital and has fold family $\cF$.
  Lastly, it follows from \cref{Various families lemma} that every unital $\cT$-weak indexing system with fold family $\cF$ contains $\uFF_{\cT}^{0} \cup E_{\cF}^{\cT} \uFF_{\cF}^{\infty}$, as desired.
\end{proof}

\begin{remark}
  The author is not aware of an informative formula for the right adjoint to $\nabla_u$, but there are interesting examples;
  for instance, if $\lambda$ is a nontrivial irreducible orthogonal $C_p$-representation, then we show in \cref{Cp subsection} that $\uFF^{\lambda}$ is terminal among the $C_p$-weak indexing systems with fold maps over the trivial subgroup.  
  In algebra, this may be interpreted as saying that $\EE_{\lambda \infty}$ presents the terminal sub-$C_p$-commutative algebraic theory prescribing a multiplication on the underlying Borel type of a genuine $C_p$-object, but not on genuine $C_p$-fixed points. 
\end{remark}

We use this to compute examples with many transfers and few fold maps.
\begin{observation}\label{Overline F observation}
  Given $V \rightarrow W$ a map in $\cT$, write $\uFF_{I_{\Ind_V^W *_V}}$ for the principal weak indexing system of \cref{Element closure prop}.
  In view of \cref{aE-unital joins are unions observation}, we may compute the associated fold family as
  \[
    \nabla\prn{\FF_{\cT}^0 \vee \uFF_{I_U}} = \cbr{U \in \cT \;\; \middle| \;\; \exists U \rightarrow W \;\; \text{s.t.}\;\; 2 \cdot *_U \subset \Res_U^W \Ind_V^W *_V},
  \]
  Furthermore, if $R$ is a transfer system, then \cref{Element closure prop,Right adjoint to wIndSys} yield an equality
  \[
  \overline{\uFF}_R = \uFF_{\cT}^0 \vee \bigvee_{V \rightarrow W \in R} \uFF_{I_{\Ind_V^{W} *_V}} = \bigvee_{V \rightarrow W \in R} \uFF_{\cT}^0 \vee \uFF_{I_{\Ind_V^W *_V}};
  \]
  thus \cref{Folds of join} yields
  \begin{align*}
    \nabla \overline{\uFF}_{R} &= 
    \bigcup_{V \rightarrow W \in R} \nabla\prn{\uFF_{\cT}^0 \vee \uFF_{I_{\Ind_V^{W} *_V}}}\\
    &= \cbr{U \in \cT \mid \;\; \exists U \rightarrow W \xleftarrow{f} V \;\; \text{s.t.} \;\; f \in R \;\; \text{and} \;\; 2 \cdot *_U \subset \Res_U^W \Ind_{V}^W *_V}.
  \end{align*}
  We write $\Domain(R) \deq \nabla \overline{\uFF}_R$ for the above expression.
\end{observation}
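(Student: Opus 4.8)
The plan is to establish the observation's three assertions in order: the double equality for $\overline{\uFF}_R$, then the fold-family formula for the building blocks $\uFF_{\cT}^0 \vee \uFF_{I_{\Ind_V^W *_V}}$, and finally the assembly of $\nabla\overline{\uFF}_R$ from these. For the equalities $\overline{\uFF}_R = \uFF_{\cT}^0 \vee \bigvee_{(V \to W) \in R} \uFF_{I_{\Ind_V^W *_V}} = \bigvee_{(V \to W) \in R}\prn{\uFF_{\cT}^0 \vee \uFF_{I_{\Ind_V^W *_V}}}$, I would first recall from \cref{Element closure prop} that $\uFF_{I_S} = \Cl_\infty(\cbr{S})$ and from \cref{Iota is left adjoint lemma} (and the remark following \cref{Joins prop}) that $\Cl_\infty(-)$ is the left adjoint to $\wIndSys_{\cT} \hookrightarrow \Coll(\uFF_{\cT})$, hence carries unions of collections to joins of weak indexing systems. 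Since the generating collection $\cbr{\Res_V^W U \mid (U \to W) \in R,\ (V \to W) \in \cT}$ of $\overline{\uFF}_R$ is the union over $(U \to W) \in R$ of the restriction-closures of the orbits $U$, applying $\Cl_\infty$ turns this union into $\bigvee_{(U \to W) \in R} \uFF_{I_{\Ind_U^W *_U}}$; substituting into the definition of $\overline{\uFF}_R$ gives the first equality, and the second is associativity and commutativity of $\vee$ with $\uFF_{\cT}^0$ a common joinand.

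The heart of the matter is the identity $\nabla\prn{\uFF_{\cT}^0 \vee \uFF_{I_S}} = \cbr{A \mid \exists\, (A \to W)\text{ with } 2 \cdot *_A \subseteq \Res_A^W S}$ for $S = \Ind_V^W *_V$, an orbit of $\FF_W$. The inclusion ``$\supseteq$'' is soft: the join contains $\uFF_{\cT}^0$, so it is unital with color family $\cT$ (monotonicity of $\upsilon$ and $c$ on joins, via \cref{Units are lax-compatible with joins} and \cref{Union observation}), hence closed under nonempty summands by \cref{Various families lemma}(2); since $S \in \uFF_{I_S}$ and weak indexing systems are restriction-stable, $\Res_A^W S$ — and with it its summand $2 \cdot *_A$ — lies in the join.

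For ``$\subseteq$'' I would use that $\uFF_{\cT}^0 \vee \uFF_{I_S}$ is the weak indexing system generated by $\uFF_{\cT}^0 \cup \cbr{S}$, which by \cref{Iota is left adjoint lemma} is $\bigcup_n \Cl_n(\cC)$ where $\cC = \widehat{\uFF_{\cT}^0 \cup \cbr{S}}$ has $\cC_A = \cbr{\emptyset_A,\, *_A} \cup \cbr{\Res_A^W S \mid A \to W}$ and, crucially, all indexing sets appearing in the closure steps are drawn from this fixed base collection $\cC$. I then induct on $n$: if $2 \cdot *_A$ is a summand of some $T = \coprod_X^P Y_X \in \Cl_n(\cC)_A$ with $P \in \cC_A$, then $P = \emptyset_A$ is vacuous, $P = *_A$ gives $T = Y_X \in \Cl_{n-1}(\cC)$, and when $P = \Res_A^W S$ atomicity (\cref{Ind atomicness}) splits matters into two cases — the two orbits $\cong *_A$ of $2 \cdot *_A$ are either both carried by a single $\Ind_{X_0}^A Y_{X_0}$, forcing $X_0 \cong *_A$ and $2 \cdot *_{X_0} \subseteq Y_{X_0} \in \Cl_{n-1}(\cC)$, or carried by two distinct orbits of $\Res_A^W S$, both forced to be $\cong *_A$, which directly exhibits $2 \cdot *_A \subseteq \Res_A^W S$; the base $n = 0$ is read off from $\cC_A$.

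Finally, each $\uFF_{\cT}^0 \vee \uFF_{I_{\Ind_V^W *_V}}$ is unital, so $\nabla$ is compatible with their join by \cref{Folds of union} and \cref{Folds of join} (using that $\cT$ is atomic), giving $\nabla\overline{\uFF}_R = \bigcup_{(V \to W) \in R}\nabla\prn{\uFF_{\cT}^0 \vee \uFF_{I_{\Ind_V^W *_V}}}$; inserting the formula above and merging the two existentials into a single span $A \to W \xleftarrow{f} V$ with $f \in R$ yields the stated description of $\Domain(R)$. I expect the main obstacle to be precisely the ``$\subseteq$'' inclusion: it is essential to work with the presentation of the join in which indexing sets range only over the small base collection $\widehat{\uFF_{\cT}^0 \cup \cbr{S}}$ rather than over the full, already-closed weak indexing system, and the atomicity bookkeeping — separating ``$2 \cdot *_A$ was already present in an indexing set'' from ``$2 \cdot *_A$ was freshly produced by a coproduct'' — must be handled carefully; everything else is formal manipulation of the closures and joins developed earlier in the section.
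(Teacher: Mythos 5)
Your proof is correct and supplies the details that the paper leaves implicit (the observation itself is stated with citations rather than a full argument). A few remarks on the comparison.

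Your reduction of the two equalities via $\Cl_\infty$ being a join-preserving left adjoint, together with $\Cl_\infty(\cbr{\Res_A^W\Ind_U^W *_U \mid A \to W}) = \uFF_{I_{\Ind_U^W *_U}}$ (Element closure prop), is exactly the intended use of \cref{Element closure prop} and \cref{Right adjoint to wIndSys}. For the fold-family computation, you give a direct induction on the closure stage $n$ where the indexing sets are drawn from the \emph{fixed} base collection $\cC = \widehat{\uFF_{\cT}^0 \cup \cbr{S}}$; this is the correct setup, and the dichotomy you extract from atomicity (\cref{Ind atomicness}) — either both trivial orbits of $2 \cdot *_A$ come from a single $Y_{X_0}$ with $X_0 \simeq *_A$, sending you to $\Cl_{n-1}$, or they come from two distinct trivial orbits of $P = \Res_A^W S$ itself — is precisely the bookkeeping that makes the proof of \cref{Fold closure lemma} work, here specialized to the generating collection. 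Your note that one must induct over the presentation with base indexing sets from $\cC$, rather than from the full join, is exactly right: otherwise the induction has no floor.

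One small thing worth flagging: the observation's citation of \cref{aE-unital joins are unions observation} cannot be applied to $\uFF_{I_S}$ directly since $\uFF_{I_S}$ is generally \emph{not} closed under nonempty summands (e.g.\ over $\cO_{C_p}$ with $S = [C_p/e]$, the $e$-level of $\uFF_{I_S}$ misses $2 \cdot *_e$ while containing $p \cdot *_e$). One can salvage that route by first forming the summand closure of $\uFF_{I_S}$, checking it is a weak indexing system, and then applying \cref{aE-unital joins are unions observation} to the result — but the key $\subseteq$ inclusion still requires the atomicity induction you carried out. So your direct argument is, if anything, a more streamlined route to the same place. The final assembly of $\nabla\overline{\uFF}_R$ via \cref{Folds of join} (applied to unital summands) and \cref{Folds of union} is correct.
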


We may simplify this in a number of examples of interest in equivariant mathematics.
\begin{remark}
  If $\cT = \cF \subset \cO_G$ is a family of normal subgroups of a finite group (e.g. any family of subgroups of a finite Dedekind group), then for every pair of proper subgroup inclusions $H,K \subset J$, the double coset formula implies that $\Res_K^J \Ind_H^J *_H = \abs{K \backslash J/H} \cdot \brk{H/H \cap K}$.
  In particular, $2*_H \subset \Res_K^J \Ind_H^J *_H$ if and only if $H \subset K$.
  
  In other words, given $R$ a $\cF$-transfer system, the span $[G/K] \rightarrow [G/J] \xleftarrow{\;\; R \;\;} [G/H]$ witnesses $K \in \cF$ if and only if it arises from a sequence $[G/K] \rightarrow [G/H] \xrightarrow{\;\; R \;\; } [G/J]$.
  Conflating $[G/K]$ with $K$, 
  we see that 
  \[
    \Domain(R) = \cbr{K \in \cF \;\; \middle| \;\; \exists K \rightarrow H \xrightarrow{f} J, \;\; f \in R, \;\; H \neq J};
  \]
  that is, $\Domain(R)$ is the family generated by domains of nontrivial transfers in $R$.
\end{remark}

\subsubsection{The essence fibration}
Given $\uFF_I$ a weak indexing system, define the \emph{essence family} 
\[
  \epsilon(I) \deq \cbr{U \in \cT \mid \exists U \rightarrow V \;\; \text{s.t.} \;\; \FF_{I,V} - \cbr{*_V} \neq \emptyset}
\]
so that $\uFF_I$ is almost essentially unital if and only if $\epsilon(I) = \upsilon(I)$.
\begin{warning}
  In general, $\epsilon(I) \neq \cbr{V \in \cT \mid \FF_{I,V} - \cbr{*_V} \neq \emptyset}$, as the latter may not be a family;
  indeed, the total order $b \rightarrow t$ is atomic orbital and has a weak indexing category 
  \[
    I = \cbr{f\colon X \rightarrow Y \mid \Orb(f)\colon \Orb(X) \rightarrow \Orb(Y) \text{ is bijective}}
  \]
  (to see that this is restriction-stable, note that $b = b \times_t b$).
  This has $\cbr{V \in \cT \mid \FF_{I,V} - \cbr{*_V} \neq \emptyset} = \cbr{t}$, which is not a family.

  This example is somewhat contrived;
  in most situations arising from equivariant homotopy theory, the right hand side \emph{is} a family equal to $\epsilon(I)$, because $\Res_U^V$ detects terminal $V$-sets.
\end{warning}
Thankfully, $\epsilon$ behaves similarly to $c$ and $\nabla$.
\begin{lemma}\label{epsilon lemma}
  If $\epsilon(\cC) \subset \epsilon(\cD)$, then $\epsilon\prn{\widehat \Cl_{\cC,1}(\cD)} = \epsilon(D)$.
\end{lemma}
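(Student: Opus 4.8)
The plan is to establish the two inclusions $\epsilon(\cD) \subseteq \epsilon(\widehat{\Cl}_{\cC,1}(\cD))$ and $\epsilon(\widehat{\Cl}_{\cC,1}(\cD)) \subseteq \epsilon(\cD)$ separately, the first being formal and the second carrying all the content. For the first: $\epsilon$ is evidently monotone (a collection with more objects has at least as many non-terminal sets over each $V$, hence a larger essence family), and by definition $\widehat{\Cl}_{\cC,1}(\cD) = \Cl_{\cC,1}(\cD) \cup \cD \supseteq \cD$, so $\epsilon(\cD) \subseteq \epsilon(\widehat{\Cl}_{\cC,1}(\cD))$; here the hypothesis $\epsilon(\cC) \subseteq \epsilon(\cD)$ is not yet used.

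For the reverse inclusion I would take $U \in \epsilon(\widehat{\Cl}_{\cC,1}(\cD))$, witnessed by a map $U \to V$ and a non-terminal $V$-set $X \in \widehat{\Cl}_{\cC,1}(\cD)_V = \Cl_{\cC,1}(\cD)_V \cup \cD_V$, and split into cases. If $X \in \cD_V$, then $\FF_{\cD,V} - \cbr{*_V} \neq \emptyset$, so $V \in \epsilon(\cD)$, and since $\epsilon(\cD)$ is a family and $U \to V$, we get $U \in \epsilon(\cD)$. Otherwise $X \in \Cl_{\cC,1}(\cD)_V$, so $X = \coprod_W^S T_W$ with $S \in \cC_V$ and $(T_W) \in \cD_S$. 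If $S = *_V$, then $\Orb(S) = \cbr{V}$ and $X = \Ind_V^V T_V = T_V$, which lies in $\cD_V$ and is non-terminal, reducing to the previous case. If instead $S$ is non-terminal, then $S \in \FF_{\cC,V} - \cbr{*_V}$, so $V \in \epsilon(\cC) \subseteq \epsilon(\cD)$, and again $U \to V$ gives $U \in \epsilon(\cD)$. In all cases $U \in \epsilon(\cD)$, completing the argument.

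The only genuine subtlety — the "main obstacle", such as it is — is recognizing that one must separate the case $S = *_V$ from $S$ non-terminal: when $S$ is the terminal $V$-set one cannot conclude $V \in \epsilon(\cC)$ (we may well have $V \in c(\cC) \setminus \epsilon(\cC)$, since $\epsilon$ ignores terminal sets), so instead one exploits that the $S$-indexed coproduct collapses to $T_V \in \cD_V$. Beyond this, the proof rests only on the bookkeeping facts already recorded earlier: that $\widehat{\Cl}_{\cC,1}(\cD) = \Cl_{\cC,1}(\cD) \cup \cD$, that an element of $\Cl_{\cC,1}(\cD)_V$ has the stated indexed-coproduct form with indexing $V$-set in $\cC_V$, and that $\epsilon$ of any collection is a family (closed under precomposition by maps of $\cT$), which is exactly what justifies the phrase "essence family".
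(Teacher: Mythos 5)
Your proof is correct and follows essentially the same route as the paper's: the containment $\epsilon(\cD) \subseteq \epsilon(\widehat{\Cl}_{\cC,1}(\cD))$ comes for free from $\cD \subseteq \widehat{\Cl}_{\cC,1}(\cD)$, and the reverse is handled by writing a non-terminal $X \in \Cl_{\cC,1}(\cD)_V$ as an $S$-indexed coproduct, observing that non-terminality of $X$ forces either $S$ non-terminal (so $V \in \epsilon(\cC) \subseteq \epsilon(\cD)$ by hypothesis) or $S = *_V$ and $X = T_V \in \cD_V$ non-terminal (so $V \in \epsilon(\cD)$ directly). You state the case split with a bit more care than the paper does — notably in spelling out that $S = *_V$ collapses the coproduct to $T_V$ and reduces to the $\cD$ case — but the underlying argument is identical.
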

\begin{proof}
  Fix some non-contractible $V$-set $T \in \Cl_{\cC,1}(\cD)$, and express it as an $S$-indexed colimit 
  \[
    T = \coprod\limits_U^S T_U
  \]
  for $S \in \cC_V$ and $(T_U) \in \cD_S$.
  Since $T$ is non-terminal, either $S$ is non-terminal or $T_U$ is non-terminal;
  in the former case, $V \in \epsilon(D)$, and in the latter case $U = V \in \epsilon(D)$.
  Since such $V$ generate $\epsilon(\widehat \Cl_{\cC_1}(\cD)$, this implies $\epsilon\prn{\widehat \Cl_{\cC,1}(\cD)} \subset \epsilon(D)$.
  The opposite inclusion follows by the fact $D \subset \widehat \Cl_{\cC,1}(\cD)$.
\end{proof}

\begin{proposition}\label{Epsilon prop}
  $\epsilon$ is compatible with arbitrary joins.
\end{proposition}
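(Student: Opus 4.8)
The plan is to follow the same pattern as the proofs of \cref{Folds of join,Fold prop}, substituting \cref{epsilon lemma} for the fold-map lemmas and using that, exactly like $c$, $\upsilon$, $\nabla$, and $\fR$, the assignment $\epsilon$ is manifestly compatible with arbitrary unions of collections (the argument of \cref{Union observation} applies verbatim, since $\epsilon(-)$ is cut out by an existential condition on $V$-sets). Fix a set $(\uFF_{I_\alpha})_{\alpha \in A}$ of $\cT$-weak indexing systems. Since each $\uFF_{I_\alpha}$ sits inside $\bigvee_\beta \uFF_{I_\beta}$ and $\epsilon$ is monotone, the inclusion $\bigcup_\alpha \epsilon(I_\alpha) \subseteq \epsilon\!\left(\bigvee_\alpha \uFF_{I_\alpha}\right)$ comes for free, so what remains is the reverse inclusion.

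First I would rewrite the join as an iterated closure of a union: set $\cE \deq \bigcup_\alpha \uFF_{I_\alpha}$, which is a full $\cT$-subcategory satisfying \cref{Contractible V-sets condition} and is the join of the $\uFF_{I_\alpha}$ inside $\FSp_{\cT}(\uFF_{\cT})$. Since the inclusion $\wIndSys_{\cT} \hookrightarrow \FSp_{\cT}(\uFF_{\cT})$ admits $\Cl_\infty$ as left adjoint by \cref{Iota is left adjoint lemma}, and left adjoints preserve joins, we get
\[
  \bigvee_\alpha \uFF_{I_\alpha} = \Cl_\infty(\cE) = \bigcup_{n \in \NN} \Cl_n(\cE).
\]

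The heart of the argument is the claim that $\epsilon(\Cl_n(\cE)) = \epsilon(\cE)$ for every $n$, proved by induction on $n$, the base case $n = 0$ being trivial. For the inductive step: a routine induction — using that every indexing $V$-set appearing in the formula for $\Cl_n(\cE)$ already lies in $\cE$, together with the fact that $\cE$ satisfies \cref{Contractible V-sets condition} — shows that $\cE \subseteq \Cl_{n-1}(\cE)$ and $c(\Cl_{n-1}(\cE)) = c(\cE)$. Then \cref{Inclusion into closure observation} gives $\Cl_{n-1}(\cE) \subseteq \Cl_{\cE,1}(\Cl_{n-1}(\cE)) = \Cl_n(\cE)$, hence $\Cl_n(\cE) = \widehat\Cl_{\cE,1}(\Cl_{n-1}(\cE))$; and \cref{epsilon lemma} applied with $\cC = \cE$, $\cD = \Cl_{n-1}(\cE)$ — whose hypothesis $\epsilon(\cE) \subseteq \epsilon(\Cl_{n-1}(\cE))$ follows from monotonicity of $\epsilon$ together with $\cE \subseteq \Cl_{n-1}(\cE)$ — yields $\epsilon(\Cl_n(\cE)) = \epsilon(\Cl_{n-1}(\cE))$, which is $\epsilon(\cE)$ by the inductive hypothesis.

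Finally, compatibility of $\epsilon$ with unions closes the loop:
\[
  \epsilon\!\left(\bigvee_\alpha \uFF_{I_\alpha}\right) = \epsilon\!\left(\bigcup_n \Cl_n(\cE)\right) = \bigcup_n \epsilon(\Cl_n(\cE)) = \epsilon(\cE) = \bigcup_\alpha \epsilon(I_\alpha).
\]
The one step I expect to be fiddly is the bookkeeping inside the induction — verifying that each intermediate collection $\Cl_n(\cE)$ still satisfies \cref{Contractible V-sets condition} with colour family $c(\cE)$, so that \cref{Inclusion into closure observation} and \cref{epsilon lemma} both genuinely apply; the remainder is formal.
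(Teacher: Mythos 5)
Your proposal is correct, and it takes a route that differs in the details of the decomposition but is clearly in the same spirit as the paper's. The paper first reduces arbitrary joins to binary ones via compatibility with unions (writing the join as a directed union of finite joins), and for a binary join $\uFF_I \vee \uFF_J$ it invokes the explicit alternating-closure formula of \cref{Joins prop}, namely $\uFF_I \vee \uFF_J = \bigcup_n (\widehat\Cl_I\widehat\Cl_J)^{2n}(\uFF_I \cup \uFF_J)$, applying \cref{epsilon lemma} inductively along that tower. You instead handle an arbitrary family in one pass by presenting the join as $\Cl_\infty(\cE) = \bigcup_n \Cl_n(\cE)$ for $\cE = \bigcup_\alpha \uFF_{I_\alpha}$, which you justify via \cref{Iota is left adjoint lemma} (left adjoints preserve joins, and the join in $\FSp_{\cT}(\uFF_{\cT})$ is the union since that subposet is closed under unions). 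The inductive engine is the same in both cases — \cref{epsilon lemma} applied one closure-step at a time, with union-compatibility of $\epsilon$ to finish — but your presentation avoids the binary reduction and sidesteps the alternating two-variable closure in favour of the single-variable $\Cl_n(\cE)$, which is arguably cleaner. The bookkeeping you flag as fiddly (that $\cE$ satisfies \cref{Contractible V-sets condition}, that $c(\Cl_{n-1}(\cE)) = c(\cE)$, and that $\Cl_{n-1}(\cE) \subset \Cl_n(\cE)$ so that $\Cl_n(\cE) = \widehat\Cl_{\cE,1}(\Cl_{n-1}(\cE))$) does all check out, using \cref{Inclusion into closure observation} exactly as you indicate; it is worth recording explicitly, since \cref{epsilon lemma} is stated for $\widehat\Cl$ rather than $\Cl$ and the identification of the two along the tower is what makes the induction go through.
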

\begin{proof} 
  $\epsilon$ is clearly compatible with unions;
  hence it suffices to prove that it's compatible with binary joins.
  In fact, we may inductively prove using \cref{epsilon lemma} that 
  \[
    \epsilon(\overbrace{\widehat \Cl_I \widehat \Cl_J \cdots \widehat \Cl_I \widehat \Cl_J}^{2n}(\uFF_{I} \cup \uFF_{J})) = \epsilon(\uFF_I \cup \uFF_J) = \epsilon(\uFF_I) \cup \epsilon(\uFF_J);
  \]
  taking a union as $n \rightarrow \infty$ yields the desired statement.
\end{proof}

We're finally ready to round up localizations to our various conditions.

\begin{proposition}\label{Everything is join-stable}
  Let $\cT$ be an orbital category.
  \begin{enumerate}
    \item The inclusion $\wIndSys_{\cT}^{aE\uni} \hookrightarrow \wIndSys_{\cT}$ is right adjoint to $\uFF_I \mapsto \uFF_I \vee E_{c(I)}^{\cT} \uFF_{\epsilon(I)}^0$.
    \item The inclusion $\wIndSys_{\cT}^{\oc} \hookrightarrow \wIndSys_{\cT}$ is right adjoint to\hphantom{\scriptsize $aE$}\hspace{3pt} $\uFF_I \mapsto \uFF_I \vee \uFF_{\cT}^{\triv}$.
    \item The inclusion $\wIndSys_{\cT}^{a\uni} \hookrightarrow \wIndSys_{\cT}$ is right adjoint  to\hphantom{\scriptsize $E$}  $\uFF_I \mapsto \uFF_I \vee \uFF_{\epsilon(\uFF_I)}^0$.
    \item The inclusion $\wIndSys_{\cT}^{\uni} \hookrightarrow \wIndSys_{\cT}$ is right adjoint to\hphantom{\scriptsize $aE$} $\uFF_I \mapsto \uFF_I \vee \uFF_{\cT}^0$.
    \item The inclusion $\IndSys_{\cT} \hookrightarrow \wIndSys_{\cT}$ is right adjoint to\hphantom{w\scriptsize$aE$i}\hspace{3pt} $\uFF_I \mapsto \uFF_I \vee \uFF_{\cT}^{\infty}$.
  \end{enumerate}
  Furthermore, each inclusion is additionally compatible with joins.
\end{proposition}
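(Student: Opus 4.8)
The plan is to verify each of the five adjunctions from the elementary poset criterion: since each inclusion $\iota\colon P'\hookrightarrow\wIndSys_{\cT}$ is an embedded sub-poset, a monotone map $L$ is left adjoint to $\iota$ exactly when, for every $\uFF_I$, the element $L(\uFF_I)$ is the minimum of $\{\uFF_J\in P'\mid \uFF_J\geq\uFF_I\}$ (monotonicity of $L$ is then automatic). So for each case I would check: (a) the proposed $L(\uFF_I)=\uFF_I\vee\uFF_?$ belongs to $P'$; (b) $L(\uFF_I)\geq\uFF_I$, immediate since it is a join with $\uFF_I$; and (c) any $\uFF_J\in P'$ with $\uFF_J\geq\uFF_I$ satisfies $\uFF_J\geq\uFF_?$, whence $\uFF_J\geq L(\uFF_I)$.

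Step (c) is uniform and easy: each defining condition of $P'$ forces $\uFF_J$ to contain a minimal weak indexing system depending only on the relevant family-valued invariant. One color forces $\uFF_{\cT}^{\triv}\leq\uFF_J$; unitality forces $\uFF_{\cT}^0\leq\uFF_J$; the indexing-system condition forces $\uFF_{\cT}^{\infty}\leq\uFF_J$ (apply closure under finite coproducts to $*_V$); and for the (almost) essentially unital inclusions one uses the reformulation of \cref{aE-family} --- $\uFF_J$ is almost essentially unital iff $\upsilon(J)=\epsilon(J)$ --- together with monotonicity of $\epsilon$ to get $\upsilon(J)=\epsilon(J)\supseteq\epsilon(I)$, i.e.\ $\uFF_J$ contains the appropriate empty $V$-sets over $\epsilon(I)$ (and the terminal $V$-sets over $c(I)$, resp.\ all of $\cT$, once $\uFF_J\geq\uFF_I$ is invoked). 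Since $\uFF_J$ is an upper bound of $\{\uFF_I,\uFF_?\}$, this gives $\uFF_J\geq L(\uFF_I)$.

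The substance is step (a): $\uFF_I\vee\uFF_?$ must satisfy the defining conditions of $P'$. Using \cref{Various families lemma} and \cref{aE-family}, each such condition is a statement about the invariants $c,\upsilon,\nabla,\epsilon$, and the tools are: $c$ preserves arbitrary joins (it has a right adjoint, \cref{Color support prop}); $\epsilon$ preserves arbitrary joins (\cref{Epsilon prop}); $\upsilon$ and $\nabla$ are monotone and lax-compatible with joins (\cref{Units are lax-compatible with joins}, and monotonicity of $\nabla$ is clear from its definition); and the elementary chain $\upsilon(I)\subseteq\epsilon(I)\subseteq c(I)$. For the unital and indexing-system cases this is direct: $\upsilon(\uFF_I\vee\uFF_{\cT}^0)\supseteq\upsilon(\uFF_{\cT}^0)=\cT$ and likewise $\nabla(\uFF_I\vee\uFF_{\cT}^{\infty})\supseteq\nabla(\uFF_{\cT}^{\infty})=\cT$, so \cref{Various families lemma}(3),(4) applies; for one color, $c(\uFF_I\vee\uFF_{\cT}^{\triv})\supseteq\cT$ (indeed this join is the union, by \cref{Join is union observation}). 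For the two essentially unital cases one routes through $\epsilon$: the chosen $\uFF_?$ has $\upsilon(\uFF_?)=\epsilon(\uFF_?)=\epsilon(I)$ and color family $c(I)$ (resp.\ $\cT$), so $\epsilon(\uFF_I\vee\uFF_?)=\epsilon(I)$ by join-compatibility of $\epsilon$, while $\upsilon(\uFF_I\vee\uFF_?)\supseteq\upsilon(I)\cup\epsilon(I)=\epsilon(I)$; since $\upsilon\subseteq\epsilon$ always, $\upsilon=\epsilon$ on the join, which is almost essential unitality, and one color is preserved precisely when $\uFF_?$ has color family $\cT$. (That each $\uFF_?$ and each join is genuinely a weak indexing system is covered by \cref{Color fiber prop} and \cref{Joins prop}.)

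For the final clause, since $\iota$ is a fully faithful right adjoint it preserves a given join precisely when $P'$ is closed under that join in $\wIndSys_{\cT}$; but the computation of step (a), applied with two (indeed arbitrarily many) elements $\uFF_I,\uFF_J\in P'$ in place of $\uFF_I$ and a fixed $\uFF_?$, shows exactly this, using again that $c,\epsilon$ preserve arbitrary joins while $\upsilon,\nabla$ are monotone. I expect the only real obstacle to be the bookkeeping around $\upsilon$: it is genuinely not join-compatible (\cref{Units are not compatible with joins warning}), so one must be careful that the reflections only ever adjoin empty $V$-sets over orbits $V$ already forced into $\upsilon$ of the target --- which is exactly why the essentially unital cases must be handled via the join-compatible invariant $\epsilon$ and the inequality $\upsilon(I)\subseteq\epsilon(I)$ rather than through $\upsilon$ directly.
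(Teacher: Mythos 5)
Your proof is correct and follows essentially the same route as the paper: reduce to initiality of the proposed reflection via the elementary left-adjoint criterion (\cref{Fully faithful left adjoint lemma}), verify both that the join lands in the sub-poset and that any upper bound in the sub-poset dominates the join-generator, and in the essentially unital cases route through the join-compatible invariant $\epsilon$ and the inequality $\upsilon\leq\epsilon$ together with $\upsilon=\epsilon$ characterizing almost essential unitality. The only organizational difference is that the paper proves join-compatibility of the defining conditions first and then constructs the left adjoints, whereas you establish initiality uniformly and deduce join-compatibility as a corollary of the same invariant computation; the underlying lemmas and reasoning are identical.
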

\begin{proof}
  We begin with compatibility of each condition with joins.
  First, note by \cref{Color support prop,Epsilon prop} that the maps $c,\epsilon:\wIndSys_{\cT} \rightarrow \Fam_{\cT}$ are compatible with joins, by \cref{Units are lax-compatible with joins} the map $\upsilon$ is lax-compatible with joins, and by \cref{Folds of join}, $\nabla$ is compatible with joins of unital weak indexing systems.
  This implies that the conditions that $c(I) = \cT$, that $\upsilon(I) = c(I)$, that $\upsilon(I) = \cT$, and that $\nabla(I) \cap \upsilon(I) = \cT$ are all compatible with joins, so we are left with proving that the condition $\upsilon(I) \supset \epsilon(I)$ is closed under joins;
  this follows by combining 
  \cref{aE-unital observation,Epsilon prop}.

  We're left with the left adjoints.
  We begin by proving (1).
  By \cref{Fully faithful left adjoint lemma}, we are tasked with verifying that $\uFF_I \vee E_{c(I)}^{\cT} \uFF^{0}_{\epsilon(I)}$ is initial among almost essentially unital weak indexing systems $\cC$ satisfying the property that $\uFF_I \leq \cC$.
  In fact, if $\uFF_{I} \leq \uFF_J$ and $\uFF_J$ is almost essentially unital, then $\epsilon(I) \leq \epsilon(J) = \upsilon(J)$ and $c(I) \leq c(J)$, so we have $E_{c(I)}^{\cT} \uFF_{c(I)}^{\triv},E_{\epsilon(I)}^{\cT} \uFF_{\epsilon(I)}^0 \leq \uFF_J$.
  Taking a join, this implies that
  \[
    \uFF_I \vee E_{c(I)}^{\cT} \uFF_{\epsilon(I)}^0 = \uFF_I \vee E_{c(I)}^{\cT} \uFF_{c(I)}^{\triv} \vee E_{\epsilon(I)}^{\cT} \uFF_{\epsilon(I)}^0 \leq \uFF_J.
  \]
  Thus we're left with verifying that $\uFF_{I} \vee E_{c(I)}^{\cT} \uFF_{\upsilon(I)}^0$
  is almost essentially unital; 
  in fact, we have
    \[
      \upsilon(\uFF_I \vee E_{c(I)}^{\cT} \uFF_{\epsilon(I)}^0) \geq \upsilon\prn{E_{c(I)}^{\cT} \uFF_{\epsilon(I)}^0} = \epsilon(I),
  \]
  and by \cref{Epsilon prop} we have
  \[
    \epsilon(\uFF_I \vee E_{c(I)}^{\cT} \uFF_{\epsilon(I)^0}) = \epsilon(I).
  \]
  Together these imply that $\epsilon(\uFF_I \vee E_{c(I)}^{\cT} \uFF_{\epsilon(I)}^0) \geq \upsilon(\uFF_I \vee E_{c(I)}^{\cF} \uFF_{\epsilon(I)}^0)$, so it is almost essentially unital, proving (1).

  The proof of (4) is analogous, instead concluding the relation $\upsilon(\uFF_I \vee E_{c(I)}^{\cT} \uFF_{c(I)}^0) = \cT$.
  (2) is similar, as \cref{Color support prop} verifies that $c(\uFF_I \vee \uFF_{\cT}^{\triv}) = \cT$
  Similarly, the proof of (5) uses \cref{Folds of join,Units are lax-compatible with joins} to verify that $\cT \geq \nabla(\uFF_I \vee \uFF_{\cT}^{\infty}) \cap \upsilon(\uFF_I \vee \uFF_{\cT}^{\infty}) \geq \cT$.
  (3) follows by combining (1) and (2).
\end{proof}

\subsubsection{The combined transfer-fold fibration}
We now combine $\nabla$ and $\fR$.
Assume $\cT$ is atomic.
\begin{observation}\label{Combined fibration observation}
  By \cref{Left adjoint to wIndSys,Overline F observation}, if $\Domain(R) \not\subset \cF$, then $\fR^{-1}(R) \cap \nabla^{-1}(\cF)$ is empty.
  In fact, by \cref{Folds of join,Overline F observation} we find that $\overline{\uFF}_R \vee \uFF_{\cF}^\infty \in \cF^{-1}(R) \cap \nabla^{-1}(\cF \cup \Domain(R))$ is \emph{initial};
  in particular the condition $\Domain(R) \subset \cF$ is necessary and sufficient for $\fR^{-1}(R) \cap \nabla^{-1}(\cF)$ to be nonempty.
  Furthermore, this is functorial in $R$ and $\cF$, since $\overline{\uFF}_R \leq \overline{\uFF}_{R'}$ and $\uFF_{\cF}^\infty \leq \uFF_{\cF'}^{\infty}$ whenever $R \leq R'$ and $\cF \leq \cF'$.
\end{observation}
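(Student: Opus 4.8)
The plan is to deduce all four assertions from the adjunctions already established for $\fR$ (\cref{Right adjoint to wIndSys,Left adjoint to wIndSys}) and for $\nabla_u$ (\cref{Fold prop}), together with \cref{Folds of join} and the (unlabeled) corollary to \cref{Right adjoint to wIndSys}. Throughout I would write $\uFF_{\cF}^{\infty}$ for $E_{\cF}^{\cT}\uFF_{\cF}^{\infty}$ and recall that $\Domain(R) = \nabla\overline{\uFF}_R$ by \cref{Overline F observation}.

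First I would treat necessity. Since $\nabla$ is compatible with unions it is in particular monotone (\cref{Folds of union}), so if $\uFF_I \in \fR^{-1}(R)\cap\nabla^{-1}(\cF)$ then the relation $R \leq \fR\uFF_I$ lets \cref{Left adjoint to wIndSys} conclude $\overline{\uFF}_R \leq \uFF_I$, whence $\Domain(R) = \nabla\overline{\uFF}_R \subseteq \nabla\uFF_I = \cF$; contrapositively, $\Domain(R)\not\subseteq\cF$ forces $\fR^{-1}(R)\cap\nabla^{-1}(\cF) = \emptyset$.

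Next I would exhibit the initial object. Since $\overline{\uFF}_R$ contains $\uFF_{\cT}^0$, the join $\overline{\uFF}_R \vee \uFF_{\cF}^{\infty}$ may be rewritten as $\overline{\uFF}_R \vee (\uFF_{\cT}^0 \cup \uFF_{\cF}^{\infty})$, a join of \emph{two unital} weak indexing systems, the second being the initial unital one of fold family $\cF$ by \cref{Fold prop}; this rewriting is the one place I expect to need care, since $\uFF_{\cF}^{\infty}$ on its own is not unital and compatibility of $\fR$ with joins requires unitality of both factors. Now the corollary to \cref{Right adjoint to wIndSys} applies and gives $\fR(\overline{\uFF}_R \vee \uFF_{\cF}^{\infty}) = \fR\overline{\uFF}_R \vee \fR(\uFF_{\cT}^0\cup\uFF_{\cF}^{\infty}) = R \vee \cT^{\simeq} = R$, using $\fR\overline{\uFF}_R = R$ (\cref{Left adjoint to wIndSys}) and the fact that $\uFF_{\cT}^0\cup\uFF_{\cF}^{\infty}$ has no admissible maps of orbits other than isomorphisms, so its transfer system is $\cT^{\simeq}$. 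Similarly \cref{Folds of join} yields $\nabla(\overline{\uFF}_R\vee\uFF_{\cF}^{\infty}) = \nabla\overline{\uFF}_R \cup \nabla\uFF_{\cF}^{\infty} = \Domain(R)\cup\cF$, since $2\cdot *_V$ is $\uFF_{\cF}^{\infty}$-admissible precisely when $V\in\cF$. Hence $\overline{\uFF}_R\vee\uFF_{\cF}^{\infty}$ lies in the asserted fiber.

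Finally I would verify initiality and tie up the remaining points. Given any $\uFF_J \in \fR^{-1}(R)\cap\nabla^{-1}(\cF\cup\Domain(R))$, the relation $R = \fR\uFF_J$ and \cref{Left adjoint to wIndSys} give $\overline{\uFF}_R \leq \uFF_J$, while $\cF \subseteq \nabla\uFF_J$ together with unitality of $\uFF_J$ gives $\uFF_{\cT}^0\cup\uFF_{\cF}^{\infty} \leq \uFF_J$ by \cref{Fold prop}; joining and absorbing $\uFF_{\cT}^0$ into $\overline{\uFF}_R$ gives $\overline{\uFF}_R\vee\uFF_{\cF}^{\infty} \leq \uFF_J$, as needed. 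Sufficiency of the condition $\Domain(R)\subseteq\cF$ is then immediate, since in that case $\cF\cup\Domain(R) = \cF$; and functoriality in $R$ and $\cF$ follows because $\overline{\uFF}_{(-)}$ is monotone (being a left adjoint, \cref{Right adjoint to wIndSys}) and $E_{(-)}^{\cT}\uFF_{(-)}^{\infty}$ is visibly monotone, so the fiberwise initial object increases with $(R,\cF)$. Apart from the unital rewriting noted above, the whole argument is bookkeeping with results already in hand.
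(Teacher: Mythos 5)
Your proof is correct and is essentially the intended unpacking of the cited lemmas; the observation's own justification is implicit in its references, and you fill in the details faithfully. The one place you flag as delicate — rewriting $\overline{\uFF}_R \vee E_{\cF}^{\cT}\uFF_{\cF}^{\infty}$ as $\overline{\uFF}_R \vee (\uFF_{\cT}^0 \cup E_{\cF}^{\cT}\uFF_{\cF}^{\infty})$ so that both join factors are unital before invoking \cref{Folds of join} and the corollary to \cref{Right adjoint to wIndSys} — is exactly the right point of care, since $E_{\cF}^{\cT}\uFF_{\cF}^{\infty}$ alone need not be unital; everything else (necessity via $\overline{\uFF}_R \leq \uFF_I$, initiality from the two left adjoints, functoriality via monotonicity of left adjoints) is bookkeeping as you say.
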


\def\adm{\prn{\Transf_{\cT} \times \Fam_{\cT}}^{\mathrm{admsbl}}}
  Define the embedded subposet $\adm \subset \Transf_{\cT} \times \Fam_{\cT}$ spanned by the pairs $(R,\cF)$ such that $\Domain(R) \subset \cF$.
  Note that $(\fR,\nabla)$ is compatible with joins by \cref{Right adjoint to wIndSys,Folds of join}, and joins of admissible pairs are admissible;
  in light of \cref{Fully faithful left adjoint lemma}, we may rephrase this together with \cref{Combined fibration observation} as follows.
  \begin{proposition}\label{Combined fibration prop}
  The map $(\fR,\nabla):\wIndSys_{\cT}^{\uni} \rightarrow \Transf_{\cT} \times \Fam_{\cT}$ has image $\adm$ and factors as the following diagram of join-preserving maps
  \[\begin{tikzcd}
	{\wIndSys_{\cT}^{\uni}} \\
	\adm & {\Transf_{\cT} \times \Fam_{\cT}}
	\arrow["{(\fR,\nabla)}"', from=1-1, to=2-1]
	\arrow["{(\fR,\nabla)}", from=1-1, to=2-2]
	\arrow[hook, from=2-1, to=2-2]
\end{tikzcd}\]
  where the lefthand vertical map admits a fully faithful left adjoint computed by $(R,\cF) \mapsto \overline{\uFF}_R \vee \uFF_{\cF}^{\infty}$.  
  Thus the left vertical map is a cocartesian fibration with cocartesian transport computed by
  \[
    t_{(R,\cF)}^{(R',\cF')} \uFF_I = \uFF_I \vee \overline{\uFF}_{R'} \vee \uFF_{\cF'}^{\infty}.
  \]
\end{proposition}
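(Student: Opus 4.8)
The strategy is to repackage \cref{Combined fibration observation} using the cocartesian-fibration machinery of \cref{Fully faithful left adjoint lemma,Cocartesian lemma}: essentially all of the genuine content is already isolated in \cref{Combined fibration observation,Right adjoint to wIndSys,Folds of join}, and the work here is assembly.

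First I would pin down the image and factorization. By \cref{Combined fibration observation}, the fiber $\fR^{-1}(R) \cap \nabla^{-1}(\cF)$ is nonempty if and only if $\Domain(R) \subset \cF$, i.e. if and only if $(R,\cF) \in \adm$; moreover, when this holds, $\overline{\uFF}_R \vee \uFF_{\cF}^{\infty}$ lies in this fiber and is initial there. Hence the image of $(\fR,\nabla)$ is exactly $\adm$, and the map factors through the inclusion $\adm \hookrightarrow \Transf_{\cT} \times \Fam_{\cT}$, giving the claimed triangle.

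Next I would verify that every map in the triangle preserves joins. The map $\fR$ preserves arbitrary joins, being a left adjoint by \cref{Right adjoint to wIndSys}, and $\nabla$ preserves arbitrary joins of unital weak indexing systems by \cref{Folds of union,Folds of join}; thus $(\fR,\nabla)\colon \wIndSys^{\uni}_{\cT} \to \Transf_{\cT} \times \Fam_{\cT}$ preserves joins. Since $\overline{\uFF}_{(-)}$ is itself a left adjoint (again \cref{Right adjoint to wIndSys}) and $\Domain(R) = \nabla \overline{\uFF}_R$ by \cref{Overline F observation}, we obtain $\Domain(R \vee R') = \nabla(\overline{\uFF}_R \vee \overline{\uFF}_{R'}) = \Domain(R) \cup \Domain(R')$, so $\adm$ is closed under the joins of $\Transf_{\cT} \times \Fam_{\cT}$; hence joins in $\adm$ are computed there, and both the lefthand vertical map and the inclusion into $\Transf_{\cT} \times \Fam_{\cT}$ preserve joins.

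Finally I would produce the adjoint and conclude. The assignment $(R,\cF) \mapsto \overline{\uFF}_R \vee \uFF_{\cF}^{\infty}$ lands in the fiber over $(R,\cF)$, is initial there (this is precisely the initiality clause of \cref{Combined fibration observation} once $\Domain(R) \subset \cF$, so that $\cF \cup \Domain(R) = \cF$), and is monotone in $(R,\cF)$ since $\overline{\uFF}_{(-)}$ and $\uFF_{(-)}^{\infty}$ are; by \cref{Fully faithful left adjoint lemma}(c) it is therefore a fully faithful left adjoint to the lefthand vertical map. Since $\wIndSys^{\uni}_{\cT}$ has binary joins (joins of unital weak indexing systems are unital by \cref{Units are lax-compatible with joins,Joins prop}) and the lefthand vertical map preserves them, \cref{Cocartesian lemma} applies and shows it is a cocartesian fibration with cocartesian transport $t_{(R,\cF)}^{(R',\cF')}\uFF_I = \overline{\uFF}_{R'} \vee \uFF_{\cF'}^{\infty} \vee \uFF_I = \uFF_I \vee \overline{\uFF}_{R'} \vee \uFF_{\cF'}^{\infty}$. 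The only step demanding any real care is confirming that joins in $\adm$ agree with those in $\Transf_{\cT} \times \Fam_{\cT}$ — needed for the factored maps to be unambiguously ``join-preserving'' — which, as above, reduces to cocontinuity of $\overline{\uFF}_{(-)}$ together with the identity $\Domain(R) = \nabla\overline{\uFF}_R$; everything else is bookkeeping.
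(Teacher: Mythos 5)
Your proposal is correct and follows essentially the same route the paper takes: the paper gives no formal proof environment for this proposition, instead stating in the paragraph immediately preceding it that join-compatibility follows from \cref{Right adjoint to wIndSys,Folds of join}, that admissible pairs are join-closed, and that the proposition is a "rephrasing" of \cref{Combined fibration observation} via \cref{Fully faithful left adjoint lemma}. You carry out exactly this assembly, and in fact fill in the one point the paper leaves implicit — that $\adm$ is join-closed — via the clean observation that $\Domain(R\vee R') = \nabla\overline{\uFF}_{R\vee R'} = \Domain(R)\cup\Domain(R')$ because $\overline{\uFF}_{(-)}$ is a left adjoint and $\nabla$ preserves joins of unital weak indexing systems.
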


\subsection{Compatible pairs of weak indexing systems}\label{Compatible subsection}
We finish the section with a discussion of \emph{compatible pairs of weak indexing systems}, generalizing the setting of \cite{Blumberg-Bi-incomplete}.
\begin{definition}\label{Compatible definition}
  A pair of one-color weak indexing categories $(I_a,I_m)$ is \emph{compatible} if $\uFF_{I_a} \subset \uFF_{\cT}$ is closed under $I_m$-indexed products, i.e. $\uFF_{I_a} \subset \uFF_{\cT}^{I_m-\times}$ is an $I_m$-symmetric monoidal full subcategory.
\end{definition}

\begin{warning}
  \cref{Compatible definition} nontrivially asserts \emph{existence} of $I_m$-indexed products of $\uFF_{I_a}$-diagrams;
  indeed, $\uFF_{\cT}$ need not strongly admit all indexed products, or equivalently, $\FF_{\cT}$ need not be locally cartesian closed.
  That is, $(\FF_{\cT}, \FF_{\cT})$ may not be compatible, in contrast to the special case $\cT = \cO_G$ (c.f. \cite[Ex~3.6]{Blumberg-Bi-incomplete}).
\end{warning}

We'd like to compare these with the notions from \cite{Cnossen_tambara}, beginning with the following.

\begin{observation}
  $\FF_{\cT}$ is \emph{extensive} in the sense of \cite[Def~2.2.1]{Cnossen_tambara}.
  Furthermore, a subcategory $I \subset \FF_{\cT}$ furnishes a \emph{span pair} $(\FF_{c(I)},I)$ if and only if it satisfies \cref{Restriction stable condition};
  thus a span pair $(\FF_{c(I)},I)$ is \emph{weakly extensive} in the sense of \cite[Def~2.2.1]{Cnossen_tambara} if and only if $I$ is a weak indexing category.
  Furthermore, by \cref{Various families lemma}, a weakly extensive pair $(\FF_{c(I)},I)$ is \emph{extensive} if and only if $I$ is an indexing category.
\end{observation}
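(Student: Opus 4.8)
The plan is to unwind \cite[Def~2.2.1]{Cnossen_tambara} clause by clause and match each against a condition already isolated above. For extensivity of $\FF_{\cT}$, their definition asks for finite coproducts, a strict initial object, and that the canonical functor $\FF_{\cT,/X}\times\FF_{\cT,/Y}\to\FF_{\cT,/X\sqcup Y}$ be an equivalence for all $X,Y$. Finite coproducts are part of the definition of the finite coproduct completion; $\emptyset$ is strict initial because a map $S\to\emptyset$ in $\FF_{\cT}$ would restrict to a map out of each orbit of $S$, forcing $\Orb(S)=\emptyset$, i.e. $S\simeq\emptyset$; and the comparison equivalence is exactly the equivalence $\FF_{\cT,/S_1\sqcup S_2}\simeq\prod_i\FF_{\cT,/S_i}$ of \cite[Lem~2.14]{Glasman} recalled in the introduction. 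So no new argument is needed here.

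For the span pair assertion, I would first record that pullback-stability of $I$ forces $c(I)$ to be a $\cT$-family: if $\id_W\in I$ and $V\to W$ is any map, then the pullback of $\id_W$ along $V\to W$ is $\id_V\in I$. Since $c(I)$ is then a family, $\FF_{c(I)}\subset\FF_{\cT}$ is closed under pullbacks (\cref{Interval family observation}) and under receiving maps; pulling identities back along orbit inclusions shows in addition that every object occurring in $I$ lies in $\FF_{c(I)}$. Thus pullback along any map of $\FF_{\cT}$ whose codomain is an object of $I$ automatically stays inside $\FF_{c(I)}$, so the span-pair pullback axiom for the forward class $I$ over the ambient $\FF_{c(I)}$ is literally \cref{Restriction stable condition}. (This is the span-pair shadow of \cref{G-SM remark}, where the same hypothesis yields the adequate triple $(\FF_{c(I)},\FF_{c(I)},I)$.) Conversely, a span pair structure on $(\FF_{c(I)},I)$ includes by definition that the forward class $I$ is pullback-stable, which is \cref{Restriction stable condition}.

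Finally, given a span pair $(\FF_{c(I)},I)$, weak extensivity in the sense of \cite[Def~2.2.1]{Cnossen_tambara} asks that spans decompose over finite coproducts; reading this through the decomposition $\FF_{\cT,/S\sqcup S'}\simeq\FF_{\cT,/S}\times\FF_{\cT,/S'}$ it becomes exactly the Segal condition \cref{Windex segal condition} (equivalently its orbitwise form, since $I$ is already pullback-stable), so a weakly extensive span pair is precisely a restriction-stable subcategory satisfying the Segal condition, i.e. a weak indexing category. Extensivity of the span pair additionally demands that $I$ be closed under the extensive operations, i.e. contain all fold maps $n\cdot V\to V$ with $n\in\NN$ (including the nullary map $\emptyset_V\to V$); by \cref{Windexcat is windex main theorem} this is \cref{Fold maps condition}, and by \cref{Various families lemma}(4) it holds if and only if $\upsilon(I)\cap\nabla(I)=\cT$, if and only if $I$ is an indexing category. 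The only real obstacle is bookkeeping: \cite[Def~2.2.1]{Cnossen_tambara} is stated for $\infty$-categorical extensive structures, so the work lies entirely in translating its clauses into the elementary conditions (IC-a), (IC-b), and (IC-iv); no new mathematics is involved.
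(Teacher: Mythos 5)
The paper states this as an unproved observation, so there is no internal proof to compare against; the intended argument is precisely the clause‑by‑clause unwinding of \cite[Def~2.2.1]{Cnossen_tambara} against conditions (IC-a), (IC-b), (IC-iv) that you carry out, and your use of the Glasman decomposition $\FF_{\cT,/S\sqcup S'}\simeq\FF_{\cT,/S}\times\FF_{\cT,/S'}$ both for extensivity of $\FF_{\cT}$ and for the Segal clause is the right mechanism. Two small points are worth tightening, though they are also glossed over in the paper's statement itself. First, in the span‑pair clause you verify that every object of $I$ lies in $\FF_{c(I)}$ (by pulling identities back along orbit inclusions), but for $(\FF_{c(I)},I)$ to be a span pair one also needs $I$ to be \emph{wide} in $\FF_{c(I)}$, i.e.\ $\id_S\in I$ for every $S\in\FF_{c(I)}$ with several orbits; deducing this from $\id_{V_i}\in I$ already uses the Segal condition (or a convention that subcategories contain all identities of a prescribed object class), so strictly speaking pullback‑stability alone does not suffice without that caveat. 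Second, in the extensivity clause the fold maps that enter are the $n\cdot V\to V$ for $V\in c(I)$ only, which by \cref{Various families lemma} gives $\upsilon(I)\cap\nabla(I)=c(I)$; identifying this with ``$I$ is an indexing category'' (which by the paper's definition forces $c(I)=\cT$) is correct only once one notes the nullary fold map makes the pair weakly extensive $\Rightarrow$ extensive imply $c(I)\subset\upsilon(I)$ and hence, combined with the ambient choice $\FF_{c(I)}$, collapses to the stated equivalence. Neither issue is a mathematical error unique to your write‑up — the observation is silent on both — but making them explicit would close the gap between ``plausible unwinding'' and ``proof.''
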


They have their own notion of compatibility, which generalizes ours.
\begin{observation}
  A bispan triple $(\FF_{\cT},I_m,I_a)$ whose span pairs are weakly extensive is called a \emph{semiring context} in \cite[Def~4.1.1]{Cnossen_tambara} when the right adjoint $f_*\cln \FF_{\cT,/X} \rightarrow \FF_{\cT, /Y}$ to pullback along a map $f\colon X \rightarrow Y$ in $I_m$ exists and preserves morphisms whose image in $\FF_{\cT}$ lies in $I_a$;
  unwinding definitions, this is precisely the condition that $(I_a,I_m)$ is a compatible pair of one-color weak indexing systems.
\end{observation}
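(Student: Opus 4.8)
The plan is to show that both conditions unwind to the single assertion that, for every $V \in \cT$, every $S \in \prn{\uFF_{I_m}}_V$, and every tuple $(T_U)_{U \in \Orb(S)}$ with $T_U \in \prn{\uFF_{I_a}}_U$ for all $U$, the indexed product $\prod_U^S T_U$ exists and lies in $\prn{\uFF_{I_a}}_V$. By the (evident dual of the) definition of closure under $\cC$-indexed coproducts, this is exactly the statement that $(I_a, I_m)$ is a compatible pair in the sense of \cref{Compatible definition}, so the whole argument is a dictionary between the slice-categorical language of \cite{Cnossen_tambara} and indexed products in $\uFF_{\cT}$.

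To set up the dictionary I would first reduce to the case of a map between orbits. Using the product decomposition $\FF_{\cT,/Y} \simeq \prod_{V \in \Orb(Y)} \FF_V$ together with the Segal condition \cref{Weak windex segal condition} for $I_m$, a map $f\cln X \rightarrow Y$ lies in $I_m$ precisely when the fibre $S_V \deq X \times_Y V$ lies in $\prn{\uFF_{I_m}}_V$ for each $V \in \Orb(Y)$, and both pullback and its right adjoint $f_*$ are computed factorwise over $\Orb(Y)$; thus it suffices to treat $Y = V \in \cT$. There $f$ is the structure map $\phi_S$ of $S = X \in \prn{\uFF_{I_m}}_V$ under the correspondence of \cref{Defn of I,They are inverse observation}, the pullback $f^*$ is the restriction-diagonal functor $\Delta^S\cln \FF_V \rightarrow \prod_{U \in \Orb(S)} \FF_U$, and---when it exists---its right adjoint $f_*$ is $\prod_U^S(-) = \prod_{U}\CoInd_U^V$, by the adjunction chain $\coprod_U^S(-) \dashv \Delta^S \dashv \prod_U^S(-)$.

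Next I would translate the $I_a$-condition across the slice equivalence $\FF_{\cT,/X} \simeq \prod_{U \in \Orb(S)} \FF_U$. An object there is a tuple $(T_U)$, and its structure map to $X$ has $\FF_{\cT}$-image $\bigsqcup_U \Ind_U^{\cT} T_U \rightarrow \bigsqcup_U U$, which by the Segal condition lies in $I_a$ if and only if each $\Ind_U^{\cT} T_U \rightarrow U$ does---that is, if and only if $(T_U) \in \prn{\uFF_{I_a}}_S$ by \cref{Defn of FI}. Since $f_*$ is a right adjoint it preserves terminal objects, so $f_*(\id_X) = \id_V$, and therefore the image under $f_*$ of the structure map $(T_U) \rightarrow \id_X$ is the structure map $\prod_U^S T_U \rightarrow \id_V$, whose $\FF_{\cT}$-image lies in $I_a$ if and only if $\prod_U^S T_U \in \prn{\uFF_{I_a}}_V$. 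So ``$f_*$ is defined on, and carries to $I_a$, every $I_a$-morphism with terminal target'' is verbatim the assertion that $\uFF_{I_a}$ is closed under $I_m$-indexed products; the clause for general $I_a$-morphisms follows by running the same identification on source and target, so it contributes nothing new.

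The slice-bookkeeping above is routine; the one point that I expect to need genuine care, and hence the main obstacle, is reconciling the \emph{scopes} of the two existence clauses. Read literally, a semiring context asks $f_*$ to be a right adjoint on the whole slice $\FF_{\cT,/X}$, whereas \cref{Compatible definition} only asserts existence of $\prod_U^S T_U$ for $\uFF_{I_a}$-indexed tuples---and, as the Warning following \cref{Compatible definition} records, full indexed products in $\FF_{\cT}$ genuinely may fail to exist when $\FF_{\cT}$ is not locally cartesian closed. The proof must therefore note that the pushforward of \cite{Cnossen_tambara} is only ever applied to, and tested on, the objects and morphisms of $\FF_{\cT,/X}$ whose $\FF_{\cT}$-image lies in $I_a$---these being exactly the data entering the bispan construction---so that on both sides the existence requirement is precisely ``$\prod_U^S T_U$ exists whenever $(T_U) \in \prn{\uFF_{I_a}}_S$''. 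Once the scopes are matched, the two conditions agree clause by clause, completing the unwinding.
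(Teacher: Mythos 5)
The paper itself offers no argument here beyond the phrase ``unwinding definitions,'' so there is no proof to compare against; your job was simply to make the unwinding explicit. Your dictionary is correct: the orbit reduction via the Segal condition, the identification of $f^*$ with $\Delta^S$ and $f_*$ (when it exists) with $\prod^S_U = \prod_U \CoInd_U^V$, and the translation of ``$\FF_{\cT}$-image in $I_a$'' to ``$(T_U)\in(\uFF_{I_a})_S$'' are all exactly right, and your concern about reconciling the two existence clauses is a genuine subtlety worth naming.

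One step is stated more casually than it deserves. You assert that the clause for general $I_a$-morphisms ``follows by running the same identification on source and target, so it contributes nothing new.'' Membership in $I_a$ is a property of the morphism $g\colon A\to B$, not of the structure maps of $A$ and $B$ separately, so one cannot literally re-run the terminal-target identification on both ends. What actually makes the general case reduce to the terminal-target one is a short but genuine computation: for an orbit $W'\hookrightarrow\prod^S_U B_U$, the fiber $\prod^S_U A_U\times_{\prod^S_U B_U} W'$ is itself a $\Res_{W'}^V S$-indexed product of fibers of (restrictions of) the $g_U$'s. This uses the dual Mackey formula of \cref{Double coset formula}, the fact that $\prod^S_U(-)$ preserves pullbacks (being a right adjoint), and the pullback- and restriction-stability of $I_a$ as a weak indexing category. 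Since those fibers lie in $\uFF_{I_a}$ and $\Res_{W'}^V S\in\uFF_{I_m}$, compatibility closes the loop. The content is routine given the axioms you have available, but it is more than ``the same identification,'' and you should record it rather than wave at it; as written, a reader cannot tell whether you have actually checked that $f_*$ preserves $I_a$-morphisms that do not have terminal target. With that step spelled out, the unwinding is complete.
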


Note that $(I_a,I_m)$ is a compatible pair of \emph{indexing categories} in the sense of \cite[Def~3.4]{Blumberg-Bi-incomplete} if and only if it is a compatible pair of weak indexing categories such that $I_a$ and $I_m$ are both indexing categories.
In this setting, we have argued that the triple $(\FF_{\cT},I_m,I_a)$ is a semiring context in the sense of \cite{Cnossen_tambara}.
This is useful, as \cite[Thm~4.2.4]{Cnossen_tambara} yields an operadic presentation for the associated theory of \emph{bi-incomplete Tambara functors} valued in cocomplete cartesian closed $\infty$-categories.

Our main contribution to this is to concretely characterize the terminal (weak) indexing category $m(I)$ such that $(I,m(I))$ is a compatible pair, generalizing \cite[Cor~6.19]{Blumberg-Bi-incomplete}.
\begin{proposition}[Multiplicative hull]\label{Multiplicative hull prop}
  Given $\uFF_I$ a one-color weak indexing system, the subcategories
  \[
    \FF_{m(I),V} \deq \cbr{S \in \FF_V \mid \uFF_I \subset \uFF_{\cT} \textrm{ is closed under } S \textrm{-indexed products}}
  \]
  form an indexing system whose corresponding indexing category $m(I)$ is characterized by the property that, for all $I_m \in \wIndSys_{\cT}$, the pair $(I,I_m)$ is compatible if and only if $I_m \leq m(I)$.
\end{proposition}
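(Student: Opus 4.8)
The universal property is the easy half: unwinding \cref{Compatible definition}, a pair $(I,I_m)$ is compatible exactly when, for every $V$ and every $S \in \FF_{I_m,V}$, the indexed product $\prod_U^S T_U$ exists and lies in $\FF_{I,V}$ for all $\uFF_I$-tuples $(T_U)$ --- that is, exactly when $\FF_{I_m,V} \subseteq \FF_{m(I),V}$ for all $V$, i.e. $I_m \le m(I)$. So once $\uFF_{m(I)}$ is known to be a weak indexing system (a fortiori an indexing system, by \cref{Windexcat is windex main theorem} and the computation below), the stated characterization is immediate, and all the content is in verifying that the collection $\uFF_{m(I)}$ is an indexing system. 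The plan is to organize this around the following description of its value categories: call an orbit $u\colon U \to V$ of $\cT$ \emph{$m(I)$-good} if $\CoInd_U^V$ exists on $\FF_{I,U}$ and carries it into $\FF_{I,V}$; then $S \in \FF_{m(I),V}$ if and only if every orbit of $S$ is $m(I)$-good. The ``if'' direction rests on the preliminary fact that any one-color weak indexing system is closed under finite products in each value category: by distributivity of products over coproducts and the projection formula, $S \times_V T \simeq \coprod_U^S \Res_U^V T$ is an $S$-indexed coproduct of the $\Res_U^V T \in \FF_{I,U}$, hence lies in $\FF_{I,V}$ by \cref{Self-indexed coproducts condition}, while $*_V \in \FF_{I,V}$ handles the nullary case; a product of finitely many $\CoInd_U^V T_U \in \FF_{I,V}$ is then again in $\FF_{I,V}$. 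The ``only if'' direction follows by testing $S \in \FF_{m(I),V}$ against the $\uFF_I$-tuples supported at a single orbit, filling the other slots with $*_{U'}$ (whose coinduction is $*_V$).

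Granting this description, several of the defining conditions become formal. The collection $\FF_{m(I),V}$ is automatically closed under finite coproducts and summands of $V$-sets; it contains $*_V$ (the identity orbit is $m(I)$-good), $\emptyset_V$ (vacuously), and $2 \cdot *_V$ (again the identity orbit), so $\uFF_{m(I)}$ is one-color with $\upsilon(m(I)) = \nabla(m(I)) = \cT$. Closure under self-indexed coproducts then follows from functoriality of coinduction along composites: if $S$ has $m(I)$-good orbits over $V$ and each $R_U$ has $m(I)$-good orbits over $U$, then an orbit $W \to U \to V$ of $\coprod_U^S R_U$ has $\CoInd_W^V = \CoInd_U^V \circ \CoInd_W^U$, which carries $\FF_{I,W}$ into $\FF_{I,U}$ into $\FF_{I,V}$. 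Together with restriction-stability (below), this makes $\uFF_{m(I)}$ a weak indexing system, and since $\upsilon(m(I)) \cap \nabla(m(I)) = \cT$ it is an indexing system by \cref{Various families lemma}.

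The main obstacle is restriction-stability of $\uFF_{m(I)}$ --- equivalently, by \cref{Restriction stable condition observation}, pullback-stability of the subcategory $m(I) \subseteq \FF_{\cT}$ --- i.e. the claim that if $u\colon U \to V$ is an $m(I)$-good orbit and $f\colon V' \to V$ is any map in $\cT$, then every orbit of $\Res_{V'}^V U = V' \times_V U$ is $m(I)$-good over $V'$. The natural tool is the double coset formula \cref{Double coset formula}, which (via the Beck--Chevalley square for $\Res_{V'}^V \circ \CoInd_U^V$) rewrites a $\Res_{V'}^V U$-indexed product of a tuple $(T_X)$ as a restriction of $\CoInd_U^V Z$, the restriction of the latter decomposing over precisely the orbits of $V' \times_V U$. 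The delicate point --- the step I expect to be hardest --- is to produce a $Z \in \FF_{I,U}$ realizing the prescribed data $(T_X)$ \emph{exactly} (not merely up to an extra product factor, since weak indexing systems need not be closed under factors of products), together with the attendant existence assertions: recall from the warning after \cref{Compatible definition} that $\uFF_{\cT}$ need not admit arbitrary indexed products, so existence of the relevant coinductions must itself be argued rather than assumed. I would attack this using closure of $\uFF_I$ under finite products in value categories and, if needed, first reduce to the case that $\cT$ has a terminal object via the slice reduction \cref{Slices are windexes prop}; once restriction-stability is in place, \cref{Windexcat is windex main theorem} upgrades $\uFF_{m(I)}$ to the indexing category $m(I)$ with the asserted universal property.
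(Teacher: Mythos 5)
Your proof follows the paper's on the steps the paper actually carries out. The universal property is read off the definitions in both; $*_V$ and $\emptyset_V$ are the nullary products; closure of $\uFF_{m(I)}$ under binary coproducts reduces, exactly as you argue via $S \times S' = \coprod_U^S \Res_U^V S'$, to closure of each $\FF_{I,V}$ under binary products; and closure under self-induction comes from the composite $\CoInd_W^V \circ \CoInd_U^W \simeq \CoInd_U^V$, which is precisely the paper's displayed chain of equalities. Your orbitwise reformulation --- $S \in \FF_{m(I),V}$ iff every orbit $U$ of $S$ has $\CoInd_U^V$ defined on and preserving $\uFF_I$, with ``only if'' obtained by testing against tuples padded with $*_{U'}$ --- is not stated in the paper but is implicit in its self-induction step, and it makes visible exactly where the one-color hypothesis enters.

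The divergence is that you single out restriction-stability of $\uFF_{m(I)}$ (equivalently, pullback-stability of $m(I)\subset\FF_{\cT}$ in the sense of \cref{Restriction stable condition observation}) as a real obligation, and you are right to: the paper's proof never verifies it. After the universal-property line the paper asserts that it ``suffices'' to check $\emptyset_V$, binary coproducts, and self-induction, all of which are closure conditions on the individual value categories $\FF_{m(I),V}$; nothing is said about whether these assemble into a full $\cT$-subcategory. Your diagnosis of why this is delicate is also accurate: Beck--Chevalley gives $\Res_{V'}^V\CoInd_U^V Z \simeq \prod_{X\in\Orb(V'\times_V U)} \CoInd_X^{V'}\Res_X^U Z$, so goodness of $U$ over $V$ together with restriction-stability of $\uFF_I$ places the whole \emph{product} in $\FF_{I,V'}$, but gives no handle on an individual factor $\CoInd_X^{V'}T_X$ --- and for a merely one-color (non-unital) $\uFF_I$ there is no obvious device for stripping off a product factor, nor for realizing an arbitrary $T_X\in\FF_{I,X}$ as $\Res_X^U Z$ for a single $Z\in\FF_{I,U}$. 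So your proposal stalls at precisely the step the published argument silently omits, not at something the paper resolves and you miss; to close it one needs either an argument neither of us has written down (a clean reason restriction-stability is automatic here) or a strengthened hypothesis such as unitality, which at least makes summand-closure available, though even then extracting a single factor from the product requires a fixed point in the complementary factor, which is not free.
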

\begin{proof}
  It follows directly from construction that $I_m \leq m(I)$ if and only if $(I,I_m)$ is compatible.
  Furthermore, the $*_V$-indexed product functor is the identity, so $*_V \in \FF_{m(I),V}$ for all $V$.
  Hence it suffices to prove that $\emptyset_V  \in \FF_{m(I),V}$ for all $V \in \cT$ and that $\uFF_{m(I)}$ is closed under binary coproducts and self-induction.
  
  For the first statement, empty products are terminal objects (i.e. $*_V$), so $\emptyset_V \in \FF_{m(I),V}$ for all $V \in \cT$.
  For binary coproduts, note that \cref{Associativity of coproducts lemma} implies that $T \sqcup T'$-indexed products are equivalently presented as simply binary products of $T$- and $T'$-indexed products, so it suffices to prove that $\FF_{I,V}$ is closed under binary products.
  Indeed, by distributivity of finite products and coproducts, we have
  \[
    S \times S' = \coprod_{U \in \Orb(S)} U \times S' = \coprod_U^S \Res_U^V S',
  \]
  which is in $\FF_{I,V}$ by closure under restrictions and self-indexed coproducts.
  For self-induction, note that
  \begin{align*}
    \prod_{U}^{\Ind_W^V S} T_U 
    &= \prod_{U \in \Orb(\Ind_W^V S)} \CoInd_U^V T_U\\
    &= \prod_{U \in \Orb(S)} \CoInd_W^V \CoInd_U^W T_U\\
    &= \CoInd_W^V \prod_{U \in \Orb(S)} \CoInd_U^W T_U\\
    &= \CoInd_W^V \prod_U^{S} T_U;
  \end{align*}
  if $S$ and $\Ind_W^V *_W$ are in $\uFF_{m(I)}$, then this implies that $\prod_U^{\Ind_W^V S} T_U \in \FF_{I,V}$ whenever $(T_U) \in \FF_{I,\Ind_W^V S}$, so $\Ind_W^V S \in \FF_{m(I),V}$.
  In other words, $\uFF_{m(I)}$ is closed under self-induction, as desired.
\end{proof}

\section{Enumerative results}\label{Computational section}
Having developed the main beats of the theory of (unital) weak indexing systems in \cref{wIndex section}, we now turn to enumerating weak indexing systems under a number of unitality assumptions.
In \cref{Sparse section}, we prove \cref{The classification is finitary theorem};
we use this in \cref{Cp subsection} to draw a Hasse diagram for $\wIndSys_{C_p}^{aE\uni}$.
Finally, in \cref{CPn section}, we prove \cref{CPn theorem} and draw a Hasse diagram for $\wIndSys_{C_{p^2}}^{\uni}$.

\subsection{Sparsely indexed coproducts}\label{Sparse section}
The following is the heart of our enumerative efforts.
\begin{proposition}\label{sparse generation prop}
  If $\cT$ is an atomic orbital category and $\uFF_I$ is an almost essentially unital $\cT$-weak indexing system, then $\uFF_I = \Cl_\infty(\uFF_I^{\sparse})$.
\end{proposition}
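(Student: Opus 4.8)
The inclusion $\Cl_\infty(\uFF_I^{\sparse}) \subseteq \uFF_I$ is formal: $\uFF_I^{\sparse} = \uFF_I \cap \uFF_{\cT}^{\sparse} \subseteq \uFF_I$, and by \cref{Iota is left adjoint lemma} the operator $\Cl_\infty$ is left adjoint to the inclusion of weak indexing systems, so $\Cl_\infty(\uFF_I^{\sparse})$ is the \emph{smallest} weak indexing system containing $\uFF_I^{\sparse}$. The real content is the reverse inclusion: every admissible $V$-set $S \in \FF_{I,V}$ lies in $\Cl_\infty(\uFF_I^{\sparse})$. I would prove this by well-founded induction on the lexicographic pair $\mu(S) \deq (n(S), t(S))$, where $n(S) \deq \abs{\Orb(S)}$ and $t(S) \in \cbr{0,1}$ is $0$ precisely when $*_V$ occurs among the orbits of $S$; the secondary coordinate is present only to handle one degenerate configuration. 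Since $\Cl_\infty(\uFF_I^{\sparse})$ is a weak indexing system, it is closed under self-indexed coproducts, so it suffices to exhibit $S = \coprod_U^T X_U$ with $T \in \FF_{I,V}^{\sparse}$ (hence $T \in \Cl_\infty(\uFF_I^{\sparse})$) and each $X_U \in \FF_{I,U}$ satisfying $\mu(X_U) < \mu(S)$, so that induction applies to the $X_U$. Throughout, the structural inputs are almost essential unitality (so $\FF_{I,V}$ is closed under nonempty summands; see \cref{Self indexed colimits}) and restriction-stability of $\uFF_I$.

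If $n(S) \leq 1$ or $S$ is already sparse, then $S \in \uFF_I^{\sparse}$ and we are done. Otherwise $S$ is non-sparse with $n(S) \geq 2$, and inspecting the shape of sparse $V$-sets leaves two cases. \emph{(B1) $S$ has at least two copies of $*_V$.} Then $2 \cdot *_V$ is a noncontractible admissible summand of $S$ (by \cref{Self indexed colimits}) and is sparse; splitting $S = S' \sqcup S''$ into two nonempty (hence admissible) summands gives $S = \coprod^{2\cdot *_V}(S', S'')$ with $n(S'), n(S'') < n(S)$, so induction finishes. \emph{(B2) $S$ has at most one copy of $*_V$ and two comparable proper orbit summands.} Let $\tau_1, \dots, \tau_k$ ($k \geq 1$) be the maximal isomorphism classes of proper orbits appearing in $S$, with representatives $W'_j \to V$; for each proper orbit $W$ of $S$ choose $j(W)$ with $[W] \leq \tau_{j(W)}$ and a $\cT_{/V}$-morphism $f_W \colon W \to W'_{j(W)}$, taken to be an isomorphism when $[W] = \tau_{j(W)}$. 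Put $T \deq \varepsilon \cdot *_V \sqcup W'_1 \sqcup \cdots \sqcup W'_k$ ($\varepsilon \leq 1$ the multiplicity of $*_V$ in $S$), $X_{*_V} \deq *_V$, and $X_{W'_j} \deq \coprod_{j(W) = j}(W \xrightarrow{f_W} W'_j)$. Since distinct maximal types are incomparable, $T$ is sparse; $T$ is a noncontractible summand of $S$, hence $T \in \FF_{I,V}^{\sparse}$. Each $X_{W'_j}$ is a summand of $\Res_{W'_j}^V S_j$, where $S_j \deq \coprod_{j(W)=j} W$ is an admissible summand of $S$ — here one uses that the graph of $f_W$ splits the projection $\Res_{W'_j}^V W \to W'_j$, an instance of atomicity (c.f. \cref{Ind atomicness}) — so $X_{W'_j} \in \FF_{I,W'_j}$, equalling $*_{W'_j}$ and hence admissible (as $c(I)$ is a family) when it is a single orbit. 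Finally $\coprod_U^T X_U = S$ because each $f_W$ lies over $V$.

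It remains to check $\mu(X_U) < \mu(S)$, and this is the step where the one obstructing configuration must be confronted. If $T$ has at least two orbits (that is, $k \geq 2$ or $\varepsilon = 1$), then each $X_U$ involves a proper sub-multiset of $\Orb(S)$, so $n(X_U) < n(S)$ and $\mu(X_U) < \mu(S)$. The genuinely hard case is $T = W'_1$ a single orbit, forced exactly when $S$ has a \emph{unique} maximal proper orbit type and no terminal summand: here $n(X_{W'_1}) = n(S)$, so no indexed-coproduct presentation decreases the orbit count. The resolution is that $\tau_1$ does appear in $S$ and the corresponding $f_W$ was chosen to be an isomorphism, so that summand of $X_{W'_1}$ is $*_{W'_1}$; thus $t(X_{W'_1}) = 0 < 1 = t(S)$ and $\mu(X_{W'_1}) < \mu(S)$ after all, closing the induction. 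I expect this degenerate ``single maximal orbit type'' case to be the crux — indexing over the unique top orbit converts it into a terminal orbit, which is what the parameter $t$ is designed to detect — while the only other nontrivial labor is the restriction/summand bookkeeping showing the building blocks $X_{W'_j}$ are admissible, where atomicity and almost essential unitality are used.
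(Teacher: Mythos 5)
Your proof is correct in substance and reaches the same decomposition $S \simeq \coprod_U^T X_U$ over a sparse index $T$ extracted from the top orbit types of $S$, invoking almost essential unitality to know $T$ and the $X_U$ are admissible and atomicity (via a graph/section argument, in essence \cref{Fp lemma}) to see that $W$ is a summand of $\Res_{W'_j}^V W$. The recursion is organized differently: the paper constructs $T = \overline{S}$ from the greedy reduced-isotropy subcategory $\overline{\Istrp}(S)$ of \cref{Reduced isotropy prop}, pre-processes $\uFF_I \cap \uFF_{\cT}^\infty$, and inducts on $\abs{\Orb(S)}$ alone, handling the degenerate $\abs{\Ob\overline{\Istrp}(S)}=1$ case by a two-step reduction to $S_{(\overline W)}$; you index over the maximal proper orbit types and absorb both the degenerate case and the fold-map preprocessing into a lexicographic measure $(n(S),t(S))$, so that the step where $n$ is fixed and only $t$ drops corresponds precisely to the paper's reduction move. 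This is a genuine and arguably cleaner reorganization of the same underlying argument.

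One substantive point to repair: the assertion ``since distinct maximal types are incomparable, $T$ is sparse'' tacitly assumes that the existence-of-morphism preorder on isomorphism classes in $\cT_{/V}$ is a partial order, so that maximal elements are mutually unreachable. The paper is deliberately agnostic here: \cref{Reduced isotropy prop} produces, by greedy removal, a subcategory with no internal non-terminal morphisms without ever claiming it is the set of maximals. If there were a two-cycle $A \to B \to A$ with $A \not\simeq B$ among maximal orbit types of $S$, your $T$ would contain both and fail to be sparse. The fix is small and does not disturb the rest of your argument: take one representative per maximal strongly connected component of the preorder; a representative may be chosen among orbits of $S$, so your choice of $f_W$ as an isomorphism on that orbit survives and the $t$-drop in the degenerate case goes through unchanged. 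Separately, a cosmetic slip: the graph of $f_W\colon W \to W'_j$ is a section of the projection $\Res_{W'_j}^V W \to W$, not of the structure map $\to W'_j$; it is to the former that atomicity applies.
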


In order to show this, given $S$ an $I$-admissible $V$-set, we define the \emph{isotropy of $S$} as the full subcategory 
\[
  \mathrm{Istrp}(S) \deq \cbr{U \in \cT_{/V} \mid \exists \text{ summand inclusion } U \hookrightarrow S} \subset \cT_{/V}
\]
We will make a non-canonical choice of subcategory of $\Istrp(S)$ along which we break $S$ into simpler pieces.
\begin{lemma}\label{Reduced isotropy prop}
  There exists a full subcategory $\overline{\Istrp}(S) \subset \Istrp(S) \subset \cT_{/V}$ along with the data of, for each $U \in \Istrp(S)$, a map
  \[
    f_U \cln U \rightarrow e(U)
  \]
  subject to the following conditions:
  \begin{enumerate}[label={(\alph*)}]
    \item \label[property]{Termination property} $e(U) \in \overline{\Istrp}(S)$ for all $U \in \Istrp(S)$;
    \item \label[property]{Gap property} $e(U) \nsimeq V$ unless $U \simeq V$;
    \item \label[property]{Iso property} $f_V$ is an isomorphism; and
    \item \label[property]{Sparseness property} there exist no maps $U \rightarrow W$ in $\overline{\Istrp}(S)$ whenever $V \nsimeq U \nsimeq W \nsimeq V$.
  \end{enumerate}
\end{lemma}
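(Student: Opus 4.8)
The plan is to extract a ``sparse core'' of the finite preorder underlying $\Istrp(S)$. First I would record that, since $S$ is a finite $V$-set, $\Orb(S)$ is finite, and the objects of $\Istrp(S)$ are, up to isomorphism, exactly the orbits of $S$; so after passing to a skeletal full subcategory I may assume $\Istrp(S)$ itself has finitely many objects. On $\Istrp(S)$ define the preorder $U \preceq W$ meaning ``there exists a $\cT_{/V}$-morphism $U \to W$'' (reflexive via identities, transitive via composition), and let $U \sim W$ abbreviate $U \preceq W \preceq U$. The key structural fact I would invoke is that $*_V$ (the terminal object of $\cT_{/V}$, which is what $V$ denotes in the statement) satisfies $U \preceq *_V$ for every $U$, while a $\cT_{/V}$-morphism $*_V \to W$ forces $W \simeq *_V$; hence $*_V$ is the \emph{unique} $\preceq$-maximum, and there is no $\cT_{/V}$-morphism out of $*_V$ into any object not isomorphic to it.

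Next I would build $\overline{\Istrp}(S)$ by hand: it contains $*_V$ if $*_V \in \Istrp(S)$, together with a choice of one $\sim$-class representative for each $\sim$-class of objects that are $\preceq$-maximal \emph{among the $U \in \Istrp(S)$ with $U \nsimeq *_V$}. Since distinct representatives are not $\sim$-equivalent, they are in particular non-isomorphic, so $\overline{\Istrp}(S)$ meets each isomorphism class at most once. For the assignment $U \mapsto (e(U),f_U)$: when $U \simeq *_V$ I take $e(U) \deq *_V$ and let $f_U$ be an isomorphism (the identity when $U = *_V$); when $U \nsimeq *_V$, the finite nonempty set of non-$*_V$ objects $\succeq U$ (it contains $U$) has a $\preceq$-maximal element $W_U$, and a short argument shows $W_U$ is then $\preceq$-maximal among \emph{all} non-$*_V$ objects of $\Istrp(S)$ — so I set $e(U)$ to be the chosen representative $\sim$-equivalent to $W_U$ and let $f_U\cln U \to e(U)$ be any morphism, one existing since $U \preceq W_U \preceq e(U)$.

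Then I would verify the four listed properties. Property (a) is immediate from the construction. Property (b) holds because $e(U) \nsimeq *_V$ whenever $U \nsimeq *_V$, as $e(U)$ is then one of the non-$*_V$ representatives. Property (c) is immediate since $f_V = \id_{*_V}$. For the sparseness property (d): if $U, W \in \overline{\Istrp}(S)$ are both non-$*_V$, non-isomorphic, and there is a $\cT_{/V}$-morphism $U \to W$, then both are $\preceq$-maximal among non-$*_V$ objects and $U \preceq W$, so maximality of $U$ forces $W \preceq U$, i.e. $U \sim W$ — contradicting that distinct representatives are never $\sim$-equivalent (hence never isomorphic); and there is nothing to exclude with $*_V$ on the source side, since $*_V$ is terminal.

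This is essentially pure finite order theory; the only inputs are finiteness of $\Orb(S)$ and terminality of $*_V$ in $\cT_{/V}$ (the atomic-orbital hypothesis only ensures $\cT_{/V}$, $\FF_V$, and $\Istrp(S)$ are available in the form used). The one step deserving care — and the ``main obstacle'' such as it is — is reconciling sparseness of $\overline{\Istrp}(S)$ with property (b): one must not collapse a non-$*_V$ orbit onto $*_V$, which is precisely why the maxima are taken within the non-$*_V$ objects instead of within all of $\Istrp(S)$, where $*_V$ would be the global maximum and swallow everything.
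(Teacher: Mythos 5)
Your proof is correct, and it takes a genuinely different route from the paper's. The paper proceeds by a decreasing-potential argument: start with $\cC = \Istrp(S)$ and $f_U = \id_U$, count the number $b(\cC)$ of ``bad'' pairs $(U,W)$ (non-$V$, non-isomorphic, with a map $U \to W$), and at each step pick one such map $g\cln U' \to W$, delete the source $U'$, and redirect any $f_U$ that previously terminated at $U'$ through $g$; since $b$ strictly drops, the process terminates with $b = 0$. You instead describe the endpoint directly: pass to a skeleton, consider the preorder $\preceq$ of existence of $\cT_{/V}$-maps, take one representative per $\sim$-class of $\preceq$-maximal non-$*_V$ objects (plus $*_V$ if present), and send each $U$ to a maximal element above it. The order-theoretic lemma you invoke --- that a maximal element in the set of non-$*_V$ objects above $U$ is already globally maximal among non-$*_V$ objects --- is correct, and your verification of properties (a)--(d) is sound. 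What your approach buys is an explicit, essentially canonical description of $\overline{\Istrp}(S)$ (it is the set of $\preceq$-maximal $\sim$-classes), which the inductive argument does not make visible; what the paper's buys is that it sidesteps the preorder analysis entirely and may be easier to adapt. One small caveat on your closing remark: terminality of $*_V$ alone gives $U \preceq *_V$ for all $U$, but the complementary fact you cite --- that a map $*_V \to W$ forces $W \simeq *_V$ --- genuinely uses atomicity of $\cT_{/V}$ (terminality gives the section, atomicity upgrades the retraction $W \to *_V$ to an isomorphism). As it happens, your construction never actually needs that complementary fact, since condition (d) only constrains maps whose source is non-$V$; so the parenthetical is harmless, but worth being precise about.
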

\begin{proof}[Proof of \cref{Reduced isotropy prop}]
  First note that $\Istrp(S)$ together with the identity maps $f_U = \id_U$ satisfies conditions \cref{Termination property,Gap property,Iso property}.
  Given $\cC \subset \Istrp(S)$ a full subcategory with the data $f_U$ satisfying conditions \cref{Termination property,Gap property,Iso property}, let $b(\cC) \in \NN$ be the number of pairs of isomorphism classes $(U,W) \in \cC^2$ with  $V \nsimeq U \nsimeq W \nsimeq V$ such that there exists a map $U \rightarrow W$;\footnote{We have $b(\cC) \in \NN$ because $\Istrp(S)$ contains at most $\abs{\Orb(S)} < \infty$-many isomorphism classes of objects.}
  the case $b(\cC) = b(\Istrp(S))$ forms the base case in an inductive argument which constructs $(\cC,(f_U))$ satisfying \cref{Termination property,Gap property,Iso property} with arbitrarily small $b(\cC)$.

  Fix $(\cC,(f_U))$ satisfying conditions \cref{Termination property,Gap property,Iso property} and $g\cln U' \rightarrow W$ a map in $\cC$ with $V \nsimeq U' \nsimeq W \nsimeq V$.
  Note that $b(\cC - \cbr{U'}) < b(\cC)$;
  furthermore, we may endow this with the structure $(\tilde f_U)$ by 
  \[
    \tilde f_U \deq \begin{cases}
      \id_U & U \in \cC - \cbr{U'};\\
      g \circ f_U & e(U) = U';\\
      f_U & \text{otherwise}.
    \end{cases}
  \]
  By the assumption $W \in \cC$, $(\widetilde f_U)$ satisfies \cref{Termination property};
  by the assumption that $W \nsimeq V$, $(\widetilde f_U)$ satisfies \cref{Gap property};
  by construction, $(\widetilde f_V)$ satisfies \cref{Iso property}.
  Thus we have performed the inductive step.
  Repeatedly applying this, we eventually arrive at $\cC$ with $b(\cC) = 0$, i.e. $(\cC,(f_U))$ satisfy \cref{Termination property,Gap property,Iso property,Sparseness property}, as desired.
\end{proof}
Once and for all, we fix $\overline{\Istrp}(S)$ and $(f_U)$ as in \cref{Reduced isotropy prop} for all $V \in \cT$ and $S \in \FF_V$.\footnote{At this point, we assume the axiom of choice for sets of size $\abs{\uFF_{\cT}}$; this is always at least countable choice.}
Define
\[
  \overline{S} \deq \coprod_{W \in \overline{\Istrp}(S)} \Ind_W^V *_W \in \FF_V,
\]
and for all $W \in \overline{\Istrp}(S)$, we define the $W$-set
\[
  S_{(\overline{W})} \deq \coprod_{\stackrel{U \in \Orb(S)}{e(U) = W}} \Ind_U^W *_U \in \FF_W
\]
where the inductions are taken along $f_U$.
These participate in a sequence of equivalences
\begin{align}
  S 
  &\simeq \coprod_{W \in \overline{\Istrp}(S)} \coprod_{\stackrel{U \in \Orb(S)}{e(U) = W}} \Ind_U^V *_U; \label{Splay out S}\\
  &\simeq \coprod_{W \in \overline{\Istrp}(S)} \Ind_W^V \coprod_{\stackrel{U \in \Orb(S)}{e(U) = W}} \Ind_U^W *_U; \label{Factor through W}\\
  &\simeq \coprod_W^{\overline{\Istrp}(S)} S_{(\overline{W})}; \nonumber
\end{align}
indeed the equivalence \cref{Splay out S} follows from \cref{Termination property}, and the equivalence \cref{Factor through W} follows from the fact that $f_U$ is a map over $V$.
We've shown the following.
\begin{lemma}\label{S sparse coproduct}
  There is an equivalence $S \simeq \coprod_W^{\overline{S}} S_{(\overline{W})}$.
\end{lemma}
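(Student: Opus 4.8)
The plan is to assemble the chain of equivalences displayed immediately before the statement into a single identification. Write $g_U\colon U \to V$ for the structure map of an orbit $U \in \Orb(S)$, so that $S \simeq \coprod_{U \in \Orb(S)} \Ind_U^V *_U$. First I would use the function $e\colon \Orb(S) \to \overline{\Istrp}(S)$ supplied by \cref{Reduced isotropy prop}, which lands in $\overline{\Istrp}(S)$ by \cref{Termination property}, to partition the finite index set $\Orb(S)$ by the value of $e$; since $e$ is an honest function on a finite set, this partition is disjoint and exhaustive, which gives the double-coproduct presentation \cref{Splay out S} with no further hypotheses needed.

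Next, for each $W \in \overline{\Istrp}(S)$ and each $U$ with $e(U) = W$, the chosen map $f_U\colon U \to W$ is a map over $V$, so $g_W \circ f_U = g_U$ and restriction factors as $\Res_U^V = \Res_U^W \circ \Res_W^V$; passing to left adjoints yields $\Ind_U^V \simeq \Ind_W^V \circ \Ind_U^W$ with $\Ind_U^W$ taken along $f_U$ (the same transitivity of induction recorded in \cref{Composition observation}). Applying this to $*_U$ and pulling $\Ind_W^V$ out of the inner coproduct --- legitimate since $\Ind_W^V$ is a left adjoint, hence coproduct-preserving --- produces the presentation \cref{Factor through W}. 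Finally I would recognize the inner coproduct $\coprod_{\stackrel{U \in \Orb(S)}{e(U) = W}} \Ind_U^W *_U$ as $S_{(\overline{W})}$ by its definition, and recognize $\coprod_{W \in \overline{\Istrp}(S)} \Ind_W^V(-)$ as $\coprod_W^{\overline{S}}(-)$ by unwinding the definition of the indexed coproduct together with the identity $\Orb(\overline{S}) = \overline{\Istrp}(S)$, which is immediate from $\overline{S} = \coprod_{W \in \overline{\Istrp}(S)} \Ind_W^V *_W$. Chaining \cref{Splay out S}, \cref{Factor through W}, and this last rewriting yields $S \simeq \coprod_W^{\overline{S}} S_{(\overline{W})}$.

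I do not expect a genuinely hard step: the argument is entirely formal once \cref{Reduced isotropy prop} is in hand, and the only things to keep honest are that the non-canonical data $\overline{\Istrp}(S)$ and $(f_U)$ have been fixed once and for all before the statement (so the equivalence is unambiguous), and that the bookkeeping of indexed coproducts --- which indexing set is summed over, and along which maps the inductions are formed --- is carried out consistently. Note that the properties \cref{Gap property,Iso property,Sparseness property} of \cref{Reduced isotropy prop} are not used in this lemma; they enter only afterwards, when one verifies that $\overline{S}$ and the $S_{(\overline{W})}$ are assembled from strictly sparser data than $S$ itself.
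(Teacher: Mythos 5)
Your proposal is correct and takes essentially the same route as the paper: the paper's ``proof'' \emph{is} the chain of equivalences displayed just before the lemma, justified tersely by ``\cref{Termination property}'' and ``$f_U$ is a map over $V$,'' and you have simply spelled out what is implicit there (transitivity of induction from $g_W \circ f_U = g_U$, coproduct-preservation of the left adjoint $\Ind_W^V$, and the identification $\Orb(\overline{S}) \simeq \overline{\Istrp}(S)$). Your closing observation that only \cref{Termination property} is used here, with \cref{Gap property,Iso property,Sparseness property} entering only in \cref{Overline S sparse} and \cref{sparse generation prop}, is also accurate.
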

\cref{Sparseness property} then implies that this is a sparsely indexed coproduct:
\begin{lemma}\label{Overline S sparse}
  $\overline{S}$ is a sparsely indexed summand in $S$.
\end{lemma}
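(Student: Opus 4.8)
The plan is to read both assertions — that $\overline{S}$ is sparse and that $\overline{S}$ is a summand of $S$ in $\FF_V$ — directly off the four properties of \cref{Reduced isotropy prop}, so this will be a short unwinding rather than a substantial argument.

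First I would identify the isomorphism classes occurring in $\overline{S}$, equivalently the classes in $\overline{\Istrp}(S)$. By \cref{Iso property} the map $f_V$ is an isomorphism, so by \cref{Termination property} we have $V \in \overline{\Istrp}(S)$ whenever $V \in \Istrp(S)$, i.e.\ whenever $*_V$ is a summand of $S$; conversely, \cref{Gap property} shows that no object of $\overline{\Istrp}(S)$ is isomorphic to $V$ other than $V$ itself. Writing the remaining classes as $W_1, \dots, W_n$ — all $\nsimeq V$ — and conflating the orbit $\Ind_{W_i}^V *_{W_i} \in \FF_V$ with the object $W_i \in \cT_{/V}$, the definition of $\overline{S}$ unwinds to $\overline{S} \simeq \varepsilon \cdot *_V \sqcup W_1 \sqcup \cdots \sqcup W_n$, with $\varepsilon = 1$ when $*_V$ is a summand of $S$ and $\varepsilon = 0$ otherwise. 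Since $W_i \nsimeq V$, $W_j \nsimeq V$ and $W_i \nsimeq W_j$ for $i \neq j$, \cref{Sparseness property} supplies precisely the remaining requirement, namely that there is no $\cT_{/V}$-map $W_i \to W_j$ for $i \neq j$; hence $\overline{S}$ is a sparse $V$-set.

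Next I would check that $\overline{S}$ is a summand of $S$. As $\overline{\Istrp}(S) \subset \Istrp(S)$, every orbit $W$ occurring in $\overline{S}$ admits a summand inclusion $W \hookrightarrow S$, so its isomorphism class is represented among $\Orb(S)$; because the orbits of $\overline{S}$ lie in pairwise distinct isomorphism classes, I may pick a distinct orbit of $S$ representing each, and the resulting summand of $S$ is isomorphic to $\overline{S}$. Combined with \cref{S sparse coproduct}, this realizes $S$ as the $\overline{S}$-indexed coproduct $\coprod_W^{\overline{S}} S_{(\overline{W})}$ whose indexing $V$-set $\overline{S}$ is both sparse and a summand of $S$, which is the content of the lemma.

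The one place I expect to need care — though I do not regard it as a genuine obstacle — is the bookkeeping between objects of $\overline{\Istrp}(S)$, their isomorphism classes, and the orbits of $S$; in particular, one must check that $\overline{S}$ carries each isomorphism class at most once, since a repeated class would contribute an identity map and break sparseness. For the class of $V$ this is forced by \cref{Iso property} and \cref{Gap property}, and for the others it is built into the indexing of $\overline{S}$ over $\overline{\Istrp}(S)$.
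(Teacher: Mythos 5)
Your proof is correct and follows essentially the same route as the paper, which dispatches this lemma in one line ("\cref{Sparseness property} then implies that this is a sparsely indexed coproduct") — you simply spell out the details, using \cref{Iso property} and \cref{Gap property} to handle the $*_V$-contribution, \cref{Sparseness property} for the remaining orbits, and the inclusion $\overline{\Istrp}(S) \subset \Istrp(S)$ for the summand claim. The bookkeeping caveat you flag (that $\overline{\Istrp}(S)$ is taken up to isomorphism, so each class contributes one orbit to $\overline{S}$) is real but is glossed over in the paper exactly as you gloss over it.
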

To make use of this, we utilize the following lemmas;
to do so, we write $S^V \subset S$ for the maximal $V$-subset of $S$ of the form $n \cdot *_V$, and we refer to orbits of $S^V$ as \emph{fixed points of $S$.}
\begin{lemma}\label{Fp lemma}
  If $\cT$ is an atomic orbital category, the $U$-set $\Res_U^V \Ind_U^V *_U$ has a fixed point.
\end{lemma}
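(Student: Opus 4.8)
The plan is to unwind the definitions so that $\Res_U^V\Ind_U^V *_U$ becomes the iterated fiber product, and then to manufacture a fixed point out of the diagonal using atomicity. Fix the map $f\colon U\to V$ implicit in the notation. Under the equivalence $\FF_V\simeq\FF_{\cT,/V}$ the terminal $U$-set corresponds to $\id_U$, so $\Ind_U^V *_U$ is the object $(f\colon U\to V)$ of $\FF_{\cT,/V}$; by \cref{Restriction stable condition observation}, $\Res_U^V$ is computed by pullback along $f$, so
\[
  \Res_U^V\Ind_U^V *_U\;\simeq\;\bigl(U\times_V U\xrightarrow{\ \mathrm{pr}_1\ }U\bigr)\in\FF_U .
\]
In these terms, a fixed point of $\Res_U^V\Ind_U^V *_U$ is exactly an orbit summand $X\hookrightarrow U\times_V U$ whose composite to $U$ along $\mathrm{pr}_1$ is an isomorphism.

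To produce one, I would use the diagonal $\Delta\colon U\to U\times_V U$, which is a section of $\mathrm{pr}_1$. Since $U$ is an orbit of $\FF_{\cT}$ and $U\times_V U\simeq\bigoplus_{X\in\Orb(U\times_V U)}X$, the map $\Delta$ factors through a unique summand: there is an orbit $X_0$, a summand inclusion $\iota\colon X_0\hookrightarrow U\times_V U$, and a map $s\colon U\to X_0$ in $\cT$ with $\iota\circ s=\Delta$. Let $r\colon X_0\to U$ be $\mathrm{pr}_1\circ\iota$; this is precisely the structure map of the summand $X_0$ viewed in $\FF_U$, and $r\circ s=\mathrm{pr}_1\circ\Delta=\id_U$.

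Finally, both $X_0$ and $U$ are orbits of $\FF_{\cT}$, hence objects of $\cT$, and $r$ is a retraction in $\cT$ admitting the section $s$; since $\cT$ is atomic, $r$ is an isomorphism. Thus $X_0\simeq *_U$ in $\FF_U$, i.e.\ $X_0$ is a fixed point of $\Res_U^V\Ind_U^V *_U$. The argument has essentially no moving parts beyond this last application of atomicity; the one thing to check carefully is that $r=\mathrm{pr}_1\circ\iota$ really is the structure map of the summand $X_0$ as a $U$-set, so that ``$r$ is an isomorphism'' genuinely means ``$X_0$ is a terminal $U$-set.'' I do not expect a real obstacle here.
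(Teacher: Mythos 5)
Your proof is correct and takes essentially the same approach as the paper: both identify $\Res_U^V\Ind_U^V *_U$ with $(U\times_V U \to U)$, use the diagonal section to exhibit $*_U$ as a retract, factor through the unique orbit summand, and apply atomicity to conclude that the retraction from that summand is an isomorphism. Your write-up simply spells out in more detail the step the paper's proof compresses into the commutative diagram and the phrase ``the lefthand triangle establishes $*_U$ as a retract.''
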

\begin{proof}
  We have a diagram
  \[\begin{tikzcd}[sep = small]
	U \\
	& {\Ind_U^{\cT}\Res_U^V\Ind_U^V *_U} & U \\
	& U & V
	\arrow[dashed, from=1-1, to=2-2]
  \arrow[equals,curve={height=-20pt}, from=1-1, to=2-3]
  \arrow[equals,curve={height=20pt}, from=1-1, to=3-2]
	\arrow[from=2-2, to=2-3]
	\arrow[from=2-2, to=3-2]
	\arrow["\lrcorner"{anchor=center, pos=0.125}, draw=none, from=2-2, to=3-3]
	\arrow[from=2-3, to=3-3]
	\arrow[from=3-2, to=3-3]
\end{tikzcd}\]
  Taking slices over $U$, the lefthand triangle establishes $*_U$ as a retract of $\Res_U^V \Ind_U^V *_U$, i.e. it is a retract of an orbital summand $*_U \rightleftarrows S \subset \Res_U^V \Ind_U^V *_U$.
  By the atomic assumption, this establishes $*_U = S$, as desired. 
\end{proof} 

\begin{lemma}\label{Overline S obs}\label{Sw equation}
  When $\uFF_I$ is an almost essentially unital weak indexing system and $S \in \uFF_I$, we have $S_{(\overline{W})}, \overline{S} \in \uFF_I$.
\end{lemma}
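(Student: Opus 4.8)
The plan is to leverage two features of almost essentially unital weak indexing systems together with the decomposition $S \simeq \coprod_W^{\overline S} S_{(\overline W)}$ already established. First I would record that an almost essentially unital $\uFF_I$ is closed under nonempty summands: if $T \sqcup T' \in \FF_{I,W}$ with $T \not\simeq \emptyset_W$, then either $T' \simeq \emptyset_W$ and $T \simeq T \sqcup T' \in \FF_{I,W}$, or $T \sqcup T'$ is noncontractible and \cref{Self indexed colimits} applies; this is also recorded in \cref{aE-unital joins are unions observation}. The degenerate case $S \simeq \emptyset_V$, where $\Istrp(S) = \overline{\Istrp}(S) = \emptyset$, is vacuous, so I would assume $S \not\simeq \emptyset_V$, whence $\overline{\Istrp}(S) \neq \emptyset$ by \cref{Termination property}.

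For $\overline S$ this is then immediate: by \cref{Overline S sparse}, $\overline S$ is a summand of $S$, and it is nonempty since $\overline{\Istrp}(S) \neq \emptyset$, so closure under nonempty summands gives $\overline S \in \FF_{I,V}$. For the sets $S_{(\overline W)}$ the point is that they live ``one level down'' at $W$, so the natural move is to restrict $S$ along the structure map $W \to V$ and recover $S_{(\overline W)}$ as a summand. Concretely: restrict the equivalence $S \simeq \coprod_{W'}^{\overline S} S_{(\overline{W'})}$ of \cref{S sparse coproduct} along $W \to V$ and apply the double coset formula \cref{Double coset formula}; by \cref{Fp lemma} the summand $\Res_W^V \Ind_W^V *_W$ of $\Res_W^V \overline S$ has a fixed point, i.e.\ an orbit $X_0 \simeq *_W$ lying over the $W$-orbit $o(X_0) = \Ind_W^V *_W$ of $\overline S$, and the corresponding term of the double coset decomposition is the restriction of $S_{(\overline W)}$ along an identity, namely $S_{(\overline W)}$ itself. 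Thus $S_{(\overline W)}$ is a summand of $\Res_W^V S$; it is nonempty because, by the construction in \cref{Reduced isotropy prop}, $f_W = \id_W$ and $e(W) = W$, so the orbit $W$ of $S$ contributes $\Ind_W^W *_W \simeq *_W$ to $S_{(\overline W)}$. Finally $\Res_W^V S \in \FF_{I,W}$ because $\uFF_I \subset \uFF_{\cT}$ is a full $\cT$-subcategory, so closure under nonempty summands gives $S_{(\overline W)} \in \FF_{I,W}$.

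The main obstacle is the middle step: organizing the double coset formula so that $S_{(\overline W)}$ reappears intact as a summand of $\Res_W^V S$. The bookkeeping to verify is that the fixed point $X_0$ of \cref{Fp lemma} descends specifically from the orbit $\Ind_W^V *_W$ of $\overline S$ indexed by $W \in \overline{\Istrp}(S)$, rather than from some other orbit — which holds because $\Ind_{W'}^V *_{W'}$ is an orbit of type $W'$, so a summand inclusion $*_W \hookrightarrow \Res_W^V \Ind_{W'}^V *_{W'}$ forces $W' \simeq W$ — and that the attendant restriction $\Res_{X_0}^{o(X_0)}$ is along an isomorphism, hence inert. Everything else is formal, given \cref{Overline S sparse}, \cref{S sparse coproduct}, \cref{Fp lemma}, and restriction-stability of $\uFF_I$.
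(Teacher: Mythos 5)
Your proof is correct and follows essentially the same route as the paper's: both exhibit $\overline{S}$ as a nonempty summand of $S$ via \cref{Overline S sparse} and $S_{(\overline{W})}$ as a nonempty summand of $\Res_W^V S$ via \cref{Fp lemma}, \cref{Restriction lemma}, and the decomposition of \cref{S sparse coproduct}, then close with the fact that an almost essentially unital $\uFF_I$ is closed under nonempty summands. The paper organizes the summand inclusion as a single displayed diagram rather than invoking \cref{Double coset formula} explicitly, but the bookkeeping you spell out (the fixed point descends from the $W$-orbit of $\overline{S}$, and the attendant restriction is along an isomorphism) is exactly what makes the paper's diagram work.
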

\begin{proof}
  Note that 
  \cref{Restriction lemma,Fp lemma} provides a summand inclusion 
  \begin{equation}
    \begin{tikzcd}[row sep = small]
	{S_{(\overline{W})}} & {\Res_W^V S} \\
  {\coprod\limits_{\stackrel{U \in \Orb(S)}{e(U) = W}} \Ind_U^W *_U} & {\coprod\limits_{\stackrel{U \in \Orb(S)}{e(U) = W}} \Res_W^V \Ind_W^V \Ind_U^W *_U \sqcup \Res_W^V \coprod\limits_{W' \in \overline{\Istrp}(S) - \cbr{W}} \Ind_{W'}^V S_{(\overline{W})}}
	\arrow[hook', from=1-1, to=1-2]
	\arrow["\simeq"{marking, allow upside down}, draw=none, from=1-1, to=2-1]
	\arrow["\simeq"{marking, allow upside down}, draw=none, from=1-2, to=2-2]
	\arrow[hook, from=2-1, to=2-2]
\end{tikzcd}
\end{equation}
  In particular, $\overline{S} \subset S$ and $S_{\overline{(W)}} \subset \Res_W^V S$ are nonempty summands of elements of $\uFF_I$, so they are in $\uFF_I$ by the assumption that it is almost essentially unital.
\end{proof}

We now exhibit almost essentially unital weak indexing systems as generated by their sparse collections.
\begin{proof}[Proof of \cref{sparse generation prop}]
  First note that, since $n \cdot *_{V} \simeq *_V \sqcup (n-1) \cdot *_V$ and $2 \cdot *_V$ is sparse, the usual inductive argument shows that $\uFF_{I} \cap \uFF_{\cT}^{\infty} \subset \Cl_{\infty}\prn{\uFF_I^{\sparse}}$.
  Hence it suffices to prove that $\uFF_I$ is generated under sparsely $I$-indexed coproducts by $\uFF_I^{\sparse} \cup \prn{\uFF_{\cT}^{\infty} \cap \uFF_I}$.

  Fix $S \in \FF_{I,V}$.
  In the case $\Ob \overline{\Istrp}(S) = \cbr{V}$, \cref{Gap property,Iso property} imply that all orbits of $S$ are equivalent to $*_V$, so $S \in \uFF_I^{\sparse} \cup \prn{\uFF_{\cT}^{\infty} \cap \uFF_I}$;
  in the case $\Ob \overline{\Istrp}(S) = \cbr{W}$ for some $W \nsimeq V$, then by \cref{S sparse coproduct,Overline S sparse,Overline S obs}, we may replace $S$ with $S_{(\overline{W})}$, which is a $W$-set with $W \in \overline{\Istrp}(S_{(\overline{W}}))$;
  in other words, it suffices to prove this in the case that $\abs{\Ob \overline{\Istrp}(S)} > 1$.

  We will prove the membership 
  \[
    S \in \Cl_{\uFF_{\cT}^{\sparse}}\prn{\uFF_{\cT}^{\sparse} \cup \prn{\uFF_{I} \cap \uFF_{\cT}^{\infty}}}
  \]
  inductively on $\abs{\Orb(S)}$. 
  Note that $\abs{\Orb(S)} \geq \abs{\Ob \overline{\Istrp}(S)}$, so the above argument covers the base cases $\abs{\Orb(S)} \in \cbr{0,1}$;
  we argue in the case $\abs{\Ob \overline{\Istrp}(S)} \geq 2$ under the inductive assumption that the statement is true for all $T \in \uFF_T$ with $\abs{\Orb(T)} < \abs{\Orb(S)}$.
 
In this case, by the assumption $\abs{\Ob \overline{\Istrp}(S)} \geq 2$, we have $\Ind_W^V S_{(\overline{W})} \subsetneq S$ for each $W \in \overline{\Istrp}(S)$, so in particular, we have $\abs{\Orb\prn{S_{(\overline{W})}}} < \abs{\Orb(S)}$.
  The inductive hypothesis and \cref{Sw equation} guarantee
  \[
    S_{(\overline{W})} \in \Cl_{\uFF_{I}^{\sparse}}\prn{\uFF_{I}^{\sparse} \cup \prn{\uFF_I \cap \uFF_{\cT}^{\infty}}}
  \]
  for each $W$;
  \cref{S sparse coproduct,Overline S sparse,Overline S obs} then witnesses the desired membership 
  \[
    S \in \Cl_{\uFF^{\sparse}_I}\prn{\uFF_I^{\sparse} \cup \prn{\uFF_{\cT}^{\infty} \cap \uFF_I}} = \Cl_{\infty}\prn{\uFF^{\sparse}_I}.\qedhere
  \]
\end{proof}
\begin{proof}[Proof of \cref{The classification is finitary theorem}]
  By \cref{sparse generation prop}, $(-)^{\sparse}$ is a section of $\Cl_\infty(-)$ and a right adjoint;
  this implies that $(-)^{\sparse}$ is an embedding by \cref{Fully faithful left adjoint lemma}, with image spanned by those collections $\cC$ satisfying $\cC \simeq \Cl_\infty(\cC)^{\sparse}$.
  Unwinding definitions, this is what we set out to prove.
\end{proof}

\begin{corollary}\label{Finite corollary}
    If $\cT$ is an atomic orbital category such that $\pi_0(\cT)$ is finite and $\cT_{/V}$ is finite as a category for all $V \in \pi_0(\cT)$, then there exist finitely many $\otimes$-idempotent weak $\cN_\infty$-$\cT$-operads.
\end{corollary}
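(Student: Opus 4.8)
The plan is to deduce this from the finitary classification in \cref{The classification is finitary theorem}, together with an elementary count of sparse $\cT$-sets.

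First I would pass from operads to weak indexing systems. As recorded in the discussion following \cref{The classification is finitary theorem}, the poset of $\otimes$-idempotent weak $\cN_\infty$-$\cT$-operads is isomorphic, via the comparison proved in \cite{Tensor}, to $\wIndSys_{\cT}^{aE\uni}$; so it suffices to show that this poset is finite. For this I would invoke \cref{The classification is finitary theorem}: since $\cT$ is atomic orbital, restriction along $\uFF_{\cT}^{\sparse} \hookrightarrow \uFF_{\cT}$ realizes $\wIndSys_{\cT}^{aE\uni}$ as a subposet of $\mathrm{Coll}(\uFF_{\cT}^{\sparse})$. (Equivalently, one may quote \cref{sparse generation prop} directly: an almost essentially unital $\uFF_I$ equals $\Cl_\infty(\uFF_I^{\sparse})$, so $\uFF_I \mapsto \uFF_I^{\sparse}$ is injective into $\mathrm{Coll}(\uFF_{\cT}^{\sparse})$.) A collection being by definition a $\pi_0\cT$-graded subset of $\Ob' \uFF_{\cT}^{\sparse}$, and $\pi_0\cT$ being finite by hypothesis, it remains only to show that there are finitely many isomorphism classes of sparse $V$-sets for each $V \in \pi_0\cT$.

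This last bound is where the hypothesis on $\cT_{/V}$ enters. A sparse $V$-set has the form $\varepsilon \cdot *_V \sqcup W_1 \sqcup \dots \sqcup W_n$ with $\varepsilon \in \cbr{0,1}$ and $W_1, \dots, W_n$ objects of $\cT_{/V}$ admitting no $\cT_{/V}$-map $W_i \to W_j$ for $i \neq j$; in particular the $W_i$ are pairwise non-isomorphic, so their isomorphism classes form a subset of the set of isomorphism classes of objects of $\cT_{/V}$. Thus every sparse $V$-set is determined up to isomorphism by the pair consisting of $\varepsilon \in \cbr{0,1}$ and this subset, so the number of isomorphism classes of sparse $V$-sets is at most $2 \cdot 2^{\abs{\pi_0(\cT_{/V})}}$. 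Since $\cT_{/V}$ is finite as a category it has finitely many isomorphism classes of objects, so this quantity is finite; running over the finitely many $V \in \pi_0\cT$ shows $\mathrm{Coll}(\uFF_{\cT}^{\sparse})$ is finite, and hence so are $\wIndSys_{\cT}^{aE\uni}$ and the poset of $\otimes$-idempotent weak $\cN_\infty$-$\cT$-operads.

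I do not expect a real obstacle inside this paper: all of the combinatorial content is carried by \cref{The classification is finitary theorem}, and the remaining estimate is routine. The one ingredient meriting care is external to the present text, namely the identification of $\otimes$-idempotent weak $\cN_\infty$-$\cT$-operads with almost essentially unital weak indexing systems established in \cite{Tensor}; absent that companion result, the statement should be read as the (then unconditional) assertion that $\wIndSys_{\cT}^{aE\uni}$ is finite under the stated hypotheses.
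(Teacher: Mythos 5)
Your argument is essentially the paper's own proof: reduce to finiteness of $\wIndSys_{\cT}^{aE\uni}$ via the identification with $\otimes$-idempotent weak $\cN_\infty$-$\cT$-operads from \cite{Tensor}, then use \cref{The classification is finitary theorem} to embed into collections of sparse $\cT$-sets and count. The only difference is that you spell out the explicit bound $2\cdot 2^{\abs{\pi_0(\cT_{/V})}}$ on sparse $V$-sets where the paper simply asserts finiteness of $\uFF_{\cT_{/V}}^{\sparse}$; both are the same step.
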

\begin{proof}
  In \cite{Tensor} we prove that the $\otimes$-idempotent weak $\cN_\infty$-$\cT$-operads are the essential image of $\wIndSys_{\cT}^{aE\uni}$ under $\cN_{(-)\infty}^{\otimes}$, so we're tasked with proving that $\wIndSys_{\cT}^{aE\uni}$ is finite.
  \cref{The classification is finitary theorem} yields an injective map
  \[
    \wIndSys_{\cT}^{aE\uni} \hookrightarrow \prod_{V \in \pi_0 \cT} \sP(\Ob \uFF_{\cT_{/V}}^{\sparse}),
  \]
  where $\sP(-)$ denotes the power set.
  By assumption, $\uFF_{\cT_{/V}}^{\sparse}$ is finite, and hence $\sP(\Ob \uFF_{\cT_{/V}}^{\sparse})$ is finite.
  Since $\pi_0 \cT$ is finite, this implies that the $\wIndSys_{\cT}^{aE\uni}$ injects into a finite poset, so it is finite.
\end{proof}
For instance, if $G$ is finite, then there are finitely many subgroups of $G$, and hence finitely many transitive $G$-sets;
this implies that $\pi_0 \cO_G$ is finite.
Furthermore, since $\Map([G/H],[G/K])$ is a subquotient of $G$, it is finite as well, so $\cO_G$ is finite as a 1-category;
more generally, $\cO_H \simeq \cO_{G, /[G/H]}$ is finite for all $H \subset G$.
Hence \cref{Finite corollary} specializes to the following.
\begin{corollary}
  If $G$ is a finite group, then there exist finitely many $\otimes$-idempotent weak $\cN_\infty$-$G$-operads.
\end{corollary}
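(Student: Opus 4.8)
The plan is to obtain this as the special case $\cT = \cO_G$ of \cref{Finite corollary}, so essentially all of the work is in checking that $\cO_G$ meets the three hypotheses there: that $\cO_G$ is atomic orbital, that $\pi_0 \cO_G$ is finite, and that $(\cO_G)_{/[G/H]}$ is finite as a $1$-category for each $H \subset G$.

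The first point is \cref{Group example}. For the second, every transitive $G$-set is isomorphic to some $[G/H]$, and $[G/H] \simeq [G/K]$ exactly when $H$ and $K$ are conjugate in $G$, so $\pi_0 \cO_G$ is in bijection with the set of conjugacy classes of subgroups of $G$, which is finite since $G$ is. For the third, I would invoke the equivalence $(\cO_G)_{/[G/H]} \simeq \cO_H$ recorded in the introduction (via induction), reducing to the statement that $\cO_H$ is finite as a $1$-category; this in turn holds because $\cO_H$ has finitely many objects up to isomorphism (again, conjugacy classes of subgroups of the finite group $H$) and each mapping set $\Map_{\cO_H}([H/K],[H/L])$ is a subquotient of $H$ — concretely it is identified with the $K$-fixed points of $[H/L]$, a subset of an at most $\abs{H}$-element set — hence finite. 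In particular $\cO_G$ really is a $1$-category, its mapping spaces being discrete, so it fits the hypothesis of \cref{Finite corollary} with no $\infty$-categorical caveat.

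Granting these three verifications, \cref{Finite corollary} applies directly and yields that there are only finitely many $\otimes$-idempotent weak $\cN_\infty$-$G$-operads. I do not anticipate any genuine obstacle here; the only thing to be careful about is matching conventions, namely that "finite as a $1$-category" is used in precisely the sense needed to run the bound $\wIndSys_{\cT}^{aE\uni} \hookrightarrow \prod_{V \in \pi_0 \cT} \sP(\Ob \uFF_{\cT_{/V}}^{\sparse})$ from the proof of \cref{Finite corollary}, i.e. finitely many isomorphism classes of objects together with finite hom-sets — which is exactly what the verifications above supply for $\cT = \cO_G$.
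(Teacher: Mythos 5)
Your proof is correct and follows essentially the same route as the paper's: verify that $\cO_G$ is atomic orbital, that $\pi_0\cO_G$ is finite (conjugacy classes of subgroups), and that the slices $\cO_{G,/[G/H]}\simeq\cO_H$ are finite $1$-categories (finite mapping sets being subquotients of $G$), then specialize \cref{Finite corollary}. Nothing to flag.
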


\begin{remark}\label{aE-unital sparse remark}
  Note that the maps $\upsilon,c,\nabla,\fR$ all factor as
  \[\begin{tikzcd}
	{\wIndSys_{\cT}} && \cC \\
  {\Coll(\uFF_{\cT}^{\sparse})} & {\Coll(\uFF_{\cT})} & {\cD}
	\arrow["{\upsilon,c,\nabla,\fR}", from=1-1, to=1-3]
	\arrow["{- \cap \uFF_{\cT}^{\sparse}}"', from=1-1, to=2-1]
	\arrow[hook, from=2-1, to=2-2]
	\arrow["{\upsilon,c,\nabla,\fR}"', from=2-2, to=2-3]
  \arrow[hook, from=1-3, to=2-3]
\end{tikzcd}\]
  where $(\cC,\cD) = (\Transf_{\cT},\Sub_{\Cat}(\cT))$ for $\fR$ and $(\Fam_{\cT},\mathrm{FullSub}(\cT))$ otherwise.
  Using \cref{Various families lemma}, we have:
  \begin{enumerate}
    \item $\fR(\uFF_I) = \fR(\uFF_I^{\sparse})$.
    \item $\uFF_I$ has one color if and only if $\uFF_I^{\sparse}$ has one color.
    \item $\uFF_I$ is unital if and only if $\uFF_I^{\sparse}$ is unital.
    \item $\uFF_I$ is an indexing system if and only if $\upsilon(\uFF_I^{\sparse}) \cap \nabla(\uFF_I^{\sparse}) = \cT$.
  \end{enumerate}
  In particular, we may enumerate the associated posets using \cref{The classification is finitary theorem}.
\end{remark}

In fact, our description in terms of sparse $V$-sets is not as compact as it could be.
\begin{observation}\label{Sieve observation}
  If $\uFF_{I}$ is almost essentially unital and contains the sparse $V$-set $S = \varepsilon \cdot *_V \sqcup V_1 \sqcup \cdots \sqcup V_n$ and the transfer $U \rightarrow V_1$, then $\uFF_{I}$ contains the sparse $V$-set $\varepsilon \cdot *_V \sqcup U \sqcup V_2 \sqcup \cdots \sqcup V_n$, as it's an $S$-indexed coproduct of elements of $\uFF_I$.
\end{observation}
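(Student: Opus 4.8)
The plan is to exhibit the proposed set as an explicit self-indexed coproduct over $S$ and then apply \cref{Self-indexed coproducts condition}. Write $T \deq \varepsilon \cdot *_V \sqcup U \sqcup V_2 \sqcup \cdots \sqcup V_n$. The orbits of $S$ are the (at most one) orbit $V$ coming from $\varepsilon \cdot *_V$ together with $V_1, \dots, V_n$, so I would form the $S$-tuple $(T_W)_{W \in \Orb(S)}$ whose component at the orbit $V_1$ is the $V_1$-set $(U \rightarrow V_1)$ and whose component at every other orbit $W$ is the terminal $W$-set $*_W$. Since $\Ind_W^V$ is postcomposition with the structure map, one has $\Ind_W^V *_W \simeq W$ and $\Ind_{V_1}^V(U \rightarrow V_1) \simeq U$ as $V$-sets, whence $\coprod_W^S T_W \simeq T$; so $T \in \FF_{I,V}$ provided each $T_W$ lies in $\uFF_I$.

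The only thing to check is thus that this tuple is $\uFF_I$-valued. The component $(U \rightarrow V_1)$ lies in $\FF_{I,V_1}$ precisely because the transfer $U \rightarrow V_1$ lies in $I$, by construction (\cref{Defn of FI}). For the terminal components: since $S$ is nonempty, \cref{Contractible V-sets condition} gives $*_V \in \FF_{I,V}$, which takes care of the orbit from $\varepsilon \cdot *_V$; and for each $i \geq 2$, restriction-stability of the full $\cT$-subcategory $\uFF_I$ gives $\Res_{V_i}^V S \in \FF_{I,V_i}$, and $\Res_{V_i}^V S$ is nonempty because the summand inclusion $V_i \hookrightarrow S$ over $V$ pulls back to a summand $V_i \times_V V_i \subset \Res_{V_i}^V S$ admitting the diagonal section $V_i \to V_i \times_V V_i$; hence $\FF_{I,V_i} \neq \emptyset$ and \cref{Contractible V-sets condition} again gives $*_{V_i} \in \FF_{I,V_i}$. (In the almost essentially unital setting one may instead extract $V_i \in \FF_{I,V}$ directly from \cref{Self indexed colimits} and then restrict.)

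I expect no real obstacle; the one point worth flagging is that $\FF_{I,V}$ need not be closed under ordinary binary coproducts --- that is precisely condition \cref{Indexing system condition} --- so the decomposition of $T$ must genuinely be taken as an $S$-indexed coproduct rather than a plain one, which is exactly why the terminal components $*_W$ (and hence the verification that the relevant orbits are colors) must appear. Finally, the computation $\coprod_W^S T_W \simeq T$ is itself the assertion that $T$ is an $S$-indexed coproduct of members of $\uFF_I$, so when $\varepsilon \cdot *_V \sqcup U \sqcup V_2 \sqcup \cdots \sqcup V_n$ happens to be sparse this produces the claimed sparse $V$-set of $\uFF_I$.
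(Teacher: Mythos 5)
Your proposal is correct and follows the route the paper intends: the Observation is stated with only the inline justification ``it's an $S$-indexed coproduct of elements of $\uFF_I$,'' so the content is exactly the decomposition you exhibit, namely the $S$-tuple with $(U \to V_1)$ at the orbit $V_1$ and terminal $W$-sets elsewhere, followed by the (routine but necessary) verification that each component lies in $\uFF_I$. Your argument for the terminal components via restriction-stability and \cref{Contractible V-sets condition} is also correct, and it is worth noting — as your argument quietly demonstrates — that the almost-essentially-unital hypothesis is not actually consumed here: the membership $\varepsilon \cdot *_V \sqcup U \sqcup V_2 \sqcup \cdots \sqcup V_n \in \FF_{I,V}$ holds for any weak indexing system containing $S$ and the transfer $U \rightarrow V_1$; the hypothesis merely places the observation in the setting where sparse collections determine $\uFF_I$.
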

We will expand on this in the case $\cT = \cO_{C_{p^n}}$ in \cref{CPn section}.

\subsection{Warmup: the (almost essentially) unital \tCp-weak indexing systems}\label{Cp subsection}
The orbit category of the prime-order cyclic group $C_p$ may be presented as follows:
\[
\left\langle \;
\begin{tikzcd} 
  {[C_p/e]} & {*_{C_p}}
	\arrow["x"{description}, from=1-1, to=1-1, loop, in=145, out=215, distance=10mm]
	\arrow["r", from=1-1, to=1-2]
\end{tikzcd} \;\;\;
\middle| \;\;\;
x^p = \id_{\brk{C_p/e}}, \;\;\;  r =  r x \;\; \right\rangle
\]

It is easy to see that there are precisely two $C_p$-transfer systems:
$R_0$ contains no transfers, and $R_1$ contains the transfer $e \rightarrow C_p$.
Thus the poset $\Transf_{C_p}$ is $\prn{R_0 \rightarrow R_1}$.
Furthermore, there are exactly three $C_p$ families, and the poset is $\prn{\emptyset \rightarrow \cbr{e} \rightarrow \cbr{e,C_p}}$.
We will use this to perform the following computation.
\begin{theorem}\label{Cp theorem}
The poset $\wIndSys_{C_p}^{aE\uni}$ is presented by the following
\[\begin{tikzcd}[ampersand replacement=\&, row sep=small]
	\emptyset \& {E_{e}^{C_p} \FF^{\triv}} \& {E_{e}^{C_p} \FF^{0}} \& {E_e^{C_p} \FF^\infty} \\
	\& \textcolor{rgb,255:red,50;green,50;blue,200}{{\uFF^{\triv}_{C_p}}} \& \textcolor{rgb,255:red,50;green,50;blue,200}{{\uFF^0_{e}}} \& \textcolor{rgb,255:red,50;green,50;blue,200}{{\uFF^{\triv}_{C_p} \vee E_e^{C_p} \FF^\infty}} \\
	\&\& \textcolor{rgb,255:red,141;green,0;blue,184}{{\uFF_{C_p}^0}} \& \textcolor{rgb,255:red,141;green,0;blue,184}{{\uFF_{e}^{\infty}}} \&\& \textcolor{rgb,255:red,18;green,138;blue,0}{{\uFF^\infty_{C_p}}} \\
	\&\&\& \textcolor{rgb,255:red,141;green,0;blue,184}{{\overline{\uFF}_{C_p}}} \& \textcolor{rgb,255:red,141;green,0;blue,184}{{\uFF^{\lambda}}} \& \textcolor{rgb,255:red,18;green,138;blue,0}{{\uFF_{C_p}}}
	\arrow[from=1-1, to=1-2]
	\arrow[from=1-2, to=1-3]
	\arrow[from=1-2, to=2-2]
	\arrow[from=1-3, to=1-4]
	\arrow[from=1-3, to=2-3]
	\arrow[from=1-4, to=2-4]
	\arrow[color={rgb,255:red,50;green,50;blue,200}, from=2-2, to=2-3]
	\arrow[color={rgb,255:red,50;green,50;blue,200}, from=2-3, to=2-4]
	\arrow[color={rgb,255:red,50;green,50;blue,200}, from=2-3, to=3-3]
	\arrow[color={rgb,255:red,50;green,50;blue,200}, from=2-4, to=3-4]
	\arrow[color={rgb,255:red,141;green,0;blue,184}, from=3-3, to=3-4]
	\arrow[color={rgb,255:red,141;green,0;blue,184}, from=3-4, to=3-6]
	\arrow[color={rgb,255:red,141;green,0;blue,184}, from=3-4, to=4-4]
	\arrow[color={rgb,255:red,18;green,138;blue,0}, from=3-6, to=4-6]
	\arrow[color={rgb,255:red,141;green,0;blue,184}, from=4-4, to=4-5]
	\arrow[color={rgb,255:red,141;green,0;blue,184}, from=4-5, to=4-6]
\end{tikzcd}\]
where ${\color{rgb,255:red,18;green,138;blue,0}\cbr{\uFF^\infty_{C_p}, \uFF_{C_p}}}$ are the indexing systems, ${\color{rgb,255:red,141;green,0;blue,184} \cbr{\uFF_{C_p}^0, \uFF_e^\infty, \overline{\uFF}_{C_p}, \uFF^{\lambda}}}$ are the otherwise-unital weak indexing systems, and ${\color{rgb,255:red,50;green,50;blue,200}\cbr{\uFF_{C_p}^{\triv}, \uFF_e^0, \uFF_{C_p}^{\triv} \vee E_e^{C_p} \FF^{\infty}}}$ are the otherwise-almost unital weak indexing systems.
\end{theorem}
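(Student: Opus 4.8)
The plan is to strip $\wIndSys_{C_p}^{aE\uni}$ down in three layers, using the cocartesian fibrations $c$, $\upsilon$, and $(\fR,\nabla)$ built in \cref{wIndex section}; at each stage one either descends to a strictly smaller family (where the Borel comparison reduces matters to \cref{Nonequivariant example}) or is left with a finite combinatorial enumeration controlled by \cref{The classification is finitary theorem}. As input I would record the facts already observed above: $\Fam_{C_p}$ is the three-chain $\emptyset\subset\{e\}\subset\cO_{C_p}$ and $\Transf_{C_p}$ is the two-chain $R_0\subset R_1$, so by \cref{Index transf prop} the poset $\IndSys_{C_p}$ is the two-chain $\uFF_{C_p}^{\infty}\subset\uFF_{C_p}$, accounting for the two green elements.

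First I apply the second part of \cref{Color-support corollary}: $c$ exhibits $\wIndSys_{C_p}^{aE\uni}$ as a cocartesian fibration over $\Fam_{C_p}$ with $c^{-1}(\cF)\simeq\wIndSys_{\cF}^{a\uni}$ and the stated transport $\uFF_I\mapsto\uFF_{\cF'}^{\triv}\cup E_{\cF}^{\cF'}\uFF_I$. The fiber over $\emptyset$ is the point $\emptyset$; the fiber over $\{e\}$ is $\wIndSys_{BC_p}^{a\uni}$, which by \cref{Borel equivariant corollary} is $\wIndSys_{*}^{a\uni}$, the three-chain $\FF^{\triv}\subset\FF^{0}\subset\FF$ of \cref{Nonequivariant example}; feeding these through the transport of $c$ produces the top-row elements $E_e^{C_p}\FF^{\triv}$, $E_e^{C_p}\FF^{0}$, $E_e^{C_p}\FF^{\infty}$ and their vertical edges $E_e^{C_p}\FF^{\triv}\to\uFF_{C_p}^{\triv}$, $E_e^{C_p}\FF^{0}\to\uFF_e^{0}$, $E_e^{C_p}\FF^{\infty}\to\uFF_{C_p}^{\triv}\vee E_e^{C_p}\FF^{\infty}$ into the fiber over $\cO_{C_p}$. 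It then remains to compute $c^{-1}(\cO_{C_p})=\wIndSys_{C_p}^{a\uni}$, for which I run the same argument for $\upsilon$: by \cref{Unit corollary}, $\upsilon_a$ is a cocartesian fibration over $\Fam_{C_p}$ with $\upsilon_a^{-1}(\cF)\simeq\wIndSys_{\cF}^{\uni}$, embedded along $\uFF_{C_p}^{\triv}\cup E_{\cF}^{C_p}(-)$ with transport $t_{\cF}^{\cF'}\uFF_I=\uFF_{\cF'}^{0}\cup E_{\cF}^{\cF'}\uFF_I$. The fiber over $\emptyset$ is $\uFF_{C_p}^{\triv}$; the fiber over $\{e\}$ is $\wIndSys_{BC_p}^{\uni}\simeq\wIndSys_{*}^{\uni}$, the two-chain $\FF^{0}\subset\FF$, embedding as the two-chain $\uFF_e^{0}\subset\uFF_{C_p}^{\triv}\vee E_e^{C_p}\FF^{\infty}$; this pins down all three blue elements. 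The fiber over $\cO_{C_p}$ is $\wIndSys_{C_p}^{\uni}$.

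The heart of the proof, and the step I expect to be the main obstacle, is the six-element poset $\wIndSys_{C_p}^{\uni}$. Here I would combine the combined-fibration picture of \cref{Combined fibration prop} with a short sparse-set computation. Since $\Domain(R_0)=\emptyset$ and $\Domain(R_1)=\{e\}$, the image of $(\fR,\nabla)$ is the five-element poset of admissible pairs $(R_0,\emptyset),(R_0,\{e\}),(R_0,\cO_{C_p}),(R_1,\{e\}),(R_1,\cO_{C_p})$, and \cref{Left adjoint to wIndSys}/\cref{Combined fibration prop} compute the initial object $\overline{\uFF}_R\vee\uFF_{\cF}^{\infty}$ over each: one checks these are $\uFF_{C_p}^{0}$, $\uFF_e^{\infty}$, $\uFF_{C_p}^{\infty}$, $\overline{\uFF}_{C_p}$, and $\uFF_{C_p}$ respectively (using the double coset formula \cref{Double coset formula} in the concrete form $\Res_e^{C_p}\Ind_e^{C_p}*_e\simeq p\cdot *_e$, and \cref{EV example} to recognise $\uFF^{\lambda}$). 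The only non-singleton fiber is over $(R_1,\{e\})$: there $\fR=R_1$ forces $[C_p/e]$ admissible over $C_p$ and (via the double coset formula) $\FF_{I,e}=\mathbb{N}$, while $\nabla=\{e\}$ forbids $2\cdot *_{C_p}$, so over $C_p$ the admissible sets lie between the closure of $\{[C_p/e]\}$ and $\FF^{\lambda}_{C_p}$; checking that adjoining $*_{C_p}\sqcup[C_p/e]$ generates all of $\FF^{\lambda}_{C_p}$ shows this fiber is exactly the two-chain $\overline{\uFF}_{C_p}\subset\uFF^{\lambda}$, and the indexing systems are identified by \cref{Various families lemma}.

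Finally I assemble: a cocartesian fibration of posets is determined by its base, its fibers, and its transport, so stacking the three fibrations above — with the within-fiber orders just computed and the between-fiber edges read off from the transport formulas of \cref{Color-support corollary}, \cref{Unit corollary}, and \cref{Combined fibration prop} — recovers the full containment order; a last bookkeeping pass confirms no extra relations, giving precisely the displayed Hasse diagram together with the tripartition into indexing, otherwise-unital, and otherwise-almost-unital weak indexing systems. The genuine difficulty is concentrated in the $\wIndSys_{C_p}^{\uni}$ enumeration: proving that the closure constraints above are exhaustive (hence exactly six objects) and correctly matching each object with its named representative, in particular separating $\overline{\uFF}_{C_p}$ from $\uFF^{\lambda}$, which have the same image under $(\fR,\nabla)$.
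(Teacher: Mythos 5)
Your proposal is correct and takes essentially the same route as the paper. The paper first computes $\wIndSys_{C_p}^{\uni}$ directly — using \cref{Index transf prop} for the fibers with $\nabla = \cO_{C_p}$, the observation that $\nabla\colon \fR^{-1}(\cT^{\simeq}) \to \Fam_{\cT}$ is an equivalence for the transfer-free fibers, and a bare-hands enumeration of sparse $C_p$-sets for the one nontrivial fiber $(R_1,\{e\})$ — and then assembles $\wIndSys_{C_p}^{aE\uni}$ by citing \cref{Color-support corollary}, \cref{Unit corollary}, and \cref{Borel equivariant corollary}; you run the same three fibrations but in top-down order, peeling off $c$, then $\upsilon$, and only then analyzing $(\fR,\nabla)$ via \cref{Combined fibration prop}. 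The key step — showing that the fiber over $(R_1,\{e\})$ is exactly the two-chain $\overline{\uFF}_{C_p} \subset \uFF^{\lambda}$, pinned down by the single extra sparse $C_p$-set $*_{C_p} \sqcup [C_p/e]$ — matches the paper's argument, and your identifications of the other singleton fibers and of the embedded Borel chains agree with the paper.
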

\begin{remark}
  Already, we see that none of $\wIndSys_{C_p}^{\uni}$, $\wIndSys_{C_p}^{a\uni}$, or $\wIndSys_{C_p}^{aE\uni}$ are self-dual, since each embed the poset $\bullet \rightarrow \bullet \rightarrow \bullet \leftarrow \bullet$ as a cofamily, but none embed its dual as a family. 
  This heavily contrasts the cases of $\IndSys_G = \Transf_G$ and of $\Fam_G$, which are self-dual for arbitrary abelian $G$ by \cite{Franchere}.

  Similarly, we may see that $\wIndSys_{C_p}^{\uni} \subset \wIndSys_{C_p}$ is a cofamily, as it consists of the elements which are at least $\uFF_{C_p}^0$.
  However,  its dual does not embed into $\wIndSys_{C_p}$ as a family, since $\wIndSys_{C_p}$ admits $\emptyset \rightarrow E_e^{C_p} \FF^{\triv}$ as an initial sub-poset;
  hence  $\wIndSys_{C_p}$ is not self-dual either.
\end{remark}  
Note that $\uFF_{C_p}^\infty \subset \uFF_{C_p}$ are $C_p$-indexing systems;
\cref{Index transf prop} shows that this is the poset of indexing systems.
This completely characterizes $\nabla^{-1}(\cT) \cap \fR^{-1}(-)$, and we will extend this to arbitrary fibers. 
First, those with no transfers:
\begin{observation}\label{No transfers obs}
  For any atomic orbital category $\cT$, the map $\nabla\cln \fR^{-1}(\cT^{\simeq}) \rightarrow \Fam_{\cT}$ is an equivalence by \cref{sparse generation prop};
  the fibers of this are $\nabla^{-1}(\cF) \cap \fR^{-1}(\cT^{\simeq}) = \cbr{\uFF_{\cF}^{\infty}}.$
\end{observation}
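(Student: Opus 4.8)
The plan is to show that an element of $\fR^{-1}(\cT^{\simeq})$ is completely recorded by its fold family, so that $\nabla$ restricts to a bijection with inverse $\cF \mapsto \uFF_{\cF}^{\infty}$, where we read $\uFF_{\cF}^{\infty}$ as the unital weak indexing system $\uFF_{\cT}^{0}\cup E_{\cF}^{\cT}\uFF_{\cF}^{\infty}$ that is initial in the fiber of $\nabla_u$ over $\cF$ by \cref{Fold prop}. Since $\nabla$ is monotone, an equivalence of posets then amounts to checking this inverse is also monotone, and the description of the fibers falls out of the bijection.

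First I would record what lies in $\fR^{-1}(\cT^{\simeq})$: these are the unital weak indexing systems $\uFF_I$ (in particular one-colour, hence almost essentially unital) whose only admissible transitive sets are the trivial ones. Indeed, a transitive $W$-set in $\FF_{I,W}$ corresponds under $\FF_W\simeq\FF_{\cT,/W}$ to a morphism $f\colon V\to W$ of $\cT$ lying in $I$, hence in $\fR(I)=\cT^{\simeq}$, so $f$ is an isomorphism and the set is $*_W$. From this together with almost essential unitality I would deduce that \emph{every} admissible set has the form $n\cdot *_W$: if $S\in\FF_{I,W}$ is noncontractible, then for each orbit summand $U\hookrightarrow S$ the decomposition $S = U\sqcup(S\setminus U)$ is noncontractible, so \cref{Self indexed colimits} gives $U\in\FF_{I,W}$, whence $U\simeq *_W$ by the previous sentence; thus all orbits of $S$ are trivial. (Equivalently, as the statement indicates, \cref{sparse generation prop} reduces this to checking that the only sparse $W$-sets all of whose orbits are trivial are $\emptyset_W$, $*_W$, and $2\cdot *_W$, and that the closure of these under indexed coproducts and restriction consists only of the sets $n\cdot *_W$.)

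Next I would pin down which multiples occur. Always $\emptyset_W,*_W\in\FF_{I,W}$ by unitality and the one-colour property. If moreover $2\cdot *_W\in\FF_{I,W}$, i.e. $W\in\nabla(I)$, then writing $n\cdot *_W$ as the $2\cdot *_W$-indexed coproduct of $(*_W,(n-1)\cdot *_W)$ and inducting on $n$ as in the proof of \cref{Various families lemma}(4) shows $n\cdot *_W\in\FF_{I,W}$ for all $n$; if $W\notin\nabla(I)$, then any $n\cdot *_W$ with $n\ge 2$ would, via \cref{Self indexed colimits}, split off a $2\cdot *_W$ summand, a contradiction, so only $\emptyset_W,*_W$ occur. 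Hence $\FF_{I,W}=\{n\cdot *_W\mid n\in\NN\}$ when $W\in\nabla(I)$ and $\FF_{I,W}=\{\emptyset_W,*_W\}$ otherwise; that is, $\uFF_I=\uFF_{\nabla(I)}^{\infty}$.

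It remains to assemble the equivalence. The identity $\uFF_I=\uFF_{\nabla(I)}^{\infty}$ shows $\nabla|_{\fR^{-1}(\cT^{\simeq})}$ is injective, and it is surjective since for each $\cF\in\Fam_{\cT}$ the weak indexing system $\uFF_{\cF}^{\infty}$ is unital (by \cref{Fold prop}), has only trivial admissible transitive sets so that $\fR(\uFF_{\cF}^{\infty})=\cT^{\simeq}$, and satisfies $\nabla(\uFF_{\cF}^{\infty})=\cF$. Since $\cF\le\cF'$ plainly forces $\uFF_{\cF}^{\infty}\le\uFF_{\cF'}^{\infty}$, the inverse $\cF\mapsto\uFF_{\cF}^{\infty}$ is monotone, so $\nabla$ restricts to an isomorphism of posets and its fibers are the asserted singletons. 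I expect no genuine obstacle here; the only care needed is the bookkeeping between transitive $V$-sets and morphisms of $\cT$ and the small-$n$ edge cases, both routine.
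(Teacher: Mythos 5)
Your proof is correct and amounts to a careful unpacking of the one-line argument the paper gestures at: when $\fR(I)=\cT^{\simeq}$, the only sparse $V$-sets available are $\emptyset_V$, $*_V$, and $2\cdot *_V$, so by sparse generation the whole system is determined by which of these occur, i.e. by $\nabla(I)$. Your direct argument---using \cref{E-unital condition} (or \cref{Self indexed colimits}) to split off orbit summands, showing each orbit of an admissible $V$-set is a transfer in $\fR(I)$ and hence $\simeq *_V$, then pinning down which $n\cdot *_V$ occur by a fold-map induction---re-derives the relevant special case of \cref{sparse generation prop} from scratch, which is more self-contained but the same underlying decomposition. You also correctly read the abbreviated notation $\uFF_{\cF}^{\infty}$ as $\uFF_{\cT}^{0}\cup E_{\cF}^{\cT}\uFF_{\cF}^{\infty}$ (which matches the usage in \cref{Cp theorem} and the left adjoint of \cref{Fold prop}), and you correctly rely on \cref{Fold prop} for well-definedness of the inverse, which is where the atomicity hypothesis silently enters.
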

The only remaining case is $\nabla^{-1}(\cbr{e}) \cap \fR^{-1}(R_1)$.
Unwinding definitions, we find that there are two options for unital sparse collections closed under applicable self-indexed coproducts with the specified transfers and fold maps;
they each must have $e$-values given by $\cbr{\emptyset_e,*_e,2 \cdot *_e}$, and the two options for $C_p$-values are
\[
  \overline{\FF}^{\sparse}_{C_p} = \cbr{\emptyset_{C_p}, *_{C_p}, [C_p/e]}, \hspace{40pt} \FF_{C_p}^{\lambda,\sparse} = \cbr{\emptyset_{C_p}, *_{C_p}, [C_p/e], *_{C_p} \sqcup [C_p/e]}.
\]
Furthermore, in view of \cref{Borel equivariant corollary}, we have $\wIndSys_{BC_p}^{\uni} \simeq \wIndSys_{*}^{\uni}$.
Applying \cref{Nonequivariant example}, we've arrived at the following computations:
\[\begin{tikzcd}[row sep=tiny]
	{\wIndSys_{BC_p}^{\uni}:} & {\uFF^0} & {\uFF^\infty} \\
	\\
	& {\uFF_{C_p}^0} & {\uFF_e^{\infty}} && {\uFF_{C_p}^\infty} \\
	{\wIndSys_{C_p}^{\uni}:} \\
	&& {\overline{\uFF}_{C_p}} & {\uFF^{\lambda}} & {\uFF_{C_p}}
	\arrow[from=1-2, to=1-3]
	\arrow[from=3-2, to=3-3]
	\arrow[from=3-3, to=3-5]
	\arrow[from=3-3, to=5-3]
	\arrow[from=3-5, to=5-5]
	\arrow[from=5-3, to=5-4]
	\arrow[from=5-4, to=5-5]
\end{tikzcd}\]
\cref{Cp theorem} then follows by applying \cref{Color-support corollary,Unit corollary}.

\subsection{The fibers of the \tCpN-transfer-fold fibration}\label{CPn section}
Fix $\cT = \cO_{C_{p^n}}$ for some $n \in \NN$.
\begin{observation}[{\cite[Prop~1.3.1]{Dieck}}]
  Fix $N \subset G$ a normal subgroup and $H \subset G$ another subgroup. 
  Whenever $\Map([G/N],[G/H])$ is nonempty, evaluation at a point yields a bijection 
  \[
    \Map([G/N],[G/H]) \simeq G/H
  \]
  whose right $\Aut_G([G/N]) \simeq G/N$-action is right multiplication by residues modulo $H$;
  furthermore, whenever $\Map([G/H],[G/N])$ is nonempty, it is similarly in bijective correspondence with $G/N$ and with left $G/N$ action given by left multiplication.
  In either case, the $G/N$ action is transitive.
 
  In particular, when $\cF \subset \cO_G$ is a collection of normal subgroups of $G$ (e.g. any collection if $G$ is a Dedekind group or an abelian group), an isomorphism-closed collection of arrows $\FS$ with codomains lying in $\cF$ is determined by the corresponding inclusions $K \subset H$ such that the $\FS$ contains any (hence every) map $[G/K] \rightarrow [G/H]$.
    In this scenario, we will safely conflate these notions.
\end{observation}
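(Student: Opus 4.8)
The plan is to recall the mechanism behind the (cited) first statement and then deduce the conflation asserted at the end. First, for subgroups $A, B \subseteq G$, evaluation at $eA$ identifies $\Map([G/A],[G/B])$ with the fixed set $(G/B)^A = \cbr{gB \mid g^{-1}Ag \subseteq B}$, since a $G$-map out of $[G/A]$ is determined by the image of $eA$, which is forced to be an $A$-fixed point of $G/B$. When $B \triangleleft G$ the condition $g^{-1}Ag \subseteq B$ is equivalent to $A \subseteq B$ and independent of $g$, so $(G/B)^A$ is either empty or all of $G/B$; the case $A \triangleleft G$ is identical. This yields the two displayed bijections, with $\Map([G/N],[G/H])$ nonempty exactly when $N \subseteq H$ and $\Map([G/H],[G/N])$ nonempty exactly when $H \subseteq N$. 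Under these identifications the relevant automorphism-group actions are computed the same way: for $N \triangleleft G$ the Weyl group $\Aut_G([G/N]) = N_G(N)/N$ is $G/N$ acting on $[G/N]$ by coset multiplication, and evaluating at the base coset shows that its precomposition (resp.\ postcomposition) action on $\Map([G/N],[G/H])$ (resp.\ $\Map([G/H],[G/N])$) is implemented by a multiplication action of $G/N$, necessarily transitive because $G$ acts transitively on $G/H$ and $G/N$ on $G/N$ (the side on which one writes the multiplication being a convention that does not affect transitivity).

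For the final assertion, fix a collection $\cF$ of normal subgroups of $G$ and an isomorphism-closed class $\FS$ of arrows of $\cO_G$ whose codomains lie in $\cF$. Every arrow in question has the form $[G/K] \to [G/H]$ with $H \in \cF$, hence $H \triangleleft G$; by the first paragraph its existence forces $K \subseteq H$, and then evaluation at $eK$ identifies $\Map([G/K],[G/H])$ with all of $G/H$. The Weyl group $\Aut_G([G/H]) = G/H$ acts on this set by postcomposition through coset multiplication, which is transitive. Thus any two maps $[G/K] \to [G/H]$ differ by postcomposition with an automorphism of $[G/H]$; since $\FS$ is isomorphism-closed, it contains one such map if and only if it contains every such map. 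Therefore $\FS$ is recorded exactly by the set of inclusions $K \subseteq H$ (with $H \in \cF$) for which $[G/K] \to [G/H]$ lies in $\FS$ — that is, these two kinds of data agree, as promised.

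The only step demanding care is keeping straight which mapping sets are nonempty: because every $H \in \cF$ is normal, the usual condition ``$K$ is subconjugate to $H$'' collapses to the plain inclusion ``$K \subseteq H$'', and once that observation is made the rest reduces to transitivity of a coset-multiplication action together with \cite[Prop~1.3.1]{Dieck}. I therefore do not expect a genuine obstacle; the content is entirely in setting up the correct bookkeeping.
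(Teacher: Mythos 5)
Your proof is correct and follows the same standard argument that the paper is implicitly invoking by citing \cite[Prop~1.3.1]{Dieck}; the paper gives no proof of its own, so there is nothing to contrast it against. You correctly reduce $\Map_G([G/A],[G/B])$ to the fixed set $(G/B)^A$, observe that normality of $B$ (or of $A$) collapses the subconjugacy condition to a plain inclusion so the fixed set is all-or-nothing, and then use transitivity of the Weyl group $\Aut_G([G/H]) \cong G/H$ (here requiring $H$ normal so $N_G(H)=G$) acting by postcomposition to conclude that an isomorphism-closed class is determined by which pairs $K \subseteq H$ it realizes. Your side-step on the left-versus-right-multiplication bookkeeping is acceptable, since the only fact the paper's final assertion actually needs is transitivity of the Weyl-group action, which you establish cleanly.
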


Recall that when $\cC \subset \cO_{C_{p^n}}$ is a collection of orbits and $R$ a $C_{p^n}$-transfer system, the term \emph{$R$-sieves on $\cC$} refers to subgraphs $\FS \subset R$ satisfying the following conditions:
\begin{enumerate}[label={(\alph*)}]
  \item \label[condition]{Cond1} arrows in $\FS$ are closed under isomorphism;
  \item \label[condition]{Cond3} given an inclusion $K \subset H$ in $\FS$ and $L \subset H$ with $L \in \cC$, the inclusion $L \cap K \subset L$ is in $\FS$;
  \item \label[condition]{Cond2} given an inclusion $K \subset H$ in $\FS$, we have $H \in \cC$; and
  \item \label[condition]{Cond4} given inclusions $J \subset K$ in $R$ and $K \subsetneq H$ in $\FS$, the composite $J \subset H$ is in $\FS$.
\end{enumerate}
We will denote the full sub-poset of $R$-sieves on $\cC$ by
\[
  \Sieve_R(\cC) \subset \Sub_{\mathrm{Graph}}(R).
\]
Given $\uFF_{I} \subset \uFF_{C_{p^n}}$ an almost essentially unital weak indexing system,
let $\Sv(\uFF_I) \subset \fR(\uFF_I)$ be the subgraph consisting of maps $U \rightarrow V$ with $V \in \Codomain(\fR(\uFF_I)) - \nabla(\uFF_I)$ such that $*_V \sqcup U \in \FF_{I,V}$;
said another way, $\Sv(\uFF_I)$ consists of the non-contractible sparse $V$-sets in $\uFF_I$ with exactly one fixed point which do not arise by taking coproducts indexed by a fold map $2 \cdot *_V \rightarrow *_V$.
\begin{proposition}\label{R-sieves prop}
  The restricted map $\Sv\cln \fR^{-1}(R) \cap \nabla^{-1}(\cF) \rightarrow \Sub_{\mathrm{Graph}}(R)$ is an embedding with image spanned by the $R$-sieves on $\Cod(R) - \cF$.  
\end{proposition}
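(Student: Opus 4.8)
The plan is to show that $\Sv$ lands in $\Sieve_R(\Cod(R) - \cF)$, construct an explicit inverse, and verify both that the inverse lands in $\fR^{-1}(R) \cap \nabla^{-1}(\cF)$ and that it is indeed inverse to $\Sv$; the statement that $\Sv$ is an embedding (i.e.\ full) then follows from \cref{Fully faithful left adjoint lemma} once we see it is a section of a monotone map, or directly by checking both composites.

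First I would verify that $\Sv(\uFF_I)$ satisfies conditions \cref{Cond1,Cond2,Cond3,Cond4}. Conditions \cref{Cond1} and \cref{Cond2} are immediate from the definition of $\Sv$ and the fact that $\uFF_I$ is replete (\cref{Automorphism condition observation}). For \cref{Cond4}, if $K \subsetneq H$ lies in $\Sv(\uFF_I)$ and $J \subset K$ lies in $R = \fR(\uFF_I)$, then $*_H \sqcup [H/K] \in \FF_{I,H}$ and $[K/J] \in \FF_{I,K}$, so \cref{Sieve observation} yields $*_H \sqcup [H/J] \in \FF_{I,H}$; one must check $H \notin \nabla(\uFF_I)$, which holds since $H \notin \cF = \nabla(\uFF_I)$ by hypothesis, so $J \subset H$ lies in $\Sv(\uFF_I)$. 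For \cref{Cond3}, given $K \subset H$ in $\Sv(\uFF_I)$ and $L \subset H$ with $L \in \Cod(R) - \cF$, the restriction $\Res_L^H$ of $*_H \sqcup [H/K]$ is $*_L \sqcup \Res_L^H [H/K]$, and by the double coset formula (\cref{Double coset formula}) together with \cref{Fp lemma}, $\Res_L^H[H/K]$ contains the fixed point summand $[L/(L \cap K)]$ (using normality of subgroups of $C_{p^n}$); since $\uFF_I$ is almost essentially unital it is closed under nonempty summands (\cref{aE-unital joins are unions observation}), so $*_L \sqcup [L/(L\cap K)] \in \FF_{I,L}$, giving $L \cap K \subset L$ in $\Sv(\uFF_I)$ — modulo checking $L \notin \nabla(\uFF_I)$, which again holds as $L \notin \cF$.

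Next I would construct the inverse: given $\FS \in \Sieve_R(\Cod(R) - \cF)$, define a sparse collection $\cC_\FS^{\sparse}$ whose $V$-value is $\overline{\uFF}_{R,V}^{\sparse} \cup \{*_V \sqcup [V/K] : (K \subset V) \in \FS\}$ together with $2\cdot *_V$ precisely when $V \in \cF$, and set $\Psi(\FS) \deq \Cl_\infty(\cC_\FS^{\sparse})$. By \cref{The classification is finitary theorem} and \cref{aE-unital sparse remark} it suffices to check that $\cC_\FS^{\sparse}$ is an almost essentially unital collection closed under applicable self-indexed coproducts with $\fR = R$ and $\nabla = \cF$; the sieve conditions \cref{Cond3,Cond4} are exactly what is needed for closure (an $S$-indexed coproduct of sparse generators, after applying the double coset formula and discarding inductions of non-fixed-point pieces which land back in $\overline{\uFF}_R$, produces another sparse generator or a fold-type set), and conditions \cref{Cond1,Cond2} ensure well-definedness. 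I expect the identity $\Sv \circ \Psi = \mathrm{id}$ to be essentially formal, and $\Psi \circ \Sv = \mathrm{id}$ to follow from \cref{sparse generation prop}: $\uFF_I = \Cl_\infty(\uFF_I^{\sparse})$, and $\uFF_I^{\sparse}$ is recovered from $\Sv(\uFF_I)$ together with $R$ and $\cF$ because every sparse $V$-set decomposes (via \cref{S sparse coproduct}) into a fold part, an induction part controlled by $\overline{\uFF}_R$, and one-fixed-point pieces recorded by $\Sv$.

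The main obstacle will be the closure argument for $\cC_\FS^{\sparse}$, i.e.\ verifying that the sparse collection built from an abstract sieve is genuinely closed under applicable self-indexed coproducts. This requires carefully tracking, via the double coset formula for $C_{p^n}$, how a sparse $V$-set $\varepsilon \cdot *_V \sqcup \bigsqcup [V/K_i]$ in the collection indexes a coproduct of further sparse generators, decomposing the result into its orbit summands, identifying which summands are fixed points (handled by \cref{Cond3,Cond4}), which are inductions of fixed points from proper subgroups (handled by membership of the relevant transfers in $R$), and confirming the outcome is again sparse or reduces to the fold-map case — all while keeping the fold family pinned at exactly $\cF$ rather than something larger. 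This is where the combinatorics of the linear order $\Sub_{\Grp}(C_{p^n}) = [n+1]$ is used essentially, so I would isolate it as a lemma about how sparse generators compose over $C_{p^n}$ before assembling the bijection.
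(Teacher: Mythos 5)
Your plan is structurally parallel to the paper's: verify the sieve axioms for $\Sv(\uFF_I)$, build an inverse $\Psi$, and check the two composites.  Your arguments for the sieve axioms are correct and essentially coincide with the paper's (the paper's verification of \cref{Cond3} also passes through the summand inclusion $[L/L \cap K] \subset \Res_L^H[H/K]$).  The one genuinely different choice is in constructing the inverse: the paper writes down $\uFF_\FS$ explicitly as a full $\cT$-subcategory — when $H \in \cF$ it takes \emph{all} $H$-sets with orbits in $R_{/H}$, and when $H \notin \cF$ it takes coproducts of nontrivial $R$-inductions optionally together with a single fixed point $*_H$ whose further nontrivial orbits are constrained to $\FS$ — and then verifies $\uFF_\FS$ is closed under self-indexed coproducts by a direct two-case analysis on whether $T^H \neq \emptyset$ and whether $H \in \cF$.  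You instead set $\Psi(\FS) = \Cl_\infty(\cC_\FS^{\sparse})$ and appeal to \cref{The classification is finitary theorem}.

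There are two issues with this route, one small and one substantial.  First, your $\cC_\FS^{\sparse}$ is not quite right: for $V \in \cF$ the sparse part of the correct answer contains $*_V \sqcup [V/K]$ for every $K \subset V$ in $R$ (these arise as $2\cdot *_V$-indexed coproducts of $*_V$ and $[V/K]$), but your collection only adds $*_V \sqcup [V/K]$ when $(K\subset V) \in \FS$, which never happens when $V \in \cF$ since $V \notin \Cod(R) - \cF$.  So as written $\cC_\FS^{\sparse}$ is \emph{not} closed under applicable self-indexed coproducts, and the appeal to \cref{The classification is finitary theorem} would need its input repaired — simplest is to take $\cC_\FS^{\sparse} \deq (\uFF_\FS)^{\sparse}$ with $\uFF_\FS$ as in the paper's explicit formula.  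Second, and more importantly, even after that fix, verifying that $\cC_\FS^{\sparse}$ is closed under applicable self-indexed coproducts is exactly equivalent in content to the paper's verification that $\uFF_\FS$ is closed under self-indexed coproducts; you correctly identify this as the main obstacle, but you stop at flagging it.  This case analysis (tracking fixed points vs.\ inductions under the double coset formula, separately in the regimes $H \in \cF$ and $H \notin \cF$) is where the proposition is actually proved, and it is not a formality — \cref{Cond4} is used precisely to absorb the $R$-precompositions that appear, and the distinction between the $T^H = \emptyset$ and $T^H \neq \emptyset$ cases is what keeps the fold family pinned at $\cF$.  So the proposal is a correct outline of essentially the paper's strategy, but the decisive closure verification remains undone.
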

\begin{proof}
  In view of \cref{The classification is finitary theorem}, a unital $\cT$-weak indexing system lying over $(R,\cF)$ is determined by its nontrivial $V$-sets $S$ such that:
  \begin{itemize}
    \item $S^V = *_V$; 
    \item $S - S^V = U_1 \sqcup \cdots \sqcup U_n \neq \emptyset$ and there exist no maps $U_i \rightarrow U_j$ over $V$ for $i \neq j$; and
    \item $V \in \Codomain(R) - \cF$.
  \end{itemize}
  In fact, since the subgroup lattice $\Sub_{\Grp}(\cO_{C_{p^n}}) = [n + 2]$ is a total order, such a sparse $H$-set is exactly an $H$-set of the form $*_H \sqcup [H/J]$ for some $J \subsetneq H$.
  Thus $\Sv$ is an embedding, so it suffices to characterize its image.
  
  \cref{Cond1} follows immediately for $\Sv(\uFF_I)$ by the fact that $\uFF_I$ is a full subcategory.
  \cref{Cond3} follows by using the double coset formula to construct a summand inclusion $[L/L \cap K] \subset \Res_L^H [H/K]$, and thus a summand inclusion $*_L \sqcup [L/L \cap K] \subset \Res_L^H \prn{*_H \sqcup [H/K]}$ witnessing membership $*_L \sqcup [L,L/K] \in \uFF_I$.
  \cref{Cond2} follows by construction.
  \cref{Cond4} follows by noting that $*_H \sqcup [H/J]$ is a $*_H \sqcup [H/K]$-indexed coproduct of elements of $\uFF_I$ in this situation.
  Thus we've shown that $\mathrm{im} \Sv \subset \Sieve_R\prn{\Cod\prn{R} - \cF}$, so it suffices to verify the opposite inclusion.
  
  Fixing $\FS$ an $R$-sieve on $\Cod(R) - \cF$, we define the collection $\uFF_{\FS}$ by its values
 \[
  \FF_{\FS,H}
    = \cbr{S \mid \forall \, [H/K] \in \Orb(S), \;\; K \subset H \in R}
 \]
  when $H \in \cF$, and
  \begin{align*}
    \FF_{\FS,H} = & \cbr{\coprod_i n_i \cdot [H/K_i] \;\; \middle| \;\; \forall \, i, \;\;\; n_i \in \NN, \text{ and } K_i \subsetneq H \in R}\\
    &\cup \cbr{*_H \sqcup \coprod_{i} n_i \cdot [H/K_i] \;\; \middle| \;\; \forall \, i, \;\;\; n_i \in \NN, \text{ and } K_i \subsetneq H \in \FS}
  \end{align*}
  when $H \not \in \cF$.
  These are full subcategories by \cref{Cond1}, and they are restriction-stable (hence a full $G$-subcategory) by \cref{Cond3}.
  Furthermore, it follows immediately by definition that $\nabla(\uFF_{\FS}) = \cF$, that $\fR(\uFF_{\FS}) = R$, that $\upsilon(\uFF_{\FS}) = \cO_{C_{p^n}} = c(\uFF_{\FS})$, and by \cref{Cond2} that $\Sv(\uFF_{\FS}) = \mathfrak{S}$, so to conclude that $\uFF_{\FS} \in \Sv^{-1}(\mathfrak{S})$ (and hence the proposition), it remains to show that $\uFF_{\FS}$ is closed under self-indexed coproducts.

  Suppose $S \in \FF_{\FS, H}$ and $(T_K) \in \FF_{\FS, S}$, and write $T \deq \coprod_K^S T_K$.
  We show $T \in \FF_{\FS,H}$  in two cases.

  \subsubsection*{The cases $H \in \cF$ or $T^H = \emptyset$}
  In either of these cases, we're tasked with proving that the orbital summands of $T$ lie in $R_{/H}$.
  In any case, all orbital summands of $T_{K_i}$ lie in $R_{/K_i}$ by assumption;
  since the orbital summands of $S$ lie in $R_{/H}$ by assumption, all orbital summands of $T$ are then $R$-indexed inductions of orbital summands of $T_{K_i}$.
  Unwinding definitions, we've argued that any orbital summand $[H/J]$ has structure map factoring as a composite $J \subset K \subset H$ of inclusions in $R$, so $J \subset H$ is in $R$, which is what we were trying to show.

  \subsubsection*{The case $H \not \in \cF$ and $T^H \neq \emptyset$.}
  Write $T = \coprod_{K_i}^S T_{K_i}$.
  Since $T$ has a fixed point, $S$ must as well;
  the decomposition $S = *_H \sqcup S'$ yields a decomposition $T = T_H \sqcup T'$ where $T_H \in \uFF_{\FS}$ and $T'$ is a coproduct of nontrivial $\FS$-indexed inductions of elements of $R_{/K_i}$.
  In particular, $T'$ is fixed-point free, so $T^H = T_H^H \sqcup \prn{T'}^H = T_H^H = *_H$.

  Fix $[H/K] \subset T$ a nontrivial orbital summand.
  We're tasked with proving that $K \subset H$ lies in $\FS$.
  The inclusion $[H/K] \subset T$ factors through an inclusion $[H/K] \subset T_H$ or $[H/K] \subset T'$.
  In the case $[H/K] \subset T_H$, the claim follows by unwinding definitions since $T_H \in \FF_{S,H}$ has a fixed point.
  In the case $[H/K] \subset T'$, orbital summands of $T'$ are nontrivial $\FS$-indexed inductions of $[K/J]$ for $J \subset K$ in $R$;
  hence they correspond with compositions $J \subset K \subsetneq H$, which lies in $\FS$ since $K \subsetneq H$ is in $\FS$ and $\FS$ is closed under precomposition with maps in $R$ by \cref{Cond4}.
  To summarize, we've shown that $T^H = *_H$ and the nontrivial orbital summands of $T$ lie in $\FS_{/H}$, so $T \in \FF_{\FS,H}$, and we are done.
\end{proof}

\begin{proof}[Proof of \cref{CPn theorem} ]
  In view of \cite[Thm~25]{Balchin}, the combined transfer-fold fibration has signature 
  \[
    (\fR,\nabla)\cln \wIndSys_{C_{p^n}}^{\uni} \rightarrow K_{n+2} \times [n+2].
  \]
  After \cref{Combined fibration prop,R-sieves prop}, we've identified the fibers and proved that the restricted map is a cocartesian fibration.
  Thus it suffices to understand cocartesian transport, which is implemented by
  \[
    t_{(R,\cF)}^{(R',\cF')} \uFF_I = \uFF_I \vee \overline{\uFF}_{R'} \vee \uFF_{\cF'}^\infty
  \]
  by \cref{Cocartesian lemma}, in terms of $R$-sieves.
  When $R = R'$, it is clear that this is given by the restriction $\Sieve_R(\Codomain(R) - \cF) \twoheadrightarrow \Sieve_R(\Cod(R) - \cF')$, so it suffices to characterize this in the case $\cF = \cF'$.
  Unwinding definitions, we're tasked with characterizing for which $K \hookrightarrow H$, we have
  \[
    *_H + [H/K] \in \uFF_I \vee \overline{\uFF}_{R'}.
  \]
  Let $t_{R}^{R'}\cln \Sieve_R(\Codomain(R) - \cF) \hookrightarrow \Sieve_{R'}(\Codomain(R') - \cF)$ be the map sending an $R$-sieve $\FS$ to the $R'$-sieve whose non-isomorphisms are the composites $J \subset K \subsetneq H$ with $K \subsetneq H \in \FS - \FS^{\simeq}$ and $J \subset K \in R'$.
  On one hand, note that, for all $J \subset K \subsetneq H$ in $t_{R}^{R'} \FS$, we have
  \[
    *_H \sqcup [H/J] = *_H \sqcup \Ind_K^H [K/J],
  \]  
  i.e. $*_H \sqcup [H/J]$ is a $*_H \sqcup [H/K]$-indexed coproduct of elements of $\overline{\uFF}_{R'}$;
  unwinding definitions, this implies that $\Sv\prn{\uFF_I \vee \overline{\uFF}_{R'}} \geq t_{R}^{R'} \Sv(\uFF_I)$.

  On the other hand, note that $\uFF_{t_R^{R'} \Sv(\uFF_I)}$ is a unital weak indexing system containing both $\uFF_I$ and $\overline{\uFF}_{R'}$;
  this implies that $\uFF_I \vee \overline{\uFF}_{R'} \leq \uFF_{t_R^{R'} \Sv(\uFF_I)}$, so applying $\Sv$ together with the above inequality yields $\Sv \prn{\uFF_I \vee \overline{\uFF}_{R'}} = t_{R}^{R'} \Sv(\uFF_I)$, which is what we set out to prove.
\end{proof}

We finish by drawing this out for $n = 2$.
We may illustrate $\cO_{C_{p^2}}$ as follows
\[
\begin{tikzcd}
  {\brk{C_{p^2}/e}} & {\brk{C_{p^2}/C_p}} & {*_{C_p^2}}
  \arrow[tail reversed, no head, from=1-1, to=1-1, loop, in=235, out=305, distance=10mm, "C_{p^2}"]
	\arrow[from=1-1, to=1-2]
  \arrow[tail reversed, no head, from=1-2, to=1-2, loop, in=235, out=305, distance=10mm, "C_{p}"]
	\arrow[from=1-2, to=1-3]
\end{tikzcd}
\]
with $\Map([C_{p^2}/e], [C_{p^2}/C_p])$ a $C_p$-torsor and $\Map([C_{p^2}/C_p], *_{C_{p^2}}) = *$. 
The independent computations of \cite{Rubin,Balchin} verify the that $\Transf_{C_{p^2}}$ agrees with \cref{Transf figure}.
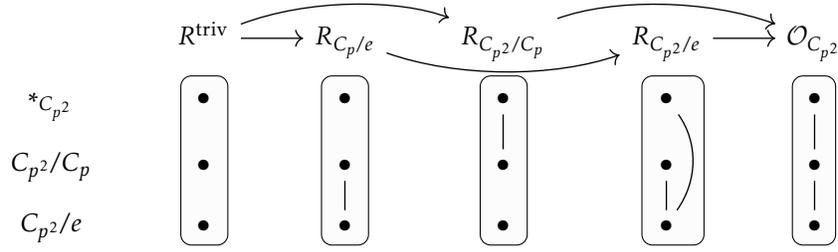
\begin{figure}
\[
\tikz[%remember picture, 
overlay]{
  \filldraw[draw=black, fill = white!99!black, rounded corners] (2.42,.75) rectangle (3.045,-1.5);
  \filldraw[draw=black, fill = white!99!black, rounded corners] (4.29,.75) rectangle (4.915,-1.5);
  \filldraw[draw=black, fill = white!99!black, rounded corners] (6.4,.75) rectangle (7.025,-1.5);
  \filldraw[draw=black, fill = white!99!black, rounded corners] (8.55,.75) rectangle (9.375,-1.5);
  \filldraw[draw=black, fill = white!99!black, rounded corners] (10.55,.75) rectangle (11.125,-1.5);
  }
  \begin{tikzcd}[row sep =tiny]
	& {R^{\triv}} & {R_{C_p/e}} & {R_{C_{p^2}/C_{p}}} & {R_{C_{p^2}/e}} & {\cO_{C_{p^2}}} \\
  {*_{C_{p^2}}} & \bullet & \bullet & \bullet & \bullet & \bullet \\
	{C_{p^2}/C_{p}} & \bullet & \bullet & \bullet & \bullet & \bullet \\
  {C_{p^2}/e} & \bullet & \bullet & \bullet & \bullet & \bullet
	\arrow[from=1-2, to=1-3]
	\arrow[curve={height=-15pt}, from=1-2, to=1-4]
	\arrow[curve={height=15pt}, from=1-3, to=1-5]
  \arrow[curve={height=-15pt}, from=1-4, to=1-6, crossing over]
	\arrow[from=1-5, to=1-6]
	\arrow[no head, from=3-4, to=2-4]
	\arrow[no head, from=3-6, to=2-6]
	\arrow[no head, from=4-3, to=3-3]
	\arrow[curve={height=12pt}, no head, from=4-5, to=2-5]
	\arrow[no head, from=4-5, to=3-5]
	\arrow[no head, from=4-6, to=3-6]
\end{tikzcd}\]
\caption{Pictured is the result of Rubin and Balchin-Barnes-Rotzheim's computation of $\Transf_{C_{p^2}}$.}\label{Transf figure}
\end{figure}

Given $R \in \Transf_{C_{p^2}}$, we let $\uFF_R$ be the corresponding indexing system.
We will use \cref{CPn theorem} to compute $\wIndSys_{C_{p^2}}^{\uni}$, which we will populate with examples from real representation theory via the following.
\begin{lemma}
  For all $n$ and all  orthogonal $C_{p^n}$-representations $V$, the element 
  \[
    \uFF^V \in \fR^{-1}\prn{\fR{\uFF^V}} \cap \nabla^{-1}\prn{\nabla\prn{\uFF^V}}
  \]
  is terminal.
\end{lemma}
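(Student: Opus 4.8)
The plan is to prove the dual statement directly: every unital $C_{p^n}$-weak indexing system $\uFF_J$ with $\fR(\uFF_J) = \fR(\uFF^V) =: R$ and $\nabla(\uFF_J) = \nabla(\uFF^V) =: \cF$ satisfies $\uFF_J \leq \uFF^V$, which is precisely the assertion that $\uFF^V$ is terminal in $\fR^{-1}(R) \cap \nabla^{-1}(\cF)$. Since $\cO_{C_{p^n}}$ is atomic orbital and $\uFF_J$ is unital (hence almost essentially unital), \cref{sparse generation prop} gives $\uFF_J = \Cl_\infty(\uFF_J^{\sparse})$; as $\uFF^V$ is itself a weak indexing system (\cref{EV example}) and $\Cl_\infty(\uFF_J^{\sparse})$ is minimal among weak indexing systems containing the collection $\uFF_J^{\sparse}$ (\cref{Iota is left adjoint lemma}), it suffices to prove the inclusion of collections $\uFF_J^{\sparse} \subseteq \uFF^V$, i.e. $\FF^{\sparse}_{J,H} \subseteq \FF^V_H$ for every subgroup $H \subseteq C_{p^n}$.

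Next I would enumerate sparse $H$-sets. Because the subgroup lattice of $H \cong C_{p^k}$ is a chain, no two non-isomorphic transitive $H$-sets are incomparable, so a sparse $H$-set is one of $\emptyset_H$, $*_H$, $2 \cdot *_H$, $[H/J]$ with $J \subsetneq H$, or $*_H \sqcup [H/J]$ with $J \subsetneq H$. The sets $\emptyset_H$ and $*_H$ embed $H$-equivariantly into $\Res_H V$ trivially (the empty set, resp.\ the origin). Membership $2 \cdot *_H \in \FF_{J,H}$ is by definition equivalent to $H \in \nabla(\uFF_J) = \cF = \nabla(\uFF^V)$, i.e.\ to $2 \cdot *_H \in \FF^V_H$; and $[H/J] \in \FF_{J,H}$ is equivalent to the transfer $J \subseteq H$ lying in $\fR(\uFF_J) = R = \fR(\uFF^V)$, i.e.\ to $[H/J] \in \FF^V_H$. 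So the only case with content is $S = *_H \sqcup [H/J] \in \FF_{J,H}$ with $J \subsetneq H$.

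In that case $[H/J]$ is a non-contractible summand of $S$, and unitality of $\uFF_J$ forces closure under summands (\cref{Various families lemma}), so $[H/J] \in \FF_{J,H}$ and hence $[H/J] \in \FF^V_H$ by the previous paragraph. Fix an $H$-equivariant embedding $[H/J] \hookrightarrow \Res_H V$; its image is an orbit $Hv$ with $J \subseteq \stab_H(v)$ and $[H : \stab_H(v)] = [H:J]$, whence $\stab_H(v) = J$ and in particular $v \notin V^H$, so $v \neq 0$. I then claim $V^H$ is never contained in the finite set $Hv$: if $\dim V^H \geq 1$ then $V^H$ is an infinite (positive-dimensional real) subspace, and if $\dim V^H = 0$ then $V^H = \cbr{0}$ and $0 \notin Hv$ since $v \neq 0$. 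Choosing any $w \in V^H \setminus Hv$, the subset $\cbr{w} \sqcup Hv \subseteq \Res_H V$ is an $H$-equivariant copy of $*_H \sqcup [H/J]$, so $S \in \FF^V_H$. This completes $\uFF_J^{\sparse} \subseteq \uFF^V$ and hence the lemma.

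I do not expect a genuine obstacle: the only points requiring care are the enumeration of sparse $C_{p^k}$-sets (immediate from the chain structure of the subgroup lattice) and the elementary representation-theoretic observation that a fixed subspace $V^H$, being either $\cbr{0}$ or positive-dimensional, cannot lie inside a single nontrivial finite orbit. (Alternatively one could pass through \cref{R-sieves prop} and check that $\Sv(\uFF^V)$ is the maximal $R$-sieve on $\Codomain(R) - \cF$, but the argument above avoids that bookkeeping.)
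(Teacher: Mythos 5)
Your proof is correct, and it takes a route that is structurally different from the paper's, though both hinge on the same geometric observation. The paper first establishes (via \cref{R-sieves prop} and \cref{CPn theorem}) that the fiber $\fR^{-1}(R)\cap\nabla^{-1}(\cF)$ is isomorphic under $\Sv$ to the poset of $R$-sieves on $\Codomain(R)-\cF$, and then notes that the configuration-space identification $\mathrm{Conf}^H_{*_H\sqcup S}(V)\simeq\mathrm{Conf}^H_S(V-\{0\})=\mathrm{Conf}^H_S(V)$ (for $S$ fixed-point free) forces $\Sv(\uFF^V)$ to be the \emph{complete} such sieve, hence terminal. You bypass the sieve machinery entirely: you reduce to the sparse collection via \cref{sparse generation prop} and the universal property of $\Cl_\infty$ from \cref{Iota is left adjoint lemma}, enumerate sparse $C_{p^k}$-sets directly from the chain structure of the subgroup lattice, dispose of $\emptyset_H$, $*_H$, $2\cdot *_H$, and $[H/J]$ by unwinding $\fR$ and $\nabla$, and handle the only nontrivial case $*_H\sqcup[H/J]$ by the same underlying observation the paper uses, cast concretely: a fixed-point-free orbit $Hv\subset\Res_H V$ avoids $V^H\ni 0$, so one can always append the origin to extend the embedding. (You could shorten the final case slightly: $0\in V^H\setminus Hv$ always holds, making the $\dim V^H\geq 1$ versus $=0$ split unnecessary.) Your approach is more self-contained and avoids the $R$-sieve bookkeeping, at the cost of re-enumerating sparse $C_{p^k}$-sets; the paper's approach is shorter because it leverages the already-proved \cref{CPn theorem}.
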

\begin{proof}
  Suppose $S$ is fixed point free.
  The observation $\mathrm{Conf}_{*_H + S}^H(V) = \mathrm{Conf}_{S}^H(V - \cbr{0}) = \mathrm{Conf}_S^H(V)$ implies that $\Sv\prn{\uFF^V}$ is the complete $\fR\prn{\uFF^V}$-sieve on $\Cod\prn{\fR\prn{\uFF^V}} - \nabla\prn{\uFF^V}$, so this follows from \cref{CPn theorem}. 
\end{proof}
In view of this, to compute the position of $\uFF^V$ in the classification of \cref{CPn theorem}, we need only compute its transfers and fold maps.
Fix a distinguished generator $x \in C_{p^2}$.
\begin{example}\label{Lambda example 1}
  Let $\lambda_{C_{p^2}}$ be a 2-dimensional orthogonal $C_{p^2}$-representation wherein $x$ acts by a rotation of order $p^2$.
  Both $\lambda_{C_{p^2}}$ and $\Res_{C_p}^{C_{p^2}} \lambda_{C_{p^2}}$ have 0-dimensional fixed points, so they do not embed $2 \cdot *_{(-)}$;
  hence 
  \[
    \nabla\prn{\uFF^{\lambda_{C_{p^2}}}} = \cbr{e}.
  \]
  The non-fixed points of $\lambda_{C_{p^2}}$ have orbit type $[C_{p^2}/e]$ and the non-fixed points of $\Res_{C_{p}}^{C_{p^2}} \lambda^{C_{p^2}}$ have orbit type $[C_{p}/e]$;
  together these imply that, in the notation of \cref{Transf figure},
  \[
    \fR\prn{\uFF^{\lambda_{C_{p^2}}}} = R_{C_{p^2}/e}.\qedhere
  \]
\end{example}

\begin{example}\label{Lambda example 2}
  Similarly to \cref{Lambda example 1}, let $\lambda_{C_p}$ be an irreducible $C_{p^2}$-representation wherein $x$ acts by a rotation of order $p$;
  this is 1-dimensional (and the sign representation) if $p = 2$, and 2-dimensional if $p > 2$.
  Note that $\lambda_{C_p}$ has 0-dimensional fixed points, but $\Res_{C_p}^{C_{p^2}} \lambda_{C_p}$ is trivial;
  hence \[
    \nabla\prn{\uFF^{\lambda_{C_p}}} = \cbr{e,C_p}.
  \]
  Furthermore, the orbit type of non-fixed points in $\lambda_{C_p}$ is $[C_{p^2}/C_p]$;
  this implies that \[
    \fR\prn{\uFF^{\lambda_{C_{p^2}}}} = R_{C_{p^2}/C_p}.\qedhere
  \]
\end{example} 
 
Note that $\overline{\uFF}_{R}$ corresponds with the minimal $R$-sieve on $\Cod(R) - \mathrm{Dom}(R)$.
Together with \cref{EV example,Lambda example 1,Lambda example 2}, this completely characterizes the image of the join generators of \cref{CP2 figure} under $(\fR,\nabla,\Sv)$;
since $\fR, \nabla, \Sv$ are compatible with joins, this completely characterizes the image of the entirety of \cref{CP2 figure} under $(\fR,\nabla,\Sv)$.
In fact, this is everything.
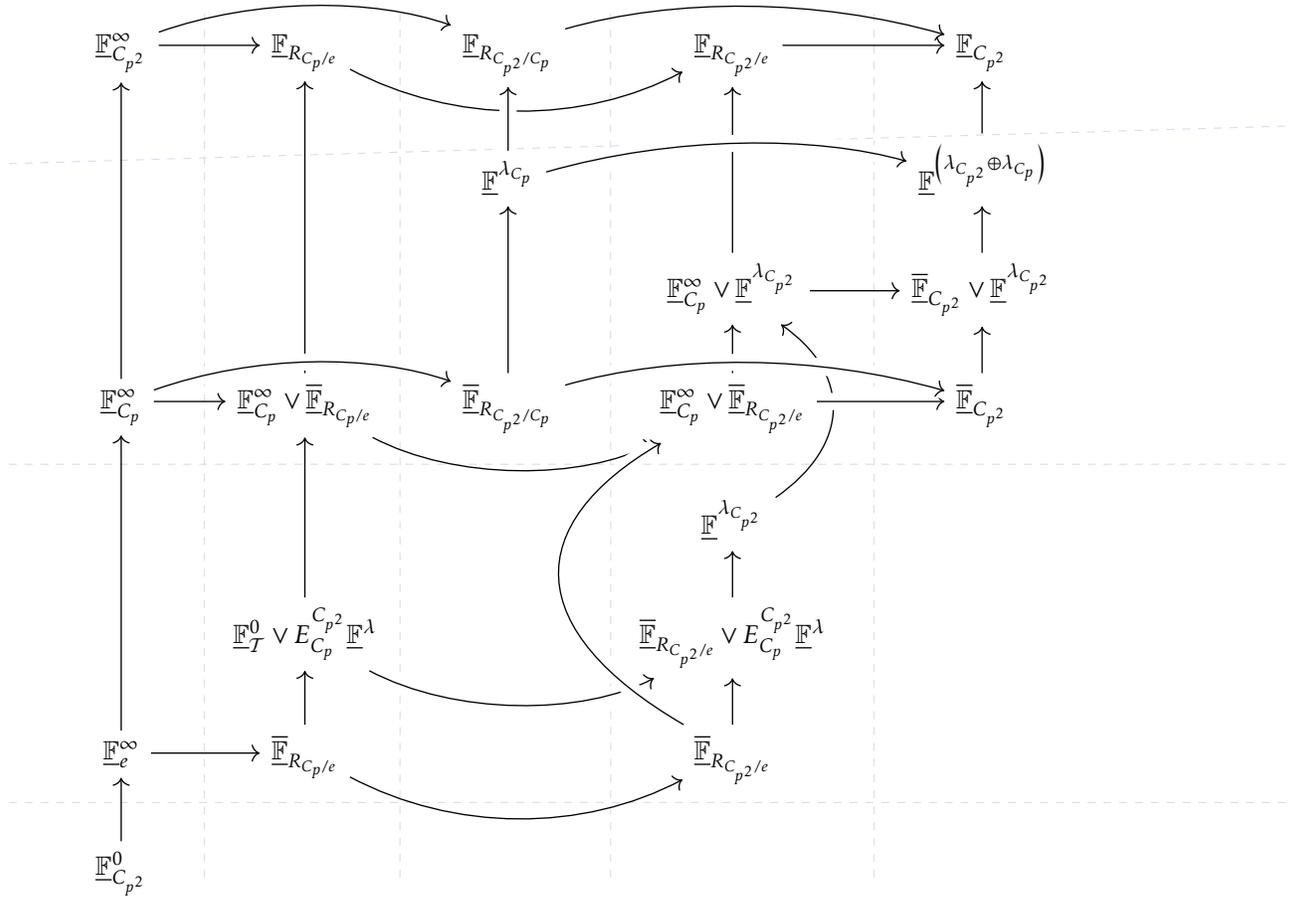
\begin{figure}
    \[
      \tikz[%remember picture, 
      overlay]{
        \draw[dashed, color={white!95!black!90!blue}] (1.6,   -5.5)   -- (1.6,  6);
        \draw[dashed, color={white!95!black!90!blue}] (4.2,  -5.5)    -- (4.2, 6);
        \draw[dashed, color={white!95!black!90!blue}] (7,  -5.5)      -- (7, 6);
        \draw[dashed, color={white!95!black!90!blue}] (10.5,  -5.5)   -- (10.5,   6);
        \draw[dashed, color={white!95!black!90!blue}] (-1,    4.)   -- (16,   4.5);
        \draw[dashed, color={white!95!black!90!blue}] (-1,    0)      -- (16,   0);
        \draw[dashed, color={white!95!black!90!blue}] (-1,    -4.5)   -- (16,   -4.5);
        }
        \begin{tikzcd}
          {\uFF_{C_{p^2}}^\infty} & {\uFF_{R_{C_p/e}}} &  {\uFF_{R_{C_{p^2}/C_p}}} & {\uFF_{R_{C_{p^2}/e}}} & {\uFF_{C_{p^2}}} \\
          && {\uFF^{\lambda_{C_{p}}}} && {\uFF^{\prn{\lambda_{C_{p^2}} \oplus \lambda_{C_{p}}}}} \\
          &&& {\uFF_{C_p}^\infty \vee \uFF^{\lambda_{C_{p^2}}}} & {\overline{\uFF}_{C_{p^2}} \vee \uFF^{\lambda_{C_{p^2}}}} \\
          {\uFF_{C_p}^\infty} & {\uFF_{C_p}^\infty \vee \overline{\uFF}_{R_{C_p/e}}} & {\overline{\uFF}_{R_{C_{p^2}/C_p}}} & {\uFF_{C_p}^\infty \vee \overline{\uFF}_{R_{C_{p^2}/e}}} & {\overline{\uFF}_{C_{p^2}}} \\
          &&& {\uFF^{\lambda_{C_{p^2}}}} \\
          & {\uFF_{\cT}^0 \vee E_{C_p}^{C_{p^2}} \uFF^{\lambda}} && {\overline{\uFF}_{R_{C_{p^2}/e}} \vee E_{C_p}^{C_{p^2}} \uFF^{\lambda}} \\
          {\uFF^\infty_{e}} & {\overline{\uFF}_{R_{C_p/e}}} && {\overline{\uFF}_{R_{C_{p^2}/e}}} \\
          {\uFF_{C_{p^2}}^0}
        	\arrow[from=1-1, to=1-2]
        	\arrow[curve={height=-18pt}, from=1-1, to=1-3]
        	\arrow[curve={height=30pt}, from=1-2, to=1-4]
        	\arrow[curve={height=-18pt}, from=1-3, to=1-5]
        	\arrow[from=1-4, to=1-5]
        	\arrow[from=2-5, to=1-5]
        	\arrow[from=3-4, to=1-4]
        	\arrow[from=3-4, to=3-5]
        	\arrow[from=3-5, to=2-5]
        	\arrow[from=4-1, to=1-1]
        	\arrow[from=4-1, to=4-2]
        	\arrow[from=4-2, to=1-2]
        	\arrow[curve={height=30pt}, from=4-2, to=4-4]
        	\arrow[from=4-3, to=2-3]
        	\arrow[from=4-4, to=3-4]
        	\arrow[from=4-5, to=3-5]
        	\arrow[curve={height=45pt}, from=5-4, to=3-4]
        	\arrow[from=6-2, to=4-2]
        	\arrow[from=6-4, to=5-4]
        	\arrow[from=7-1, to=4-1]
        	\arrow[from=7-1, to=7-2]
        	\arrow[from=7-2, to=6-2]
        	\arrow[curve={height=30pt}, from=7-2, to=7-4]
        	\arrow[from=7-4, to=6-4]
        	\arrow[from=8-1, to=7-1]
         	\arrow[crossing over, from=2-3, to=1-3]
            \arrow[from=4-4, to=4-5, crossing over]
          	\arrow[curve={height=-18pt}, crossing over, from=4-3, to=4-5]
        	\arrow[curve={height=-18pt}, crossing over, from=4-1, to=4-3]
         	\arrow[curve={height=30pt}, crossing over, from=6-2, to=6-4]
            \arrow[curve={height=-18pt}, crossing over, from=2-3, to=2-5]
            \arrow[curve={height=-80pt}, crossing over, from=7-4, to=4-4]
        \end{tikzcd}
    \]
    \caption{Pictured is a Hasse diagram for the poset of unital $C_{p^2}$-weak indexing systems. Dashed lines separate the fibers of the cocartesian fibration $(\fR,\nabla)$.
  }\label{CP2 figure}
\end{figure}
\begin{coolcorollary}\label{CP2 corollary}
  The poset of unital $C_{p^2}$-weak indexing systems is presented by \cref{CP2 figure}.
\end{coolcorollary}
What remains is to verify that \cref{CP2 figure} bijects onto the Sieve posets of \cref{CPn theorem} and that cocartesian transport as described by \cref{CPn theorem} is implemented by horizontal arrows.
Cocartesian transport will follow simply by unwinding definitions.

When $R = \FF_{C_{p^2}}^{\simeq}$ or $\cF = \cO_{C_{p^2}}$, the fibers are one point by \cref{Index transf prop,No transfers obs}.
The remaining one-point fiber $\fR^{-1}(R_{C_{p}/e}) \cap \nabla^{-1}(\cbr{e,C_p})$ is trivial since $\Cod(R_{C_p/e}) \subset \cF$.
The empty fibers in \cref{CP2 figure} follow from \cref{CPn theorem},

The two-point fibers all follow from a similar consideration, which we may exemplify in the case $R = R_{C_{p^2}/C_p}$ and $\cF = \cbr{e,C_p}$.
In this case, the only orbit in $\Cod(R) - \cF$ is $*_{C_{p^2}}$, and the only $R$-transfer with codomain $*_{C_{p^2}}$ is $C_p \subset C_{p^2}$.
Thus there are exactly two $R$-sieves on $\Cod(R) - \cF$, depending on whether or not they contain a transfer.
The reader may easily verify that the other two-point fibers in \cref{CP2 figure} each also have only one applicable transfer.

The first example of a three-point fiber is $R = R_{C_{p^2}/e}$ and $\cF = \cbr{e}$.
In this instance, $\Cod(R) - \cF = \cbr{[C_{p^2}/C_{p}], *_{C_{p^2}}}$, and all non-isomorphisms in $R$ have codomain lying in $\Cod(R) - \cF$;
thus we are enumerating restriction and $R_{C_{p^2}/e}$-precomposition-closed subsets of $\cbr{e \subset C_p, e \subset C_{p^2}}$.
In fact, there are no applicable precompositions, so the only condition comes from the fact that the restriction of $e \subset C_{p^2}$ to $C_p$ is $e \subset C_p$, i.e. any $R$-sieve containing $e \subset C_{p^2}$ is complete.
Thus there are three $R$-sieves on $\Cod(R) - \cF$:
the empty sieve, the complete sieve, and $\cbr{e \subset C_p}$.

The other example of a three-point fiber is $R = \cO_{C_{p^2}}$ and $\cF = \cbr{e,C_p}$.
In this instance, $\Cod(R) - \cF = \cbr{*_{C_{p^2}}}$, so we are considering restriction and precomposition-closed subsets of $\cbr{e \subset C_{p^2}, C_p \subset C_{p^2}}$.
The only relevant condition is the precomposition condition;
since $e \subset C_{p}$ is in $R$, if $C_p \subset C_{p^2}$ is in $\FS$, then $e \subset C_{p^2}$ is in $\FS$.
Thus there are three $R$-sieves on $\Cod(R) - \cF$:
the empty sieve, the complete sieve, and $\cbr{e \subset C_{p^2}}$.

\subsection{Questions and future directions}
To stimulate further development in this area, we now pose a litany of questions concerning the structure and tabulation of weak indexing systems.
The first arose to the author out of consternation concerning the apparent lack of structure arising in \cref{CP2 figure}. 
\begin{question}
  Is there a closed form expression for $\wIndSys_{\cO_{C_{p^n}}}^{\uni}$ or $\abs{\wIndSys_{\cO_{C_{p^n}}}^{\uni}}$?
\end{question}
The author believes that, akin to the strategy employed in \cite{Balchin}, this may be solved by characterizing change-of-group functors such as restriction, Borelificaiton, and inflation.  
In particular, given $H \subset G$ a subgroup, the cofamily $\cO_{G/H}$ consisting of transitive $G$-sets on which $H$ acts trivially is an atomic orbital category, so it possesses a well-defined theory of weak indexing systems, which should participate in an adjunction 
\[
  \Infl^G_H\cln \wIndSys_{G/H} \rightleftarrows \wIndSys_{G}\cln F^G_H,
\]
where $F^G_H$ metaphorically represents ``fixed points with residual genuine $W_G(H)$-action,'' and literally sends $\uFF_I$ to a $\cO_{G/H}$-weak indexing system satisfying $F_H^G \FF_{I,V} =  \FF_{I,V}$ for all $V \in \cO_{G/H} \subset \cO_G$.
In the setting where $N \subset G$ is normal, $\cO_{G/N}$ is canonically equivalent to the orbit category for the group $G/N$, so given a choice of a \emph{normal} subgroup, this produces an inductive procedure: 
characterize $\cO_G$ weak indexing systems by picking a normal subgroup and inductively characterizing weak indexing systems for $\cO_{G, \geq N}$ (related to $\cO_N$ by \cref{Slice theorem}), weak indexing systems for $\cO_{G/N}$, and the possible transfers from outside $\cO_{G/N}$ to inside (as well as the possible additional data of $H$-sets $S$ for which $N$ acts trivially on $G/H$ but not on $G/\mathrm{stab}_H(x)$ for all $x \in S$).

Outside of closed form expressions, the following question is evident as an extension of \cref{CPn theorem}.
\begin{question}
  Is there a good combinatorial expression of $\nabla^{-1}(\cF) \cap \fR^{-1}(R)$ over an arbitrary abelian, dedekind, nilpotent, or general finite group?
\end{question}
The author expects that our techniques may be extended to a similar sieve-based presentation for $\nabla^{-1}(\cF) \cap fR^{-1}(R)$ over more general families of groups.
 
Another question arises by looking closely at \cref{CP2 corollary}; 
we were able to tabulate all 21 unital $C_{p^2}$-weak indexing systems using only the examples $\uFF_{R}$, $\overline{\uFF}_R$, and $\uFF^{V}$ together with joins and the functors $E_{(-)}^{C_{p^2}}$.\footnote{To see this, note that $\uFF_G^0$ is the arity support of the 0 $G$-representation and $\uFF_G^\infty$ is the arity support of any positive-dimensional trivial $G$-representation.}
Thus we ask the following.
\begin{question}
  Which unital weak indexing systems are realizable via tensor products of $\cbr{\FF^V}$ under various change of group functors?
\end{question}
In particular, all recorded instances of the right adjoint to $\nabla$ occur as the arity support $\uFF^{V}$ of an $\EE_V$-$G$-operad, so we ask the following.
\begin{question}
  What is the right adjoint to $\nabla$? Is it related to $\EE_V$? 
\end{question}

\begin{appendix}
  \section{Slice categories, \tinfty-categories}\label{Infinity appendix}
\stoptocwriting
In this appendix, we handle the $\infty$-category theory implicit in this paper.
We begin in \cref{sub:global orbital cats} by relating our setting to Cnossen-Lenz-Linsken's global setting, acquiring myriad $\infty$-categorical examples.
Then we move on in \cref{Slice categories subsection} to show that $\cT$ and $\ho(\cT)$ have the same posets of weak indexing categories and weak indexing systems, validating the choice throughout the rest of the article to specialize to the 1-categorical case.

\subsection{Orbital categories and transfer systems from the global setting}\label{sub:global orbital cats}
\begin{definition}[{\cite[Def~4.2.2,4.3.2]{Cnossen_stable}}]\label{Atomic orbital subcategory definition}
  Given $\cP \subset \cT$ a wide subcategory of an $\infty$-category, we denote by $\FF^{\cP}_{\cT} \deq \FF_{\cP} \subset \FF_{\cT}$ the wide subcategory whose maps are induced by maps in $\cP$.
  We say $\cP \subset \cT$ is an \emph{orbital subcategory} if $\FF_{\cT}^{\cP} \subset \FF_{\cT}$ is stable under pullbacks along arbitrary maps in $\FF_{\cT}$, and all such pullbacks exist.
  We say $\cP \subset \cT$ is additionally \emph{atomic} if any morphism in $\cP$ which admits a section in $\cT$ is an equivalence.
\end{definition}
We say that an $\infty$-category is atomic orbital if and only if it's an atomic orbital subcategory of itself;
this agrees with the verbatim generalization of \cref{Atomic orbital definition}.
Many global examples can be pulled back to the orbital setting using the following.
\begin{lemma}
  Suppose $\cP \subset \cT$ is an atomic orbital subcategory.
  Then, $\cP$ is atomic orbital as an $\infty$-category.
\end{lemma}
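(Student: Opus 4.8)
The plan is to first identify the finite coproduct completion $\FF_{\cP}$ with the (non-full) subcategory $\FF_{\cT}^{\cP} \subset \FF_{\cT}$, and then to upgrade the pullbacks that $\FF_{\cT}$ already possesses (by orbitality of $\cT$) to pullbacks computed inside $\FF_{\cT}^{\cP}$. For the first point I would note that the coproduct-preserving extension $\FF_{\cP} \to \FF_{\cT}$ of $\cP \hookrightarrow \cT$ is faithful with essential image $\FF_{\cT}^{\cP}$: computing mapping spaces via the universal property of the finite coproduct completion, $\Map_{\FF_{\cP}}(\bigsqcup_i V_i, \bigsqcup_j W_j) \simeq \prod_i \coprod_j \Map_{\cP}(V_i,W_j)$, which includes componentwise along the monomorphisms $\Map_{\cP}(V_i,W_j)\hookrightarrow\Map_{\cT}(V_i,W_j)$ into the corresponding description of $\Map_{\FF_{\cT}}$. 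So it suffices to show $\FF_{\cT}^{\cP}$ has pullbacks. The orbital-subcategory hypothesis already guarantees that a cospan $A \to C \leftarrow B$ in $\FF_{\cT}^{\cP}$ admits a pullback $P \deq A\times_C B$ in $\FF_{\cT}$ whose two projection legs again lie in $\FF_{\cT}^{\cP}$; the only remaining point is the universal property \emph{inside} $\FF_{\cT}^{\cP}$, i.e. that for any $Q \in \FF_{\cT}^{\cP}$ with compatible maps $q_A\colon Q \to A$, $q_B\colon Q \to B$ in $\FF_{\cT}^{\cP}$, the induced $\FF_{\cT}$-map $h\colon Q \to P$ lies in $\FF_{\cT}^{\cP}$.

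The key manipulation is a graph factorization. Writing $p_A\colon P \to A$ for the projection, $h$ factors as $Q \xrightarrow{\sigma} Q\times_A P \to P$, where the second map, being the pullback of $q_A \in \FF_{\cT}^{\cP}$ along $p_A$, lies in $\FF_{\cT}^{\cP}$ by the orbital-subcategory hypothesis, and where, under the canonical identification $Q\times_A P \simeq Q\times_C B$, the map $\sigma$ is the graph of $q_B$ over $C$ — hence the pullback of the relative diagonal $\Delta_{B/C}\colon B \to B\times_C B$ along a map of $\FF_{\cT}$. Thus it suffices to prove $\Delta_{B/C}\in\FF_{\cT}^{\cP}$. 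Decomposing $B$ and $C$ into orbits, this reduces to: for a map $g\colon V \to W$ of orbits in $\cP$, the diagonal $\Delta_g\colon V \to V\times_W V$ lies in $\FF_{\cT}^{\cP}$, equivalently, its corestriction to the orbit summand $Z\hookrightarrow V\times_W V$ through which it factors (using that $V$ is connected) is a map in $\cP$.

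Here is where atomicity of the subcategory $\cP\subset\cT$ enters, and I expect this to be the crux of the argument: the first projection $V\times_W V \to V$ lies in $\FF_{\cT}^{\cP}$ (it is a pullback of $g\in\cP$), so it restricts on $Z$ to a map $Z \to V$ in $\cP$; this map admits the resulting section $V \to Z$ in $\cT$, so atomicity forces it to be an equivalence, whence its section $V \xrightarrow{\sim} Z$ lies in $\cP$ (subcategories here being understood to contain the equivalences of $\cT$) and $\Delta_g = \bigl(V \xrightarrow{\sim} Z \hookrightarrow V\times_W V\bigr)$ lies in $\FF_{\cT}^{\cP}$. Chasing back up, $\sigma$ and then $h$ lie in $\FF_{\cT}^{\cP}$, so $\FF_{\cP}\simeq\FF_{\cT}^{\cP}$ has pullbacks and $\cP$ is orbital.

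Finally, atomicity of $\cP$ as an $\infty$-category is immediate: a morphism of $\cP$ admitting a section in $\cP$ admits that section in $\cT$, hence is an equivalence by the atomic-subcategory hypothesis. The main obstacle is thus concentrated in the second and third paragraphs — in particular in recognizing that proving $\cP$ \emph{orbital} genuinely requires the \emph{atomic} hypothesis on the subcategory, by way of the fact that $\FF_{\cT}^{\cP}$ must be shown to be closed under the relevant relative diagonals, and that atomicity is exactly the input that makes those diagonals land in $\FF_{\cT}^{\cP}$.
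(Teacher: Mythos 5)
Your argument is correct and structurally the same as the paper's: both reduce orbitality of $\cP$ to showing that the comparison map into the $\FF_{\cT}$-pullback lies in $\FF_{\cT}^{\cP}$, after observing that the projection legs already do. The one difference is in how that last step is handled: the paper simply cites the left-cancellability of atomic orbital subcategories (\cite[Lem~4.3.5]{Cnossen_stable}), whereas you re-derive it in situ via the graph factorization $h = \pi_P \circ \Gamma_{q_B}$, the pullback description of the graph against the relative diagonal, and the observation that atomicity forces $\Delta_g$ to land in $\FF_{\cT}^{\cP}$ --- which is essentially the proof of the cited lemma, inlined. Your version is thus a bit more self-contained and makes visible exactly where the atomicity hypothesis is consumed, at the cost of being longer than the citation the paper uses.
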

\begin{proof}
  First, assume we have a square in $\FF_{\cP} \simeq \FF_{\cP}^{\cT}$;
  since $\cP \subset \cT$ is an orbital subcategory, we may extend our square to a pullback diagram $\FF_{\cT}$
  \[\begin{tikzcd}
	{T'} \\
	& {T \times_S S'} & T \\
	& {S'} & S
	\arrow["h"{description}, dashed, from=1-1, to=2-2]
	\arrow["{g'}", curve={height=-12pt}, from=1-1, to=2-3]
	\arrow["{f'}"', curve={height=12pt}, from=1-1, to=3-2]
	\arrow["{\pi_T}"', dashed, from=2-2, to=2-3]
	\arrow["{\pi_{S'}}", dashed, from=2-2, to=3-2]
	\arrow["\lrcorner"{anchor=center, pos=0.125}, draw=none, from=2-2, to=3-3]
	\arrow["f", from=2-3, to=3-3]
	\arrow["g"', from=3-2, to=3-3]
\end{tikzcd}\]
  To prove that $\cP$ is orbital, it suffices to verify that the inner square is a pullback diagram lying in $\cP$;
  to check that it lies in $\cP$ we are tasked with verifying that $\pi_{S'}$ and $\pi_{\cT}$ are in $\cP$ and to check that it's a pullback we are tasked with verifying that $h$ lies in $\cP$.
  In fact, $\pi_{S'}$ and $\pi_T$ are in $\cP$ since $\cP \subset \cT$ is an orbital subcategory;
  $h$ is then in $\cP$ since atomic orbital subcategories are left cancellable by \cite[Lem~4.3.5]{Cnossen_stable}.
  
  We've proved that $\cP$ is orbital.
  To see that $\cP$ is atomic, note that this immediately follows from the second condition of \cref{Atomic orbital subcategory definition}.
\end{proof}

\begin{example}
  Let $G$ be a Lie group and $\cO_G^{f.i.} \subset \cO_G$ the wide subcategory of the orbit $\infty$-category spanned by projections $[G/K] \rightarrow [G/H]$ corresponding with finite-index closed subgroup inclusions $K \subset H$.
  Then, by \cite[Ex~4.2.6]{Cnossen_stable}, $\cO_G^{f.i.} \subset \cO_G$ is an orbital subcategory with pullbacks implemented by a double coset formula.
  In fact, since all endomorphisms of transitive $G$-spaces are automorphisms, it is atomic as well;
  hence $\cO_G^{f.i.}$ is an atomic orbital $\infty$-category.
  
  In fact, by \cref{Interval family observation}, the $\cO_G^{f.i.}$-family $\cO_{G}^{fin}$ spanned by \emph{finite subgroups} is an atomic orbital $\infty$-category as well.
  In the case $G = \TT$ this yields the \emph{cyclonic orbit category}, so its stable homotopy theory is that of \emph{cyclonic spectra}, i.e. \emph{finitely genuine $S^1$-spectra} (c.f. \cite[Thm~2.8]{Barwick_Cyclonic}). 
\end{example}

\begin{example}
  Given $H \subset G$ a closed subgroup, the $\cO_{G}^{f.i.}$-cofamily $\cO_{G, \leq [G/H]}^{f.i.}$ spanned by homogeneous $G$-spaces $[G/J]$ admitting a quotient map from $[G/H]$ satisfies the assumption of \cref{Interval family observation}, so it is atomic orbital;
  in the case $H = N \subset G$ is normal, it is equivalent to $\cO_{G/N}^{f.i.}$.
  In any case, the associated stable homotopy theory is the value category of \emph{$H$-geometric fixed points} with residual genuine $G/H$-structure (c.f. \cite{Glasman}).
\end{example}

Now, \cref{Index transf prop} additionally allows for a reformulation of transfer systems which may be familiar to global equivariant homotopy theorists.
\begin{observation}
  Let $\cT$ be an orbital $\infty$-category.
  Then, a wide subcategory $R \subset \cT$ is a transfer system if and only if it is an orbital subcategory in the sense of \cref{Atomic orbital subcategory definition};
  indeed, the axioms for an orbital subcategory encapsulate that of a transfer system, and given a transfer system, \cite[Rmk~2.4.9]{Nardin} argues that $\FF_{\cT}^R$ is indexing category, so in particular it is pullback-stable.\footnote{In essence, the foundational difference between the orbital category and orbital subcategory settings is that the former setting develops stable homotopy theory over a transfer system by specialization from the complete transfer system, whereas the later setting characterizes this directly; the latter strategy is more complicated, but allows for base categories which are not themselves orbital, such as the global indexing category.}  
  Furthermore, if $\cT$ is atomic orbital, then all of its orbital subcategories are atomic orbital, so in particular, indexing categories are equivalent to atomic orbital subcategories in this case.
\end{observation}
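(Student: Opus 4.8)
The plan is to prove the biconditional by treating each implication with a different tool: ``orbital subcategory $\Rightarrow$ transfer system'' directly from the definitions, and the converse by invoking \cref{Index transf prop} (which identifies $\IndSys_{\cT}$ with $\Transf_{\cT}$) together with the characterization of indexing categories in \cref{Windexcat is windex main theorem}.

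For the forward direction, suppose $R \subset \cT$ is an orbital subcategory in the sense of \cref{Atomic orbital subcategory definition}; it is wide and core-containing by fiat, so the only content in \cref{Orbital transfer systems definition} is the base-change axiom. Given a $\cT$-diagram as in that definition with $\alpha\colon V \to U$ in $R$, I would form the pullback square $V \times_U U' \to U'$ in $\FF_{\cT}$, which exists and lies in $\FF^R_{\cT}$ precisely because $R$ is an orbital subcategory, so $\FF^R_{\cT}$ is stable under pullback along the arbitrary map $U' \to U$. Since $R$ contains all identities, every summand inclusion in $\FF_{\cT}$ lies in $\FF^R_{\cT}$; in particular so does the hypothesized summand inclusion $V' \hookrightarrow V \times_U U'$. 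Composing, the classifying map $\alpha'\colon V' \to U'$ lies in $\FF^R_{\cT}$, and as $V'$ and $U'$ are orbits this is exactly the statement $\alpha' \in R$. Hence $R$ is a transfer system.

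For the reverse direction, suppose $R$ is a transfer system. By \cref{Index transf prop} there is a unique indexing category $I$ with $I \cap \cT = \fR(I) = R$, and I would identify $I$ with $\FF^R_{\cT}$: combining the reduction of the Segal condition to maps into orbits (\cref{Reduction to maps to orbits observation,Weak windex segal condition}) with the closure of $\FF_{I,V}$ under finite coproducts and (since $I$ is an indexing category, hence unital) under summands, via \cref{Fold maps condition,Various families lemma}, membership of $T \to S$ in $I$ reduces to the requirement that $W \to U$ lie in $I \cap \cT = R$ for every orbit $U$ of $S$ and every orbit $W$ of $T \times_S U$ --- which is the defining condition of $\FF^R_{\cT}$. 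Since $I$ is restriction-stable (\cref{Restriction stable condition}) and $\cT$ is orbital, $\FF^R_{\cT} = I$ is then stable under all (existing) pullbacks in $\FF_{\cT}$, i.e.\ $R$ is an orbital subcategory. One may also shortcut this paragraph by quoting \cite[Rmk~2.4.9]{Nardin}, which asserts directly that $\FF^R_{\cT}$ is an indexing category.

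Finally, when $\cT$ is atomic orbital, every orbital subcategory $\cP \subset \cT$ is automatically atomic: a morphism of $\cP$ admitting a section in $\cT$ is \emph{a fortiori} a morphism of $\cT$ admitting a section in $\cT$, hence an equivalence by atomicity of $\cT$. Together with the preceding two directions --- and the lemma above, which upgrades such subcategories to atomic orbital $\infty$-categories --- this shows that the indexing categories, transfer systems, orbital subcategories, and atomic orbital subcategories of $\cT$ all name the same poset of subcategories. The one delicate step is the identification $\FF^R_{\cT} = I$, i.e.\ unwinding the Segal and coproduct-closure conditions correctly; since that is exactly the content behind \cref{Index transf prop} and \cite[Rmk~2.4.9]{Nardin}, the essential work has been done upstream and the rest is bookkeeping.
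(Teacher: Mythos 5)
Your proposal is correct and takes essentially the same route as the paper's terse observation: the forward direction by unwinding the pullback-stability of $\FF^R_{\cT}$ (including the side remark that summand inclusions lie in $\FF^R_{\cT}$ since $R$ contains all identities), the converse via \cref{Index transf prop}/\cite[Rmk~2.4.9]{Nardin} showing $\FF^R_{\cT}$ is an indexing category and hence pullback-stable, and atomicity by the one-line section argument. You have simply spelled out the bookkeeping that the paper leaves implicit.
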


\begin{example}
  Given $\cT$ an atomic orbital $\infty$-category and $V \in \cT$ an object, let $[V] \subset \cT$ be the full subcategory of objects $U$ such that $U$ and $V$ live in the same connected component of $B\cT$, i.e. there is a finite zigzag of morphisms connecting $U$ and $V$.
  Then, $[V] \subset \cT$ satisfies the assumption of \cref{Interval family observation}, so it is atomic orbital.
  This recovers a number of examples;
  for instance, $[[G/e]] = \cO_G^{fin}\subset \cO_G^{f.i.}$, and $[[G/G] \subset \cO_G^{f.i.}$ is the orbit category of transitive $G$-sets with finite-index isotropy.
\end{example}

The following observation largely reduces the study of $\cT$-equivariant mathematics to that which is equivariant over its connected components.
\begin{observation}\label{Product of gammas}
  Limits indexed by a coproduct of $\infty$-categories are computed by products of their limits over the summands \cite[Prop~4.4.1.1]{HTT};
  given $\cC$ a $\cT \sqcup \cT'$-$\infty$-category, we acquire a natural equivalence
  \[
    \Gamma^{\cT \sqcup \cT'} \cC \simeq \Gamma^{\cT} \cC \times \Gamma^{\cT'} \cC.
  \]
  For instance, this itself yields an equivalence $\wIndSys_{\cT \sqcup \cT'} \simeq \wIndSys_{\cT} \times \wIndSys_{\cT'}$.
\end{observation}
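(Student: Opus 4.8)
The plan is to obtain the equivalence $\wIndSys_{\cT \sqcup \cT'} \simeq \wIndSys_{\cT} \times \wIndSys_{\cT'}$ from two ingredients: the isomorphism $\gamma\colon \wIndSys_{\cT} \xrightarrow{\sim} \Gamma^{\cT} \uwIndSys_{\cT}$ of the proposition following \cref{Slice theorem} (valid for any orbital category in place of $\cT$), together with the displayed product formula $\Gamma^{\cT \sqcup \cT'} P \simeq \Gamma^{\cT} P \times \Gamma^{\cT'} P$, natural in the $(\cT \sqcup \cT')$-poset $P$, coming from $(\cT \sqcup \cT')^{\op} \simeq \cT^{\op} \sqcup \cT'^{\op}$ and \cite[Prop~4.4.1.1]{HTT}. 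Granting these, all that remains is to identify the $(\cT \sqcup \cT')$-poset $\uwIndSys_{\cT \sqcup \cT'}$ with the pair $(\uwIndSys_{\cT}, \uwIndSys_{\cT'})$ under the canonical splitting of $(\cT \sqcup \cT')$-categories, which reduces to checking that every piece of structure in \cref{Windex definition} is \emph{local to a summand}.

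First I would record the decomposition of finite coproduct completions. Since $\Fun((\cT \sqcup \cT')^{\op}, \Set) \simeq \Fun(\cT^{\op}, \Set) \times \Fun(\cT'^{\op}, \Set)$ and every representable presheaf on $\cT \sqcup \cT'$ is represented by an object of exactly one summand, taking finite coproducts of representables produces an equivalence $\FF_{\cT \sqcup \cT'} \simeq \FF_{\cT} \times \FF_{\cT'}$ under which $\Orb$ is the componentwise operation and the inclusions of $\FF_{\cT}$ and $\FF_{\cT'}$ are recovered. Pullbacks in a product are computed factorwise, so $\FF_{\cT \sqcup \cT'}$ has pullbacks as soon as $\FF_{\cT}$ and $\FF_{\cT'}$ do; thus $\cT \sqcup \cT'$ is (atomic) orbital whenever $\cT$ and $\cT'$ are, and $\wIndSys_{\cT \sqcup \cT'}$ is defined. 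Moreover $(\cT \sqcup \cT')_{/V} \simeq \cT_{/V}$ for $V \in \cT$ (and symmetrically for $V \in \cT'$), so the functors $\Res_U^V$, $\Ind_U^V$, and the indexed coproducts $\coprod_U^S T_U$ attached to an object $V$ of $\cT$ see only the $\cT$-structure.

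Next I would pass to $(\cT \sqcup \cT')$-categories: the identification $\Cat_{\cT \sqcup \cT', 1} = \Fun((\cT \sqcup \cT')^{\op}, \Cat_1) \simeq \Cat_{\cT,1} \times \Cat_{\cT',1}$ presents a $(\cT \sqcup \cT')$-category as a pair $(\cC^{\cT}, \cC^{\cT'})$, and the value-category formula $\FF_{\cT \sqcup \cT', /V} \simeq \FF_{\cT_{/V}}$ for $V \in \cT$ identifies $\uFF_{\cT \sqcup \cT'}$ with $(\uFF_{\cT}, \uFF_{\cT'})$. Hence full $(\cT \sqcup \cT')$-subcategories of $\uFF_{\cT \sqcup \cT'}$ biject with pairs $(\uFF_J, \uFF_{J'})$ of a full $\cT$-subcategory of $\uFF_{\cT}$ and a full $\cT'$-subcategory of $\uFF_{\cT'}$, with containment taken componentwise; and by the previous paragraph \cref{Contractible V-sets condition} and \cref{Self-indexed coproducts condition}, tested at an object of one summand, involve only that summand's structure, so $(\uFF_J, \uFF_{J'})$ satisfies them exactly when each of $\uFF_J$, $\uFF_{J'}$ does. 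Running the same argument on value categories and slices shows that $\uwIndSys_{\cT \sqcup \cT'}$ corresponds, as a $(\cT \sqcup \cT')$-poset, to $(\uwIndSys_{\cT}, \uwIndSys_{\cT'})$. Feeding this into the product formula for $\Gamma$ and then applying $\gamma$ over $\cT$, $\cT'$, and $\cT \sqcup \cT'$ gives
\[
  \wIndSys_{\cT \sqcup \cT'} \simeq \Gamma^{\cT \sqcup \cT'} \uwIndSys_{\cT \sqcup \cT'} \simeq \Gamma^{\cT} \uwIndSys_{\cT} \times \Gamma^{\cT'} \uwIndSys_{\cT'} \simeq \wIndSys_{\cT} \times \wIndSys_{\cT'};
\]
alternatively, one reads the bijection directly off the displayed identification of full subcategories with pairs.

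I do not expect a genuine obstacle here; the only point needing care is the locality to a summand of the data in \cref{Windex definition} — concretely, that $(\cT \sqcup \cT')_{/V} \simeq \cT_{/V}$ for $V$ in a summand and that $\Orb$, $\Ind_{(-)}^{(-)}$, and indexed coproducts are carried to the componentwise operations under $\FF_{\cT \sqcup \cT'} \simeq \FF_{\cT} \times \FF_{\cT'}$. Once that identification is in hand, both displayed equivalences are formal.
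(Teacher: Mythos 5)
Your proof is correct and follows the same route the paper intends: the observation is really just the product formula for $\Gamma$ over a coproduct of index categories, combined with the identification $\wIndSys_{\cT} \simeq \Gamma^{\cT} \uwIndSys_{\cT}$ from the proposition after \cref{Slice theorem} and the (implicit) fact that $\uwIndSys_{\cT \sqcup \cT'}$ restricts on each summand to $\uwIndSys_{\cT}$ and $\uwIndSys_{\cT'}$ respectively. The paper leaves the latter identification and the locality-to-a-summand verification unstated; you have supplied those details (via $\FF_{\cT \sqcup \cT'} \simeq \FF_{\cT} \times \FF_{\cT'}$ and $(\cT \sqcup \cT')_{/V} \simeq \cT_{/V}$), which is exactly what makes "this itself yields" go through.
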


\subsection{Slices and discreteness} \label{Slice categories subsection}
Note that \cref{Slice example} applies verbatim in the $\infty$-categorical case.
This allows us to conclude that taking \emph{homotopy categories} preserves the atomic orbital setting.
\begin{example}
  The atomic orbital $\infty$-category $\cT_{/V}$ has a terminal object;
  by \cite[Prop~2.5.1]{Nardin}, this implies that $\cT_{/V}$ is a 1-category.
    In general for $F:J \rightarrow \cT$ a diagram in an atomic orbital $\infty$-category indexed by a finite 1-category, $\cT_{/J}$ is also a 1-category;
    in particular, the top arrow
    \[
    \begin{tikzcd}
        \cT_{/J} \arrow[rd] \arrow[r]
        & \ho(\cT)_{/J} \arrow[d, phantom, "\simeq"{marking}]\\
        & \ho(\cT_{/J})
    \end{tikzcd}
    \]
    is an equivalence.
    This implies that $\FF_{\ho(\cT)}$ has pullbacks, i.e. $\ho(\cT)$ is orbital;
    because $\cT$ is atomic, retracts in $\ho(\cT)$ are isomorphisms, i.e. $\ho(\cT)$ is atomic orbital.
\end{example}
Moreover, note that the results of \cref{Reduction to slices subsubsection} apply to the $\infty$-categorical case verbatim.
Using this and fact that the 1-category of posets is a 1-category, \cref{Slice theorem} constructs an equivalence
\[\begin{tikzcd}[ampersand replacement=\&]
	{\Sub(\FF_{\cT})} \& {\wIndCat_{\cT}} \& {\lim_{V \in \cT^{\op}} \wIndCat_{\cT_{/V}}} \\
	{\Sub(\FF_{\ho(\cT)})} \& {\wIndCat_{\ho(\cT)}} \& {\lim_{V \in \ho \cT^{\op}} \wIndCat_{\ho(\cT)_{/V}}}
	\arrow["\ho", from=1-1, to=2-1]
	\arrow[hook, from=1-2, to=1-1]
	\arrow["\simeq"{description}, draw=none, from=1-2, to=1-3]
	\arrow["\sim", from=1-2, to=2-2]
	\arrow["\sim", from=1-3, to=2-3]
	\arrow[hook', from=2-2, to=2-1]
	\arrow["\simeq"{description}, draw=none, from=2-2, to=2-3]
\end{tikzcd}\]
Moreover, a completely analogous argument yields equivalences
\[\begin{tikzcd}[ampersand replacement=\&]
  {\mathrm{FullSub}_{\cT}(\uFF_{\cT})} \& {\wIndSys_{\cT}} \& {\lim_{V \in \cT^{\op}} \wIndSys_{\cT_{/V}}} \\
  {\mathrm{FullSub}_{\ho(\cT)}(\uFF_{\ho(\cT}))} \& {\wIndSys_{\ho(\cT)}} \& {\lim_{V \in \ho \cT^{\op}} \wIndSys_{\ho(\cT)_{/V}}}
	\arrow["\ho", from=1-1, to=2-1]
	\arrow[hook, from=1-2, to=1-1]
	\arrow["\simeq"{description}, draw=none, from=1-2, to=1-3]
	\arrow["\sim", from=1-2, to=2-2]
	\arrow["\sim", from=1-3, to=2-3]
	\arrow[hook', from=2-2, to=2-1]
	\arrow["\simeq"{description}, draw=none, from=2-2, to=2-3]
\end{tikzcd}\]
Note that the constructions of \cref{Defn of I,Defn of FI} carry over to the $\infty$-categorical setting, compatibly with the homotopy category construction;
in particular, we acquire a diagram
\[\begin{tikzcd}[ampersand replacement=\&]
	{\wIndCat_{\cT}} \& {\wIndSys_{\cT}} \& {\wIndCat_{\cT}} \\
	{\wIndCat_{\ho(\cT)}} \& {\wIndSys_{\ho(\cT)}} \& {\wIndCat_{\ho(\cT)}}
	\arrow["{\uFF_{(-)}}", from=1-1, to=1-2]
	\arrow["\sim"', from=1-1, to=2-1]
	\arrow["I", from=1-2, to=1-3]
	\arrow["\sim"', from=1-2, to=2-2]
	\arrow["\sim"', from=1-3, to=2-3]
	\arrow["{\uFF_{(-)}}", "\sim"', from=2-1, to=2-2]
	\arrow["I", from=2-2, to=2-3, "\sim"']
\end{tikzcd}\]
Applying two out of three, we've observed the following.
\begin{corollary}
  The homotopy category construction yields equivalences $\wIndCat_{\cT} \simeq \wIndCat_{\ho(\cT)}$ and $\wIndSys_{\cT} \simeq \wIndSys_{\ho(\cT)}$ intertwining $I$ and $\uFF_{(-)}$;
  in particular, $I$ and $\uFF_{(-)}$ are inverse equivalences in the $\infty$-categorical case.
\end{corollary}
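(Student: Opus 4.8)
The plan is to bootstrap from the $1$-categorical statement \cref{Windexcat is windex prop} by passing to slices. First I would record that every slice $\cT_{/V}$ of an atomic orbital $\infty$-category has a terminal object, hence is a $1$-category by \cite[Prop~2.5.1]{Nardin}; consequently the canonical comparison $\cT_{/V} \simeq \ho(\cT_{/V}) \simeq \ho(\cT)_{/V}$ is an equivalence, so $\wIndCat_{\cT_{/V}}$ and $\wIndCat_{\ho(\cT)_{/V}}$ (and likewise the $\wIndSys$ versions) literally agree. Since the proof of \cref{Slice theorem} applies verbatim $\infty$-categorically, we also have $\wIndCat_{\cT} \simeq \lim_{V \in \cT^{\op}} \wIndCat_{\cT_{/V}}$ and the analogue over $\ho(\cT)$, and similarly for $\wIndSys$.

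Next I would observe that the functor $V \mapsto \wIndCat_{\cT_{/V}}$ factors through the $1$-category $\mathrm{Poset}$, so its limit is insensitive to replacing the indexing $\infty$-category $\cT^{\op}$ by its homotopy $1$-category: a diagram valued in a $1$-category factors through $\ho$ of its source, and its limits computed over the source and over the homotopy category coincide. Combining this with the previous point yields
\[
  \wIndCat_{\cT} \simeq \lim_{V \in \ho(\cT)^{\op}} \wIndCat_{\cT_{/V}} = \lim_{V \in \ho(\cT)^{\op}} \wIndCat_{\ho(\cT)_{/V}} \simeq \wIndCat_{\ho(\cT)},
\]
and symmetrically $\wIndSys_{\cT} \simeq \wIndSys_{\ho(\cT)}$.

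For the intertwining clause I would note that \cref{Defn of I} and \cref{Defn of FI} make sense verbatim $\infty$-categorically — both are built from the indexed coproducts $\Ind_V^{\cT}(-)$, i.e. postcomposition in $\FF_{\cT}$, which under the slice identifications above is compatible with the homotopy-category construction — so the maps $I$ and $\uFF_{(-)}$ commute with the equivalences just constructed. This produces a commuting square whose bottom row consists of the two $1$-categorical horizontal maps (inverse equivalences by \cref{Windexcat is windex prop}) and whose vertical maps are equivalences; two-out-of-three then forces $I\colon \wIndCat_{\cT} \rightleftarrows \wIndSys_{\cT}\colon \uFF_{(-)}$ to be inverse equivalences as well, and commutativity of the square is exactly the asserted intertwining.

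I do not expect a serious obstacle; the two points that deserve to be written out rather than elided are the claim in the second paragraph that a $\mathrm{Poset}$-valued limit depends only on the homotopy $1$-category of its indexing diagram, and the verification that $I(-)$ and $\uFF_{(-)}$ are genuinely natural in $\cT$ so that they descend to $\ho(\cT)$ — both routine once the slice reduction is in place, since the only potentially $\infty$-categorical object, $\FF_{\cT}$ itself, never needs to be handled directly.
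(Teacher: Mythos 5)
Your proposal follows essentially the same route as the paper: reduce to slices via the slice theorem (which applies verbatim in the $\infty$-categorical setting), observe that each slice $\cT_{/V}$ is a 1-category so agrees with $\ho(\cT)_{/V}$, use that $\mathrm{Poset}$ is a 1-category to pass from a limit over $\cT^{\op}$ to one over $\ho(\cT)^{\op}$, and finally invoke the 1-categorical equivalence \cref{Windexcat is windex prop} together with two-out-of-three. The paper phrases the limit-descent step as "the 1-category of posets is a 1-category," which is exactly your observation that a $\mathrm{Poset}$-valued diagram factors through the homotopy category; your argument and the paper's are in correspondence step for step.
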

The rest of this paper concerning general $\cT$ lifts to the $\infty$-categorical case analogously.
\begin{corollary}\label{Borel equivariant corollary}
  If $X$ is a space, then the forgetful map $\wIndSys_{X} \rightarrow \prn{\wIndSys_*}^{\pi_0 X}$ is an equivalence.
\end{corollary}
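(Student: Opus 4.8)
The plan is to run the slice-reduction of \cref{Reduction to slices subsubsection} and then observe that, over a space, everything degenerates to a $\pi_0$-indexed product because slices are contractible and posets carry no monodromy. First I would invoke the $\infty$-categorical analogue of \cref{Slice theorem} established in \cref{Slice categories subsection}, which identifies $\wIndSys_X$ with the limit $\lim_{V \in X^{\op}} \wIndSys_{X_{/V}}$ of the $X$-poset $V \mapsto \wIndSys_{X_{/V}}$ (using that $X$, as a space, is an atomic orbital $\infty$-category and that $X^{\op} \simeq X$). Now for every object $V \in X$ the slice $X_{/V}$ has a terminal object $\id_V$ and is an $\infty$-groupoid, hence is contractible: $X_{/V} \simeq \ast$. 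Therefore $\wIndSys_{X_{/V}} \simeq \wIndSys_\ast$, and under the identification $\cC \mapsto \cC_V$ the forgetful map $\wIndSys_X \to \wIndSys_{X_{/V}}$ is precisely restriction of the underlying local system of categories along the point $V\colon \ast \to X$.

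Next I would note that the functor $X^{\op} \to \Cat_1$ given by $V \mapsto X_{/V}$ is canonically constant at $\ast$ — the space of $\Cat_1$-valued functors on $X^{\op}$ whose values are contractible is itself contractible — so the $X$-poset $V \mapsto \wIndSys_{X_{/V}}$ is canonically constant at $\wIndSys_\ast$, giving $\wIndSys_X \simeq \Fun(X, \wIndSys_\ast)$. Since $\wIndSys_\ast$ is a poset — a $1$-category in which every hom-set has at most one element and every isomorphism is an identity — a functor out of the $\infty$-groupoid $X$ into it has no monodromy and is (uniquely) constant on each connected component; that is, $\Fun(X,\wIndSys_\ast) \simeq \Fun(\pi_0 X, \wIndSys_\ast) = \prn{\wIndSys_\ast}^{\pi_0 X}$, the equivalence being evaluation at a chosen basepoint of each component (consistently with the splitting $\wIndSys_X \simeq \prod_{[x]\in\pi_0 X}\wIndSys_{X_{[x]}}$ afforded by \cref{Product of gammas}). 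Tracing the two displays above, this composite equivalence $\wIndSys_X \xrightarrow{\sim} \prn{\wIndSys_\ast}^{\pi_0 X}$ is exactly the forgetful map.

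The genuine content is formal bookkeeping: one must control the interaction between the $\infty$-categorical limit produced by the Slice theorem and the $1$-truncatedness of $\wIndSys_\ast$, namely the two ``no higher coherence'' steps that $V \mapsto X_{/V}$ is constant and that a poset-valued local system is constant. A route that sidesteps these entirely is to unwind the definition directly: an $X$-category is a local system of categories on $X$; since $X_{/V}\simeq\ast$, the $X$-category $\uFF_X$ has all value categories $\FF_{X_{/V}}\simeq\FF$ with trivial monodromy, so a full $X$-subcategory of $\uFF_X$ is precisely a $\pi_0 X$-indexed family of full subcategories of $\FF$; and since \cref{Contractible V-sets condition,Self-indexed coproducts condition} depend only on the value categories (as already used in establishing $\uwIndSys_{\cT}\subset\uFullSub_{\cT}(\uFF_{\cT})$), such a family is an $X$-weak indexing system if and only if each member is a $\ast$-weak indexing system. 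This recovers the equivalence with $\prn{\wIndSys_\ast}^{\pi_0 X}$ on the nose, again realized by the forgetful map.
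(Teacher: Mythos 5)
Correct, and essentially the paper's argument: both pass through the slice-reduction of \cref{Slice categories subsection} (equivalently, the splitting of \cref{Product of gammas}), use contractibility of slices of a space to identify each $\wIndSys_{X_{/V}}$ with $\wIndSys_*$, and reduce to $\pi_0 X$. Your version is slightly more careful in making explicit the two bookkeeping points the paper leaves tacit — that $V\mapsto X_{/V}$ is a genuinely \emph{constant} local system (because the $\infty$-groupoid of contractible categories is contractible) and that a poset-valued local system on a space has no monodromy — which is a welcome elaboration rather than a different route.
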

\begin{proof}
  Slice categories of spaces are contractible, so
  \cref{Product of gammas} yields a chain of equivalences
  \[
    \wIndSys_{X} \simeq \prod_{x \in \pi_0 X} \wIndSys_{[x]} \simeq \prod_{x \in \pi_0 X} \lim_{y \in [X]} \wIndSys_{[x]_{/y}} \simeq \prod_{x \in \pi_0 X} \wIndSys_*.\qedhere 
  \]
\end{proof}

\end{appendix}

\resumetocwriting

{
\printbibliography 
}
\end{document}